	\titleformat{\section}[block]{\Large\bfseries\filcenter}{\thesection}{1em}{}
\theoremstyle{plain}
\renewcommand*\thesection{\arabic{section}}
\numberwithin{equation}{section} 
\theoremstyle{plain}
\newtheorem{thm}{Theorem}
\numberwithin{thm}{section} 
\newtheorem{lemma}[thm]{Lemma}
\newtheorem{corollary}[thm]{Corollary}
\newtheorem{theorem}[thm]{Theorem}
\theoremstyle{definition}
\newtheorem{remark}[thm]{Remark}
\newtheorem{definition}[thm]{Definition}
\newcommand{\thistheoremname}{}
\newtheorem{genericthm}[equation]{\thistheoremname}
\newcommand{\thistheoremnames}{}
\newtheorem*{genericthms}{\thistheoremnames}
\newenvironment{para*}[1]
  {\renewcommand{\thistheoremnames}{#1}%
   \begin{genericthms}}
  {\end{genericthms}}
\let\expandafter\oldproof\csname\string\proof\endcsname
\let\oldendproof\endproof
\renewenvironment{proof}[1][\proofname]{%
  \oldproof[\upshape \bfseries #1]%
}%
{\oldendproof}
\def\@makechapterhead#1{%
  \vspace*{50\p@}%
  {\parindent \z@ \raggedright \normalfont
    \interlinepenalty\@M
    \Huge\bfseries  \thechapter.\quad #1\par\nobreak
    \vskip 40\p@
  }}
\newcommand{\reqnomode}{\tagsleft@false}
\def \d{\,{\rm d}}
\def\dist{\,{\rm dist}}
\def\supp{\,{\rm supp }}
\def\diam{\,{\rm diam}}
\DeclareRobustCommand*{\bfseries}{%
  \not@math@alphabet\bfseries\mathbf
  \fontseries\bfdefault\selectfont
  \boldmath
}
\newlength{\defbaselineskip}
\newcommand{\N}{\mathbb{N}}
\newcommand\eps\varepsilon
\def\mean#1{\mathchoice%
          {\mathop{\kern 0.2em\vrule width 0.6em height 0.69678ex depth -0.58065ex
                  \kern -0.8em \intop}\nolimits_{\kern -0.4em#1}}%
          {\mathop{\kern 0.1em\vrule width 0.5em height 0.69678ex depth -0.60387ex
                  \kern -0.6em \intop}\nolimits_{#1}}%
          {\mathop{\kern 0.1em\vrule width 0.5em height 0.69678ex
              depth -0.60387ex
                  \kern -0.6em \intop}\nolimits_{#1}}%
          {\mathop{\kern 0.1em\vrule width 0.5em height 0.69678ex depth -0.60387ex
                  \kern -0.6em \intop}\nolimits_{#1}}}
\numberwithin{equation}{section}
\newcommand\blfootnote[1]{%
  \begingroup
  \renewcommand\thefootnote{}\footnote{#1}%
  \addtocounter{footnote}{-1}%
  \endgroup
}
\def\loc{\operatorname{loc}}
\def\eqn#1$$#2$${\begin{equation}\label#1#2\end{equation}}
\newcommand\R{\mathbb{R}}
\newcommand{\F}{\mathscr F}
\def \tp{\textup}
\def \p{\partial}
\def \e{\varepsilon}
\def \H{\mathscr H}
\def \D{\mathrm D}
\newcommand{\bb}[1]{\mathbb{#1}}
\def \LL {\mathrm L}
\def \WW{\mathrm W}
\def \BB{\mathrm B}
\newcommand\restr[2]{{
  \left.\kern-\nulldelimiterspace 
  #1 
  \vphantom{|} 
  \right|_{#2} 
  }
}
\def\Xint#1{\,\mathchoice
  {\XXint\displaystyle\textstyle{#1}}%
  {\XXint\textstyle\scriptstyle{#1}}%
  {\XXint\scriptstyle\scriptscriptstyle{#1}}%
  {\XXint\scriptscriptstyle\scriptscriptstyle{#1}}%
  \!\int}
\def\XXint#1#2#3{\setbox0=\hbox{$#1{#2#3}{\int}$}\vcenter{\hbox{$#2#3$}}\kern-.5\wd0}
\def\dashint{\Xint-}
\let\norm\relax
\newcommand{\norm}[1]{\left\lVert#1\right\rVert}
\newcommand{\seminorm}[1]{\left[#1\right]}
  \let\div\relax
  \DeclareMathOperator{\div}{div}
\tikzstyle{none}=[inner sep=0mm]
\title{Boundary regularity results for minimisers of convex functionals with $(p,q)$-growth}
\author[1]{Christopher Irving}
\author[2]{Lukas Koch}
\affil[1]{\small Faculty of Mathematics, TU Dortmund University, Vogelpothsweg 87, 44227 Dortmund, Germany 
\protect \\
  {\tt{christopher.irving@tu-dortmund.de}}
  \vspace{1em} \ }
  \affil[2]{\small MPI for Mathematics in the Sciences, Inselstrasse 22, 04177 Leipzig, Germany
\protect \\
  {\tt{kochl@mis.mpg.de}}
  \vspace{1em} \ }
\begin{document}
\maketitle

\begin{abstract}
We prove improved differentiability results for relaxed minimisers of vectorial convex functionals with $(p,q)$-growth, satisfying a H\"older-growth condition in $x$. We consider both Dirichlet and Neumann boundary data. In addition, we obtain a characterisation of regular boundary points for such minimisers. In particular, in case of homogeneous boundary conditions, this allows us to deduce partial boundary regularity of relaxed minimisers on smooth domains for radial integrands. We also obtain some partial boundary regularity results for non-homogeneous Neumann boundary conditions.
\end{abstract}

\blfootnote{
\emph{2020 Mathematics Subject Classification:} 35J60, 35J70\\ 
\emph{Keywords.} nonuniformly elliptic convex vectorial functionals, non-autonomous integrands, partial regularity, regular boundary points\\
}


\section{Introduction and results}
We study minimisation problems of the form
\begin{align*}\label{eq:neumann_problem}
  &\min_{u\in \WW^{1,p}(\Omega,\R^m)} \F_N(u) \ \text{ where } \\
  &\quad \F_N(u)=\int_\Omega F(x,\D u)- f\cdot u\,\d x+\int_{\p\Omega}g_N\cdot u\,\d\H^{n-1},\tag{N}
\end{align*}
and
\begin{align*}\label{eq:dirichlet_problem}
  \min_{u\in \WW^{1,p}_{g_D}(\Omega,\R^m)} \F_D(u) \ \text{ where }
\F_D(u)=\int_\Omega F(x,\D u)-f\cdot u\,\d x.\tag{D}
\end{align*}
Here $f$, $g_N$ and $g_D$ are sufficiently regular data.
In the Neumann case \eqref{eq:neumann_problem} we will assume the compatibility condition
\begin{equation}\label{eq:compatibility_condition}
  \int_\Omega f \,\d x = \int_{\p\Omega} g_N \,\d \H^{n-1},
\end{equation} 
where the integral is to be understood as being applied componentwise.

In order to comment on our results, we state our assumptions on $F$ precisely. We remark that we explain our notation in Section \ref{sec:prelim}.

\textit{Let $0<\alpha\leq 1$, $n \geq 2,$ $m \geq 1,$ and let $\Omega \subset \bb R^n$ be a bounded Lipschitz domain. We assume that 
  \begin{equation}
    F = F(x,z) \colon \overline\Omega \times \bb R^{m \times n} \to \bb R
  \end{equation} 
  is measurable in $x$, continuously differentiable in $z$ and moreover satisfies natural growth conditions and a H\"older-continuity assumption in $x$ of the form
\begin{gather}\label{def:bounds1}
  z \mapsto F(x,z) - \nu\lvert V_{\mu,p}(z)\rvert^2 \text{ is convex},\tag{H1}\\
\label{def:bounds2}
\lvert F(x,z)\rvert\leq \Lambda (1+\lvert z\rvert^2)^\frac q 2,\tag{H2}\\
\label{def:bounds3} \lvert F(x,z)-F(y,z)\rvert\leq \Lambda \lvert x-y\rvert^\alpha\left(1+ \lvert z\rvert^2\right)^\frac q 2,\tag{H3}
\end{gather}
for some $\mu, \nu, \Lambda>0,$ $\alpha \in (0,1]$, all $z,w\in \R^{m\times n}$ and almost every $x,y\in \Omega$, where $1< p\leq q$.}
\textit{At times we additionally assume 
\begin{equation}
  \label{def:bounds31} \lvert \p_zF(x,z)-\p_zF(y,z)\rvert\leq \Lambda \lvert x-y\rvert^\alpha\left(1+ \lvert z\rvert^2\right)^\frac {q-1} 2\tag{H4}.   
  \end{equation} 
}
In particular, $F(x,z)$ is convex.
Further properties of such integrands will be discussed in Section \ref{sec:bounds_int}.


It is well-known that a major obstruction to regularity in the setting of $(p,q)$-growth is the possible occurrence of the Lavrentiev phenomenon. This describes the possibility that
\begin{align}\label{eq:Lphenomenon}
\inf_{u\in W^{1,p}_g(\Omega)} \F(u)< \inf_{u\in W^{1,q}_g(\Omega)} \F(u).
\end{align}
This phenomenon was first described in \cite{Lavrentiev1926}. In the context of $(p,q)$-growth functionals the theory was expanded in \cite{Zhikov1987}, \cite{Zhikov1993}, \cite{Zhikov1995}. Adapting the viewpoint and terminology of \cite{Buttazo1992}, we will not deal with pointwise minimisers of \eqref{eq:neumann_problem} and \eqref{eq:dirichlet_problem}, but rather with relaxed minimisers in the following sense:
\begin{definition}\label{def:relaxedMinimiser}
We say $u\in \WW^{1,p}(\Omega)$ is a \textbf{$\WW^{1,q}$-relaxed minimiser} (usually referred to as a relaxed minimiser) of $\F_N$ if $u$ minimises the relaxed functional
\begin{align}\label{def:relaxedFunctional}
\overline \F_N(v) = \inf \left\{\,\liminf_{j\to\infty} \F(v_j): (v_j)\subset Y, v_j\rightharpoonup v \text{ weakly in } X\,\right\}
\end{align}
amongst all $v\in X= \WW^{1,p}(\Omega)$ where $Y=\WW^{1,q}(\Omega)$. 
Similarly, we say $u\in \WW^{1,p}_{g_D}(\Omega)$ is a \textbf{$\WW^{1,q}$-relaxed minimiser} of $\F_D$, if $u$ minimises the relaxed functional
\begin{align}\label{def:relaxedFunctionalDirichlet}
\overline \F_D(v) = \inf \left\{\,\liminf_{j\to\infty} \F(v_j): (v_j)\subset Y, v_j\rightharpoonup v \text{ weakly in } X\,\right\}
\end{align}
amongst all $v\in X= \WW^{1,p}_{g_D}(\Omega)$ where $Y=\WW^{1,q}(\Omega)\cap \WW^{1,p}_{g_D}(\Omega)$. 
Note that if $u\in \WW^{1,q}(\Omega)$, then the relaxed functional agrees with the pointwise definition, justifying the terminology.
\end{definition} 

There is an extensive literature on the Lavrentiev phenomenon, an overview of which can be found in \cite{Buttazo1995}, \cite{Foss2001}. The phenomenon also arises in nonlinear elasticity \cite{Foss2003}. If the Lavrentiev phenomenon can be excluded, then minimisers of the relaxed and pointwise functional agree. In general, under the assumptions of this paper, it is not known whether the Lavrentiev phenomenon can be excluded. We refer however to \cite{Koch2022a} for conditions under which the Lavrentiev gap can be excluded, see also \cite{Esposito2004,Esposito2019} for local versions of these conditions. We further remark that while our improved differentiability results for minimisers of \eqref{eq:neumann_problem} or \eqref{eq:dirichlet_problem} are stated for relaxed minimisers, a closer look reveals that they also imply an improved differentiability result for pointwise minimisers with an argument or assumption excluding Lavrentiev.
Finally, we point out that there is a different approach to the relaxed functional, which involves studying measure representations of it. We refer to \cite{Fonseca1997,Acerbi2003} for results and further references in this direction.

The study of regularity theory for minimisers in the case $p<q$ started with the seminal papers \cite{Marcellini1989,Marcellini1991}. We do not aim to give a complete overview of the theory here, and we will instead focus on results directly relevant to this paper. We refer to \cite{Mingione2006} for a good overview and further references. In general, the study has been almost completely concentrated on \eqref{eq:dirichlet_problem} and in particular, unless explicitly mentioned, all references pertain to this setting. A particular focus of research have been the special cases of the double-phase functional $F(x,z)=\lvert z\rvert^p+a(x)\lvert z\rvert^q$ and functionals with $p(x)$-growth. For an introduction and further references with regards to these special cases we refer to the introduction of \cite{Baroni2018} and \cite{Diening2011,Radulescu2015}, respectively.  Already in the scalar $m=1$ autonomous case $F(x,z)\equiv F(z)$ counterexamples show that in order to prove regularity of minimisers $p$ and $q$ may not be too far apart \cite{Giaquinta1987,Marcellini1989,Hong1992}. We stress that the counterexamples only apply to pointwise minimisers. We list the to our knowledge best available $W^{1,q}_{\tp{loc}}$-regularity results for general autonomous convex functionals with $(p,q)$-growth (when $n\geq 2$): Under natural growth conditions it suffices to assume $q<\frac{np}{n-1}$ \cite{Carozza2013} in order to obtain $W^{1,q}_{\tp{loc}}$-regularity of minimisers. To obtain the same conclusion under controlled growth conditions the gap may be widened to $q<p\left(1+\frac 2 {n-1}\right)$ \cite{Schaeffner2020}, and under controlled duality growth conditions it suffices to take $q<\frac{np}{n-2}$ (if $n=2$ it suffices to take $q<\infty$) \cite{DeFilippis2020}. We note that in all three cases higher integrability goes hand in hand with a higher differentiability result, for instance in \cite{Esposito2002,Carozza2013} it is proved that $V_{p,\mu}(\D u)\in \WW^{1,2}_{\tp{loc}}(\Omega)$.
 We refer to \cite{Bogelein2013} for results and references in the case of parabolic systems with $(p,q)$-growth. 

The global theory is less developed and the only general results available extend the results of \cite{Carozza2013} up to the boundary in \cite{Koch2021a}. Additionally in \cite{Bulicek2018}, Lipschitz regularity up to the boundary is obtained for minimisers of scalar autonomous functionals satisfying nonstandard growth conditions and the structure condition $F\equiv F_0(\lvert z\rvert)$. The growth conditions considered include $(p,q)$-growth.

We now turn to the case of non-autonomous functionals $F(x,z)$, convex and with $(p,q)$-growth in $z$, while satisfying a uniform $\alpha$-H\"older condition in $x$. For $n\geq 2$ counterexamples to $W^{1,q}$ regularity with $1<p<n<n+\alpha<q$ are due to \cite{Esposito2004}, see also \cite{Fonseca2004}. Recent work suggests that the condition $p<n<q$ may be removed \cite{Balci2020}. If $q<\frac{(n+\alpha)p}{n}$, it was proven in \cite{Esposito2004} for many standard examples that minimisers enjoy $W^{1,q}_{\tp{loc}}$-regularity and improved differentiability $V_{p,\mu}(\D u)\in \WW^{\frac \beta 2,2}_{\tp{loc}}(\Omega)$ for all $\beta<\alpha$. Using \cite{Esposito2019}, the result may be extended to functionals satisfying an additional condition on the $x$-dependence. $W^{1,q}_{\tp{loc}}$ regularity is in general not known if $q=\frac{(n+\alpha)p}{n}$. An exception are functionals modeled on the double-phase functional \cite{Baroni2018}, see also \cite{DeFilippis2019} and \cite{Hasto2022}.

Concerning the theory of global improved differentiability and $W^{1,q}$-regularity results for non-autonomous functionals in \cite{Koch2021a}, the results of \cite{Esposito2019} are extended up to the boundary.  For functionals satisfying a structural assumption inspired by the double-phase functional Cald\'{e}ron-Zygmund estimates valid up to the boundary are obtained in \cite{Byun2017}. H\"older-regularity up to the boundary for double-phase functionals is studied in \cite{Tachikawa2020}.

If additional structure assumptions such as $F\equiv F_0(x,\lvert z\rvert)$ are imposed or if it is assumed that minimisers are bounded it is possible to improve on the results listed so far. Without going into further detail we refer to \cite{Breit2012}, \cite{Carozza2011} for results and further references in these directions. Local boundedness of minimisers for convex non-autonomous $(p,q)$-growth functionals under natural growth conditions and the additional assumption $F(x,2z)\lesssim 1+F(x,z)$ is studied in \cite{Hirsch2020}.

A secondary problem is that of partial $C^{1,\alpha}$ regularity for solutions, which is the best we can hope for in light of classical examples \cite{DeGiorgi1968,Mazya1968}.
In the setting of $(p,q)$-growth, the first partial regularity results were obtained in \cite{Acerbi1994} for functionals with a specific structure, and later in \cite{Passarelli1996} for $F \equiv F(z)$ subject to a controlled growth condition for exponents satisfying $q \geq p \geq 2$ and
\begin{equation}\label{eq:partial_reg_range}
  q < \min\left\{p+1, \frac{np}{n-1}\right\}.
\end{equation} 
The latter authors used the smoothing operator introduced in \cite{Fonseca1997}, and these ideas were developed further in \cite{Schmidt2008} to obtain a Caccioppoli-type inequality in the quasiconvex setting.
Building upon these techniques, the case of convex integrands $F(x,z)$ was obtained in \cite{DeMaria2010} assuming the exponents satisfy \eqref{eq:partial_reg_range}.
While the above results only considered pointwise minimisers, the case of $\WW^{1,q}$-relaxed minimisers was treated in \cite{Schmidt2009} through a delicate analysis of the relaxed functional. 
Recently, under substantially weaker assumptions that incorporate $(p,q)$-growth, this was improved in \cite{Gmeineder2022} to the range $1 \leq p < q < \min\left\{p+1,\frac{np}{n-1}\right\}.$
Integrands satisfying a $p$-Laplace type degeneration at the origin were treated in \cite{Schmidt2008a}, and recently in this setting partial gradient continuity in the presence of a forcing term $f \in \LL(n,1)$ was obtained in \cite{DeFilippis2022b,DeFilippis2022a}, under more restrictive conditions on $(p,q)$ than \eqref{eq:partial_reg_range}.

The aforementioned results only consider interior regularity however, which raises the question of the existence of regular boundary points.
For convex problems with regular growth this was settled in \cite{Kristensen2010}, where the integrand can additionally depend on $u.$
This relied on a characterisation of regular boundary points obtained in \cite{Kronz2005} for quasiconvex integrands, however without an improved differentiability result it is unclear whether this characterisation is satisfied at any $x_0 \in \p\Omega.$
To our knowledge this has not been considered in the case of $(p,q)$-growth, nor in the setting of Neumann boundary.

Concerning maximal improved differentiability subject to a forcing term, regarding the $p$-Laplace system, there is a wealth of information available, c.f. \cite{Mingione2007,Breit2017,Cianchi2019} and the references therein. For the purposes of this paper, we remark that a classical result in \cite{Simon1977, Simon1981} establishes interior $B^{s,p}_{\infty}$ regularity with $s = 1+\min\left\{p-1,\frac1{p-1}\right\}$ for the $p$-Laplace equation
\begin{equation}
  - \div \left( \lvert \D u\rvert^{p-2} \D u \right) = f,
\end{equation} 
with $f \in L^{p^\prime},$ and that this is sharp for $p\geq 2$. Further, up to the boundary, \cite{Simon1981} establishes that it is possible to take $s=\min\left\{(p-1)^2,\frac 1 {(p-1)^2}\right\}$. Our results indicate that this likely is not sharp.
If $f \in W^{1,p^\prime},$ the obtained regularity improves to $V_{p}(\D u) \in W^{1,2};$ this was noted to hold globally in \cite{EbmeyerLiuSteinhauer2005}. Finally, we remark that maximal improved Besov-regularity for the $p$-Laplace equation was studied in \cite{Dahlke2016}.

\subsection{Overview of results}

Our first main result extends the higher integrability results of \cite{Koch2020} to the Neumann case:
\begin{restatable}{theorem}{neumannBasic}\label{thm:regularityRelaxed}
Let $\alpha\in(0,1]$ and $1<p\leq q<\frac{(n+\alpha)p} n$.
Let $\Omega$ be a Lipschitz domain, $f\in \BB_{\infty}^{\alpha-1,q^\prime}(\Omega)$ and $g_N\in \WW^{\alpha-\frac 1 {q^\prime},q^\prime}(\p\Omega)$ be such that \eqref{eq:compatibility_condition} is satisfied.
Suppose $F$ satisfies \eqref{def:bounds1}-\eqref{def:bounds3} with $1< p\leq q< \frac{(n+\alpha)p}{n}$, and that $u$ is a relaxed minimiser of $\F_N$ in the class $\WW^{1,p}(\Omega)$. 
Then $u\in \WW^{1,q}(\Omega),$ and for any $\beta<\alpha$ the estimate
\begin{align}\label{eq:regRelaxed1}
\|u\|_{\WW^{1,\frac{np}{n-\beta}}(\Omega)} \lesssim \left(1+\overline\F_N(u)+\|f\|_{\BB_{\infty}^{\alpha-1,q^\prime}(\Omega)}^{q^\prime}+\|g_N\|_{\WW^{\alpha-\frac 1 {q^\prime},q^\prime}(\p\Omega)}^{q^\prime}\right)^\gamma
\end{align}
holds for some $\gamma>0$, where the implicit constant depends on $\Omega$, $\lambda$, $\Lambda$, $n$, $\beta$, and $p$. In fact, we have
\begin{align}\label{eq:regRelaxed2}
\|V_{p,\mu}(\D u)\|_{\BB^{1+\frac \alpha 2,2}_\infty(\Omega)}\lesssim \left(1+\overline\F_N(u)+\|f\|_{\BB_{\infty}^{\alpha-1,q^\prime}(\Omega)}^{q^\prime}+\|g_N\|_{\WW^{\alpha-\frac 1 {q^\prime},q^\prime}(\p\Omega)}^{q^\prime}\right)^\gamma.
\end{align}
\end{restatable}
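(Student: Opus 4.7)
The plan is to mimic the strategy of the Dirichlet case in \cite{Koch2020}, adapted to Neumann boundary data via a careful treatment of the boundary integral. I would proceed in four steps.

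First, I would construct an approximating family of minimisers. Set $F_\varepsilon(x,z) = F(x,z) + \varepsilon(1+|z|^2)^{q/2}$, so that $F_\varepsilon$ has standard $q$-growth while preserving the convexity, Hölder dependence, and growth bounds uniformly (up to the constant shift). Let $u_\varepsilon$ minimise the corresponding Neumann functional (with $f$, $g_N$ unchanged, noting the compatibility \eqref{eq:compatibility_condition} is preserved). By classical theory, $u_\varepsilon \in \WW^{1,q}(\Omega)$ and satisfies the weak Euler--Lagrange equation
\begin{equation*}
\int_\Omega \p_z F_\varepsilon(x,\D u_\varepsilon)\cdot \D\varphi\,\d x = \int_\Omega f\cdot\varphi\,\d x + \int_{\p\Omega} g_N\cdot\varphi\,\d\H^{n-1}
\end{equation*}
for all $\varphi \in \WW^{1,q}(\Omega,\R^m)$. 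A standard comparison with $u$ (using the definition of $\overline{\F}_N$) would yield $\overline{\F}_N(u_\varepsilon) \leq \overline{\F}_N(u) + o(1)$, giving uniform $\WW^{1,p}$ bounds via the coercivity of $V_{\mu,p}$.

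Second, the core is a uniform-in-$\varepsilon$ Besov estimate for $V_{\mu,p}(\D u_\varepsilon)$. Interior estimates follow verbatim from \cite{Koch2020}. At the boundary, I would flatten $\p\Omega$ locally to work in a half-ball $B_r^+$ and insert tangential second differences $\tau_{-h,s}(\eta^2 \tau_{h,s} u_\varepsilon)$, $s \in \{1,\ldots,n-1\}$, as test functions, where $\eta$ is a tangential cutoff supported up to the flat part. The monotonicity consequence of \eqref{def:bounds1} controls $\int \eta^2 |\tau_{h,s} V_{\mu,p}(\D u_\varepsilon)|^2$ from below; from above, the Hölder-in-$x$ assumption \eqref{def:bounds3} (or \eqref{def:bounds31}) produces a $|h|^{2\alpha}$ factor, and the forcing terms are absorbed using the Besov/trace duality
\begin{equation*}
\left| \int_\Omega f \cdot \tau_{h,s}\psi\,\d x + \int_{\p\Omega} g_N\cdot\tau_{h,s}\psi\,\d\H^{n-1}\right| \lesssim |h|^\alpha \left(\|f\|_{\BB^{\alpha-1,q'}_\infty} + \|g_N\|_{\WW^{\alpha-1/q',q'}}\right) \|\psi\|_{\WW^{1,q}},
\end{equation*}
which is exactly the scale dictated by the Besov exponents in the hypothesis. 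Iterating the resulting fractional Caccioppoli estimate via a Gehring/self-improvement argument (as in \cite{Koch2020,Esposito2019}) lifts the tangential regularity of $V_{\mu,p}(\D u_\varepsilon)$ to $\BB^{\alpha/2,2}_\infty$ in tangential directions, giving the higher integrability $\D u_\varepsilon \in \LL^{np/(n-\beta)}$ for $\beta<\alpha$.

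Third, the normal direction is recovered from the Euler--Lagrange equation: fixing the tangential block of $\p_z F_\varepsilon$, the remaining normal component $\p_{x_n}[\p_z F_\varepsilon(x,\D u_\varepsilon)]_{\cdot,n}$ is expressible as $f$ minus tangential divergences of the already-controlled components, and inverting the nondegenerate elliptic symbol yields fractional regularity of $\p_n V_{\mu,p}(\D u_\varepsilon)$ of the same order. Patching local estimates with a finite partition of unity along $\p\Omega$ and combining with the interior estimate gives the global bound \eqref{eq:regRelaxed2} uniformly in $\varepsilon$.

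Fourth, pass to the limit. The uniform bounds extract a weak limit $\tilde u \in \WW^{1,q}(\Omega)$; lower semicontinuity of $\overline{\F}_N$ together with the asymptotic minimality of $u_\varepsilon$ identifies $\tilde u$ as a relaxed minimiser. Since \eqref{def:bounds1} guarantees strict convexity in $\D u$, $\tilde u = u$, and the uniform estimates descend to $u$ by weak lower semicontinuity of the Besov norm. The main obstacle throughout is the boundary integral with $g_N$ under tangential-difference-quotient test functions: one must match the fractional regularity of $g_N$ to the expected $|h|^\alpha$ gain without losing powers via trace inequalities, which is where the precise choice of Besov/Sobolev scales in the hypothesis becomes essential.
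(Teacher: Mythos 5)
Your overall architecture (regularise with $F_\varepsilon$, prove uniform Besov bounds for $V_{p,\mu}(\D u_\varepsilon)$, pass to the limit via the relaxed functional) matches the paper, and your treatment of the $f$ and $g_N$ terms by interpolation between the $\alpha=0$ and $\alpha=1$ endpoints is exactly right. However, the core of your Step 2 contains a genuine gap for \emph{this} theorem: you propose to insert second difference quotients into the Euler--Lagrange equation, which forces you to estimate the commutator term $\langle \p_zF(x+h,\D u_\varepsilon(\cdot+h))-\p_zF(x,\D u_\varepsilon(\cdot+h)),\Delta_h\D u_\varepsilon\rangle$. Controlling this requires H\"older continuity of $\p_zF$ in $x$, i.e.\ \eqref{def:bounds31} --- which you yourself flag parenthetically --- but Theorem \ref{thm:regularityRelaxed} assumes only \eqref{def:bounds1}--\eqref{def:bounds3}, and $F$ is merely $C^1$ in $z$. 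The paper avoids the equation entirely here: it uses a \emph{variational comparison}, testing minimality of $v_\varepsilon$ against the translated competitor $T_hv_\varepsilon=\phi v_{\varepsilon,h}+(1-\phi)v_\varepsilon$ for $h$ in the interior cone $C_{\rho_0}(\theta_0,-\mathbf n(x_0))$ guaranteed by the Lipschitz cone condition. Minimality plus the quantified convexity of Lemma \ref{lem:hbound1} gives the lower bound on $\|\Delta_hV_{p,\mu}(\D v_\varepsilon)\|_{\LL^2}^2$, and the upper bound only ever sees $F(x,z)-F(y,z)$, i.e.\ \eqref{def:bounds3}. The difference-quotient approach to the Euler--Lagrange system is reserved for the \emph{improved} differentiability results (Theorems \ref{thm:interiorImproved}, \ref{thm:relaxedImproved}), precisely where \eqref{def:bounds31} is added to the hypotheses.

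Your Steps 2--3 have a second problem: $\Omega$ is only Lipschitz, so a boundary-flattening chart is merely bi-Lipschitz, and $\D(u\circ\Phi)=(\D u\circ\Phi)\,\D\Phi$ with $\D\Phi\in\LL^\infty$ does not preserve fractional differentiability of the gradient (the paper explicitly imposes $C^{1,1}$ boundaries in Section \ref{sec:boundaryImproved} for exactly this reason). Likewise, "inverting the nondegenerate elliptic symbol" to recover the normal derivative presupposes a $C^2$, uniformly elliptic structure that is unavailable under \eqref{def:bounds1}--\eqref{def:bounds3} with $p<q$. The cone-based argument makes both issues moot: the set of admissible translation directions at each point spans $\R^n$, and the difference-quotient characterisation \eqref{eq:besovcharacterisation} of $\BB^{s,p}_\infty$ with $D$ a spanning cone yields the full Besov regularity \eqref{eq:regRelaxed2} in one stroke, with no tangential/normal splitting, no flattening, and no Gehring-type iteration. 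The conclusion \eqref{eq:regRelaxed1} then follows from \eqref{eq:regRelaxed2} by Lemma \ref{lem:Vfunc_diff} and Besov embedding, as you anticipate.
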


If we additionally assume \eqref{def:bounds31}, then we can obtain a further improvement in differentiability.
Roughly speaking, once solutions lie in $\WW^{1,q}$ they achieve the same improvement in differentiability as solutions to integrands with $(p,p)$-growth, up to possibly the endpoint scale.
Differentiability agreeing with that enjoyed by solutions to integrands with $(p,p)$-growth was previously known only for autonomous integrands \cite{Esposito2002,Carozza2013}. The argument there relies on differentiating the Euler-Lagrange equation and hence on the existence of second derivatives of the solution. This is not available in the non-autonomous setting. Instead we use second order difference quotients and a delicate iteration argument. Moreover, we encompass a forcing term with sharp regularity assumptions, see the discussion below. We state the result here in the interior case where the result is already new.

\begin{restatable}{theorem}{interiorImproved}\label{thm:interiorImproved}
  Suppose $1<p\leq q < \infty$, $\alpha \in (0,1]$, and suppose $F$ satisfies \eqref{def:bounds1}--\eqref{def:bounds31}.
  Let $q < \frac{n+\alpha}n p$ and $f \in \BB^{\alpha-1,p^\prime}_{\infty}(\Omega).$
  Suppose $u$ is a relaxed minimiser for $\F.$ 
  Then we have
  \begin{equation}
    V_{p,\mu}(\D u) \in \BB^{\delta,2}_{\infty,\loc}(\Omega) \quad \text{ with } \delta = \frac{\alpha}2 \min\{2,p^{\prime}\}.
  \end{equation} 
  Moreover if $p \geq 2$ and $f \in \BB^{\beta-1,p^{\prime}}_{1}(\Omega)$ for $\beta \in [\alpha,2]$, then we can take $\delta = \min\left\{\frac{p^{\prime}\beta}{2},\alpha\right\}$.

  Additionally, if a-priori $u \in W^{1,q}(\Omega),$ it suffices to assume $q < \frac{np}{n-\alpha}$ and \eqref{def:bounds3} can be omitted.
\end{restatable}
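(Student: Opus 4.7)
Since either by hypothesis or via (the interior version of) Theorem~\ref{thm:regularityRelaxed} we have $u\in\WW^{1,q}_{\loc}(\Omega)$, $u$ satisfies the Euler--Lagrange system
\[
\int_\Omega \p_z F(x,\D u)\cdot \D\varphi\,\d x = \langle f,\varphi\rangle
\]
for every compactly supported $\varphi\in \WW^{1,q}(\Omega)$. My plan is to run a difference-quotient analysis on this equation. Fixing $B_{2r}(x_0)\Subset \Omega$, a cutoff $\eta\in C_c^\infty(B_{3r/2}(x_0))$ with $\eta\equiv 1$ on $B_r(x_0)$, and $\snr{h}<r/4$, testing the equation shifted by $h$ with $\eta^2\Delta_h u$ and subtracting the unshifted one gives
\[
\int \bigl(\p_z F(x+h,\D u(x+h))-\p_z F(x,\D u(x))\bigr)\cdot \D(\eta^2\Delta_h u)\,\d x = \int \bigl(f(x+h)-f(x)\bigr)\cdot \eta^2\Delta_h u\,\d x.
\]

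Splitting the left-hand integrand into an ellipticity part $\p_z F(x+h,\D u(x+h))-\p_z F(x+h,\D u(x))$, whose pairing with $\D u(x+h)-\D u(x)$ is bounded below via \eqref{def:bounds1} by $\snr{\Delta_h V_{p,\mu}(\D u)}^2$, and a non-autonomous error $\p_z F(x+h,\D u(x))-\p_z F(x,\D u(x))$, majorised via \eqref{def:bounds31} by $\snr{h}^\alpha(1+\snr{\D u}^2)^{(q-1)/2}$, then absorbing the $\nabla\eta$ cross-terms via Young's inequality and estimating the forcing term through the Besov duality pairing $\BB^{\alpha-1,p'}_\infty\times \BB^{1-\alpha,p}_1$, will produce a Caccioppoli-type bound
\[
\int_{B_r}\snr{\Delta_h V_{p,\mu}(\D u)}^2\,\d x \lesssim \snr{h}^{2\delta_0}
\]
for some initial $\delta_0>0$, which via the Nikolskii--Besov characterisation translates to $V_{p,\mu}(\D u)\in \BB^{\delta_0,2}_{\infty,\loc}(\Omega)$. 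To reach the sharp $\delta=\tfrac\alpha2\min\{2,p'\}$ I would bootstrap: the elementary $V$-function inequalities turn Besov control of $V_{p,\mu}(\D u)$ into Besov control of $\D u$ (in opposite directions for $p\leq 2$ and $p\geq 2$), which tightens both the non-autonomous error and the forcing pairing in the next round, with the iteration converging to the endpoint exponent. The refinement for $p\geq 2$ with $f\in \BB^{\beta-1,p'}_1$ follows the same scheme, with the forcing pairing now producing a factor $\snr{h}^\beta$, yielding $\delta=\min\{p'\beta/2,\alpha\}$. If $u\in \WW^{1,q}$ is assumed a priori then \eqref{def:bounds3} is superfluous—it only enters through Theorem~\ref{thm:regularityRelaxed}—and the weaker Sobolev slack $q<np/(n-\alpha)$ is all that is needed to control the $\LL^q$-norms of $\D u$ generated by \eqref{def:bounds31}.

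The principal obstacle will be that H\"older-in-$x$ dependence precludes differentiating the Euler--Lagrange equation, so the whole argument must be carried out at the finite-difference level without any a-priori pointwise or integrable control on $\Delta_h \D u$; one is forced to rely on the $V$-function inequalities rather than chain-rule manipulations. Compounding this, the sharp value of $\delta$ requires a delicate iterative accounting of the interplay between the $(p,q)$-growth gap, the $\LL^q$-bound on $\D u$ furnished by Theorem~\ref{thm:regularityRelaxed}, the H\"older shift $\snr{h}^\alpha$ from \eqref{def:bounds31}, and the Besov regularity of $f$, together with verifying that the bootstrap closes without losing a fractional exponent at each step.
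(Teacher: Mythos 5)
Your setup is essentially the paper's: testing the shifted and unshifted Euler--Lagrange systems against $\eta^2\Delta_h u$ and subtracting is, after discrete integration by parts, the same as the paper's choice of test function $-\phi^2\Delta_h^2u$ with $\Delta_h^2 u = u_h+u_{-h}-2u$, and the resulting decomposition into an ellipticity term (bounded below by $\snr{\Delta_hV_{p,\mu}(\D u)}^2$ via \eqref{def:bounds1}), a non-autonomous error of size $\snr{h}^\alpha(1+\snr{\D u})^{q-1}\snr{\Delta_h\D u}$ via \eqref{def:bounds31}, cutoff commutators, and an interpolated forcing pairing is exactly Step 1 of the paper. The starting exponent $\delta_0=\tfrac{\alpha}{2}$ is indeed supplied by Theorem \ref{thm:regularityRelaxed}.

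The gap is in the bootstrap, which is where the actual content of the theorem lies. First, you do not say how the terms $\int(1+\snr{\D u})^{q-1}\snr{\Delta_h\D u}$ and $\int(1+\snr{\D u})^{q-1}\snr{\Delta_h^2 u}$ are closed at each stage: one must split the exponent $q-1$ against $\Delta_h\D u$ by H\"older with a pair $(\tau_1,\tau_2)$ chosen via Sobolev embedding of $\BB^{\delta_k,2}_\infty$, and when the current integrability of $\D u$ is too weak to take $\tau_2=2$ the paper is forced into a secondary iteration in an auxiliary exponent $\sigma$ (cases (a) and (b) of Step 2); this is precisely where $q<\tfrac{np}{n-\alpha}$ is used quantitatively, not merely ``to control the $\LL^q$-norms''. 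Second, and more seriously, the recursion only produces $\delta_{k+1}=\tfrac{\alpha}{2}+\tfrac{\delta_k}{\max\{2,p\}}$, so $\delta_k\uparrow\tfrac{\alpha}{2}\min\{2,p'\}$ strictly from below, while the constants in the naive estimate blow up with $k$; the statement asserts the \emph{endpoint} is attained. Your phrase ``the iteration converging to the endpoint exponent'' assumes exactly what must be proved. The paper's Step 3 resolves this by interpolating the $(q-1)$-power factor against the current Besov norm to obtain a right-hand side that is \emph{sublinear} (exponent $\kappa<1$, with $\kappa<1$ again equivalent to $q<\tfrac{np}{n-\alpha}$) in $\norm{V_{p,\mu}(\D u)}_{\BB^{\delta_k,2}_\infty}$, and then runs a Giusti-type absorption over geometrically shrinking radii to get a bound on $\norm{V_{p,\mu}(\D u)}_{\BB^{\delta_k,2}_\infty(B_{R/2})}$ that is uniform in $k$, after which one may send $k\to\infty$ in the difference-quotient characterisation. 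Without an argument of this type your proof only yields every $\delta<\tfrac{\alpha}{2}\min\{2,p'\}$, not $\delta=\tfrac{\alpha}{2}\min\{2,p'\}$. The same issue recurs in the refined case $f\in\BB^{\beta-1,p'}_1$, where in addition, for $\beta>1$, the factor $\snr{h}^\beta$ in the forcing pairing is not free: it requires rewriting $\Delta_h^2u$ via the fundamental theorem of calculus and an interpolation estimate for $\Delta_h\D u$ in negative-order Besov spaces.
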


\begin{remark}\label{rem:w12_diff}
In particular we see that if $\alpha=1$, we have
\begin{equation}
  f \in \BB^{\max\{1-\frac2p,0\},p^{\prime}}_{1}(\Omega) \implies V_{p,\mu}(\D u) \in \WW^{1,2}_{\loc}(\Omega).
\end{equation} 
Moreover if $p \leq 2$, we are simply requiring that $f \in \LL^{p^\prime}(\Omega)$.
In the case $p>2$, the fractional regularity $s = 1-\frac2p$ is shown to be sharp for the $p$-Laplace in \cite{Brasco2018}, however it is unclear whether one can relax the regularity of $f$ to $\BB^{1-\frac2p,p^{\prime}}_q(\Omega)$ for some $q>1$.
\end{remark}

Restricting to radial integrands $F\equiv F_0(x,\lvert z\rvert)$ and homogeneous boundary conditions, adapting techniques developed in \cite{EbmeyerLiuSteinhauer2005,Ebmeyer2005} we can extend this to hold globally.

\begin{restatable}{theorem}{neumannImproved}\label{thm:relaxedImproved}
  Suppose $1<p\leq q < \infty,$ $\alpha \in (0,1]$.  Let $\Omega$ be a bounded $C^{1,1}$-domain and suppose $F\equiv F_0(x,\lvert z\rvert)$ satisfies \eqref{def:bounds1}--\eqref{def:bounds31}.
  Let $q < \frac{n+\alpha}n p$ and $f \in \BB^{\alpha-1,p^\prime}_{\infty}(\Omega).$ Assume $g_N = g_D = 0$.
  Then, if $u$ is a relaxed minimiser for $\F_D$ or $\F_N$,  for all $\delta \leq \frac{\alpha}2 \min\{2,p^\prime\}$ we have $V_{p,\mu}(\D u) \in \BB^{\delta,2}_{\infty}(\Omega)$ with the associated estimate
  \begin{align*}
    \|V_{p,\mu}(\D u)\|_{\BB_{\infty}^{\delta,2}(\Omega)} \lesssim \left(1+\overline\F_N(u)+\|f\|_{\BB_{\infty}^{\alpha-1,p^\prime}(\Omega)}^{p^\prime}\right)^\gamma.
  \end{align*}
  Further if $p \geq 2$ and $\beta\in [\alpha,2]$ such that $f \in \BB^{\beta-1,p^\prime}_{\infty}(\Omega),$ then $V_{p,\mu}(\D u) \in \BB^{\delta,2}_{\infty}(\Omega)$ for all $\delta \leq \min\left\{\frac{p^{\prime}\beta}{2},\alpha\right\}.$
  Additionally, if a-priori $u \in W^{1,q}(\Omega),$ it suffices to assume $q < \frac{np}{n-\alpha}$ and \eqref{def:bounds3} can be omitted.
\end{restatable}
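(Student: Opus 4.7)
The plan is to reduce the global estimate to the interior Theorem~\ref{thm:interiorImproved} by combining a localization near $\p\Omega$ with a reflection argument, exploiting the radial structure $F\equiv F_0(x,|z|)$ together with the homogeneity of the boundary data. The global Besov norm $\|V_{p,\mu}(\D u)\|_{\BB^{\delta,2}_{\infty}(\Omega)}$ can be characterised via $L^2$-moduli of continuity, so a finite cover of $\overline\Omega$ by balls $\{U_i\}$ — those compactly contained in $\Omega$ being handled directly by Theorem~\ref{thm:interiorImproved} — reduces the proof to establishing the corresponding estimate in $\Omega\cap U_i$ for each boundary ball $U_i$ centred at some $x_i\in\p\Omega$.

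For a fixed boundary ball, first introduce a $C^{1,1}$ chart $\Phi$ flattening $\p\Omega\cap U_i$ to a piece of $\{y_n=0\}$, chosen so that $\Phi$ sends the inward normal field to $e_n$ on the boundary; since $\p\Omega\in C^{1,1}$, such a chart can be built from the signed distance function and is bi-Lipschitz with $D\Phi$, $D\Phi^{-1}\in C^{0,1}$. The transformed function $\tilde u=u\circ\Phi^{-1}$ is a relaxed minimiser on the half-ball $B_r^+$ of the functional with integrand $\tilde F(y,z)=F_0(\Phi^{-1}(y),|z\,D\Phi(y)|)\cdot|\det D\Phi^{-1}(y)|$ and with transformed forcing term. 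I would then verify that $\tilde F$ continues to satisfy \eqref{def:bounds1}--\eqref{def:bounds31} with comparable constants — convexity and the $(p,q)$-bounds \eqref{def:bounds1}--\eqref{def:bounds2} are immediate from the bi-Lipschitz nature of $\Phi$, while the H\"older conditions \eqref{def:bounds3}--\eqref{def:bounds31} follow from the $C^{0,1}$-regularity of $D\Phi$ and the $C^{0,\alpha}$-regularity of $F_0$ in $x$.

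Next I would extend $\tilde u$ to the full ball $B_r$ by even reflection $\bar u(y',y_n)=\tilde u(y',-y_n)$ in the Neumann case and odd reflection $\bar u(y',y_n)=-\tilde u(y',-y_n)$ in the Dirichlet case; the homogeneity $g_N=0$ or $g_D=0$ ensures $\bar u\in \WW^{1,p}(B_r)$. Correspondingly define $\bar F$ on $B_r$ by reflecting the $y_n$ variable in $\tilde F$ and flipping the appropriate row of $z$. Here the radial structure of $F_0$ is essential: because $|z\,D\Phi|$ depends on $z$ only through quantities invariant under the appropriate reflection (once the chart is built with $D\Phi(y',0)$ preserving the tangential-normal splitting), the reflected integrand $\bar F$ still satisfies \eqref{def:bounds1}--\eqref{def:bounds31} on $B_r$, and a symmetry argument at the level of competitors in the relaxed functional shows that $\bar u$ is itself a relaxed minimiser on $B_r$ (with reflected forcing $\bar f\in\BB^{\alpha-1,p'}_\infty(B_r)$, whose norm is controlled by that of $f$).

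At this point Theorem~\ref{thm:interiorImproved} applies to $\bar u$ on $B_r$, yielding $V_{p,\mu}(\D\bar u)\in \BB^{\delta,2}_{\infty,\loc}(B_r)$ with the corresponding quantitative estimate; restricting to $B_{r/2}^+$ and transferring back via $\Phi$ gives the estimate on $\Omega\cap U_i$, and summing over the cover produces the global bound in the statement. The a-priori $\WW^{1,q}$ version follows by the same route, where the improved interior assumption $q<np/(n-\alpha)$ is inherited by the reflected problem. The main obstacle I expect is the matching of the chart with the radial structure: a generic bi-Lipschitz $\Phi$ destroys the form $F_0(x,|z|)$, so the chart must be calibrated so that $D\Phi(y',0)$ acts as an isometry between tangential directions and fixes the normal direction up to sign, ensuring that the reflection $z\mapsto z\,(\,\mathrm{diag}(I_{n-1},-1)\,)$ is an exact symmetry of $\bar F$ rather than only an approximate one; the remaining tangential perturbations away from $\{y_n=0\}$ are controlled by the $C^{0,1}$-regularity of $D\Phi$ and absorbed into the Hölder term \eqref{def:bounds3}.
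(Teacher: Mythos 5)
Your route is genuinely different from the paper's. The paper never claims that the reflected function is a minimiser on the doubled domain: it keeps working with the original half-ball problem, tests the Euler--Lagrange system with $\phi^2\Delta_h^2\tilde u$ (admissible by the parity of the extension), and the whole difficulty is concentrated in showing that the two interface terms $A_3'+A_4'$ produced by the discrete integration by parts in the normal direction cancel (Lemma \ref{lem:cancellation}); the radial structure enters only there. The price of that route is that the extension must be shown to preserve $\BB^{\delta,2}_\infty$-regularity at every step of the iteration, which in the Neumann case breaks down at $\delta=\tfrac12$ and forces the detour through Lemma \ref{lem:zeroNeumann} ($\p_n u=0$ on $\Gamma$). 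Your approach, if completed, would sidestep that obstruction entirely, since interior regularity of $\bar u$ on the full ball is a conclusion of Theorem \ref{thm:interiorImproved} rather than something transferred back and forth across the interface. So the comparison is genuinely in your favour \emph{if} the reduction works.

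The gap is the sentence ``a symmetry argument at the level of competitors in the relaxed functional shows that $\bar u$ is itself a relaxed minimiser.'' This is the technical heart of your proof and it is not routine. First, you need additivity of the \emph{relaxed} functional across the flat interface $\Gamma$, i.e.\ $\overline{\bar\F}(v,B_r)=\overline{\bar\F}_M(v,B_r^+,\Gamma)+\overline{\bar\F}_M(v,B_r^-,\Gamma)$, so that a full-ball competitor can be split and its two halves compared with $u$; this is an adaptation of Lemma \ref{lem:additivityNeumann} (itself resting on gluing $\WW^{1,q}$ recovery sequences along the interface) that you must actually carry out for a flat cut rather than a sphere. Second, the two halves of a full-ball competitor do not individually respect the boundary condition on $\Gamma$: in the Dirichlet case they have opposite traces there, so minimality only follows after averaging them and invoking convexity of $\overline\F$ (cf.\ \eqref{eq:convexArg}); in the Neumann case one needs the mixed relaxed formulation with free data on $\Gamma$. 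None of this is supplied. Separately, your reflected integrand is only exactly symmetric on $\{y_n=0\}$: for $n\geq 3$ no chart makes $D\Phi$ a tangential isometry on a curved hypersurface, so the calibration you describe can only achieve $|zJ\,D\Phi(y',0)|=|z\,D\Phi(y',0)|$ (block-orthogonality of $D\Phi\,D\Phi^{T}$ on the boundary slice), and you must check that the resulting $O(|y_n|)$ defect is genuinely absorbed into \eqref{def:bounds3}--\eqref{def:bounds31} for $\bar F$ --- in particular that $\bar F$ remains $C^1$ in $z$ with \eqref{def:bounds31} across the interface, since Theorem \ref{thm:interiorImproved} uses \eqref{def:bounds31} in an essential way. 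Until the minimality transfer and the hypothesis-verification for $\bar F$ are written out, the proof is incomplete.
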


In the Neumann case we can also infer some higher differentiability when $g_N \not\equiv 0;$ see Theorem \ref{thm:radial_neumann}.
Here the differentiability of order $\delta>\frac12$ will be crucial to establish partial boundary regularity in the inhomogenous case; see Theorem \ref{thm:nonHomogeneousNeumann}.
To our knowledge a result of this type is new in the non-uniformly elliptic setting, and the proof crucially depends on the iteration technique we will develop.

\begin{restatable}{theorem}{radialNeumann}\label{thm:radial_neumann}
  Suppose $1 < p \leq q < \infty,$ $\alpha \in (0,1],$ $q < \frac{n+\alpha}n p,$ $\Omega$ is a bounded $C^{1,1}$-domain and $F \equiv F_0(x,\lvert z\rvert)$ satisfies \eqref{def:bounds1}--\eqref{def:bounds31}.
  Then there is $\delta_0>\frac 1 2$, such that the following holds for all $\delta \leq \min\left\{\frac \alpha 2\min(2,p^\prime), \delta_0\right\}$:
  If $f \in \BB^{\alpha-1,p^\prime}_{\infty}(\Omega)$, ${g_N \in \WW^{\alpha-\frac1{p^\prime},p^\prime}(\p\Omega)}$, and $u$ is a relaxed minimiser of $\overline{\F}_N,$ then $V_{p,\mu}(\D u) \in \BB^{\delta,2}_{\infty}(\Omega)$ and we have the corresponding estimate
  \begin{equation}
    \seminorm{V_{p,\mu}(\D u)}_{\BB^{\delta,2}_{\infty}(\Omega)} \leq C \left( \overline{\F}_N(u) + \norm{f}_{\BB^{\alpha,p^\prime}_{\infty}(\Omega)}^{p^\prime} + \norm{g_N}_{\WW^{\alpha-\frac1{p^\prime},p^\prime}(\p\Omega)}^{p^\prime}\right)^{\gamma}.
  \end{equation} 
  The same conclusion holds for $\delta \leq \min\left\{\alpha,\frac{p^\prime \beta} 2,\delta_0\right\}$ (with a modified estimate) if $p\geq 2,$ $f \in \BB_{1}^{\beta-1,p^\prime}(\Omega)$, $g \in \BB^{\beta-\frac1{p^{\prime}},p^{\prime}}_{1}(\partial\Omega)$ with $\beta \in [\alpha,2]$.
\end{restatable}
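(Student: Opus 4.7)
The plan is to combine Theorem \ref{thm:interiorImproved} for the interior with a boundary estimate via tangential difference quotients in a flattened half-ball, then to exploit the radial structure of $F$ to recover the normal direction, and finally to iterate the construction to push the Besov exponent past $\tfrac12$.

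First I would appeal to Theorem \ref{thm:regularityRelaxed} to ensure $u \in \WW^{1,q}(\Omega)$ and that the Euler--Lagrange equation is available in integrated form. Interior Besov regularity of the required order is immediate from Theorem \ref{thm:interiorImproved}, so the task reduces to a boundary estimate near an arbitrary $x_0 \in \p\Omega$. I would flatten $\p\Omega \cap B_r(x_0)$ by a $C^{1,1}$-chart; the pulled-back integrand still satisfies \eqref{def:bounds1}--\eqref{def:bounds31} with modified constants, and the radial structure is preserved up to lower-order perturbations that can be absorbed through the iteration. In the half-ball, for each tangential direction $e_s$ ($s=1,\ldots,n-1$), testing the Euler--Lagrange equation against $\phi = D^{-h}_s(\eta^2 D^h_s u)$ for a suitable cutoff $\eta$ yields the ellipticity integral controlling $\int \eta^2 \lvert D^h_s V_{p,\mu}(\D u)\rvert^2 \dx$ via \eqref{def:bounds1}, the forcing contribution is handled as in the interior case using $f \in \BB^{\alpha-1,p^\prime}_\infty$, and the new boundary term
\[
I_b(h) = \int_{\p\Omega} g_N \cdot D^{-h}_s(\eta^2 D^h_s u)\,\d\H^{n-1}
\]
is transferred onto $g_N$ via the standard integration-by-parts identity for difference quotients and then estimated by duality of $\WW^{\alpha-\frac{1}{p^{\prime}},p^{\prime}}(\p\Omega)$ with its predual, producing an $h^{\alpha}$-scaling controlled by $\|g_N\|_{\WW^{\alpha-\frac{1}{p^{\prime}},p^{\prime}}(\p\Omega)}$ together with a fractional trace norm of $\eta^2 D^h_s u$. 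This yields tangential Besov regularity of $V_{p,\mu}(\D u)$ on the flattened half-ball.

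The normal direction is then recovered as in \cite{EbmeyerLiuSteinhauer2005,Ebmeyer2005}: exploiting $F \equiv F_0(x,\lvert z\rvert)$, the Euler--Lagrange equation allows the normal second-derivative contribution to $V_{p,\mu}(\D u)$ to be expressed as a combination of tangential second derivatives, $f$, and terms already controlled in the tangential step. Combining this with the previous estimate and pulling back through the chart, a covering argument then globalises to give a full boundary Besov bound on $V_{p,\mu}(\D u)$.

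The main obstacle will be the iterative scheme needed to reach $\delta_0 > \tfrac12$. The first pass produces a base exponent $\delta_1$ which need not exceed $\tfrac12$, because the trace of $\eta^2 D^h_s u$ on $\p\Omega$ is initially bounded only via its $\WW^{1,p}$-norm. Feeding the resulting Besov regularity back into that trace estimate sharpens the duality bound on $I_b(h)$ and advances $\delta$ by a fixed quantum per iteration; a careful bookkeeping of the exponents shows the process can be closed at some $\delta_0 > \tfrac12$ depending only on $n,p,q,\alpha$, which is precisely the threshold above which $V_{p,\mu}(\D u)$ acquires an $\LL^2$-trace on $\p\Omega$, past which the mechanism ceases to gain. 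The case $p \geq 2$ with $f \in \BB^{\beta-1,p^\prime}_1(\Omega)$, $g_N \in \BB^{\beta-\frac1{p^{\prime}},p^{\prime}}_1(\p\Omega)$ is handled by the same iteration with $\beta$ replacing $\alpha$ throughout the duality step.
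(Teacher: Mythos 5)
Your overall architecture (flatten, tangential difference quotients with the $g_N$-term handled by duality/interpolation, use the radial structure for the normal direction, then iterate) matches the paper's, and your treatment of the tangential directions and of the boundary term $I_b(h)$ is essentially the argument in Lemma \ref{lem:mainEstimateImprovedDiff} and \eqref{eq:A4Interpolated}. But the normal direction is where the real difficulty sits, and your proposal does not actually resolve it. You say the radial structure lets you express "the normal second-derivative contribution to $V_{p,\mu}(\D u)$" through the Euler--Lagrange equation in terms of tangential second derivatives and $f$. That mechanism is the classical one for $p$-growth problems where one has genuine second derivatives (or full $\WW^{1,2}$-regularity of $V_p(\D u)$), but here the object being propagated is only a \emph{fractional} Besov seminorm $\BB^{\delta,2}_{\infty}$ with $\delta<1$: there is no pointwise identity expressing a normal difference quotient $\Delta_{he_n}V_{p,\mu}(\D u)$ in terms of tangential ones via the equation, so the step as described cannot be carried out. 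The paper instead extends $u$ \emph{evenly} across the flattened boundary, tests with the second-order difference quotient $\phi^2\Delta_h^2\tilde u$ in the normal direction, and shows that the two non-vanishing boundary terms $A_3'$ and $A_4'$ cancel exactly by a parity argument (Lemma \ref{lem:cancellation}) that uses $F\equiv F_0(x,\lvert z\rvert)$ --- this is precisely the point where the radial assumption enters, and it is also the step that is broken in \cite{Ebmeyer2005} for non-radial integrands.

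Relatedly, your explanation of where $\delta_0>\tfrac12$ comes from is not the right one. The obstruction is not the trace of $\eta^2 D_s^h u$ or $V_{p,\mu}(\D u)$ acquiring an $\LL^2$-trace; it is that the even extension of $u$ preserves $\BB^{\delta,2}_{\infty}$-regularity of $V_{p,\mu}(\D u)$ only for $\delta<\tfrac12$ (Lemma \ref{eq:zeroextension}: $\p_n\tilde u$ is the odd, i.e.\ zero-type, extension of $\p_n u$, and $\chi_{\{x_n>0\}}$ is a Besov multiplier only below exponent $\tfrac1p$, which at the level of the $\LL^2$-based seminorm gives the threshold $\tfrac12$). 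The iteration $\delta\mapsto\delta'=\tfrac\alpha2+\tfrac{\min\{\delta,\sigma\}}{\max\{2,p\}}$ therefore runs only while $\delta<\tfrac12$; one final application starting from $\delta$ just below $\tfrac12$ lands at some $\delta_0=\delta'>\tfrac12$, and the scheme must stop there (unless $g_N=0$, in which case Lemma \ref{lem:zeroNeumann} shows $\p_n u=0$ on the boundary and the extension remains admissible). Without identifying this mechanism your "bookkeeping" claim that the process closes above $\tfrac12$ is unsupported. You should also record that passing to the flattened half-ball requires the additivity/localisation of the relaxed functional (Lemma \ref{lem:additivityNeumann}), since the minimiser is only a relaxed one.
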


\begin{remark}
  The main result in \cite{Ebmeyer2005} is stated to hold for non-radial integrands, however we believe that the proof is not correct. When restricting to radial integrands ${F\equiv F_0(x,\lvert z\rvert)}$, the proof can be corrected along the lines of the argument we give in Section \ref{sec:improved} and the following statement can be shown to be true:

  \textit{Let $2<p<\infty$. Suppose $\Omega$ is a $C^{1,1}$-domain, $f\in \BB_1^{1-\frac 2 p,p^\prime}(\Omega)$, and $\int_\Omega f\,\d x = 0$. Assume $F\equiv F_0(x,\lvert z\rvert)$ satisfies \eqref{def:bounds1}-\eqref{def:bounds31} with $\alpha=1$ and $q=p$. Suppose $u$ minimises $\F$. Then $u\in \BB^{1+\frac 2 p,p}_\infty(\Omega)$ and $V_{p,\mu}(\D u)\in \WW^{1,2}(\Omega)$.}

The issue with the proof given in \cite{Ebmeyer2005} lies in the proof of Lemma 3 of the paper. In the notation used there, it is not the case that $\p_i D_n^{-h}D_n^h v(x) = D_n^{h}D_n^{-h} \p_i u$. Since $v$ is the even extension of $u$, the normal derivative $\p_n v$ is odd, whereas $\p_n u$ was defined to be an even extension of $\p_n u$. This also causes issues with the claim
that $J_{03}=-J_{04}$. It is precisely these issues that force us to restrict to radial integrands in the above theorem.
Moreover one also needs to show that $\p_n v = 0$ on $\partial\Omega$ to show this extension preserves fractional differentiability of $\D u$, which we prove in Lemma \ref{lem:zeroNeumann}, however this additional argument is missing in \cite{Ebmeyer2005}.
In connection to Remark \ref{rem:w12_diff} we also mention that \cite{EbmeyerLiuSteinhauer2005,Ebmeyer2005} assumes the weaker condition $f \in \WW^{1-\frac2p,p^{\prime}}(\Omega)$, however we were unable to verify this claim.
\end{remark}

Our results can also be adapted to piecewise $C^{1,1}$ domains, and to the case of mixed Dirichlet-Neumann boundary conditions, which we investigate in Section \ref{sec:mixedboundary}.
We also remark that in the autonomous setting, it is possible to improve the range of $q$ by employing arguments from \cite{Koch2021a} that are based on \cite{Schaeffner2020}, extending the results of \cite{Carozza2013} from the interior case.
\begin{restatable}{corollary}{autonomousImproved}\label{thm:autonomousImproved}
  If $F$ is independent of $x$, Theorems \ref{thm:regularityRelaxed}, \ref{thm:interiorImproved} and \ref{thm:relaxedImproved} hold under the assumption $2\leq p\leq q<\max\left\{\frac{np}{n-1},p+1\right\}$.
\end{restatable}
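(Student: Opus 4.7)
The plan is to exploit the fact that when $F$ does not depend on $x$, both \eqref{def:bounds3} and \eqref{def:bounds31} hold trivially with any exponent, so we may effectively operate at $\alpha = 1$ throughout. It remains to enlarge the admissible range of $q$ from $q < (n+1)p/n$ to $q < \max\{np/(n-1),\,p+1\}$. We proceed by first establishing the higher integrability $u \in \WW^{1,q}(\Omega)$ in the enlarged range, and then feeding this into the \emph{a priori} versions of Theorems \ref{thm:interiorImproved} and \ref{thm:relaxedImproved}.

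For the higher integrability statement, i.e.\ the autonomous analogue of Theorem \ref{thm:regularityRelaxed}, we follow the interior argument of \cite{Carozza2013}, which takes second-order finite differences of the Euler--Lagrange system and applies a Gehring-type self-improvement to the resulting fractional estimates, yielding the range $q < np/(n-1)$. The extension up to the boundary was carried out in \cite{Koch2021a} via tangential difference quotients combined with a flattening of $\partial\Omega$; inhomogeneous data and the forcing term $f$ are incorporated as in the proof of Theorem \ref{thm:regularityRelaxed}, while the relaxed structure is handled by the same truncation arguments used in the present paper. To reach the range $q < p+1$, which dominates when $p < n-1$, we instead invoke the refined iteration of \cite{Schaeffner2020}, whose boundary counterpart is likewise provided in \cite{Koch2021a}. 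Taking the larger of the two bounds yields $q < \max\{np/(n-1),\,p+1\}$. The restriction $p \geq 2$ is dictated by these techniques, which rely on the superquadratic structure of the shifted $p$-energy.

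Once $u \in \WW^{1,q}(\Omega)$ has been established, the \emph{a priori} clauses of Theorems \ref{thm:interiorImproved} and \ref{thm:relaxedImproved} apply: they explicitly permit \eqref{def:bounds3} to be dropped in exchange for the weaker requirement $q < np/(n-\alpha)$. With $\alpha = 1$ this reduces to $q < np/(n-1)$, which is automatic within our enlarged range, so the Besov differentiability $V_{p,\mu}(\D u) \in \BB^{\delta,2}_{\infty}(\Omega)$ (respectively its local counterpart) together with the associated estimate follows directly from those theorems with no further work.

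The main technical difficulty lies in verifying that the Schaeffner-type iteration, originally devised for interior estimates on uniformly elliptic-in-the-gradient systems, can be combined consistently with both the boundary flattening procedure of \cite{Koch2021a} and the truncation required by the relaxed minimiser framework; in particular, the controlled-growth estimates must survive both operations. Since all these ingredients have already been assembled in the literature cited above and in the present paper, no genuinely new ideas are required, and the corollary reduces to a careful compilation of the existing components.
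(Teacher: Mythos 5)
Your route is the same as the paper's: in the autonomous case \eqref{def:bounds3} and \eqref{def:bounds31} are vacuous so one may take $\alpha=1$, the higher integrability $u \in \WW^{1,q}(\Omega)$ in the enlarged range is supplied by Corollary \ref{cor:autonomous} (obtained by optimising the cut-off as in \cite{Koch2021a}, which transports \cite{Schaeffner2020} and \cite{Carozza2013} to the boundary), and the improved differentiability is then read off from the a-priori clauses of Theorems \ref{thm:interiorImproved} and \ref{thm:relaxedImproved}.

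There is, however, one concrete false step. You assert that the a-priori hypothesis $q < \frac{np}{n-\alpha} = \frac{np}{n-1}$ is ``automatic within our enlarged range''. It is not: one has $\frac{np}{n-1} \geq p+1$ exactly when $p \geq n-1$, so for $2 \leq p < n-1$ (hence $n \geq 4$) the maximum in the corollary equals $p+1 > \frac{np}{n-1}$, and exponents $q \in \left[\frac{np}{n-1}, p+1\right)$ are admitted by the statement but violate the a-priori condition. That condition is not cosmetic — the iteration in the proof of Theorem \ref{thm:interiorImproved} uses $q < \frac{np}{n-\alpha}$ essentially (e.g.\ to ensure $\frac p2(\tau_1-\eps)>q$ in Step 2 and $\kappa_\infty<1$ in Step 3), so it does not close under $q<p+1$ alone. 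Consequently your argument yields Theorems \ref{thm:interiorImproved} and \ref{thm:relaxedImproved} only for $q < \min\left\{\frac{np}{n-1},p+1\right\}$, which is exactly the range the paper records in the remark following the proof of Theorem \ref{thm:relaxedImproved}; the full $\max$ range is justified only for the higher-integrability conclusion of Theorem \ref{thm:regularityRelaxed} via Corollary \ref{cor:autonomous}. To obtain the $\max$ range for the improved differentiability as literally stated you would have to rework the iteration so that it runs under $q<p+1$ alone, and neither your sketch nor the paper supplies that.
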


We remark that for the $p$-Laplace operator with forcing term $f\in \WW^{-1+s,p^\prime}$, $0\leq s\leq 1$, in \cite{Simon1977}, it is shown that solutions may in general fail to lie in $\WW^{1+\frac s{\max\{2,p\}-1}+\e,p}_{\tp{loc}}(\Omega)$ for any $\e>0$. For $p\geq 2$, this example has been adapted to Besov spaces in \cite{Weimar2021} showing in particular that, if $f\in \BB^{-1+s,p^\prime}_\infty(\Omega)$, solutions in general do not lie in $\BB^{1+\frac 2 {\max\{2,p\}-1}}_{\infty,\tp{loc}}(\Omega)$. An example in \cite{Brasco2018} shows that if $p\geq 2$ and $f\in \WW^{s,p^\prime}$, then solutions may in general fail to satisfy $V_{p,\mu}(\D u)\in\WW^{\frac p 2 \frac{s+1}{p-1}+\e,2}_{\tp{loc}}(\Omega)$. Thus our results are optimal.

As a consequence of our improved differentiability results, we can establish the existence of regular boundary points.
For this we will need to characterise the singular set of relaxed minimisers via a suitable $\eps$-regularity result, which extends the corresponding interior result from \cite{DeMaria2010}.
\begin{restatable}{theorem}{epsRegularity}\label{thm:eps_regularity}
Assume $F \colon \Omega \times \R^{m\times n} \to \R$ is $C^2$ in $z,$ satisfies \eqref{def:bounds1}-\eqref{def:bounds31} with $\mu=1,$ and the exponents satisfy
\begin{equation}\label{eq:pq_partialregularity}
  1 < p \leq q < \min\left\{\frac{np}{n-1},p+1\right\}.
\end{equation} 
We will assume further that $\Omega \subset \bb R^n$ is a bounded $C^{1,\alpha}$ domain and $f \in \LL^{\frac{n}{1-\alpha}}(\Omega,\R^m).$ For the boundary function we will assume $g_D \in C^{1,\alpha}(\Omega,\R^m)$ in the case of Dirichlet boundary, and $g_N \in C^{0,\alpha}(\Omega,\R^m)$ in the case of Neumann boundary, in which case we also assume the compatibility condition \eqref{eq:compatibility_condition}.
Suppose $u$ is a relaxed minimiser of \eqref{eq:neumann_problem} or \eqref{eq:dirichlet_problem}. Then for each $M>0$ and $0<\beta<\alpha,$ there is $\eps>0$ and $R_0>0$ such that if we have $\lvert (\D u)_{\Omega_R(x_0)}\rvert \leq M$ and
  \begin{equation}
    \left( \dashint_{\Omega_R(x_0)} \left\lvert V_p(\D u - (\D u)_{\Omega_R(x_0)})\right\rvert^2 \,\d x \right)^{\frac12} < \eps,
  \end{equation} 
  for some $x_0 \in \overline\Omega$ and $0<R<R_0,$ then $u$ is $C^{1,\beta}$ in $\overline\Omega_{\frac R2}(x_0),$
  where ${\Omega_R(x_0) = \Omega \cap B_R(x_0)}.$
\end{restatable}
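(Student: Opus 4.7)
The plan is to run a Campanato-type excess-decay argument in the spirit of \cite{Kronz2005, Kristensen2010, DeMaria2010}, extended both to the $(p,q)$-relaxed setting and to boundary points. For interior points $x_0 \in \Omega$ with $\overline{B_R(x_0)} \subset \Omega$, the statement essentially coincides with the interior $\varepsilon$-regularity result of \cite{DeMaria2010}; after a standard Morrey-type perturbation to incorporate the forcing $f \in \LL^{n/(1-\alpha)}(\Omega)$, no new ingredient is needed. The new content is therefore at boundary points $x_0 \in \p\Omega$, which I flatten by a $C^{1,\alpha}$-diffeomorphism $\Phi$ sending $\Omega_R(x_0)$ onto the half-ball $B_R^+ = B_R \cap \{x_n > 0\}$ and $\p\Omega \cap B_R(x_0)$ onto $\Gamma_R := B_R \cap \{x_n = 0\}$. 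The pulled-back integrand $\tilde F(x,z) = F(\Phi^{-1}(x),\, z\,\D\Phi^{-1}(x))\,\lvert\det \D\Phi(x)\rvert$ continues to satisfy \eqref{def:bounds1}--\eqref{def:bounds31} because $\D\Phi \in C^{0,\alpha}$. In the Dirichlet setting I would reduce to homogeneous boundary values by passing to $u - g_D$; in the Neumann setting the transformed boundary integral is retained and controlled by $\norm{g_N}_{C^{0,\alpha}}$.

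The first analytic step is a Caccioppoli inequality of the second kind on half-balls $B_\rho^+(x_0)$. Since $u$ is only a $W^{1,p}$-relaxed minimiser, no $W^{1,q}$-competitor is directly available, so I would follow \cite{Schmidt2009, DeMaria2010, Gmeineder2022}: extract from the definition of the relaxed functional a sequence $u_j \in W^{1,q}$ realising $\overline{\F}_N(u)$ or $\overline{\F}_D(u)$, modify it via the Fonseca--Marcellini smoothing of \cite{Fonseca1997} together with radial cut-offs so that the modified maps respect the prescribed condition on $\Gamma_\rho$ (zero trace in the Dirichlet case, no trace constraint in the Neumann case) and agree with $u_j$ near the curved part of $\p B_\rho^+$, compare using \eqref{def:bounds1}--\eqref{def:bounds31}, and finally pass to the limit $j \to \infty$. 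The outcome is an estimate of the form
\begin{equation*}
  \dashint_{B_{\rho/2}^+(x_0)} \lvert V_p(\D u - Q)\rvert^2 \,\d x \lesssim \dashint_{B_\rho^+(x_0)} \left\lvert V_p\!\left(\frac{u - \ell_Q}{\rho}\right)\right\rvert^2 \,\d x + \text{l.o.t.},
\end{equation*}
uniform in affine $\ell_Q(x) = a + Qx$ compatible with the boundary condition, where the lower-order terms collect $\rho^{2\alpha}$-Hölder contributions from $\tilde F$, a term controlled by $\norm{f}_{\LL^{n/(1-\alpha)}}$, and contributions involving $g_D$ or $g_N$.

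With the Caccioppoli inequality in hand, I would linearise the Euler--Lagrange system of $\tilde F$ at $\ell_{(\D u)_{B_\rho^+}}$ to obtain a constant-coefficient bilinear form $\mathcal{A}$, Legendre--Hadamard elliptic by \eqref{def:bounds1}. Using \eqref{def:bounds31} together with the smallness of the $V_p$-excess, I would verify that $u$ is approximately $\mathcal{A}$-harmonic on $B_\rho^+(x_0)$ and invoke the half-ball $\mathcal{A}$-harmonic approximation lemma (Dirichlet version as in \cite{Kristensen2010, Kronz2005}, Neumann version with vanishing co-normal on $\Gamma_\rho$) to obtain an $\mathcal{A}$-harmonic comparison $h$ close to $u$ in $L^p$. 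Standard linear theory gives $h \in C^{1,\beta}$ up to $\Gamma_\rho$, and combining the Caccioppoli estimate for $u - \ell_{(\D h)_{B_{\theta\rho}^+}}$ with the Taylor decay of $h$ yields an excess decay
\begin{equation*}
  E(x_0,\theta\rho) \leq C\,\theta^{2\beta}\, E(x_0,\rho) + C\,\rho^{2\alpha},
\end{equation*}
where $E(x_0,\rho) = \dashint_{B_\rho^+(x_0)} \lvert V_p(\D u - (\D u)_{B_\rho^+})\rvert^2 \,\d x$. Choosing $\theta$ small and iterating in the standard Campanato manner gives $\D u \in C^{0,\beta}(\overline{\Omega}_{R/2}(x_0))$ for every $\beta < \alpha$.

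The principal obstacle is the boundary Caccioppoli step: one has to build $W^{1,q}$-comparison maps which simultaneously respect the Dirichlet or Neumann condition on $\Gamma_\rho$ and match the relaxing sequence $u_j$ near the curved part of $\p B_\rho^+$, uniformly as $j \to \infty$. For Dirichlet data this requires a boundary-adapted variant of the Fonseca--Marcellini smoothing, based on one-sided mollification in the normal direction and exploiting $g_D \in C^{1,\alpha}$ to control the trace error; for Neumann data there is no trace constraint, but the co-normal term has to be accounted for in the error. The exponent restriction $q < \min\{np/(n-1), p+1\}$ is precisely what makes these two ingredients compatible: $q < np/(n-1)$ underpins the Sobolev embedding used in the smoothing (as in \cite{Carozza2013, Koch2021a}), while $q < p+1$ is needed to absorb the $(p,q)$-gap error in the Caccioppoli inequality (as in \cite{DeMaria2010, Gmeineder2022}).
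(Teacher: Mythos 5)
Your overall architecture (flattening, boundary Caccioppoli inequality via the Fonseca--Marcellini smoothing, excess decay, Campanato iteration) matches the paper's, but for the key excess-decay step you take a genuinely different route: you linearise and invoke a half-ball $\mathcal{A}$-harmonic approximation lemma, whereas the paper runs an indirect blow-up/compactness argument in the style of \cite{DeMaria2010}. In the Dirichlet case your route is viable and the paper says as much (``one can also argue directly by means of an $\mathcal{A}$-harmonic approximation similarly as in \cite{Kronz2005}''); what it would buy is a quantitative, non-contradiction proof with trackable constants, at the price of having to verify approximate $\mathcal{A}$-harmonicity from \eqref{def:bounds31} and the smallness of the excess.

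The gap is in the Neumann case. You appeal to a ``Neumann version with vanishing co-normal on $\Gamma_\rho$'' of the half-ball $\mathcal{A}$-harmonic approximation lemma, but no such lemma is available off the shelf for Legendre--Hadamard elliptic constant-coefficient systems with a co-normal condition on the flat boundary portion, and the paper explicitly defers the direct Neumann argument to future work. Proving it would require essentially the compactness machinery the paper carries out by hand: one must construct admissible test functions on the varying half-balls with vanishing mean on the flat boundary portion (the paper's Lemma \ref{lem:phij}), exploiting the translation invariance $u\mapsto u+c$ granted by \eqref{eq:compatibility_condition}, and one must establish $C^1$-regularity up to $\Gamma$ of the limiting co-normal problem, which the paper obtains by tangential mollification and mean-corrected difference quotients rather than by citing ``standard linear theory''. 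The same translation invariance is also what makes the boundary Caccioppoli inequality work in the Neumann case: the constant parts $g_N(x_0)$ and $\partial_zF(x_0,\D a)$ paired with the normal only drop out after normalising $\int_{\Gamma}(\tilde u-\psi)\,\d\H^{n-1}=0$, a point your sketch does not identify and which is precisely why the paper's $\eps$-regularity result cannot be localised to the mixed boundary-value problems of Section \ref{sec:mixedboundary}.
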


As a direct consequence of Theorem \ref{thm:eps_regularity}, we obtain the following characterisation of regular points.
Here the singular set is understood to be the set of points $x_0$ where $u$ fails to be $C^1$ in every neighbourhood of $x_0.$
\begin{restatable}[Characterisation of regular points]{corollary}{singularPoints}\label{cor:singularPoints}
  Under the assumptions of Theorem \ref{thm:eps_regularity}, the singular set of $u$ in $\overline\Omega$ is given by
  \begin{equation}\label{eq:u_singularset}
    \Sigma = \left\{ x \in \overline\Omega : \limsup_{r \to 0} \lvert (\D u)_{\Omega_r(x)} \rvert = \infty \text{ or } \liminf_{r \to 0} \dashint_{\Omega_r(x)} \left\lvert V_p(\D u - (\D u)_{\Omega_r(x)}) \right\rvert^2 > 0 \right\}.
  \end{equation} 
  That is, $u \in C^{1,\beta}(\overline\Omega \setminus \Sigma)$ for each $\beta < \alpha.$
\end{restatable}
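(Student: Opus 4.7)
The statement is a direct consequence of the $\eps$-regularity result Theorem \ref{thm:eps_regularity}. Writing $\Sigma^\prime$ for the right-hand side of \eqref{eq:u_singularset}, the plan is to prove both inclusions $\Sigma \subseteq \Sigma^\prime$ and $\Sigma^\prime \subseteq \Sigma$: the first is the substantive direction furnished by the $\eps$-regularity theorem, while the second is a routine consequence of continuity of $\D u$ on the regular set.

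For $\Sigma \subseteq \Sigma^\prime$, take $x_0 \in \overline\Omega \setminus \Sigma^\prime$ and fix $\beta \in (0,\alpha)$. By definition of $\Sigma^\prime$ we may find $M>0$ such that $\lvert(\D u)_{\Omega_r(x_0)}\rvert \leq M$ for all sufficiently small $r>0$, together with a sequence $r_j \to 0$ along which
\begin{equation*}
  \dashint_{\Omega_{r_j}(x_0)}\lvert V_p(\D u - (\D u)_{\Omega_{r_j}(x_0)})\rvert^2 \,\d x \to 0.
\end{equation*}
Let $\eps, R_0 > 0$ be the constants provided by Theorem \ref{thm:eps_regularity} applied with this $M$ and $\beta$. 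For $j$ sufficiently large we have $r_j < R_0$, $\lvert(\D u)_{\Omega_{r_j}(x_0)}\rvert \leq M$, and the excess above is smaller than $\eps^2$, so the theorem yields $u \in C^{1,\beta}(\overline\Omega_{r_j/2}(x_0))$. In particular $x_0$ admits a regular neighbourhood, so $x_0 \notin \Sigma$. Since $\beta < \alpha$ was arbitrary, the inclusion and the claimed regularity $u \in C^{1,\beta}(\overline\Omega \setminus \Sigma)$ follow simultaneously.

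For $\Sigma^\prime \subseteq \Sigma$ we argue by contrapositive: if $x_0 \notin \Sigma$, then $u \in C^1$ in a neighbourhood of $x_0$, so $\D u$ is continuous at $x_0$ and bounded on small balls around $x_0$. In particular $(\D u)_{\Omega_r(x_0)} \to \D u(x_0)$ as $r \to 0^+$, so $\limsup_{r\to 0}\lvert (\D u)_{\Omega_r(x_0)}\rvert < \infty$, and by continuity of both $\D u$ and $V_p$ the integrand $\lvert V_p(\D u(y) - (\D u)_{\Omega_r(x_0)})\rvert^2$ tends to $0$ uniformly in $y \in \Omega_r(x_0)$, so the averaged excess vanishes in the limit. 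Hence $x_0 \notin \Sigma^\prime$. No genuine obstruction arises at this stage: the nontrivial content of the corollary is entirely absorbed into Theorem \ref{thm:eps_regularity}, whose role here is to convert the quantitative smallness condition along a single subsequence $r_j \to 0$ into $C^{1,\beta}$ regularity on a full half-ball neighbourhood of $x_0$.
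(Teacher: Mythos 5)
Your proof is correct and is exactly the argument the paper intends when it calls the corollary "a direct consequence" of Theorem \ref{thm:eps_regularity} (the paper gives no separate proof): the $\eps$-regularity statement handles the inclusion $\Sigma \subseteq \Sigma'$ via a single scale $r_j$ satisfying both hypotheses, and the reverse inclusion follows from continuity of $\D u$ on the regular set. Nothing further is needed.
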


\begin{corollary}\label{eq:dimension_estimates}
  Let $\alpha\in(0,1]$, $\eps>0$ and $1<p\leq q\leq \min\left\{\frac{(n+\alpha)p}{n},p+1\right\}$. Suppose $\Omega$ is a $C^{1,1}$-domain and $f\in \LL^{\frac n {1-\eps}}(\Omega,\R^m)\cap \BB_{\infty}^{\alpha-1,p^\prime}(\Omega,\R^m)$.
  Under the assumptions of Theorem \ref{thm:eps_regularity}, if $F\equiv F_0(x,\lvert z\rvert)$, $u$ is a relaxed minimiser of \eqref{eq:neumann_problem} with $g_N=0$ (resp.\,\eqref{eq:dirichlet_problem} with $g_D=0$) and $\Sigma$ is the singular set for $u$ as given in \eqref{eq:u_singularset}, we have
  \begin{equation}
    \dim_{\H} \Sigma \leq n - \alpha \min\{2,p^\prime\}.
  \end{equation} 
  If $p\geq 2$, $F$ satisfies in addition \eqref{def:bounds31} and $f\in \LL^{\frac{n}{1-\eps}}(\Omega,\R^m)\cap \BB^{\beta-1,p^\prime}(\Omega,\R^m)$ with $\beta \in [\alpha,2]$, then
\begin{equation}
  \dim_{\H} \Sigma \leq n - \min\left\{2\alpha,p^\prime\beta\right\}.
\end{equation}
\end{corollary}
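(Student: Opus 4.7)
The plan is to combine the characterisation of regular points from Corollary~\ref{cor:singularPoints} with the improved differentiability from Theorem~\ref{thm:relaxedImproved} via a standard measure-density argument for Besov functions. By Corollary~\ref{cor:singularPoints} the singular set decomposes as $\Sigma = \Sigma_1 \cup \Sigma_2$, with
\[
  \Sigma_1 = \{x \in \overline\Omega : \limsup_{r\to 0} \lvert (\D u)_{\Omega_r(x)}\rvert = \infty\}, \quad \Sigma_2 = \Big\{x : \liminf_{r\to 0} \dashint_{\Omega_r(x)} \lvert V_p(\D u - (\D u)_{\Omega_r(x)})\rvert^2 \d y > 0\Big\}.
\]
The hypotheses -- in particular $F \equiv F_0(x,\lvert z\rvert)$ and the homogeneous boundary condition $g_N = 0$ (resp.\ $g_D = 0$) -- place us within the scope of Theorem~\ref{thm:relaxedImproved}, yielding $V_{p,\mu}(\D u) \in \BB^{\delta, 2}_\infty(\Omega)$ with $\delta = \tfrac{\alpha}{2}\min\{2, p^\prime\}$ in general, and $\delta = \min\{\alpha, \tfrac{p^\prime\beta}{2}\}$ under the additional conditions $p\geq 2$ and $f \in \BB^{\beta-1, p^\prime}_\infty(\Omega)$.

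Second, I would invoke the standard potential-theoretic bound: for any $v \in \BB^{\delta,2}_\infty(\Omega)$ with $2\delta < n,$ the set of non-$L^2$-Lebesgue points of $v$ has Hausdorff dimension at most $n - 2\delta$. This follows from a Vitali covering argument together with the Besov seminorm estimate, and is a standard tool in the singular-set analysis of nonlinear elliptic systems; see \cite{Mingione2006,Kristensen2010} and the references therein. Applied to $v = V_{p,\mu}(\D u),$ it produces an exceptional set $E$ with $\dim_{\H} E \leq n - 2\delta$, outside which the Lebesgue value of $V_{p,\mu}(\D u)$ can be written as $V_{p,\mu}(a(x))$ since $V_{p,\mu}$ is a homeomorphism of $\R^{m\times n}.$ It remains to prove $\Sigma \subset E$. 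At $x \notin E,$ the elementary bound $\lvert z\rvert^p \leq C(1 + \lvert V_{p,\mu}(z)\rvert^2)$ combined with Jensen's inequality forces $\lvert(\D u)_{\Omega_r(x)}\rvert$ to remain bounded, so $x \notin \Sigma_1$. For $\Sigma_2,$ one combines the $L^2$-Lebesgue convergence $\dashint \lvert V_{p,\mu}(\D u) - V_{p,\mu}(a(x))\rvert^2 \to 0$ with the subadditivity $\lvert V_p(z-w)\rvert^2 \lesssim \lvert V_p(z-a)\rvert^2 + \lvert V_p(a-w)\rvert^2$ and (for $p \geq 2$) the Acerbi--Fusco comparison $\lvert V_p(z-a)\rvert^2 \lesssim \lvert V_p(z) - V_p(a)\rvert^2$ to deduce that the excess vanishes. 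Substituting the two values of $\delta$ then gives the claimed estimates $\dim_{\H}\Sigma \leq n - \alpha\min\{2,p^\prime\}$ and $\dim_{\H}\Sigma \leq n - \min\{2\alpha, p^\prime\beta\}$.

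The principal difficulty is the inclusion $\Sigma_2 \subset E$ in the degenerate regime $1 < p < 2,$ where the Acerbi--Fusco comparison $\lvert V_p(z) - V_p(a)\rvert^2 \sim (1+\lvert z\rvert+\lvert a\rvert)^{p-2}\lvert z-a\rvert^2$ degenerates at infinity, so that $L^2$-convergence of $V_{p,\mu}(\D u)$ does not immediately transfer to convergence of $\D u$. I would handle this by exploiting the higher integrability $\D u \in \LL^q(\Omega)$ from Theorem~\ref{thm:regularityRelaxed} to pass to $L^p$-Lebesgue points of $\D u$ itself -- whose complement has Hausdorff dimension at most $n - 2\delta$ by a parallel covering argument, using that $\D u \in \BB^{2\delta/p, p}_\infty$ for $p \geq 2$ or by a truncation argument for $p < 2$ -- and then bounding $\dashint \lvert V_p(\D u - (\D u)_{\Omega_r(x)})\rvert^2$ by $C\dashint(\lvert \D u - (\D u)_{\Omega_r(x)}\rvert^2 + \lvert \D u - (\D u)_{\Omega_r(x)}\rvert^p)$ and invoking Jensen/Hölder at the Lebesgue point.
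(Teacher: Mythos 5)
Your argument is essentially the paper's proof: the paper likewise combines the improved differentiability of $V_{p,\mu}(\D u)$ from Section \ref{sec:relaxed} with Corollary \ref{cor:singularPoints} and simply cites \cite{Mingione2003} for the measure-density estimates \eqref{eq:dimensionEstimate} bounding the dimension of the two bad sets of a $\WW^{\theta,2}$-function by $n-2\theta$, which is exactly your covering/Lebesgue-point lemma applied to $v=V_{p,\mu}(\D u)\in \BB^{\delta,2}_\infty\hookrightarrow \WW^{\theta,2}$ for all $\theta<\delta$. One small remark: your detour through $\LL^p$-Lebesgue points of $\D u$ for $p<2$ is unnecessary and would actually degrade the exponent to $n-p\delta$; since $\Sigma_1$ is already contained in the exceptional set, one may assume $\lvert(\D u)_{\Omega_r(x)}\rvert\leq M$, and then \eqref{eq:vfunction_difference} with $\mu>0$ gives the two-sided comparison $\lvert V_p(z-A)\rvert^2\sim_M\lvert V_p(z)-V_p(A)\rvert^2$ uniformly in $z$, so the $L^2$-Lebesgue point property of $V_{p,\mu}(\D u)$ transfers directly to the vanishing of the excess.
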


The following is now immediate.

\begin{theorem}[Existence of regular boundary points]\label{thm:regularBoundaryPoints}
  Suppose the assumptions of Corollary \ref{eq:dimension_estimates} hold. If $f\in \LL^{\frac{n}{1-\eps}}(\Omega,\R^m)\cap \BB^{\alpha-1,p^\prime}(\Omega,\R^m)$ for some $\eps>0$ and
  \begin{align*}
    \alpha > \max\left\{\frac 1 2,\frac 1 {p^\prime}\right\},
  \end{align*}
  then $\mathscr \H^{n-1}$-almost every boundary point is regular for $u$. In particular, this holds if ${F\equiv F(z)}$ is autonomous. If $p\geq 2$, $\alpha>\frac 1 2$ and additionally $f\in \LL^{\frac{n}{1-\eps}}(\Omega,\R^m)\cap \BB_{\infty}^{\beta-1,p^\prime}(\Omega,\R^m)$ with $\beta > \frac{1}{p^{\prime}}$, we have $\mathscr \H^{n-1}$-almost every boundary point is regular for $u$.
\end{theorem}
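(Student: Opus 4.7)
My plan is to reduce the theorem to the dimension bound in Corollary \ref{eq:dimension_estimates}, since once the Hausdorff dimension of the singular set $\Sigma$ is strictly less than $n-1$, the intersection $\Sigma \cap \partial\Omega$ automatically has vanishing $\H^{n-1}$ measure.

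First, I would invoke Corollary \ref{eq:dimension_estimates}, which under the present hypotheses yields
\begin{equation}
\dim_{\H} \Sigma \leq n - \alpha \min\{2,p^\prime\}
\end{equation}
in the general setting, and
\begin{equation}
\dim_{\H} \Sigma \leq n - \min\{2\alpha, p^\prime \beta\}
\end{equation}
when $p \geq 2$ and the improved Besov regularity of $f$ is available. The remaining task is arithmetic.

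Next, I observe that $\alpha \min\{2, p^\prime\} > 1$ is literally equivalent to $\alpha > \tfrac{1}{2}$ and $\alpha > \tfrac{1}{p^\prime}$, i.e., $\alpha > \max\{\tfrac12, \tfrac1{p^\prime}\}$, which is the hypothesis of the theorem. Hence $\dim_{\H} \Sigma \leq n - \alpha \min\{2,p^\prime\} < n - 1$, and since $\Sigma \cap \partial\Omega \subset \Sigma$ this forces $\H^{n-1}(\Sigma \cap \partial\Omega) = 0$, which is the claim. For the autonomous case $F \equiv F(z)$, the Hölder assumption \eqref{def:bounds3} trivially holds with $\alpha = 1$, so the strict inequality $1 > \max\{\tfrac12, \tfrac1{p^\prime}\}$ is automatic from $p^\prime > 1$.

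Finally, for the case $p \geq 2$ with $f \in \BB^{\beta-1,p^\prime}_\infty$, the combined hypotheses $\alpha > \tfrac{1}{2}$ and $\beta > \tfrac{1}{p^\prime}$ give $2\alpha > 1$ and $p^\prime \beta > 1$, so $\min\{2\alpha, p^\prime\beta\} > 1$ and the improved dimensional estimate again yields $\dim_{\H} \Sigma < n-1$, concluding in the same way. There is no genuine obstacle here: all of the work has already been done in establishing the improved differentiability in Theorem \ref{thm:relaxedImproved} (used to obtain Corollary \ref{eq:dimension_estimates}) together with the $\eps$-regularity from Theorem \ref{thm:eps_regularity}; the present theorem is essentially a threshold computation.
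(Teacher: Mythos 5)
Your proposal is correct and follows exactly the route the paper intends: the paper states the theorem is "immediate" from Corollary \ref{eq:dimension_estimates}, and your argument — translating the hypotheses into $\alpha\min\{2,p^\prime\}>1$ (resp.\ $\min\{2\alpha,p^\prime\beta\}>1$) so that $\dim_{\H}\Sigma<n-1$ and hence $\H^{n-1}(\Sigma\cap\p\Omega)=0$ — is precisely that threshold computation. No gaps.
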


Finally, in the Neumann case, by Theorem \ref{thm:radial_neumann} we obtain the existence of regular boundary points for inhomogeneous boundary conditions.
\begin{theorem}\label{thm:nonHomogeneousNeumann}
  Let $\frac12 < \alpha \leq 1$, $\eps>0$ and $1<p\leq q<\min\left\{\frac{(n+\alpha)p}{n},p+1\right\}$. Suppose $\Omega$ is a $C^{1,1}$-domain and $f\in \LL^{\frac{n}{1-\eps}}(\Omega,\R^m)\cap \BB^{\beta-1,p^\prime}_{\infty}(\Omega,\R^m)$ with  $\beta > \max\left\{\frac12,\frac1{p^{\prime}}\right\}$.
  If $F\equiv F_0(x,\lvert z\rvert)$ satisfies \eqref{def:bounds1}-\eqref{def:bounds31} and $u$ is a relaxed minimiser of \eqref{eq:neumann_problem} with $g_N \in \BB_1^{\beta-\frac1{p^\prime},p^\prime}(\p\Omega,\R^m)\cap C^{0,\eps}(\p\Omega,\R^m)$, then $\mathscr \H^{n-1}$-almost every boundary point is regular for $u$.
\end{theorem}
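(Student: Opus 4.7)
The plan is to combine the boundary improved differentiability of Theorem \ref{thm:radial_neumann} with the characterisation of the singular set in Corollary \ref{cor:singularPoints}. The condition $q<\min\{(n+\alpha)p/n,p+1\}$ together with $\alpha>\tfrac12$ places us in the common range of applicability of both results, so it remains to check that the exceptional set produced by Corollary \ref{cor:singularPoints} meets $\p\Omega$ in an $\mathscr{H}^{n-1}$-null set.

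The first step is to verify that under the standing hypotheses, Theorem \ref{thm:radial_neumann} delivers $V_{p,\mu}(\D u)\in \BB^{\delta,2}_\infty(\Omega)$ for some $\delta>\tfrac12$. One checks, via Besov embeddings $\BB^{s,p^\prime}_\infty\hookrightarrow \BB^{s',p^\prime}_q$ for $s'<s$ applied both to the forcing term and to the boundary datum, that the hypotheses of Theorem \ref{thm:radial_neumann} hold with parameters $\alpha_\ast\in(\tfrac12,\alpha]$ and $\beta_\ast\in(\max\{\tfrac12,1/p^\prime\},\beta]$ suitable for each of its clauses: when $1<p<2$ the general clause yields $\delta$ arbitrarily close to $\min\{\alpha_\ast,\delta_0\}$ (using $p^\prime>2$); when $p\geq 2$ and $\beta\geq\alpha$ the second clause yields $\delta$ close to $\min\{\alpha,p^\prime\beta_\ast/2,\delta_0\}$; and when $p\geq 2$ and $\beta<\alpha$ the general clause, applied with a smaller H\"older exponent $\alpha_\ast\in(1/p^\prime,\beta)$, yields $\delta$ close to $\min\{\alpha_\ast p^\prime/2,\delta_0\}$. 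In every case the two hypotheses $\alpha>\tfrac12$ and $\beta>\max\{\tfrac12,1/p^\prime\}$ force $\delta>\tfrac12$.

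The second step is a boundary Lebesgue-point statement for Besov functions: if $w\in \BB^{\delta,2}_\infty(\Omega)$ with $\delta>\tfrac12$, then outside an exceptional set $E\subset\overline\Omega$ with $\mathscr{H}^{n-1}(E)=0$ the averages $(w)_{\Omega_r(x_0)}$ converge to a finite limit $w^\ast(x_0)$ and
\begin{equation*}
  \lim_{r\to 0}\dashint_{\Omega_r(x_0)}\lvert w-w^\ast(x_0)\rvert^2\,\d x=0.
\end{equation*}
This rests on the existence of an $L^2(\p\Omega)$-trace for such $w$ together with Besov-capacity bounds for the non-Lebesgue-point set, which has Hausdorff dimension at most $n-2\delta<n-1$; alternatively, one may straighten $\p\Omega$ via a $C^{1,1}$-diffeomorphism and run a chaining/Poincar\'e argument along balls collapsing onto a boundary point. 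Applying this with $w=V_{p,\mu}(\D u)$ and using the pointwise equivalences for $V_{p,\mu}$ recorded in Section \ref{sec:prelim}---that $V_{p,\mu}\colon\R^{m\times n}\to\R^{m\times n}$ is a homeomorphism of $(p/2)$-power growth and that $\lvert V_p(\xi-\eta)\rvert^2$ is locally dominated by $\lvert V_{p,\mu}(\xi)-V_{p,\mu}(\eta)\rvert^2$---yields, for $\mathscr{H}^{n-1}$-a.e.\ $x_0\in\p\Omega$, both $\limsup_{r\to 0}\lvert(\D u)_{\Omega_r(x_0)}\rvert<\infty$ and
\begin{equation*}
  \lim_{r\to 0}\dashint_{\Omega_r(x_0)}\lvert V_p(\D u-(\D u)_{\Omega_r(x_0)})\rvert^2\,\d x=0,
\end{equation*}
so such $x_0$ lie outside $\Sigma$ by Corollary \ref{cor:singularPoints}.

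The main obstacle is the boundary Lebesgue-point statement of the second step; the threshold $\delta>\tfrac12$ is both necessary and sufficient for the exceptional set to be $\mathscr{H}^{n-1}$-negligible, and Theorem \ref{thm:radial_neumann} is engineered precisely to produce such a $\delta$ in the inhomogeneous Neumann setting. A purely interior improvement, as in Theorem \ref{thm:interiorImproved}, would be insufficient to reach boundary points.
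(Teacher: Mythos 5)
Your overall route is exactly the paper's: Theorem \ref{thm:radial_neumann} supplies $V_{p,\mu}(\D u)\in\BB^{\delta,2}_\infty(\Omega)$ for some $\delta>\frac12$, the dimension bound \eqref{eq:dimensionEstimate} for the non-Lebesgue-point set of a $\WW^{\theta,2}$-function (the paper cites \cite{Mingione2003} for precisely your ``boundary Lebesgue-point statement'') gives $\dim_{\H}\Sigma\leq n-2\delta<n-1$, and Corollary \ref{cor:singularPoints} converts this into regularity of $\H^{n-1}$-a.e.\ boundary point. Your second and third steps therefore coincide with what the paper does, and your closing remark about the necessity of the threshold $\delta>\frac12$ is on point.

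The gap is in your first step, in the sub-case $\beta<\alpha$. There the data only meet the hypotheses of the first clause of Theorem \ref{thm:radial_neumann} after you lower the H\"older exponent of $F$ from $\alpha$ to some $\alpha_\ast\leq\beta$, as you propose; but that theorem also assumes $q<\frac{(n+\alpha_\ast)p}{n}$, and this is \emph{not} implied by the standing hypothesis $q<\frac{(n+\alpha)p}{n}$. For instance with $n=3$, $p=2$, $\alpha=1$, $\beta=0.55$ the statement admits $q$ up to $8/3\approx 2.67$, whereas any admissible $\alpha_\ast<0.55$ forces $q<2.37$. The ``a-priori $u\in\WW^{1,q}$'' variant only relaxes this to $q<\frac{np}{n-\alpha_\ast}$ (here $q<2.45$), and obtaining $u\in\WW^{1,q}(\Omega)$ first via Theorem \ref{thm:regularityRelaxed} with the full exponent $\alpha$ would require $g_N\in\WW^{\alpha-\frac1{q'},q'}(\p\Omega)$, which $g_N\in\BB_1^{\beta-\frac1{p'},p'}\cap C^{0,\eps}$ does not provide when $\beta<\alpha$. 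So as written your step 1 fails on part of the admissible parameter range; one must either restrict to $\beta\geq\alpha$ (as the analogous Corollary \ref{eq:dimension_estimates} does for $p\geq2$) or read the exponent condition with $\min\{\alpha,\beta\}$ in place of $\alpha$. To be fair, the paper's own proof is a one-line reference to Section \ref{sec:relaxed} and does not address this either, but it is the one step of your argument that does not go through as claimed.
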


It would be interesting to obtain a statement akin to Theorem \ref{thm:nonHomogeneousNeumann} in the case of Dirichlet boundary conditions.

\begin{remark}
  If a-priori $u\in \WW^{1,q}(\Omega)$, it suffices to assume $q<\min\left\{\frac{np}{n-\alpha},p+1\right\}$ in the statements of Corollary \ref{eq:dimension_estimates}, Theorem \ref{thm:regularBoundaryPoints} and Theorem \ref{thm:nonHomogeneousNeumann}. 
  In particular in the case of homogeneous integrands $F\equiv F_0(\lvert z\rvert)$, $2 \leq p \leq q<\min\left\{\frac{np}{n-1},p+1\right\}$ suffices.
\end{remark}


The outline of the paper is follows. In Section \ref{sec:prelim} we collect our notation and a number of preliminary results, concerning in particular Besov spaces and the relation of the relaxed functional to appropriate regularised versions of $F$.
In Section \ref{sec:smoothing_relaxation} we consider a boundary version of a smoothing operator introduced in \cite{Fonseca1997}, where an additivity property for the relaxed functional is proved allowing us to localise  and flatten the boundary in our analysis.

In Section \ref{sec:relaxed} we prove our differentiability results, starting with the basic result in the Neumann case in Section \ref{sec:basicNeumann}. 
We then consider the improved results, proving the interior statement in Section \ref{sec:improved} where the key iteration argument is described.
The corresponding global version is considered in Section \ref{sec:boundaryImproved} following the argument of \cite{EbmeyerLiuSteinhauer2005,Ebmeyer2005}, and in the Neumann case we must show an even extension of $u$ preserves the desired fractional differentiability; this follows from Lemmas \ref{eq:zeroextension} and \ref{lem:zeroNeumann}.
We also explain how to apply these results to mixed boundary conditions in Section \ref{sec:mixedboundary}.

Finally in Section \ref{sec:excessEstimate}, we prove the characterisation of regular boundary points, by establishing a Caccioppoli-type inequality which we combine with a blow-up argument.

\section{Preliminaries}\label{sec:prelim}
\subsection{Notation}\label{sec:notation}
In this section we introduce our notation. 
$\Omega$ will always denote a open, bounded domain in $\R^n$. Given $\omega\subset\R^n$, $\overline \omega$ will denote its closure and $\chi_\omega$ the indicator function of $\omega$. For subsets of $\R^n$ we will use both $\mathscr L^n$ and $\lvert \cdot \rvert$ to denote the Lebesgue measure, $\mathscr H^s$ for the Hausdorff measure, and $\tp{dim}_{\mathscr{H}}$ for the Hausdorff dimension. 
We also denote the half-space in $\bb R^n$ by $\bb R^n_+ = \{ x \in \bb R^n : x_n>0\}$.
For $x\in \R^n$ and $r>0$, $B_r(x)$ is the open Euclidean ball of radius $r$, centred at $x$, while $S^{n-1}$ is the unit sphere in $\R^n$. We also write $\Omega_r(x) = \Omega \cap B_r(x).$ Further we denote by $K_r(x)$ the cube of side-length $r$, centred at $x$.
We denote the cone of height $\rho$, aperture $\theta$ and axis in direction $\mathbf n$ by $C_\rho(\theta, \mathbf n)$. That is
\begin{align*}
C_\rho(\theta, \mathbf n)=\{\,h\in \R^n: \lvert h\rvert\leq \rho, h\cdot \mathbf n\geq \lvert h\rvert\cos(\theta)\,\}.
\end{align*}
Here and elsewhere $\lvert\cdot \rvert$ denotes the Euclidean norm of a vector in $\R^n$ and likewise the Euclidean norm of a matrix $A\in \R^{n\times n}$. $\tp{Id}$ is the identity matrix in $\R^{n\times n}$.
Given an open set $\Omega$, for $\lambda>0$ set $\Omega^\lambda=\{\,x\in \Omega: d(x,\partial\Omega)>\lambda\,\}$ and $\lambda \Omega = \{\,\lambda x: x\in\Omega\,\}$. For $h\in \R^n$, $\Omega + h = \{x\in \R^n\colon x-h\in \Omega\}$. Here $d(x,\partial\Omega)=\inf_{y\in \p\Omega} \lvert x-y\rvert$ is the distance of $x$ from the boundary of $\Omega$. Abusing notation, we write $\Omega^h = \Omega^{\lvert h\rvert}$ for $h\in \R^n$.

If $p\in[1,\infty]$, denote by $p^\prime=\frac{p}{p-1}$ the H\"older conjugate.
The symbols $a \sim b$ and $a\lesssim b$ mean that there is some constant $C>0$, depending only on $n,\, m,\, p,\,q,\, \Omega,\, \mu,\, \nu,\, \Lambda$ and $\alpha,$ and independent of $a$ and $b$ such that $C^{-1} a \leq b \leq C a$ and $a\leq C b$, respectively. 

We will often find it useful to write for a function $v$ defined on $\R^n$ and a vector $h\in \R^n$, $v_h(x)=v(x+h)$.
We also pick a family $\{\,\rho_\e\,\}$ of radially symmetric, non-negative mollifiers of unitary mass. We denote convolution of a locally integrable function $u$ with $\rho_\e$ as
\begin{align*}
u\star\rho_\e(x)=\int_{\R^n} u(y)\rho_\e(x-y)\d y.
\end{align*}
For bounded and open sets $\Omega \subset \bb R^n$, we will also use the notation $\dashint_{\Omega} f \,\d x = \frac1{\lvert\Omega\rvert}\int_{\Omega} f \,\d x$ for averaged integrals.
Function spaces will be denoted $X(\Omega) = X(\Omega,\bb R^m)$ where the target space may be dropped if it is clear from context, and for $f \in X,$ $g \in X^{\prime}$ we will write $\int_{\Omega} f \cdot g \,\d x$ to be understood as the duality pairing.

At times, we need to consider versions of $\F_N$, $\F_D$ and $\overline \F_N$, $\overline \F_D$ localised to $\omega\subset \R^n$. These are obtained by restricting the domain of integration and definition of the functions involved to $\omega$ and denoted $\F_N(\cdot,\omega)$, $\F_D(\cdot,\omega)$, $\overline \F_N(\cdot,\omega)$ and $\overline \F_D(\cdot,\omega)$, respectively.
For the case of mixed boundary, and when localising in the Neumann case, we will need the relaxed functional defined for $\omega \subset \bb R^n,$ $\gamma \subset \p\omega$ by
\begin{align}\label{eq:mixed_relaxed}
  \overline \F_M(v,\omega,\gamma)=\inf \left\{\,\liminf_{j\to\infty} \F(v_j): (v_j)\subset v+Y, v_j\rightharpoonup v \text{ weakly in } X\,\right\}
\end{align}
where $X= \{u\in \WW^{1,p}(\omega)\colon \tp{Tr}\, u=v \text{ on } \gamma\}$ where $Y=\WW^{1,q}(\omega)\cap X$. 
We will write $\overline \F_M(u) = \overline \F_M(u,\Omega,\Gamma_D)$ for mixed problems.

\subsection{WB coverings, Lipschitz domains and extensions of \texorpdfstring{$F$}{F}}\label{sec:WBLipschitz}
We define Whitney-Besicovitch coverings, which combine properties of Whitney and Besicovitch coverings and were introduced in \cite{Kislyakov2005}. A nice presentation of the theory is given in \cite{Kislyakov2013}. To be precise:
\begin{definition}
A family of dyadic cubes $\{\,Q_i\,\}_{i\in I}$ with mutually disjoint interiors is called a Whitney-covering of $\Omega$ if
\begin{align*}
\bigcup_{i\in I} Q_i = \bigcup_{i\in I} 2Q_i = \Omega\\
5Q_i \cap (\R^n\setminus \Omega) \neq \emptyset.
\end{align*}
 A family of cubes $\{\,K_i\,\}_{i\in I}$ is called a Whitney-Besicovitch-covering (WB-covering) of $\Omega$ if there is a triple $(\delta, M, \varepsilon)$ of positive numbers such that
 \begin{align}\label{def:WBcoverExtension}
  & \bigcup_{i\in I} \frac{1}{1+\delta}K_i = \bigcup_{i\in I} K_i = \Omega\\\label{def:WBcoverMultiplicity}
  & \sum_{i\in I} \chi_{K_i}\leq M \\\
  & K_i \cap K_j \neq \emptyset \Rightarrow \lvert K_i \cap K_j\rvert \geq \varepsilon \max(\lvert K_i\rvert,\lvert K_j\rvert).  
 \end{align}

\end{definition}
The existence of a Whitney-covering for $\Omega$ is classical. The refinement to a WB-covering can be found in \cite{Kislyakov2013}:
\begin{theorem}[cf. Theorem 3.15, \cite{Kislyakov2013}]\label{thm:covering}
Let $\Omega$ be an open subset of $\R^n$ with non-empty complement. Let $\{\,Q_i\,\}$ be a family of cubes which are a Whitney covering of $\Omega$. Then the cubes $K_i = \left(1+\frac{1}{6}\right)Q_i$ are a WB-covering of $\Omega$ with $\delta = \frac{1}{6}$, $\e = \frac{1}{14^n}$ and $M\leq 6^n-4^n+1$. Moreover for this covering $\frac{2}{\left(1+\frac{1}{6}\right)^\frac{1}{n}}\lvert K_i\rvert^\frac{1}{n}\leq\dist(K_i,\partial\Omega)$.
\end{theorem}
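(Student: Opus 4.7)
The plan is to verify each of the four WB-conditions in turn, deriving all the geometric estimates directly from the defining Whitney properties of $\{Q_i\}$. Writing $\ell_i$ for the side length of $Q_i$, the enlarged cube $K_i = \tfrac{7}{6} Q_i$ has side length $\tfrac{7}{6}\ell_i$ and satisfies $\lvert K_i\rvert^{1/n} = \tfrac{7}{6}\ell_i$, so the distance claim reduces to the estimate $2(7/6)^{(n-1)/n}\ell_i \leq \dist(K_i,\p\Omega)$.

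The two covering identities are immediate from the definitions. Since $\tfrac{1}{1+\delta}K_i = Q_i$, the first equality follows from $\bigcup_i Q_i = \Omega$; while the relation $\bigcup_i 2Q_i = \Omega$ forces $2Q_i \subset \Omega$ for each $i$, and hence $K_i \subset 2Q_i \subset \Omega$, giving $\bigcup_i K_i = \Omega$ (the reverse inclusion being trivial since $K_i \supset Q_i$). The distance bound will then be obtained by combining the lower estimate on $\dist(Q_i, \p\Omega)$ in terms of $\ell_i$ provided by $2Q_i \subset \Omega$ with the fact that $K_i$ extends only $\tfrac{\ell_i}{12}$ beyond $Q_i$ in each direction.

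For the multiplicity estimate, I would first establish that overlapping cubes $K_i, K_j$ must have side lengths within a fixed ratio. The upper bound $\dist(Q_i,\p\Omega) \lesssim \ell_i$, coming from $5Q_i \cap (\R^n\setminus\Omega) \neq \emptyset$, combined with the matching lower bound from $2Q_i \subset \Omega$, forces two Whitney cubes lying close enough for their enlargements to overlap to have comparable side lengths; the dyadic structure then restricts this ratio to at most a factor of $2$. Fixing $x\in\Omega$, all $K_i \ni x$ must therefore have side lengths comparable to that of the unique $Q_{j^*}\ni x$, and their centres must lie in an explicit $\ell^\infty$-box around $x$. A direct combinatorial count of the admissible dyadic centres—enumerating a $6^n$-block of potential neighbours of $Q_{j^*}$ and subtracting the $4^n$-block interior which is already forbidden by the interior-disjointness of the $Q_i$—yields $M \leq 6^n - 4^n + 1$. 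The comparable-overlap property then follows easily from the ratio bound: if $K_i \cap K_j \neq \emptyset$ with $\ell_i \leq \ell_j \leq 2\ell_i$, the interior-disjointness of $Q_i$ and $Q_j$ forces $K_i \cap K_j$ to contain a rectangle whose smallest side has length at least $\tfrac{\ell_i}{12}$, while $\max(\lvert K_i\rvert,\lvert K_j\rvert) \leq (\tfrac{7}{3}\ell_i)^n$; a direct computation then produces the constant $\tfrac{1}{14^n}$. The principal obstacle is the combinatorial count for $M \leq 6^n - 4^n + 1$, which requires careful bookkeeping of precisely which dyadic sizes and positions are simultaneously compatible with the Whitney conditions $2Q\subset\Omega$, $5Q\cap(\R^n\setminus\Omega)\neq\emptyset$, and the overlap requirement $K\ni x$, so that no configuration is double-counted.
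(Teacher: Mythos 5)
The paper does not prove this statement at all: it is quoted verbatim from Theorem~3.15 of the cited monograph of Kislyakov and Kruglyak, so there is no in-paper argument to compare yours against. Your overall shape — verify the four WB-conditions directly from the three Whitney axioms — is the only reasonable from-scratch route, and the two covering identities and the reduction of the distance claim to $2\left(\tfrac76\right)^{(n-1)/n}\ell_i\le\dist(K_i,\p\Omega)$ are correct.

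However, the quantitative core of the proposal does not go through from the hypotheses as stated. First, the distance bound: $2Q_i\subset\Omega$ gives only $\dist_\infty(Q_i,\p\Omega)\ge\ell_i/2$, and since the Euclidean distance is merely bounded below by the sup-norm distance, your proposed derivation yields $\dist(K_i,\p\Omega)\ge\ell_i/2-\ell_i/12=\tfrac{5}{12}\ell_i$, which falls short of the required $2\left(\tfrac76\right)^{(n-1)/n}\ell_i\ge 2\ell_i$ by roughly a factor of five; the stated constant evidently depends on the particular normalisation of the Whitney construction in the reference and cannot be extracted from $2Q_i\subset\Omega$ alone. Second, the factor-$2$ comparability of side lengths of overlapping $K_i,K_j$, which you invoke for both the multiplicity count and the overlap constant, is not justified: chaining $\dist_\infty(x,\p\Omega)\ge\tfrac{5}{12}\ell_i$ for $x\in K_i$ against $\dist_\infty(x,\R^n\setminus\Omega)\le\tfrac{37}{12}\ell_j$ from $5Q_j\cap(\R^n\setminus\Omega)\neq\emptyset$ gives $\ell_i\le\tfrac{37}{5}\ell_j$, which the dyadic structure only rounds down to $\ell_i\le4\ell_j$, not $2\ell_j$. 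This matters because $\varepsilon=1/14^n$ is exactly what the computation $\bigl(\tfrac{\ell_i/6}{(7/6)\cdot2\ell_i}\bigr)^n$ produces in the ratio-$2$ case, while ratio $4$ only yields $1/28^n$. Finally, the count $M\le 6^n-4^n+1$ is asserted rather than derived; a same-size count gives at most $2^n$ cubes through a point, and the bookkeeping across the several admissible dyadic levels that is supposed to produce precisely $6^n-4^n+1$ is the actual content of the lemma and is missing. You would either need to import the sharper separation properties of the specific construction in the reference, or accept worse constants than those stated.
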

It will be of crucial importance to us that there exists a partition of unity associated to a WB-covering.
\begin{theorem}[cf. Theorem 3.19, \cite{Kislyakov2013}]\label{thm:WBunity}
 Suppose the cubes $\{\,K_i\,\}_{i\in I}$ form a WB-covering of $\Omega$ with constants $(\delta,M,\varepsilon)$. Then there is a family $\{\,\psi_i\,\}_{i\in I}$ of infinitely differentiable functions that form a partition of unity on $\Omega$ with the following properties:
 \begin{gather*}
  \supp (\psi_i) \subset \frac{1+\frac{\delta}{2}}{1+\delta} K_i\\
  \psi_i(x) \geq \frac{1}{M} \text{ for } x\in \frac{1}{1+\delta} K_i\\
  \lvert D\psi_i\rvert \leq c \frac{1}{\lvert K_i\rvert^{\frac{1}{n}}}.
 \end{gather*}
 for all $i\in I$.
\end{theorem}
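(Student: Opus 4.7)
The plan is to carry out the standard construction of a smooth partition of unity subordinate to the covering by first forming bump functions adapted to each cube and then normalising, exploiting the three structural properties \eqref{def:WBcoverExtension}, \eqref{def:WBcoverMultiplicity} and the quantified overlap condition in turn to obtain the three claims.

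First, for each $i\in I$ one builds $\phi_i\in C_c^\infty(\mathbb R^n)$ with $0\leq \phi_i\leq 1$, $\supp \phi_i\subset \frac{1+\delta/2}{1+\delta}K_i$, $\phi_i\equiv 1$ on $\frac{1}{1+\delta}K_i$, and $\lvert D\phi_i\rvert\leq c\lvert K_i\rvert^{-1/n}$, by scaling and translating a single fixed bump adapted to the unit cube; here $c$ depends only on $n$ and $\delta$. Set $S(x)=\sum_{i\in I}\phi_i(x)$ on $\Omega$, and define $\psi_i=\phi_i/S$. Property \eqref{def:WBcoverExtension} gives that the interior subcubes $\{\tfrac{1}{1+\delta}K_i\}$ cover $\Omega$, so for each $x\in\Omega$ some $\phi_i(x)=1$ and hence $S\geq 1$. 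On the other hand $\phi_i\leq \chi_{K_i}$, so \eqref{def:WBcoverMultiplicity} forces $S\leq M$. These bounds immediately yield that $\psi_i\in C^\infty(\Omega)$, $\sum_i \psi_i=1$, the support property is inherited from $\phi_i$, and for $x\in \frac{1}{1+\delta}K_i$ one has $\psi_i(x)=1/S(x)\geq 1/M$.

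The main obstacle is the derivative estimate, and this is where the quantified overlap condition becomes essential. Writing
\begin{equation*}
D\psi_i=\frac{D\phi_i}{S}-\frac{\phi_i\,DS}{S^2},
\end{equation*}
the first term is harmless since $S\geq 1$. For the second, $\lvert DS(x)\rvert\leq \sum_{j:\,x\in\supp\phi_j}\lvert D\phi_j(x)\rvert$, a sum containing at most $M$ terms by \eqref{def:WBcoverMultiplicity}, with each summand controlled by $c\lvert K_j\rvert^{-1/n}$. The key step is to show that whenever $K_j\cap \supp\phi_i\neq\emptyset$, the sidelengths of $K_i$ and $K_j$ are comparable, so that each such $\lvert K_j\rvert^{-1/n}$ can be replaced by a constant multiple of $\lvert K_i\rvert^{-1/n}$. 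Since $\supp\phi_i\subset K_i$, any such $K_j$ meets $K_i$, and the overlap condition $\lvert K_i\cap K_j\rvert\geq \varepsilon \max(\lvert K_i\rvert,\lvert K_j\rvert)$ combined with the trivial bound $\lvert K_i\cap K_j\rvert\leq \min(\lvert K_i\rvert,\lvert K_j\rvert)$ forces $\varepsilon^{-1}\lvert K_j\rvert\geq \lvert K_i\rvert\geq \varepsilon\lvert K_j\rvert$. Assembling these yields $\lvert D\psi_i(x)\rvert\leq c\lvert K_i\rvert^{-1/n}$ with $c=c(n,\delta,M,\varepsilon)$, completing the proof.
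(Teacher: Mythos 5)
Your construction is correct: the comparability of sidelengths of intersecting cubes, which you extract from the overlap condition $\lvert K_i\cap K_j\rvert\geq \varepsilon\max(\lvert K_i\rvert,\lvert K_j\rvert)$ together with the trivial bound $\lvert K_i\cap K_j\rvert\leq\min(\lvert K_i\rvert,\lvert K_j\rvert)$, is exactly the ingredient needed to make the gradient of the normalising sum $S$ commensurate with $\lvert K_i\rvert^{-1/n}$ on $\supp\phi_i$. The paper does not prove this statement but imports it from the cited reference, whose argument is this same standard normalised-bump construction, so there is nothing further to compare.
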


It is well-known that Lipschitz domains satisfy a uniform interior and exterior cone condition. To be precise: Let $\Omega$ be a Lipschitz domain. Then (see e.g. \cite[Section 1.2.2]{Grisvard1992}): there are $\rho_0,\theta_0>0$ and a map $ \mathbf n\colon \R^n\to S^{n-1}$ such that for every $x\in \R^n$
\begin{align}\label{eq:uniformCone}
C_{\rho_0}(\theta_0, \mathbf n(x))\subset O_{\rho_0}(x)=\left\{\,h\in\R^n: \lvert h\rvert\leq \rho_0, \left(B_{3\rho_0}(x) \setminus \Omega\right)+h\subset\R^n\setminus\Omega\,\right\}\\
C_{\rho_0}(\theta_0,- \mathbf n(x))\subset I_{\rho_0}(x)=\left\{\,h\in\R^n: \lvert h\rvert\leq \rho_0, \left(\Omega\cap B_{3\rho_0}(x)\right)+h\subset\Omega\,\right\}.
\end{align}

Finally, we extend $F$ outside $\Omega$. Let $\Omega\Subset B(0,R)$. We extend $F(x,z)$ to an integrand on $B(0,R)\times \R^{m\times n}$ still denoted $F$ by setting
\begin{align*}
F(x,z) = \inf_{y\in\Omega} F(y,z)+(1+\lvert z\rvert^2)^\frac q 2\lvert x-y\rvert.
\end{align*}
It is straightforward to check that if $F$ satisfies any of the properties \eqref{def:bounds1}-\eqref{def:bounds3} on $\Omega$, $F$ satisfies them on $B(0,R)$ (after potentially increasing $\Lambda$).
In Section \ref{sec:boundaryImproved} we will also need our extension to preserve \eqref{def:bounds31}, however there we will flatten the boundary and take an even reflection in $x$ instead.

\subsection{Function spaces}
\label{sec:besov}
We recall some basic properties of Sobolev and Besov spaces following the exposition in \cite{Savare1998}, alternatively the theory can be found in \cite{Triebel1978}.

For $0\leq \alpha\leq 1$ and $k\in \N$, $C^k(\Omega)$ and $C^{k,\alpha}(\Omega)$ denote the spaces of functions $k$-times continuously differentiable in $\Omega$ and $k$-times $\alpha$-H\"older differentiable in $\Omega$, respectively.

For $1\leq p\leq\infty, k\in \N$, $\LL^p(\Omega)=\LL^p(\Omega,\R^m)$ and $\WW^{k,p}(\Omega)=\WW^{k,p}(\Omega,\R^m)$ denote the usual Lebesgue and Sobolev spaces respectively. We write $\WW^{k,p}_0(\Omega)$ for the closure of $C_0^\infty(\Omega)$-functions with respect to the $\WW^{k,p}$-norm. For $g\in \WW^{1,1}(\Omega)$, we write ${\WW^{k,p}_g(\Omega)= g + \WW^{k,p}_0(\Omega)}$.  We freely identify $\WW^{k,p}$-functions with their precise representatives.

 Denote by $[\cdot,\cdot]_{s,q}$ the real interpolation functor. Let $s\in(0,1)$ and $p,q\in [1,\infty]$. We define
\begin{gather*}
\BB^{s,p}_q(\Omega)=\BB^{s,p}_q(\Omega,\R^m)=[\WW^{1,p}(\Omega,\R^m),\LL^p(\Omega,\R^m)]_{s,q}\\
\BB^{1+s,p}_q(\Omega)=[\WW^{2,p}(\Omega),\WW^{1,p}(\Omega)]_{s,q}=\{\,v\in \WW^{1,p}(\Omega): \D v\in \BB^{s,p}_q(\Omega)\,\}
\end{gather*}
Further we recall that $\WW^{1+s,p}(\Omega)=\BB^{1+s,p}_p(\Omega)$ and that for $1\leq q<\infty$, $\BB^{s,p}_q(\Omega)$ embeds continuously in $\BB^{s,p}_\infty(\Omega)$. For $\alpha<0$, we define $\WW^{\alpha,p}(\Omega)$ as the dual of $\WW^{-\alpha,p^\prime}_0(\Omega),$ and for $s \in (0,1)$ we can also define
\begin{equation}\label{eq:besov_negative}
  \BB^{-s,p}_q(\Omega) = [\LL^{p}(\Omega),\WW^{-1,p}(\Omega)]_{s,q}.
\end{equation} 
Note we also have $\WW^{-s,p}(\Omega) = \BB^{-s,p}_p(\Omega)$ by the duality theorem for real interpolation, and also that $\BB^{-s,p^{\prime}}_{q^{\prime}}(\Omega)$ identifies with the dual of $\BB^{s,p}_{q,0}(\Omega) = [\LL^p(\Omega),\WW_0^{-1,p}(\Omega)]_{s,q}$ for $1 \leq p,q < \infty$.

We will use a characterisation of Besov spaces $\BB^{s,p}_{\infty}$ in terms of difference quotients.  Let $D$ be a set whose linear span is $\R^n$, star-shaped with respect to $0$. For $s\in(0,1)$, $p\in[1,\infty]$, consider
\begin{align*}
[v]_{s,p,\Omega}^p = \sup_{h\in D\setminus\{\,0\,\}}\int_{\Omega^h}\left\lvert\frac{v_h(x)-v(x)}{\lvert h\rvert^s}\right\rvert^p\d x
\end{align*}
This characterises $\BB^{s,p}_\infty(\Omega)$ in the sense that
\begin{align*}
v\in \BB^{s,p}_\infty(\Omega)\Leftrightarrow v\in \LL^p(\Omega) \text{ and } [v]_{s,p,\Omega}^p<\infty.
\end{align*}
Moreover there are positive constants $C_1,C_2>0$ depending only on $s,\, p,\, D,\, \Omega$ such that
\begin{align}\label{eq:besovcharacterisation}
C_1 \|v\|_{\BB^{s,p}_\infty(\Omega)}\leq \|v\|_{\LL^p(\Omega)}+[v]_{s,p,\Omega}\leq C_2\|v\|_{\BB^{s,p}_\infty(\Omega)}.
\end{align}
If $\Omega=B_r(x_0)$, then $C_1, C_2$ are unchanged by replacing $D$ with $QD$, where $Q$ is an orthonormal matrix. In particular, when $D=C_\rho(\theta, \mathbf n)$ is a cone, they are independent of the choice of $n$. 


Recall also that $\BB^{s,p}_q(\Omega)$ may be localised for $s \in(0,2)$ and $1 \leq p,q \leq \infty$: If $\{\,U_i\,\}_{i\leq M}$ is a finite collection of balls covering $\Omega$, then $v\in \BB^{s,p}_q(\Omega)$ if and only if $v_{\vert \Omega \cap U_i}\in \BB^{s,p}_q(\Omega \cap U_i)$ for $i=1,...,M$. Moreover, there are constants $C_3,C_4$ depending only on $M$ such that
\begin{align}\label{eq:besovlocalisation}
C_3 \|v\|_{\BB^{s,p}_q(\Omega)}\leq \sum_{i=1}^M \|v\|_{\BB^{s,p}_q(\Omega \cap U_i)}\leq C_4 \|v\|_{\BB^{s,p}_q(\Omega)}.
\end{align}

We recall a well-known embedding theorem, see e.g. \cite{Triebel1983}:
\begin{theorem}\label{thm:embedding}
Let $1\leq p\leq p_1\leq \infty$, $1\leq q\leq q_1\leq \infty$, $0 < s_1 < s < 2.$ Suppose $\Omega $ is a Lipschitz domain.
Assume that 
\begin{equation}\label{eq:embedding_exponents}
  s-\frac n p \geq s_1-\frac n {p_1}
\end{equation} 
 and suppose $v\in \BB^{s,p}_\infty(\Omega)$. Then,
\begin{align*}
\|v\|_{\BB^{s_1,p_1}_{q_1}(\Omega)}\lesssim\|v\|_{\BB^{s,p}_q(\Omega)}.
\end{align*}
If $q > q_1,$ the embedding remains true provided the inequality \eqref{eq:embedding_exponents} is strict.
\end{theorem}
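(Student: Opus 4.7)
The plan is to transfer the inequality from $\Omega$ to $\bb R^n$ via a Besov extension operator and then establish the embedding on $\bb R^n$ using a Littlewood-Paley decomposition and Bernstein's inequality. Since $\Omega$ is a bounded Lipschitz domain, there exists a universal extension operator $E$ (for instance the Calder\'on or Stein extension) bounded simultaneously from $\WW^{1,p}(\Omega) \to \WW^{1,p}(\bb R^n)$ and from $\LL^p(\Omega) \to \LL^p(\bb R^n)$. The functorial property of real interpolation applied to the definition in Section \ref{sec:besov} then yields a bounded $E\colon \BB^{s,p}_q(\Omega) \to \BB^{s,p}_q(\bb R^n)$, and combined with the trivial restriction $\BB^{s_1,p_1}_{q_1}(\bb R^n) \to \BB^{s_1,p_1}_{q_1}(\Omega)$ this reduces the problem to the embedding on $\bb R^n$. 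For $s \in (1,2)$ we reduce to the case $s \in (0,1)$ by applying the argument to $\D v$, in view of the identification $\BB^{1+s,p}_q = \{v \in \WW^{1,p} : \D v \in \BB^{s,p}_q\}$.

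For the main argument on $\bb R^n$ I would fix a Littlewood-Paley decomposition $\tilde v = \sum_{j \geq 0} \Delta_j \tilde v$, so that $\Delta_j \tilde v$ has Fourier support in an annulus of radius $\sim 2^j$. Bernstein's inequality for frequency-localised functions gives
\begin{equation*}
  \|\Delta_j \tilde v\|_{\LL^{p_1}} \lesssim 2^{jn(1/p - 1/p_1)} \|\Delta_j \tilde v\|_{\LL^p},
\end{equation*}
hence $2^{j s_1}\|\Delta_j \tilde v\|_{\LL^{p_1}} \lesssim 2^{-j\theta}\bigl(2^{js}\|\Delta_j \tilde v\|_{\LL^p}\bigr)$ with $\theta = (s - n/p) - (s_1 - n/p_1) \geq 0$ by hypothesis. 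If $q \leq q_1$, the continuous embedding $\ell^{q} \hookrightarrow \ell^{q_1}$ applied to the resulting dyadic sequence immediately concludes the argument, and no strict inequality is required. If on the other hand $q > q_1$, then the hypothesis is strict so $\theta > 0$ and the factor $2^{-j\theta}$ decays geometrically; I would absorb it using H\"older's inequality with exponent $q/q_1$, concluding $\sum_j (2^{-j\theta} a_j)^{q_1} \lesssim \|a_j\|_{\ell^q}^{q_1}$ where $a_j = 2^{js}\|\Delta_j \tilde v\|_{\LL^p}$.

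The main technical point, though entirely standard, is confirming that the Littlewood-Paley characterisation of $\BB^{s,p}_q(\bb R^n)$ agrees up to equivalent norms with the real-interpolation definition used in Section \ref{sec:besov}; without this identification one cannot apply Bernstein directly to the interpolation norm. Granting this classical fact (see e.g.\ \cite{Triebel1978,Triebel1983}), the remainder of the argument is a routine manipulation of dyadic sums, and restricting back to $\Omega$ yields the desired inequality.
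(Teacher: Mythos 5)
The paper does not prove this statement at all: it is quoted as a classical result with a citation to Triebel's book, so there is no in-paper argument to compare against. Your proposal is the standard proof of that classical result and is essentially correct: extension by a universal (Stein-type) operator for Lipschitz domains, whose boundedness on $\BB^{s,p}_q$ follows from the interpolation definition; the Littlewood--Paley characterisation of $\BB^{s,p}_q(\bb R^n)$; Bernstein's inequality giving the factor $2^{-j\theta}$ with $\theta=(s-\tfrac np)-(s_1-\tfrac n{p_1})\geq 0$; and then either $\ell^q\hookrightarrow\ell^{q_1}$ when $q\leq q_1$, or H\"older in $j$ using the geometric decay $2^{-j\theta}$ with $\theta>0$ when $q>q_1$. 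The one step I would tidy up is your reduction of $s\in(1,2)$ to $s\in(0,1)$ by passing to $\D v$: this only produces the conclusion when $s_1>1$ as well, since for $s_1<1$ the target space is a statement about $v$ rather than $\D v$, and a two-step chain through an intermediate smoothness below $1$ is not always available. The cleaner route, which you already have at your disposal, is to invoke the identification of the real-interpolation spaces $[\WW^{2,p},\WW^{1,p}]_{\cdot,q}$ and $[\WW^{1,p},\LL^p]_{\cdot,q}$ with the Littlewood--Paley spaces uniformly for all $0<s<2$, and run the dyadic argument once for the whole range; this removes the need for any reduction across the integer $1$.
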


We also recall the trace theorem in the following form, see e.g. \cite{Adams2003}.
\begin{lemma}\label{lem:traceTheorem}
  Let $\Omega$ be a Lipschitz domain. Let $1<p<\infty$. Then there is a bounded linear operator $\tp{Tr}\colon \WW^{1,p}(\Omega)\to \WW^{1-\frac 1 p,p}(\p\Omega)$. In fact, it is possible to define $\tp{Tr}\, u$ to be the restriction of the $\H^{n-1}$ almost everywhere defined precise representative of $u$ on $\p\Omega$.
\end{lemma}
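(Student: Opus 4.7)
The plan is to establish the trace operator via the classical three-step procedure: (1) reduce to the half-space via a partition of unity and bi-Lipschitz straightening of the boundary; (2) construct and estimate the trace on the half-space; (3) identify it with the restriction of the precise representative.

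For step (1), since $\Omega$ is Lipschitz, there exists a finite cover $\{U_i\}$ of $\partial\Omega$ together with bi-Lipschitz maps $\Phi_i \colon U_i \cap \Omega \to \bb R^n_+ \cap V_i$ sending $\partial\Omega \cap U_i$ onto $\{x_n = 0\} \cap V_i$. Using a subordinate partition of unity $\{\varphi_i\}$ and the localisation property of Besov spaces \eqref{eq:besovlocalisation} (with the boundary analogue being standard), it suffices to bound $\tp{Tr}(\varphi_i u \circ \Phi_i^{-1})$ in $\WW^{1-1/p,p}(\bb R^{n-1})$ by $\|u\|_{\WW^{1,p}(\Omega)}$; the bi-Lipschitz change of variables preserves both Sobolev and Besov norms up to constants depending only on the Lipschitz constant of the chart.

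For step (2), on the half-space I would first define $\tp{Tr}\,u(x') := u(x',0)$ for $u \in C^\infty_c(\overline{\bb R^n_+})$, a space dense in $\WW^{1,p}(\bb R^n_+)$. The $\LL^p$ bound
\begin{equation*}
\int_{\bb R^{n-1}} |u(x',0)|^p \,\d x' \lesssim \|u\|_{\WW^{1,p}(\bb R^n_+)}^p
\end{equation*}
follows from writing $|u(x',0)|^p = -\int_0^\infty \p_n(|u(x',x_n)|^p \eta(x_n))\,\d x_n$ for a cutoff $\eta$ and applying Young's inequality. The Besov seminorm bound
\begin{equation*}
\int_{\bb R^{n-1}}\int_{\bb R^{n-1}} \frac{|u(x',0)-u(y',0)|^p}{|x'-y'|^{n+p-2}}\,\d x'\,\d y' \lesssim \|\D u\|_{\LL^p(\bb R^n_+)}^p
\end{equation*}
is obtained by inserting the intermediate value $u(z, |x'-y'|)$ at $z = \tfrac{x'+y'}{2}$, estimating each piece by the fundamental theorem of calculus along suitable line segments, and applying a standard Hardy-type inequality; this gives exactly the intrinsic $\WW^{1-1/p,p}$ seminorm by the Gagliardo--Slobodeckij characterisation. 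Density then extends $\tp{Tr}$ to a bounded linear map on $\WW^{1,p}(\Omega)$.

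For step (3), to identify the trace with the restriction of the precise representative, I would use the fact that for $u \in \WW^{1,p}(\Omega)$ with $p>1$, the $(1,p)$-capacity of the set where Lebesgue points fail has vanishing $(n-1)$-Hausdorff content, so the precise representative $u^*$ is defined $\H^{n-1}$-a.e.\ on $\p\Omega$. For smooth $u$ the two definitions coincide trivially; for general $u$, approximate by smooth functions and use that convergence in $\WW^{1,p}$ implies, up to a subsequence, quasi-everywhere convergence of the precise representatives, which by the capacity--Hausdorff measure comparison yields $\H^{n-1}$-a.e.\ convergence on $\p\Omega$. The main obstacle is this last identification: carefully handling the capacity argument on a Lipschitz (not smooth) boundary, which is standard but technical; all other steps are purely computational once the boundary has been flattened.
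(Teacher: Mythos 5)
The paper does not actually prove this lemma: it is stated as a classical fact with a pointer to the literature (Adams--Fournier), so there is no in-paper argument to compare against. Your proposal is the standard textbook proof and is sound in outline: localisation plus bi-Lipschitz flattening (which preserves both the $\WW^{1,p}$ and the Gagliardo $\WW^{1-\frac1p,p}$ seminorms up to constants depending on the Lipschitz character), the half-space $\LL^p$ bound by the fundamental theorem of calculus and Young's inequality, the Gagliardo seminorm bound with the correct exponent $n+p-2=(n-1)+\bigl(1-\frac1p\bigr)p$ via the intermediate point and a Hardy-type inequality, and the identification with the precise representative via $(1,p)$-capacity --- for $p>1$ sets of zero $(1,p)$-capacity have Hausdorff dimension at most $n-p<n-1$, hence zero $\H^{n-1}$-measure, and quasi-everywhere convergence of precise representatives along a subsequence closes the argument. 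One point worth making explicit in your step (3): the precise representative at a boundary point only makes sense if the averages are taken either over one-sided balls $\Omega_r(x)$ or with respect to a Sobolev extension of $u$ to a neighbourhood of $\overline\Omega$ (which exists precisely because $\Omega$ is Lipschitz); without saying which convention is used, ``the precise representative on $\p\Omega$'' is not yet defined. With that clarification your argument is complete.
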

Finally we recall the following well-known result (see \cite{Evans1992} for the ingredients of the proof) which will justify extending $u$ by extensions of $g$.
We will later collect results concerning extensions preserving Besov-regularity in Section \ref{sec:oddeven_extension}.
\begin{lemma}\label{lem:extension}
Let $p\in[1,\infty]$. Let $V\Supset \Omega$ be an open, bounded set.
Suppose $u\in \WW^{1,p}(\Omega)$ and $v\in \WW^{1,p}(V)$ such that $\restr v\Omega \in u+\WW^{1,p}_0(\Omega)$. Then the map
\begin{align*}
w = \begin{cases}
	u \text{ in } \Omega \\
	v \text{ in } V\setminus\Omega
	\end{cases}
\end{align*}
	belongs to $\WW^{1,p}(V)$.
\end{lemma}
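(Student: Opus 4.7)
The plan is to reduce the gluing problem to a standard extension-by-zero argument for $\WW^{1,p}_0$ functions. First I would set $\varphi = \restr{v}{\Omega} - u$, which by assumption lies in $\WW^{1,p}_0(\Omega)$. The point is that $\WW^{1,p}_0(\Omega)$ functions extend by zero to $\WW^{1,p}(V)$: pick a sequence $\varphi_k \in C_c^\infty(\Omega)$ converging to $\varphi$ in $\WW^{1,p}(\Omega)$; each $\varphi_k$ has compact support in $\Omega \subset V$, so its extension by zero $\tilde\varphi_k$ lies in $C_c^\infty(V)$. The sequence $\tilde\varphi_k$ is Cauchy in $\WW^{1,p}(V)$ (its $\WW^{1,p}(V)$ norm equals the $\WW^{1,p}(\Omega)$ norm of $\varphi_k$), so it converges to some $\tilde\varphi \in \WW^{1,p}(V)$ which agrees with $\varphi$ on $\Omega$ and vanishes on $V \setminus \Omega$.

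Next I would write
\begin{equation*}
w = v - \tilde\varphi\, \chi_{\Omega \cup \p\Omega},
\end{equation*}
or, more cleanly, simply observe that $v - \tilde\varphi \in \WW^{1,p}(V)$ (difference of two $\WW^{1,p}(V)$ functions) and verify pointwise almost everywhere that $v - \tilde\varphi = u$ on $\Omega$ and $v - \tilde\varphi = v$ on $V \setminus \Omega$. This exactly matches the definition of $w$, so $w \in \WW^{1,p}(V)$ with weak gradient given by $\D v - \D\tilde\varphi$.

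There is no real obstacle here; the only point to be careful about is the justification that extension by zero from $\WW^{1,p}_0(\Omega)$ into $\WW^{1,p}(V)$ is a bounded linear map, but this is immediate from the density of $C_c^\infty(\Omega)$ and the fact that for smooth compactly supported functions the weak derivative of the zero-extension is just the zero-extension of the derivative. If $p = \infty$ the same argument works via the identification of $\WW^{1,\infty}$ with Lipschitz functions, or simply by noting that the hypothesis forces $u$ and $v$ to have matching boundary traces so that the glued map is Lipschitz on $V$.
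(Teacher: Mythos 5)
Your argument is correct and is exactly the standard proof the paper alludes to by citing Evans for its "ingredients": reduce to the fact that the zero-extension of a $\WW^{1,p}_0(\Omega)$ function lies in $\WW^{1,p}(V)$, then write $w = v - \tilde\varphi$ with $\varphi = \restr{v}{\Omega} - u$. Note that your hedge for $p=\infty$ is unnecessary here, since the paper defines $\WW^{1,\infty}_0(\Omega)$ as the closure of $C_0^\infty(\Omega)$ in the $\WW^{1,\infty}$-norm, so your density argument applies verbatim in that case as well.
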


\subsection{\texorpdfstring{$V$}{V}-functions and inequalities}

For $p \in [1,\infty)$ and $\mu \geq 0,$ we will introduce the \emph{$V$-functions}
\begin{equation}
  V_{p,\mu} = (\mu^2 + \lvert z\rvert^2)^{\frac{p-2}4}z.
\end{equation} 
If $\mu$ is clear from context, we will write $V_{p,\mu} = V_p.$
The fundamental property these functions satisfy is that
\begin{equation}\label{eq:Vfunction_fundemental}
  \lvert V_{p,\mu}(z)\rvert^2  = (\mu^2+\lvert z\rvert^2)^{\frac{p-2}2}\lvert z\rvert^2 \sim \begin{cases} \lvert z\rvert^2 & \text{ if } \lvert z\rvert \leq \mu \\ \lvert z\rvert^p & \text{ if } \lvert z\rvert \geq \mu,\end{cases}
\end{equation} 
noting that $\lvert V_{p,0}(z)\rvert^2 = \lvert z\rvert^p$ when $\mu=0$.
This modular quantity naturally arises instead of the $p$-norm, which is the primary complication in the subquadratic ($p \leq 2$) case.

We will record several useful and well-known properties and estimates involving these functions.

\begin{lemma}[Pointwise estimates]
  For $1 \leq p < \infty$ and $\mu \geq 0,$ the following hold for all $z, z_1, z_2 \in \bb R^{Nn}$ and $\lambda>0,$ where the implicit constants depend on $p, \mu$ only.
  \begin{align}
    \min\{\lambda^2,\lambda^p\}\lvert V_{p,\mu}(\lambda z)\rvert^2 &\leq \lvert V_{p,\mu}(\lambda z)\rvert^2 \leq \max\{\lambda^2,\lambda^p\}\lvert V_{p,\mu}(\lambda z)\rvert^2 \label{eq:vfunction_homogeneity} \\
    \lvert V_{p,\mu}(z_1 + z_2)\rvert^2 &\lesssim \left( \lvert V_{p,\mu}(z_1)\rvert^2 + \lvert V_{p,\mu}(z_2)\rvert^2 \right)\label{eq:vfunction_additive} \\
    \lvert V_{p,\mu}(z_1)-V_{p,\mu}(z_2)\rvert^2  &\sim (\mu^2 + \lvert z_1\rvert^2 + \lvert z_2\rvert^2)^{\frac{p-2}2}\lvert z_1 - z_2\rvert^2 \label{eq:vfunction_difference}
  \end{align} 
  Furthermore for each $\delta \in (0,1)$ we also have Young's inequality which takes the form
  \begin{equation}\label{eq:vfunction_young}
    \lvert z \cdot w \rvert \leq \delta \lvert V_{p,\mu}(\lvert z\rvert)\rvert^2 + C(p)\delta^{1-\min\{2,p\}} \lvert V_{p^\prime,\mu^{p-1}}(\lvert w\rvert)\rvert^2.
  \end{equation} 
\end{lemma}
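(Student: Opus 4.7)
The plan is to derive all four estimates from the explicit formula $\lvert V_{p,\mu}(z)\rvert^2 = (\mu^2+\lvert z\rvert^2)^{(p-2)/2}\lvert z\rvert^2$ together with the equivalence \eqref{eq:Vfunction_fundemental}. In particular this makes $\lvert V_{p,\mu}(z)\rvert^2$ a radial function of $z$ which is monotone increasing in $\lvert z\rvert$, since the derivative of $t\mapsto (\mu^2+t^2)^{(p-2)/2}t^2$ equals $t(\mu^2+t^2)^{(p-4)/2}(pt^2+2\mu^2)\geq 0$. All four statements are classical (see e.g.\ Acerbi--Fusco or Hamburger), so my sketch focuses on the structural steps rather than on the book-keeping.

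For (i), I would insert $\lambda$ directly and compare the prefactor $(\mu^2+\lambda^2\lvert z\rvert^2)^{(p-2)/2}$ against $(\mu^2+\lvert z\rvert^2)^{(p-2)/2}$: a case split according to the sign of $p-2$ and whether $\lambda\le 1$ or $\lambda\ge 1$ produces the competing factors $\lambda^2$ and $\lambda^p$, from which the min--max bounds follow. For (ii) I would combine radial monotonicity with the triangle inequality to obtain $\lvert V_{p,\mu}(z_1+z_2)\rvert^2\le \lvert V_{p,\mu}(2\max\{\lvert z_1\rvert,\lvert z_2\rvert\})\rvert^2$, apply (i) with $\lambda=2$ to pull out a constant, and finally bound the maximum by the sum.

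The main technical step is (iii). I would write
\begin{equation*}
V_{p,\mu}(z_1) - V_{p,\mu}(z_2) = \int_0^1 \D V_{p,\mu}(z_t)[z_1-z_2]\,dt, \qquad z_t := z_2 + t(z_1-z_2),
\end{equation*}
and use the derivative bound $\lvert \D V_{p,\mu}(z)\rvert \sim (\mu^2+\lvert z\rvert^2)^{(p-2)/4}$, obtained by direct differentiation of the defining formula. The claim then reduces to the uniform comparison $\int_0^1 (\mu^2+\lvert z_t\rvert^2)^{(p-2)/2}\,dt \sim (\mu^2+\lvert z_1\rvert^2+\lvert z_2\rvert^2)^{(p-2)/2}$. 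For $p\geq 2$ the upper bound follows from $\lvert z_t\rvert\leq \lvert z_1\rvert+\lvert z_2\rvert$, and the lower bound from restricting to $t\in[\tfrac14,\tfrac34]$ and estimating $\lvert z_t\rvert$ from below via the larger of $\lvert z_1\rvert, \lvert z_2\rvert$. The hard case is $p<2$, where the integrand can degenerate at interior points of the segment; here one must identify a subset of $[0,1]$ of measure $\gtrsim 1$ on which $\mu^2+\lvert z_t\rvert^2 \gtrsim \mu^2+\lvert z_1\rvert^2+\lvert z_2\rvert^2$ by a geometric argument. This subquadratic case is the one genuinely delicate step in the lemma.

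For (iv) I would apply the standard weighted Young inequality to the scalar convex function $\Phi(t):=\lvert V_{p,\mu}(t)\rvert^2$, using $\lvert zw\rvert\le \delta\Phi(\lvert z\rvert)+C_\delta\Phi^*(\lvert w\rvert)$, and show that $\Phi^*$ is equivalent to $\lvert V_{p',\mu^{p-1}}(\cdot)\rvert^2$. Indeed $\Phi(t)\sim \mu^{p-2}t^2$ for $t\le \mu$ and $\Phi(t)\sim t^p$ for $t\ge \mu$, and the corresponding Legendre conjugates $\mu^{-(p-2)}s^2$ and $s^{p'}$ match precisely the quadratic and $p'$-power regimes of $\lvert V_{p',\mu^{p-1}}(s)\rvert^2$ separated at $s=\mu^{p-1}$. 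The dependence $\delta^{1-\min\{2,p\}}$ on the constant is then obtained by rescaling $z\mapsto \lambda z$, $w\mapsto \lambda^{-1}w$ via (i) and optimising in $\lambda$ separately in the cases $p\leq 2$ and $p\geq 2$.
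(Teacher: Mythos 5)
Your overall route is the same as the paper's, which simply declares \eqref{eq:vfunction_homogeneity}--\eqref{eq:vfunction_additive} easy, cites the literature for \eqref{eq:vfunction_difference}, and derives \eqref{eq:vfunction_young} from Young's inequality for the $N$-function $e_{p,\mu}$ and the conjugacy $e_{p,\mu}^*\sim e_{p^\prime,\mu^{p-1}}$ plus rescaling — exactly your plan for (iv). Items (i) and (ii) are fine as sketched. For (iii), one caveat: the fundamental theorem of calculus together with the \emph{norm} bound $\lvert \D V_{p,\mu}(z_t)\rvert\sim(\mu^2+\lvert z_t\rvert^2)^{\frac{p-2}4}$ gives the upper bound but not the lower one, since $\bigl\lvert\int_0^1 \D V_{p,\mu}(z_t)[z_1-z_2]\,\d t\bigr\rvert$ could in principle degenerate by cancellation. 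The standard fix is to pair with $z_1-z_2$, use that $\D V_{p,\mu}(z)$ is symmetric positive definite with smallest eigenvalue $\gtrsim(\mu^2+\lvert z\rvert^2)^{\frac{p-2}4}$, and then apply Cauchy--Schwarz; note also that the integral comparison you then need is $\int_0^1(\mu^2+\lvert z_t\rvert^2)^{\gamma}\,\d t\sim(\mu^2+\lvert z_1\rvert^2+\lvert z_2\rvert^2)^{\gamma}$ with $\gamma=\frac{p-2}4$, not $\frac{p-2}2$. These are sketch-level imprecisions in an otherwise correct classical argument.

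The genuine problem is the final step of (iv). Carrying out your rescaling $z\mapsto\lambda z$, $w\mapsto\lambda^{-1}w$ with \eqref{eq:vfunction_homogeneity} and $\lambda\leq1$ produces the coefficient $\lambda^{\min\{2,p\}}$ in front of $\lvert V_{p,\mu}\rvert^2$ and $\lambda^{-\max\{2,p^\prime\}}$ in front of $\lvert V_{p^\prime,\mu^{p-1}}\rvert^2$; setting $\delta=\lambda^{\min\{2,p\}}$ yields the exponent $-\max\{2,p^\prime\}/\min\{2,p\}$. This equals $1-\min\{2,p\}=-1$ precisely when $p\geq2$, but for $1<p<2$ it equals $-\frac1{p-1}<1-p$, and this worse exponent is in fact sharp: with $\mu=0$ and scalar $z=t$, $w=s$, maximising $ts-\delta t^p$ over $t$ gives $c_p\,\delta^{-\frac1{p-1}}s^{p^\prime}$, which is not $\lesssim\delta^{1-p}s^{p^\prime}$ uniformly in $\delta\in(0,1)$. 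So the inequality with exponent $1-\min\{2,p\}$ is false in the subquadratic case; the correct statement has $\delta^{1-\max\{2,p^\prime\}}=\delta^{-\max\{1,\frac1{p-1}\}}$, and your "optimising in $\lambda$ separately in the cases $p\leq2$ and $p\geq2$" cannot produce the printed exponent when $p<2$. This appears to be a typo in the lemma rather than a flaw in your method (and it is harmless for the paper's applications, where $\delta$ is fixed before the constant matters), but a complete proof must either correct the exponent or restrict to $p\geq2$.
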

Properties \eqref{eq:vfunction_homogeneity}, \eqref{eq:vfunction_additive} are easy to check, and see for instance \cite{Giaquinta1986a} for \eqref{eq:vfunction_difference}.
The estimate \eqref{eq:vfunction_young} follows by Young's inequality applied to the $N$-function ${e_{p,\mu}(t) = (1+t^2)^{\frac{p-1}2}t^2}$ together with \eqref{eq:vfunction_homogeneity}, noting its conjugate satisfies $e_{p,\mu}^* \sim e_{p^\prime,\mu^{p-1}}.$

We will later need to compare $V$-functions with different exponents, for which it will be useful to record that
\begin{equation}\label{eq:Vfunction_pq}
  \lvert V_{q,\mu}(z)\rvert^2 \lesssim \lvert V_{p,\mu}(z)\rvert^2 + \lvert V_{p,\mu}(z)\rvert^{\frac {2q}p},
\end{equation} 
for $p \leq q.$ This follows by using \eqref{eq:Vfunction_fundemental} and distinguishing between the case of small and large $\lvert z\rvert$ separately.

We will also use the following Poincar\'e-Sobolev inequality adapted to $V$-functions.
To apply these near the boundary it will be necessary to ensure the associated constants do not change, which is recorded in the following result.
\begin{lemma}\label{lem:precise_poincaresobolev}
  Let $1 \leq p < n$ and suppose $D \subset \bb R^n$ is a bounded convex domain such that there exists $x_0 \in \Omega$ and $0<r<R<\infty$ such that $B_{r}(x_0) \subset D \subset B_{R}(x_0).$ Then for any $u \in \WW^{1,p}(\Omega)$ we have
  \begin{equation}\label{eq:poincaresobolev_norm}
    \norm{u - (u)_D}_{\LL^{p^*}(\Omega)} \leq C \norm{\D u}_{\LL^p(\Omega)}
  \end{equation} 
  where $p^* = \frac{np}{n-p}.$ Also for general $1 \leq p < \infty$ we have 
  \begin{equation}\label{eq:poincaresobolev_vfunction}
    \left(\dashint_D \left\lvert V_{p,\mu}\left( \frac{u-(u)_D}{R} \right) \right\rvert^{\frac{2n}{n-1}} \,\d x\right)^{\frac{n-1}n} \leq C\dashint_D \lvert V_{p,\mu}(\D u)\rvert^2 \,\d x.
  \end{equation} 
  In both cases the constant $C$ depends on $n, p,r/R$ only.
\end{lemma}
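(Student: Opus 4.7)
The plan is to handle the two inequalities in turn. For \eqref{eq:poincaresobolev_norm}, the standard Poincar\'e inequality on bounded convex domains gives $\norm{u-(u)_D}_{\LL^p(D)} \lesssim \diam(D)\,\norm{\D u}_{\LL^p(D)} \leq 2R \norm{\D u}_{\LL^p(D)}$ with constant depending only on $n, p$. Upgrading to $\LL^{p^*}$ is achieved via the Sobolev embedding $\WW^{1,p}(D) \hookrightarrow \LL^{p^*}(D)$, whose constant depends only on the John constant of $D$. Since $D$ is convex with $B_r(x_0) \subset D \subset B_R(x_0)$, it is a John domain whose parameters are controlled purely by $r/R$; after scaling, the combined constant in \eqref{eq:poincaresobolev_norm} therefore depends only on $n, p, r/R$.

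For \eqref{eq:poincaresobolev_vfunction}, the first step is a scaling reduction in $\mu$. Using the identity $V_{p,\mu}(z) = \mu^{p/2}\,V_{p,1}(z/\mu)$ valid for $\mu > 0$, the substitution $\tilde u = u/\mu$ shows that the inequality for $(u,\mu)$ is equivalent to the inequality for $(\tilde u, 1)$, since the common factor $\mu^p$ cancels from both sides. It therefore suffices to prove the estimate, with constants independent of $\mu$, in the two cases $\mu = 0$ and $\mu = 1$.

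In the $\mu = 0$ case, $|V_{p,0}(z)|^2 = |z|^p$ and the claim reduces to the classical $p$-Sobolev-Poincar\'e estimate with target exponent $pn/(n-1) \leq p^* = np/(n-p)$ (valid for $p < n$; the cases $p \geq n$ follow from standard embeddings). In the $\mu = 1$ case, $\Phi(t) := |V_{p,1}(t)|^2$ is an N-function with uniform $\Delta_2, \nabla_2$ constants, behaving like $t^2$ for $t \leq 1$ and like $t^p$ for $t \geq 1$, and the claim is an Orlicz-Sobolev-Poincar\'e inequality for $\Phi$ on $D$. This can be proved either by a direct splitting of $D$ into $\{\lvert \D u\rvert \leq 1\}$ and $\{\lvert \D u\rvert > 1\}$ and applying $\LL^2$- and $\LL^p$-Sobolev-Poincar\'e separately to suitable truncations of $u-(u)_D$, or by invoking standard Orlicz-Sobolev theory. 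The main obstacle is controlling the constants uniformly in $\mu$ (resolved by the scaling reduction) and ensuring that only the ratio $r/R$, not $r$ and $R$ individually, enters (guaranteed by the John domain property of $D$, whose John constant is bounded purely in terms of $r/R$).
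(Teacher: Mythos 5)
Your treatment of \eqref{eq:poincaresobolev_norm} matches the paper's, which simply cites the Sobolev--Poincar\'e inequality on John domains (Bojarski) together with the observation that a convex set with $B_r(x_0)\subset D\subset B_R(x_0)$ is a John domain with constant controlled by $r/R$. For \eqref{eq:poincaresobolev_vfunction} your route is genuinely different in organisation: you first normalise $\mu$ via $V_{p,\mu}(z)=\mu^{p/2}V_{p,1}(z/\mu)$ (this identity and the cancellation of $\mu^p$ on both sides are correct), reducing to the cases $\mu=0$ (classical) and $\mu=1$ (an Orlicz--Sobolev--Poincar\'e inequality for the $N$-function $\Phi(t)=(1+t^2)^{(p-2)/2}t^2$). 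The paper instead proves the $\mu$-uniform statement directly in two lines: apply the $p=1$ case of \eqref{eq:poincaresobolev_norm} to the scalar function $e_{p,\mu}\bigl(\lvert u-(u)_D\rvert/R\bigr)$, use the chain rule and Young's inequality \eqref{eq:vfunction_young} to bound $e_{p,\mu}'\cdot\lvert\D u\rvert/R$ by $e_{p,\mu}(\lvert u-(u)_D\rvert/R)+e_{p,\mu}(\D u)$, and then absorb the zeroth-order term using the modular Poincar\'e inequality $\dashint_D e_{p,\mu}(\lvert u-(u)_D\rvert/R)\lesssim\dashint_D e_{p,\mu}(\D u)$ from Bhattacharya--Leonetti. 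Your ``invoke standard Orlicz--Sobolev theory'' option is acceptable and amounts to the same ingredients, though note that the required statement is the modular form (the $\LL^{n/(n-1)}$ norm of $\Phi(\cdot)$, not $\Phi$ of a norm), so you must cite or reproduce exactly this version with constants depending only on the $\Delta_2/\nabla_2$ constants of $\Phi$ and the John constant.

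One concrete caveat: your fallback of ``splitting $D$ into $\{\lvert\D u\rvert\le 1\}$ and $\{\lvert\D u\rvert>1\}$ and applying $\LL^2$- and $\LL^p$-Sobolev--Poincar\'e to suitable truncations of $u-(u)_D$'' does not work as stated. A level-set truncation of $u-(u)_D$ localises according to the size of $u-(u)_D$, not of $\D u$, so there is no decomposition $u-(u)_D=u_1+u_2$ with $\D u_1=\D u\,\chi_{\{\lvert\D u\rvert\le1\}}$; making this rigorous would require a Lipschitz-truncation argument, which is considerably heavier than the lemma itself. If you want a self-contained proof rather than a citation, the paper's composition-plus-Young argument is the efficient way to do it.
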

\begin{proof}
  Estimate \eqref{eq:poincaresobolev_norm} follows for instance from the corresponding result established for John domains in \cite[Theorem 5.1]{Bojarski1988}, noting that in particular convex domains are John domains as observed in \cite[Remark 2.4(c)]{Martio1978}.
  For \eqref{eq:poincaresobolev_vfunction}, applying the $p=1$ case to $e_{p,\mu}\left(\frac{u-(u)_D}R\right)$ gives
  \begin{equation}
    \begin{split}
      \left(\int_D e_{p,\mu}\left(\frac{u-(u)_D}R\right)^{\frac{n}{n-1}} \,\d x\right)^{\frac{n-1}n} 
      &\leq \frac{C}{\lvert D\rvert^{\frac1n}} \dashint_D e_{p,\mu}^\prime\left(\frac{\lvert u -(u)_{D}\rvert}R \right) \frac{\lvert \D u\rvert}R \,\d x \\
      &\leq C \dashint_D e_{p,\mu}\left( \frac{\lvert u -(u)_{D}\rvert}R \right) + e_{p,\mu}(\D u) \,\d x,
      \end{split}
  \end{equation} 
  where we have used \eqref{eq:vfunction_young} in the last line, and the fact that $\mathscr \lvert D\rvert^{\frac1n} \sim R \sim r$ where the implicit constant depends on $n$ and $r/R.$
  It remains to show the estimate
  \begin{equation}
    \dashint_D e_{p,\mu}\left( \frac{\lvert u -(u)_{D}\rvert}R \right) \,\d x \leq C \dashint_{D} e_{p,\mu}(\D u) \,\d x,
  \end{equation} 
  which is proved in \cite[Lemma 1]{Bhattacharya1991} (taking $F = e_{p,\mu}$) noting that $\diam(D) \sim  R.$
\end{proof}

\begin{lemma}\label{lem:annular_poincaresobolev}
  For $1 < p < \infty,$ for all $x_0 \in \overline{\bb R}^n_+$ and $0<r<s<\infty$ such that $(x_0)_n \not\in \left(\frac{s+r}2,r\right),$ defining the domain ${A = \bb R^n_+\cap \left(B_s(x_0) \setminus B_r(x_0)\right)}$ we have
    \begin{equation}
      \left( \int_{A} \lvert V_{p,\mu}(u)\rvert^{\frac{2n}{n-1}} \,\d x \right)^{\frac{n-1}n} \lesssim_{n,p} \int_{A} \left\lvert V_{p,\mu}\left( \frac{u}{s-r} \right) \right\rvert^2 + \lvert V_{p,\mu}(\D u)\rvert^2 \,\d x.
  \end{equation} 
  for all $u \in W^{1,p}(\Omega).$
\end{lemma}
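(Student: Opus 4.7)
The plan is to first establish the mean-zero annular Sobolev--Poincar\'e inequality
\begin{equation*}
  \left(\int_A \left|V_{p,\mu}\left(\tfrac{u-(u)_A}{s-r}\right)\right|^{\frac{2n}{n-1}}\right)^{\frac{n-1}n} \lesssim (s-r)^{-1}\int_A |V_{p,\mu}(\D u)|^2,
\end{equation*}
and then recover the full statement by splitting $|V_{p,\mu}(u)|^2 \lesssim |V_{p,\mu}(u-(u)_A)|^2 + |V_{p,\mu}((u)_A)|^2$ through \eqref{eq:vfunction_additive}. The annular estimate I would derive by covering $A$ with a finite family $\{D_i\}_{i\le N}$ of convex subdomains, each inscribing and circumscribing balls of radius comparable to $s-r$ with bounded overlap multiplicity, applying Lemma~\ref{lem:precise_poincaresobolev} on every $D_i$, and chaining the local means $(u)_{D_i}$ across overlaps. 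The geometric condition on $(x_0)_n$, which I read as $(x_0)_n \notin (r,\tfrac{s+r}2)$ since the displayed interval is empty for $s>r$, precisely excludes the degenerate configuration in which $B_r(x_0)$ sits slightly above $\partial \bb R^n_+$ and creates a thin sliver of $A$ near the boundary; in every admissible case $A$ is either a full annulus, a spherical cap minus a ball, or a half-annulus, and admits such a uniform covering.

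Given the annular estimate, the homogeneity relation \eqref{eq:vfunction_homogeneity} converts $|V_{p,\mu}((u-(u)_A)/(s-r))|^{\frac{2n}{n-1}}$ back to $|V_{p,\mu}(u-(u)_A)|^{\frac{2n}{n-1}}$ at the cost of a factor $\max((s-r)^2,(s-r)^p)^{\frac n{n-1}}$, whose contribution after taking the $(n-1)/n$-th root is dominated in the regime of small $s-r$ by the $(s-r)^{-1}$ loss in the Poincar\'e inequality, producing the desired bound by $\int_A |V_{p,\mu}(\D u)|^2$. For the mean term, the fact that $|V_{p,\mu}|^2$ is equivalent to a convex Young function of $|\cdot|$ gives, by Jensen's inequality, $|V_{p,\mu}((u)_A)|^2 \lesssim \dashint_A |V_{p,\mu}(u)|^2$; multiplying by $|A|^{(n-1)/n}$ and using \eqref{eq:vfunction_homogeneity} once more to replace $|V_{p,\mu}(u)|^2$ by $|V_{p,\mu}(u/(s-r))|^2$ delivers a bound by $\int_A |V_{p,\mu}(u/(s-r))|^2$. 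Adding the two contributions and invoking the subadditivity $(a+b)^{(n-1)/n}\le a^{(n-1)/n}+b^{(n-1)/n}$ then yields the claim.

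The main obstacle is the construction of the covering and the accompanying chaining argument: one needs the number of convex pieces and the overlap multiplicity to be controlled by a constant depending only on $n$, so that the Poincar\'e--Sobolev constant is truly of the form $C(n,p)$. This is precisely what the hypothesis on $(x_0)_n$ enforces by ruling out the degenerate slab-like geometry. Once the annular Poincar\'e--Sobolev estimate is in hand, the remaining V-function bookkeeping is mechanical given \eqref{eq:vfunction_additive} and \eqref{eq:vfunction_homogeneity}.
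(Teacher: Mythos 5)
Your reduction to a mean-zero Sobolev--Poincar\'e inequality on all of $A$ with the single mean $(u)_A$ and a constant scaling like $(s-r)^{-1}$ is the step that fails. For a thin annulus ($s-r\ll r$) the Poincar\'e constant of $A$ is governed by its diameter $\sim s$, not by its thickness $s-r$: taking $p=2$, $\mu=0$, $x_0$ far from $\{x_n=0\}$, and $u$ a smoothing of $\operatorname{sgn}(x_1-(x_0)_1)$ at scale $s-r$, one has $\norm{u-(u)_A}_{\LL^2(A)}^2\sim s^{n-1}(s-r)$ while $\norm{\D u}_{\LL^2(A)}^2\sim s^{n-2}$, and your displayed intermediate inequality (after H\"older on the left) forces $s\lesssim s-r$, which is false. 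Equivalently, the chaining you propose must connect antipodal parts of the annulus through $\sim (r/(s-r))^{n-1}$ convex pieces of size $s-r$, so the chain length, and hence the constant, cannot depend only on $n$. The hypothesis on $(x_0)_n$ does not rescue this: it is insensitive to the eccentricity $r/(s-r)$ and serves only to prevent the hyperplane $\{x_n=0\}$ from shaving the covering pieces that meet the boundary into slivers, so that each such piece still contains a comparable convex sub-piece.

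The repair --- and the paper's route --- is to never introduce a global mean. Since the target inequality already carries the zeroth-order term $\int_A\lvert V_{p,\mu}(u/(s-r))\rvert^2$ on the right, it suffices to cover $A$ by boundedly overlapping truncated cones $U_j=\bb R^n_+\cap\bigl(x_0+C_s(\theta,n_j)\setminus\overline{C_r(\theta,n_j)}\bigr)$ with aperture $\theta\sim s-r$; in spherical coordinates about $x_0$ each $U_j$ becomes a convex set with inscribed and circumscribed balls of comparable radii, so Lemma \ref{lem:precise_poincaresobolev} applies on each $U_j$ with its own local mean $(u)_{U_j}$, which is then absorbed into $\dashint_{U_j}\lvert V_{p,\mu}(u/(s-r))\rvert^2$ by Jensen's inequality exactly as in your last paragraph. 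Summing over $j$ using the finite overlap and the subadditivity of $t\mapsto t^{(n-1)/n}$ gives the claim with no chaining of means across pieces, which is precisely why the thin-annulus degeneracy is harmless there. Your V-function bookkeeping (homogeneity, additivity, Jensen for the mean term) is fine and carries over verbatim to this piecewise argument.
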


\begin{proof}
  Setting $\theta = s-r,$ consider the cones
  \begin{equation}
    U_j = \bb R^n_+ \cap (x_0 + C_s(\theta,n_j) \setminus \overline{C_r(\theta,n_j)}),
  \end{equation} 
  and choose finitely many $\{n_j\} \subset S^{n-1}$ so that $\{U_j\}$ covers $B_s \setminus B_r.$ 
  We can also ensure that $\sum_j \chi_{U_j} \leq M$ with $M$ independent of $s,t.$
  Now we wish to show the corresponding Sobolev inequality on each $U_j,$ from which the result will follow.
  If $B_s(x_0) \subset \Omega,$ this can be done for instance by parametrising $B_1$ by spherical coordinates, and noting this parametrisation restricts to a diffeomorphism on each $U_j$ by our choice of angle $\theta,$ and applying Lemma \ref{lem:precise_poincaresobolev} there (noting this diffeomorphism maps each $U_j$ to a convex set).
  In the boundary case, our restriction on $(x_0)_n$ ensures each $U_j$ contains a cone $\tilde U_{i}$ associated to $\bb R^n_+ \cap (B_{\frac{s+r}2}(x_0) \setminus B_r(x_0))$ and becomes convex under this parametrisation. 
  Thus we can also apply Lemma \ref{lem:precise_poincaresobolev} in this case.
\end{proof}

We will frequently pass from differentiability of the $V$-functional to differentiability of the function itself. The following is elementary, but will be useful in the sequel.

\begin{lemma}\label{lem:Vfunc_diff}
  Suppose $s \in (0,1),$ $\mu>0,$ $2 \leq r < \infty$ and $1 < p < \infty.$ Given $\Omega \subset \bb R^n$ a bounded domain, let $u$ be weakly differentiable in $\Omega$ such that $V_{p,\mu}(\D u) \in \BB^{s,r}_{\infty}(\Omega).$
  Then if $p \geq 2$ we have $u \in \BB^{1+\frac{2s}p,\frac{rp}2}_{\infty}(\Omega)$ with the associated estimate
  \begin{equation}
    \seminorm{\D u}_{\BB^{\frac {2s}p,\frac{rp}2}_{\infty}(\Omega)} \leq \seminorm{V_{p,\mu}(\D u)}_{\BB^{s,r}_{\infty}(\Omega)}^{\frac 2p}.
  \end{equation} 
  If $p < 2,$ then $u \in \BB^{1+s,\frac{rp}2}_{\infty}(\Omega)$ with the associated estimate
  \begin{equation}
    \seminorm{\D u}_{\BB^{s,\frac{rp}2}_{\infty}(\Omega)} \lesssim \left( \mu^{\frac{2}{rp}}\,\lvert\Omega\rvert^{\frac{2}{rp}} + \norm{\D u}_{\LL^{\frac{rp}2}} \right)^{1-\frac p2}\seminorm{V_{p,\mu}(\D u)}_{\BB^{s,r}_\infty(\Omega)}.
  \end{equation} 
  Moreover, if $\frac{2n} r-2s>0$, $u\in \WW^{1,\frac{np}{\frac{2n} r-2s}}(\Omega)$ with the estimate
  \begin{align*}
  [\D u]_{\LL^\frac{np}{\frac{2n} r-2s}(\Omega)}\lesssim \begin{cases}\|V_{p,\mu}(\D u)\|_{B^{s,r}_\infty(\Omega)}^\frac 2 p \quad& \text{ if } p\geq 2\\
  \mu^\frac{np}{\frac{2n} r-2s}\lvert \Omega\rvert + \|V_{p,\mu}(\D u)\|_{B^{s,r}_\infty(\Omega)} & \text{ if } p\leq 2.
  \end{cases}
  \end{align*}
\end{lemma}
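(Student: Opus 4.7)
The guiding principle is to transfer fractional differentiability from $V_{p,\mu}(\D u)$ to $\D u$ itself using the pointwise equivalence \eqref{eq:vfunction_difference}, which can be rewritten as
\begin{equation*}
\lvert z_1 - z_2\rvert \sim (\mu^2+\lvert z_1\rvert^2+\lvert z_2\rvert^2)^{\frac{2-p}{4}}\lvert V_{p,\mu}(z_1)-V_{p,\mu}(z_2)\rvert.
\end{equation*}
Applied with $z_1=\D u(x+h)$ and $z_2=\D u(x)$, this reduces everything to controlling the difference quotients of $V_{p,\mu}(\D u)$ via the characterisation \eqref{eq:besovcharacterisation} of $\BB^{s,r}_\infty$.

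For the case $p\geq 2,$ the weight $(\mu^2+\lvert z_1\rvert^2+\lvert z_2\rvert^2)^{\frac{2-p}{4}}$ is bounded pointwise by $\lvert z_1-z_2\rvert^{(2-p)/2}$ (up to constants), so we obtain the clean pointwise bound $\lvert z_1-z_2\rvert^{p}\lesssim \lvert V_{p,\mu}(z_1)-V_{p,\mu}(z_2)\rvert^{2}.$ Raising to the $r/2$-power, integrating over $\Omega^h,$ and taking the $h$-supremum divided by $\lvert h\rvert^s$ immediately yields $\D u\in \BB^{\frac{2s}{p},\frac{rp}{2}}_{\infty}(\Omega)$ with the stated seminorm bound.

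For $1<p<2$ the weight $(\mu^2+\lvert z_1\rvert^2+\lvert z_2\rvert^2)^{\frac{2-p}{4}}$ is no longer a power of $\lvert z_1-z_2\rvert,$ so the plan is to decouple it by H\"older's inequality with conjugate exponents $a=\frac{2}{2-p}$ and $a^\prime=\frac{2}{p}$ applied to the integral of $\lvert \D u(x+h)-\D u(x)\rvert^{rp/2}.$ This choice is dictated by wanting the $V$-factor to be integrated to the power $r.$ The resulting inequality is
\begin{equation*}
\int_{\Omega^h}\lvert \D u(x+h)-\D u(x)\rvert^{rp/2}\,\d x\lesssim \left(\int_{\Omega}(\mu^2+\lvert \D u\rvert^2)^{rp/4}\,\d x\right)^{\!\frac{2-p}{2}}\!\left(\int_{\Omega^h}\lvert V_{p,\mu}(\D u)(x+h)-V_{p,\mu}(\D u)(x)\rvert^r\,\d x\right)^{\!\frac{p}{2}},
\end{equation*}
where the first factor is bounded by a combination of $\mu,\lvert \Omega\rvert,$ and $\|\D u\|_{\LL^{rp/2}},$ and the second factor gives $\lvert h\rvert^{rsp/2}[V_{p,\mu}(\D u)]_{\BB^{s,r}_\infty}^{rp/2}.$ Raising to the $2/(rp)$-power and using $(A+B)^{1/(rp/2)}\sim A^{2/(rp)}+B^{2/(rp)}$ yields the claimed estimate.

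The final Lebesgue embedding statement follows by combining Theorem \ref{thm:embedding} with the pointwise comparison between $\lvert\D u\rvert^p$ and $\lvert V_{p,\mu}(\D u)\rvert^{2}.$ Specifically, the Besov embedding gives $V_{p,\mu}(\D u)\in \LL^{r_1}(\Omega)$ with $r_1=\frac{2n}{2n/r-2s}$ since the exponents are at the borderline of \eqref{eq:embedding_exponents}. For $p\geq 2$ the inequality $\lvert z\rvert^{p}\leq \lvert V_{p,\mu}(z)\rvert^{2}$ holds directly from $(\mu^2+\lvert z\rvert^{2})^{(p-2)/2}\geq \lvert z\rvert^{p-2},$ giving $\D u\in \LL^{pr_1/2}$ with norm bounded by $\|V_{p,\mu}(\D u)\|_{\BB^{s,r}_\infty}^{2/p}.$ For $p\leq 2$ the analogous inequality only holds up to an additive $\mu^{p}$-term, accounting for the extra $\mu$-dependent term in the stated bound. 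The main subtlety is the H\"older decoupling in the $p<2$ case; otherwise the argument is a direct consequence of \eqref{eq:vfunction_difference}, \eqref{eq:besovcharacterisation}, and Theorem \ref{thm:embedding}.
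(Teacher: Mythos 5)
Your proposal is correct and follows essentially the same route as the paper's proof: the $p\geq 2$ case via the pointwise bound $\lvert z_1-z_2\rvert^p\lesssim\lvert V_{p,\mu}(z_1)-V_{p,\mu}(z_2)\rvert^2$ from \eqref{eq:vfunction_difference}, the $p<2$ case via H\"older with the same conjugate exponents $\tfrac2{2-p}$ and $\tfrac2p$ decoupling the weight from the $V$-difference, and the Lebesgue embedding via Theorem \ref{thm:embedding} together with the comparison of $\lvert\D u\rvert^p$ and $\lvert V_{p,\mu}(\D u)\rvert^2$ (splitting off the $\mu$-term when $p\leq 2$). The only cosmetic difference is that for $p\geq 2$ you apply the Sobolev embedding to $V_{p,\mu}(\D u)$ before converting to $\D u$, whereas the paper converts first; both give the identical exponent and bound.
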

\begin{proof}
  The case $p \geq 2$ follows by noting that $\lvert\,\cdot\,\rvert^p = \lvert V_{p,0}(\cdot)\rvert^2 \leq \lvert V_{p,\mu}(\cdot)\rvert^2$ by definition of $V_{p,\mu}$. If $p < 2,$ for $h \in \bb R^n$ we have by H\"older and \eqref{eq:vfunction_difference} that
  \begin{equation}
    \begin{split}
      \int_{\Omega^h} \lvert \Delta_h \D u\rvert^{\frac{rp}2} \,\d x 
      &\leq \left( \int_{\Omega^h} (\mu + \lvert \D u_h\rvert + \lvert \D u\rvert)^{-\frac{(2-p)r}2} \lvert \Delta_h \D u\rvert^r \,\d x\right)^{\frac p2} \\
      &\qquad \times \left( \int_{\Omega^h}  (\mu + \lvert \D u\rvert + \lvert \D u_h\rvert)^{\frac{rp}2} \,\d x\right)^{\frac{2-p}2} \\
      &\lesssim \left( \int_{\Omega^h} \lvert \Delta_h V_{p,\mu}(\D u)\rvert^r \,\d x\right)^{\frac p2} \left( \int_{\Omega}  (\mu + \lvert \D u\rvert)^{\frac{rp}2} \,\d x\right)^{\frac{2-p}2} \\
      &\lesssim \lvert h\rvert^{\frac{prs}2} \seminorm{V_{p,\mu}(\D u)}_{\BB^{s,r}_{\infty}(\Omega)}^{\frac{pr}2} \left( \mu\, \lvert \Omega\rvert + \norm{\D u}_{\LL^{\frac{rp}2}(\Omega)}^{\frac{rp}2} \right)^{1-\frac p2},
    \end{split}
  \end{equation} 
  from which the conclusion follows. 
  
  Note that in the case $p\geq 2$, the moreover part follows directly from Sobolev embedding theorem and the fact that $u\in B^{1+\frac{2s} p,\frac{rp} 2}(\Omega)$. In case $p\leq 2$, using Sobolev embedding theorem,
  \begin{align*}
  \|V_{p,\mu}(\D u)\|_{B^{s,r}_\infty(\Omega)}\gtrsim \|V_{p,\mu}(\D u)\|_{\LL^\frac{rn}{n-sr}(\Omega)}.
  \end{align*}
  Further, for any $\tau\geq 1$,
  \begin{align*}
 \int_\Omega \lvert \D u\rvert^\frac{\tau p} 2\d x\leq \int_{\{x\in\Omega\colon \lvert \D u(x)\rvert \geq \mu\}} (\mu^2+\lvert \D u\rvert^2)^\frac{\tau(p-2)} 4\lvert \D u \rvert^{\tau}\d x + \mu^\frac{\tau p} 2\lvert \Omega\rvert.
  \end{align*}
  Combining the last two estimates, the claim follows.
\end{proof}

\subsection{Bounds on the integrand}\label{sec:bounds_int}

We collect some basic properties satisfied by the integrand $F=F(x,z),$ that we will use extensively in what follows. We first note that for $z\in \R^{m\times n}$ and almost every $x\in\Omega$,
\begin{align}
\label{eq:pnormlower}\lvert z\rvert^p-1\lesssim F(x,z).
\end{align}
By convexity of $F,$ we have \eqref{def:bounds2} implies that
\begin{equation}\label{eq:h2bound_derivative}
  \p_zF(x,z) \lesssim (1+\lvert z\rvert)^{q-1}.
\end{equation} 
We also show that \eqref{def:bounds1} implies a more familiar ellipticity condition.
\begin{lemma}\label{lem:hbound1}
  Let $1 < p < \infty,$ $\mu \geq 0,$ and suppose $f \colon \bb R^{m\times n} \to \bb R$ is $C^1$ and satisfies \eqref{def:bounds1}.
  Then we have
  \begin{equation}\label{eq:hbound1_quantiative}
    \nu(\mu^2+\lvert z\rvert^2+\lvert w\rvert^2)^\frac{p-2}{2}\lesssim \frac{F(x,z)-F(x,w)-\langle \partial_z F(x,w),z-w\rangle}{\lvert z-w\rvert^2}
  \end{equation} 
  for all $z, w \in \bb R^{m\times n}.$
\end{lemma}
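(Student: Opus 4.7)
The strategy is to use \eqref{def:bounds1} to split off the model $V$-functional from $F$, reducing matters to a well-known algebraic inequality for $\lvert V_{\mu,p}\rvert^2$ alone.

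Set $G(x,z) = F(x,z) - \nu\lvert V_{\mu,p}(z)\rvert^2$. By \eqref{def:bounds1} the map $z\mapsto G(x,z)$ is convex, and since $F(x,\cdot)$ is $C^1$ by hypothesis while $z\mapsto \lvert V_{\mu,p}(z)\rvert^2$ is $C^1$ on $\R^{m\times n}$ whenever $p>1$ (the only case of interest here), $G(x,\cdot)$ is itself a $C^1$ convex function. Applying the first-order subgradient inequality for $C^1$ convex functions to $G$ at $w$ in the direction $z-w$ and rearranging gives
\begin{equation*}
F(x,z)-F(x,w)-\langle \partial_z F(x,w),z-w\rangle \geq \nu\bigl[\lvert V_{\mu,p}(z)\rvert^2 - \lvert V_{\mu,p}(w)\rvert^2 - \langle \partial_z\lvert V_{\mu,p}\rvert^2(w), z-w\rangle\bigr].
\end{equation*}

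It therefore suffices to establish the scalar bound
\begin{equation*}
\lvert V_{\mu,p}(z)\rvert^2 - \lvert V_{\mu,p}(w)\rvert^2 - \langle \partial_z\lvert V_{\mu,p}\rvert^2(w), z-w\rangle \gtrsim (\mu^2+\lvert z\rvert^2+\lvert w\rvert^2)^{\frac{p-2}{2}}\lvert z-w\rvert^2.
\end{equation*}
This is a classical $V$-function estimate, closely related to \eqref{eq:vfunction_difference} already used by the authors. One way to verify it is to write the left-hand side via the fundamental theorem of calculus as $\int_0^1 \langle \partial_z\lvert V_{\mu,p}\rvert^2(w+t(z-w))-\partial_z\lvert V_{\mu,p}\rvert^2(w),\,z-w\rangle\,\d t$, apply the well-known monotonicity inequality
\begin{equation*}
\langle \partial_z\lvert V_{\mu,p}\rvert^2(a)-\partial_z\lvert V_{\mu,p}\rvert^2(b),\,a-b\rangle \gtrsim (\mu^2+\lvert a\rvert^2+\lvert b\rvert^2)^{\frac{p-2}{2}}\lvert a-b\rvert^2,
\end{equation*}
and combine with the elementary comparison
\begin{equation*}
\int_0^1 t\bigl(\mu^2+\lvert w+t(z-w)\rvert^2+\lvert w\rvert^2\bigr)^{\frac{p-2}{2}}\,\d t \sim (\mu^2+\lvert z\rvert^2+\lvert w\rvert^2)^{\frac{p-2}{2}},
\end{equation*}
verified case-by-case for $p\geq 2$ and $1<p<2$ (see e.g.\ \cite{Giaquinta1986a}).

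The only real work is this $V$-function inequality, and it presents no new difficulty: it has essentially the same character as \eqref{eq:vfunction_difference} which the authors have already invoked, and the required monotonicity/Hessian bounds are thoroughly documented in the literature. The key conceptual step is simply the observation that the ellipticity-type inequality \eqref{eq:hbound1_quantiative} follows directly from the convexity hypothesis \eqref{def:bounds1} by pushing the analysis through the gap functional of $\lvert V_{\mu,p}\rvert^2$, rather than needing any further structural information about $F$.
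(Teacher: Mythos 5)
Your proof is correct. The underlying idea --- that convexity of $F-\nu\lvert V_{p,\mu}\rvert^2$ transfers the ellipticity of the model integrand to $F$ --- is the same as in the paper, but the execution differs in a way worth recording. The paper works at second order: it first assumes $F(x,\cdot)\in C^2$, reads off the Hessian bound $F''(z)[\zeta,\zeta]\gtrsim\nu(\mu^2+\lvert z\rvert^2)^{\frac{p-2}2}\lvert\zeta\rvert^2$ from \eqref{def:bounds1}, integrates twice along the segment, and then treats the merely $C^1$ case by mollifying $F$ in $z$ and passing to the limit (with some care near the origin, where the mollified $V$-term loses its Hessian bound). You instead stay at first order: the gradient inequality for the $C^1$ convex function $G=F-\nu\lvert V_{p,\mu}\rvert^2$ immediately reduces \eqref{eq:hbound1_quantiative} to the corresponding gap estimate for $\lvert V_{p,\mu}\rvert^2$ alone, which is explicit and regular enough that the standard strong monotonicity of $\partial_z\lvert V_{p,\mu}\rvert^2$ together with the integral comparison
\begin{equation*}
\int_0^1 t\bigl(\mu^2+\lvert w+t(z-w)\rvert^2+\lvert w\rvert^2\bigr)^{\frac{p-2}2}\,\d t \sim (\mu^2+\lvert z\rvert^2+\lvert w\rvert^2)^{\frac{p-2}2}
\end{equation*}
finishes the argument. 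This sidesteps the mollification/approximation step entirely and is, if anything, cleaner; the price is that you must quote (or verify case-by-case in $p$) the two classical $V$-function facts above, which are of exactly the same character as \eqref{eq:vfunction_difference} and are available in the references the paper already cites. Both routes ultimately rest on the same algebraic properties of $\lvert V_{p,\mu}\rvert^2$, so the difference is one of economy rather than substance.
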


\begin{proof}
  Suppose $F$ is $C^2,$ then for $z \neq 0$ we have
  \begin{equation}
    F^{\prime\prime}(z)[\zeta,\zeta] \geq \nu \, e_{\mu,p}^{\prime\prime}(\lvert z\rvert) \lvert \zeta\rvert^2 \gtrsim c_p \nu\, (\mu^2+\lvert z\rvert^2)^{\frac{p-2}2} \lvert \zeta\rvert^2
  \end{equation} 
  for all $\zeta \in \bb R^{Nn},$ where $e_{\mu,p}(t) = (\mu^2+t^2)^{\frac{p-2}2}t^2.$
  This follows from an explicit calculation.
  Now using the fundamental theorem of calculus we infer that for $z,w \in \bb R^{Nn}$ with $tz + (1-t)w \neq 0$ for all $t \in [0,1],$
  \begin{equation}
    \begin{split}
    F(z) - F(w) - F^\prime(w) \cdot (z-w) 
    &= \int_0^1 (1-t) F^{\prime\prime}(w + t(z-w)) \,\d t \\
    &\gtrsim \int_0^1 (1-t) (\mu^2 + \lvert w + t(z-w)\rvert^2)^{\frac{p-2}2} \lvert z - w\rvert^2 \,\d t \\
    &\gtrsim (\mu^2 + \lvert z\rvert^2 + \lvert w\rvert^2)^{\frac{p-2}2} \lvert z -w \rvert^2,
    \end{split}
  \end{equation} 
  and by approximation this holds for all $z,w \in \bb R^{m\times n}.$
  For general $f$ in $C^1,$ we will fix a standard mollifier $\rho_{\delta}$ on $\bb R^{m\times n}$ and apply the above to $F_{\delta} = F(x,\cdot) \ast \rho_{\delta}.$ Then we have
  \begin{equation}
    F_{\delta} - \nu \left(\lvert V_{p,\mu}(\cdot) \rvert^2 \ast \rho_\delta\right) \text{ is convex},
  \end{equation} 
  and for $\lvert z\rvert \geq 2\delta,$ we have
  \begin{equation}
    \left(\lvert V_{p,\mu}(\cdot) \rvert^2 \ast \rho_\delta\right)^{\prime\prime}(z)[\xi,\xi] \gtrsim c_p \left( \mu^2 + \lvert z\rvert^2 - \delta^2 \right)^{\frac{p-2}2}\lvert \xi\rvert^2 \gtrsim c_p \left( \mu^2 + \lvert z\rvert^2 \right)^{\frac{p-2}2}\lvert \xi\rvert^2,
  \end{equation} 
  where the implicit constant is independent of $\delta.$
  Therefore by the $C^2$ case proved above, \eqref{eq:hbound1_quantiative} holds for $F_{\delta}$ whenever $tz + (1-t)w \not\in \{z \in \bb R^{Nn} : \lvert z\rvert \geq 2\delta\}$ for all $t \in [0,1].$
  Sending $\delta \to 0$ we deduce that \eqref{eq:hbound1_quantiative} holds for $f$ and all $z,w \in \bb R^{m\times n}.$
\end{proof}

\subsection{Properties of the relaxed and the regularised functional}\label{sec:relaxedVsRegularised}
In this section, we collect some results regarding the relaxed functional $\overline \F_N$ and its relation to a regularised version of $\F$. For local versions of $\overline \F_N$ these results are well-known, c.f. Section 6 in \cite{Marcellini1989} and \cite{Esposito2004}. Moreover, for global versions incorporating Dirichlet boundary conditions, similar results were obtained in \cite{Koch2020}. The results in this section are minor modifications of those obtained in \cite{Koch2020}.
Define for $u\in \WW^{1,p}(\Omega)$ and $\e\in(0,1]$,
\begin{align}\label{def:Fe}
\F_\e(u)= \begin{cases}
			\F(u)+\e \int_\Omega \lvert \D u\rvert^q\d x &\text{ if } u\in \WW^{1,q}(\Omega)\\
			\infty &\text{ else}.
			\end{cases}
\end{align}
Write $F_\e(x,z)=F(x,z)+\e\lvert z\rvert^q$. If $F$ is such that $F(\cdot,z)$ is measurable for any $z\in \R^n$ and $F(x,\cdot)$ is continuously differentiable for almost every $x\in\Omega$, it is easy to see that the same properties hold for $F_\e$. Moreover, if $F$ satisfies \eqref{def:bounds1}-\eqref{def:bounds3}, then so does $F_\e$ with bounds that are independent of $\e$. 

Minimisers of $\F_\e(\cdot)$ and $\overline \F_N(\cdot)$ are related as follows:
\begin{lemma}[c.f. Lemma 6.4. in \cite{Marcellini1989}]\label{lem:convApproximate}  Suppose $F(x,z)$ satisfies \eqref{def:bounds1} and \eqref{def:bounds2}, and $f \in \WW^{-1,q^\prime}$, $g_N \in \WW^{-\frac1{q^\prime},q^\prime}(\p\Omega)$ satisfies \eqref{eq:compatibility_condition}.
Suppose $u$ is a relaxed minimiser of $\F(\cdot)$ in the class $\WW^{1,p}(\Omega)$ and $u_\e$ is the pointwise minimiser of $\F_\e(\cdot)$ in the class $\WW^{1,q}(\Omega)$. Then $\F_\e(u_\e)\to \overline\F_N(u)$ as $\e\to 0$. Moreover up to passing to a subsequence $u_\e\to u$ in $\WW^{1,p}(\Omega)$.
\end{lemma}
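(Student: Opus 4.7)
The plan is to split the lemma into an energy-convergence part and a strong-convergence part, in both cases exploiting the pointwise minimality of $u_\e$ together with the quantitative strict convexity of $F_\e$. Since $z\mapsto \e\lvert z\rvert^q$ is convex, $F_\e$ inherits \eqref{def:bounds1} with the same $\nu$, so Lemma~\ref{lem:hbound1} applies to $F_\e$.

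For the upper bound, fix an approximating sequence $(v_j)\subset \WW^{1,q}(\Omega)$ realising $\overline\F_N(u)$, i.e. $v_j\rightharpoonup u$ in $\WW^{1,p}(\Omega)$ with $\F(v_j)\to \overline\F_N(u)$. Pointwise minimality of $u_\e$ gives
\begin{equation*}
  \F_\e(u_\e)\le \F_\e(v_j)=\F(v_j)+\e\int_\Omega \lvert \D v_j\rvert^q\,\d x,
\end{equation*}
so sending $\e\to 0$ at fixed $j$ and then $j\to\infty$ yields $\limsup_{\e\to 0}\F_\e(u_\e)\le \overline\F_N(u)$. Coercivity \eqref{eq:pnormlower}, combined with standard bounds on the linear terms via the trace theorem (and a Poincar\'e--Wirtinger inequality after fixing the mean of $u_\e$ in the Neumann case, which is compatible with \eqref{eq:compatibility_condition}), produces a uniform $\WW^{1,p}$-bound on $u_\e$. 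Up to a subsequence, $u_\e\rightharpoonup \tilde u$ weakly in $\WW^{1,p}(\Omega)$; since $(u_\e)\subset \WW^{1,q}(\Omega)$ is admissible in $\overline\F_N(\tilde u)$, we obtain
\begin{equation*}
  \overline\F_N(u)\le \overline\F_N(\tilde u)\le \liminf_{\e\to 0}\F(u_\e)\le \liminf_{\e\to 0}\F_\e(u_\e)\le \overline\F_N(u),
\end{equation*}
forcing equality throughout and giving $\F_\e(u_\e)\to \overline\F_N(u)$, $\F(u_\e)\to \overline\F_N(u)$, and $\e\int_\Omega \lvert \D u_\e\rvert^q\,\d x\to 0$, while $\tilde u$ is itself a relaxed minimiser.

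For the strong convergence, the Euler--Lagrange equation of $u_\e$ holds on $\WW^{1,q}(\Omega)$ by the $(q-1)$-growth of $\p_zF_\e$, and combining it with Lemma~\ref{lem:hbound1} applied to $F_\e$ yields
\begin{equation*}
  \int_\Omega \bigl\lvert V_{p,\mu}(\D v)-V_{p,\mu}(\D u_\e)\bigr\rvert^2\,\d x\lesssim \F_\e(v)-\F_\e(u_\e) \qquad \forall\, v\in \WW^{1,q}(\Omega),
\end{equation*}
after the linear-in-$v$ terms cancel via the Euler--Lagrange identity. Choosing $v=u_{\e'}$ with $\e\le \e'$ and using the convergences just established, the right-hand side tends to $0$ as $\e,\e'\to 0$, so $(V_{p,\mu}(\D u_\e))_\e$ is Cauchy in $\LL^2(\Omega)$. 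Its $\LL^2$-limit is identified with $V_{p,\mu}(\D\tilde u)$ via weak convergence of $\D u_\e$ and the homeomorphism property of $V_{p,\mu}$. Estimate \eqref{eq:vfunction_difference} together with the uniform $\LL^p$-bound on $\D u_\e$ upgrades this to $\D u_\e\to \D\tilde u$ in $\LL^p(\Omega)$, and combined with Rellich compactness, $u_\e\to \tilde u$ in $\WW^{1,p}(\Omega)$. Since the Cauchy-in-$V_{p,\mu}$ estimate holds along the full sequence, $\D\tilde u$ is uniquely determined, pinning $\tilde u=u$ (directly in the Dirichlet case, after normalising the mean in the Neumann case).

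The main obstacle is the strong-convergence step: since $u$ is a priori only in $\WW^{1,p}$, no direct convexity comparison between $u$ and $u_\e$ is available, and we are forced to use the Cauchy-in-$V_{p,\mu}$ argument between pairs $(u_\e,u_{\e'})$ of pointwise minimisers, converting the vanishing energy gap $\F_\e(u_{\e'})-\F_\e(u_\e)\to 0$ into gradient closeness via Lemma~\ref{lem:hbound1}.
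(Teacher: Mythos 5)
Your treatment of the energy convergence $\F_\e(u_\e)\to\overline\F_N(u)$ is essentially the paper's argument: test minimality of $u_\e$ against ($W^{1,q}$-approximations of) competitors, extract a weak limit $\tilde u$ by coercivity, and close the chain of inequalities using that $(u_\e)$ is itself an admissible sequence in the definition of $\overline\F_N(\tilde u)$. For the strong convergence you then take a genuinely different route. The paper extends the uniform convexity inequality
$\overline\F_N(\tfrac{w_1+w_2}2)+\tfrac{\nu}{p}\|\D w_1-\D w_2\|_{\LL^p}^p\le\tfrac12(\overline\F_N(w_1)+\overline\F_N(w_2))$
from $\WW^{1,q}$ to $\WW^{1,p}$ by lower semicontinuity and applies it directly to the pair $(u,u_\e)$ — so the "obstacle" you identify (that no convexity comparison between $u$ and $u_\e$ is available because $u\in\WW^{1,p}$ only) is in fact surmountable and is exactly what the paper does. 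Your alternative — the Euler--Lagrange identity for $u_\e$ plus Lemma \ref{lem:hbound1} giving $\|V_{p,\mu}(\D v)-V_{p,\mu}(\D u_\e)\|_{\LL^2}^2\lesssim\F_\e(v)-\F_\e(u_\e)$, applied with $v=u_{\e'}$, $\e\le\e'$ — is sound and even yields the slightly stronger conclusion that $V_{p,\mu}(\D u_\e)$ is Cauchy in $\LL^2$, which for $p<2$ is more information than $\LL^p$-convergence of the gradients.

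There is, however, one genuine gap at the very end: the step "$\D\tilde u$ is uniquely determined, pinning $\tilde u=u$" does not follow. The Cauchy property only shows that the limit of the full sequence $(u_\e)$ is well defined; it says nothing about its relation to the \emph{given} relaxed minimiser $u$. What you actually need is that any two relaxed minimisers have the same gradient, i.e.\ strict convexity of $\overline\F_N$ in $\D u$ on all of $\WW^{1,p}(\Omega)$ — and this is not automatic, since $\overline\F_N$ is defined by an infimum over approximating sequences. This is precisely the content of the paper's inequality \eqref{eq:convexArg}: one proves it first for $w_1,w_2\in\WW^{1,q}$ from \eqref{def:bounds1}, then extends it to $\WW^{1,p}$ using recovery sequences and weak lower semicontinuity of $\|\D w_1^j-\D w_2^j\|_{\LL^p}$. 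With that in hand, $u$ and your $\tilde u$ (both relaxed minimisers) satisfy $\D u=\D\tilde u$, and the additive constant is fixed in the Dirichlet case by the boundary datum and in the Neumann case by normalising means, as you indicate. So your proof is complete once you insert this uniqueness step; without it the identification of the limit is unjustified.
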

\begin{proof}
  Note that by our assumptions $\int_{\Omega} f\cdot u \,\d x$ and $\int_{\p\Omega} g_N\cdot u\,\d\H^{n-1}$ as defined for $u \in \WW^{1,q}(\Omega)$.
Hence existence and uniqueness of $u_\e$ follows from the direct method and strict convexity, respectively.
We further note that 
\begin{align*}
\overline\F_N(u)\leq \liminf_{\e\to 0}\F(u_\e) \leq \liminf_{\e\to 0}\F_\e(u_\e).
\end{align*}
 To prove the reverse implication note that for any ${v\in \WW^{1,q}(\Omega)}$,
\begin{align*}
\limsup_{\e \to 0} \F_\e(u_\e) \leq \lim_{\e \to 0} \F_\e(v)= \F(v)= \overline \F_N(v).
\end{align*}
By definition of $\overline\F_N$ the inequality above extends to all $v\in \WW^{1,p}(\Omega)$. In particular, it holds with the choice $v=u$. Thus $\F_\e(u_\e)\to \overline \F_N(u)$.

Using \eqref{def:bounds1} we may extract a (non-relabelled) subsequence of $u_\e$, so that $u_\e\rightharpoonup v$ weakly in $\WW^{1,p}(\Omega)$ for some $v\in \WW^{1,p}(\Omega)$. Note using our calculations above that $v$ is a relaxed minimiser of $\F(\cdot)$ in the class $\WW^{1,p}_g(\Omega)$. Using \eqref{def:bounds1} it is easy to see that for $w_1,w_2\in \WW^{1,q}_g(\Omega)$,
\begin{align}\label{eq:convexArg}
\overline \F_N\left(\frac{w_1+w_2}{2}\right)+\frac \nu p\|\D w_1-\D w_2\|_{\LL^p(\Omega)}^p\leq \frac 1 2\left(\overline \F_N(w_1)+\overline \F_N(w_2)\right).
\end{align}
Using the definition of $\overline \F_N$ and weak lower semicontinuity of norms, we see that this estimate extends to ${w_1,w_2\in \WW^{1,p}_g(\Omega)}$. In particular, $\overline \F_N$ is convex and so $u=v$. Moreover the choice $w_1=u,w_2=u_\e$ in the estimate shows that $u_\e\to u$ in $\WW^{1,p}(\Omega)$.
\end{proof}

A useful criterion for establishing the equality $\overline \F_N(u)=\F(u)$ is the following:
\begin{lemma}[cf. \cite{Buttazo1995}]\label{lem:lavrientiev}Let $1< p\leq q<\infty$.
For $u\in \WW^{1,p}(\Omega)$ with $\F(u)<\infty$, we have $\overline \F_N(u)=\F(u)$ if and only if there is a sequence $u_k\in \WW^{1,q}(\Omega)$ such that $u_k\rightharpoonup u$ weakly in $\WW^{1,p}(\Omega)$ and $\F(u_\e)\to \F(u)$ as $\e\to 0$. 
\end{lemma}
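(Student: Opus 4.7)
The key ingredient is weak lower semicontinuity of $\F$ on $\WW^{1,p}(\Omega)$, which holds despite the $(p,q)$-growth. Indeed, since $F(x,z)$ is a Carath\'eodory integrand, convex in $z$, and bounded below by $-1$ in view of \eqref{eq:pnormlower}, the classical theorem of Serrin/Ioffe guarantees that ${v \mapsto \int_{\Omega} F(x,\D v)\,\d x}$ is sequentially weakly lower semicontinuous on $\WW^{1,p}(\Omega)$, with the convention that the functional takes the value $+\infty$ if $F(\cdot,\D v) \notin \LL^1(\Omega).$ The remaining contributions to $\F$ are linear and continuous under weak convergence in $\WW^{1,p}(\Omega)$, given that $f \in \WW^{-1,q^\prime}(\Omega) \subset \WW^{-1,p^\prime}(\Omega)$ and similarly for $g_N$, together with compactness of the trace in the Neumann case. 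Consequently $\F$ itself is sequentially weakly lower semicontinuous on $\WW^{1,p}(\Omega)$ in the extended sense.

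For the $(\Leftarrow)$ direction, suppose such a sequence $u_k \in \WW^{1,q}(\Omega)$ exists. Plugging it into the definition \eqref{def:relaxedFunctional} of $\overline\F_N$, we immediately obtain $\overline\F_N(u) \leq \liminf_k \F(u_k) = \F(u)$. For the reverse inequality, take any admissible sequence $v_j \in \WW^{1,q}(\Omega)$ with $v_j \rightharpoonup u$ weakly in $\WW^{1,p}(\Omega)$. By the lower semicontinuity established above, $\F(u) \leq \liminf_j \F(v_j)$, and taking the infimum over all such sequences yields $\F(u) \leq \overline\F_N(u)$.

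For the $(\Rightarrow)$ direction, assume $\overline\F_N(u) = \F(u) < \infty$. The definition \eqref{def:relaxedFunctional} expresses $\overline\F_N(u)$ as an infimum over the set of $\liminf$'s of admissible sequences. Hence for each $\ell \in \N$ we may select $(v_j^{(\ell)})_j \subset \WW^{1,q}(\Omega)$ with $v_j^{(\ell)} \rightharpoonup u$ weakly in $\WW^{1,p}(\Omega)$ and
\begin{equation*}
  \liminf_{j\to\infty} \F(v_j^{(\ell)}) \leq \overline\F_N(u) + \tfrac{1}{\ell} = \F(u) + \tfrac{1}{\ell}.
\end{equation*}
Since $\WW^{1,p}(\Omega)$ is separable, its weak topology on bounded sets is metrisable, so a standard diagonal extraction (e.g.\ Attouch's diagonalisation lemma) produces indices $j_\ell \to \infty$ such that $u_\ell := v_{j_\ell}^{(\ell)} \rightharpoonup u$ weakly in $\WW^{1,p}(\Omega)$ and $\limsup_\ell \F(u_\ell) \leq \F(u)$. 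Combined with lower semicontinuity $\F(u) \leq \liminf_\ell \F(u_\ell)$, this gives $\F(u_\ell) \to \F(u)$, as desired.

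The only non-routine point is the lower semicontinuity step, where one must be careful that the functional is well-defined (possibly $+\infty$) on all of $\WW^{1,p}(\Omega)$ and that Serrin's theorem applies under the mere $(p,q)$-growth with $p$-coercivity; this is precisely handled by convexity of $F(x,\cdot)$ together with the lower bound $F(x,z) \geq \lvert z\rvert^p - 1$ from \eqref{eq:pnormlower}.
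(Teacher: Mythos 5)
The paper does not prove this lemma at all (it is quoted from Buttazzo's survey), so there is no in-text argument to compare against; your proof is the standard one — sequential weak lower semicontinuity of the convex, $p$-coercive integrand in one direction, and a diagonal extraction from almost-optimal sequences in the other — and its overall structure is sound. Two points deserve correction or elaboration. First, the inclusion you invoke, $\WW^{-1,q^\prime}(\Omega) \subset \WW^{-1,p^\prime}(\Omega)$, is backwards: since $p \leq q$ we have $\WW^{1,q}_0(\Omega) \subset \WW^{1,p}_0(\Omega)$ on a bounded domain, hence by duality $\WW^{-1,p^\prime}(\Omega) \subset \WW^{-1,q^\prime}(\Omega)$, and a functional bounded on $\WW^{1,q}$ need not act continuously on weakly convergent sequences in $\WW^{1,p}$. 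The weak continuity of the terms $\int_\Omega f\cdot v\,\d x$ and $\int_{\p\Omega} g_N\cdot v\,\d\H^{n-1}$ along $v_j \rightharpoonup u$ in $\WW^{1,p}$ genuinely requires $f$ to lie in the dual of $\WW^{1,p}(\Omega)$ and $g_N$ in the dual of the trace space $\WW^{1-\frac1p,p}(\p\Omega)$; this is supplied by the standing hypotheses where the lemma is applied (cf.\ the assumptions of Lemma \ref{lem:relaxedEuler}), but your stated justification does not deliver it. Second, in the diagonal step you need the diagonal sequence $u_\ell$ to be bounded in $\WW^{1,p}(\Omega)$ before testing against a countable dense family in the dual yields weak convergence; the individual sequences $(v^{(\ell)}_j)_j$ are bounded only by constants depending on $\ell$, so you should derive the uniform bound from the energy bound $\F(u_\ell)\leq\F(u)+2/\ell$ together with the coercivity \eqref{eq:pnormlower} (absorbing the linear terms by Young's inequality and, in the Neumann case, normalising means). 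With these two repairs the argument is complete.
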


We close this section by showing that relaxed minimisers are very weak solutions of the Euler-Lagrange system. We first note an elementary bound.
\begin{lemma}\label{lem:improvedLowerBound}
Suppose $F$ satisfies \eqref{def:bounds1} and \eqref{def:bounds2}. Then
\begin{align*}
\p_z F(x,z)\cdot z\gtrsim \lvert z\rvert^p+\lvert\p_z F(x,z)\rvert^{q^\prime}-1.
\end{align*}
\end{lemma}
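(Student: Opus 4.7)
The plan is to combine convexity of $F$ in $z$ with the growth bounds, and choose a clever comparison point to extract the dual term $\lvert \partial_z F\rvert^{q^\prime}$.

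First, by convexity of $F(x,\cdot)$ (which follows from \eqref{def:bounds1}) and the inequality $F(x,w) \geq F(x,z) + \partial_z F(x,z)\cdot(w-z)$, we obtain for every $w \in \R^{m\times n}$
\begin{align*}
  \partial_z F(x,z)\cdot z \geq F(x,z) - F(x,w) + \partial_z F(x,z)\cdot w.
\end{align*}
Using \eqref{eq:pnormlower} to bound $F(x,z) \gtrsim \lvert z\rvert^p - 1$ from below and \eqref{def:bounds2} to bound $F(x,w) \lesssim (1+\lvert w\rvert^2)^{q/2}$ from above, this yields
\begin{align*}
  \partial_z F(x,z)\cdot z \gtrsim \lvert z\rvert^p - C(1+\lvert w\rvert^q) + \partial_z F(x,z)\cdot w - C.
\end{align*}

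The main step is then to choose $w$ so that the last two terms combine to produce $\lvert\partial_z F(x,z)\rvert^{q^\prime}$. Writing $G := \lvert \partial_z F(x,z)\rvert$ and assuming $G>0$, I would set $w = t\,\partial_z F(x,z)/G$ with $t := \eps G^{1/(q-1)}$ for a small parameter $\eps>0$ to be chosen. This gives $\partial_z F(x,z)\cdot w = t G = \eps G^{q^\prime}$, while $(1+\lvert w\rvert^q) \lesssim 1 + \eps^q G^{q^\prime}$. Substituting back,
\begin{align*}
  \partial_z F(x,z)\cdot z \gtrsim \lvert z\rvert^p + \eps G^{q^\prime} - C\eps^q G^{q^\prime} - C,
\end{align*}
and choosing $\eps$ small enough (independently of $z$) that the $-C\eps^q G^{q^\prime}$ term is absorbed into $\tfrac{\eps}{2} G^{q^\prime}$ yields the desired bound $\partial_z F(x,z)\cdot z \gtrsim \lvert z\rvert^p + \lvert\partial_z F(x,z)\rvert^{q^\prime} - 1$.

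The case $G = 0$ is trivial: plugging $w = 0$ into the opening inequality gives $0 \geq \lvert z\rvert^p - C$, so $\lvert z\rvert$ is a-priori bounded, and then both sides of the claimed estimate are controlled by constants. The only mild obstacle is that the optimisation in $t$ must be done uniformly in $z$, but this is handled cleanly by the single fixed choice of $\eps$ above rather than genuinely optimising, so no real difficulty arises.
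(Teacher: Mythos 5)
Your proof is correct and is essentially the paper's argument in unwound form: the paper invokes the equality case of the Fenchel--Young inequality, $\p_z F(x,z)\cdot z = F(x,z)+F^\ast(x,\p_z F(x,z))$, together with the bound $F^\ast(x,\xi)\gtrsim \lvert\xi\rvert^{q'}-1$ deduced from \eqref{def:bounds2}, and your subgradient inequality with the test point $w=\eps G^{1/(q-1)}\,\p_zF(x,z)/G$ is precisely the "elementary calculation" that establishes that conjugate lower bound. Nothing is missing; your explicit choice of $\eps$ and the separate treatment of $G=0$ are both fine.
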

\begin{proof}
Introduce the Fenchel-conjugate of $F$,
\begin{align*}
F^\ast(x,\xi) = \sup_{\zeta} \langle \zeta,\xi\rangle - F(x,\zeta).
\end{align*}
By the equality case of the Fenchel-young inequality
\begin{align*}
\langle\p_z F(x,z),z\rangle = F(x,z)+F^\ast(x,\p_z F(x,z)).
\end{align*}
An elementary calculation shows that \eqref{def:bounds2} implies $F^\ast(x,\p_z F(x,z))\gtrsim \lvert \p_z F(x,z)\rvert^{q^\prime}-1$. This concludes the proof, recalling that we assumed also \eqref{def:bounds1}.
\end{proof}

We now adapt an argument from \cite{DeFilippis2020}.
\begin{lemma}\label{lem:relaxedEuler}
  Suppose $F$ satisfies \eqref{def:bounds1} and \eqref{def:bounds2}. Let $u$ be a relaxed minimiser of $\F_N(\cdot)$ or $\overline \F_D(\cdot)$ in the class $\WW^{1,p}(\Omega)$, with $f \in \WW^{-1,p^\prime}$ and $g_N \in \WW^{-\frac1{p^\prime},p^\prime}(\p\Omega)$, $g_D \in \WW^{1,q}(\Omega)$ in the respective cases. Then $\p_z F(x,\D u)\in \LL^{q^\prime}(\Omega)$ and
\begin{align*}
  \int_\Omega \p_z F(x,\D u)\cdot \D \psi-f \cdot \psi\d x+\int_{\p\Omega} g_N\cdot \psi \,\d\H^{n-1}=0
\end{align*}
for all $\psi \in W^{1,q}(\Omega)$ in the Neumann case, and
\begin{align*}
  \int_\Omega \p_z F(x,\D u)\cdot \D \psi-f \cdot \psi\d x=0
\end{align*}
for all $\psi \in W^{1,q}_0(\Omega)$ in the Dirichlet case.
\end{lemma}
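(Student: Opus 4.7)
The plan is to approximate $u$ by the minimisers $u_\e \in \WW^{1,q}(\Omega)$ of the regularised functional $\F_\e$ from Lemma~\ref{lem:convApproximate} (and its evident Dirichlet analogue), derive the Euler--Lagrange equation for $u_\e,$ establish uniform $\LL^{q^\prime}(\Omega)$-bounds on $\p_z F(x,\D u_\e),$ and pass to the limit $\e \to 0.$

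Since each $u_\e$ is a pointwise minimiser of $\F_\e$ within $\WW^{1,q}(\Omega)$ (respectively $\WW^{1,q}_{g_D}(\Omega)$), and the first variation exists thanks to \eqref{eq:h2bound_derivative} together with $\D u_\e \in \LL^q(\Omega),$ the standard variational computation gives
\begin{equation*}
  \int_\Omega \p_z F(x,\D u_\e)\cdot \D\psi + q\e \lvert \D u_\e\rvert^{q-2}\D u_\e \cdot \D\psi - f\cdot \psi\,\d x + \int_{\p\Omega} g_N\cdot \psi\,\d\H^{n-1} = 0
\end{equation*}
for all $\psi\in \WW^{1,q}(\Omega)$ in the Neumann case, and the analogous identity without the boundary term for $\psi\in \WW^{1,q}_0(\Omega)$ in the Dirichlet case.

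Testing with $\psi = u_\e$ in the Neumann case and $\psi = u_\e - g_D$ in the Dirichlet case, Lemma~\ref{lem:improvedLowerBound} applied to the resulting identity yields an estimate of the form
\begin{equation*}
  \int_\Omega \lvert \p_z F(x,\D u_\e)\rvert^{q^\prime} + \e \lvert \D u_\e\rvert^q \,\d x \lesssim 1 + \mathrm{(data)},
\end{equation*}
where in the Neumann case the data term is bounded by duality via $\norm{f}_{\WW^{-1,p^\prime}(\Omega)}\norm{u_\e}_{\WW^{1,p}(\Omega)} + \norm{g_N}_{\WW^{-\frac1{p^\prime},p^\prime}(\p\Omega)}\norm{u_\e}_{\WW^{1-\frac1p,p}(\p\Omega)},$ which is uniformly bounded since $u_\e \to u$ in $\WW^{1,p}(\Omega)$ by Lemma~\ref{lem:convApproximate}. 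In the Dirichlet case, the extra terms $\int \p_z F(x,\D u_\e)\cdot \D g_D\,\d x$ and $q\e\!\int \lvert \D u_\e\rvert^{q-2}\D u_\e\cdot \D g_D\,\d x$ arising from the test function are controlled using Young's inequality; a small multiple of $\norm{\p_z F(x,\D u_\e)}_{\LL^{q^\prime}}^{q^\prime}$ is absorbed into the left-hand side, while the $\e$-part uses that $\e\int \lvert \D u_\e\rvert^q\,\d x$ is uniformly bounded (a consequence of $\F_\e(u_\e) \to \overline\F(u)<\infty$).

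For the passage to the limit, the elementary identity
\begin{equation*}
  \int_\Omega \bigl(\e \lvert \D u_\e\rvert^{q-1}\bigr)^{q^\prime} \,\d x = \e^{q^\prime-1}\left(\e\int_\Omega \lvert \D u_\e\rvert^q\,\d x\right) \longrightarrow 0
\end{equation*}
ensures the regularisation term vanishes in $\LL^{q^\prime}(\Omega).$ Extracting a subsequence along which $\D u_\e \to \D u$ almost everywhere (from the $\WW^{1,p}$-convergence in Lemma~\ref{lem:convApproximate}), continuity of $\p_z F(x,\cdot)$ together with the uniform $\LL^{q^\prime}$-bound yields $\p_z F(x,\D u_\e) \rightharpoonup \p_z F(x,\D u)$ weakly in $\LL^{q^\prime}(\Omega)$ via Vitali's theorem; in particular $\p_z F(x,\D u) \in \LL^{q^\prime}(\Omega).$ Passing to the limit in the equation for $u_\e$ against a fixed test function then produces the stated identity. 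The main technical point is closing the uniform $\LL^{q^\prime}$-bound via Lemma~\ref{lem:improvedLowerBound}; in the Dirichlet case this additionally requires the absorption argument above to dispose of the extraneous $\p_z F \cdot \D g_D$ contribution.
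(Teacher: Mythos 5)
Your proposal is correct and follows essentially the same route as the paper: regularise via $\F_\e$, test the Euler--Lagrange system for $u_\e$ with $u_\e$ (resp.\ $u_\e - g_D$), apply Lemma \ref{lem:improvedLowerBound} to obtain a uniform $\LL^{q'}$-bound on $\p_z F(x,\D u_\e)$, and pass to the limit using convergence in measure of $\D u_\e$ from Lemma \ref{lem:convApproximate}. Your treatment of the vanishing regularisation term and the Dirichlet absorption step is in fact slightly more explicit than the paper's.
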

\begin{proof}
In the Neumann case, by the direct method we obtain $u_\e \in \WW^{1,q}(\Omega)$ minimising $\F_\e(\cdot)$ in the class $\WW^{1,q}(\Omega)$ for each $\e>0.$ Without loss of generality we may assume that $\int_\Omega u_\e\d x=0$. Denote $\sigma_\e = \p_z F_\e(\cdot,\D u_\e)$ and $\mu_\e = \lvert \D u_\e\rvert^{q-2}\D u_\e$. Then $u_\e$ satisfies the Euler-Lagrange system
\begin{align*}
\int_\Omega (\sigma_\e+\e\mu_\e)\cdot \D\psi-f\cdot \psi\d x+\int_{\p\Omega} g_N\cdot  u\,\d\H^{n-1}=0 \qquad \forall \psi\in \WW^{1,q}(\Omega).
\end{align*}
Choose $\psi = u_\e$ and use Lemma \ref{lem:improvedLowerBound} applied to $F$, H\"older and Young's inequality to find that for any $\delta>0$,
\begin{equation}
  \begin{split}
  &\int_\Omega \lvert \D u_\e\rvert^p+\lvert\sigma_\e\rvert^{q^\prime}+\e \lvert \D u_\e\rvert^q\d x\\
  &\lesssim  \int_\Omega 1 + (\sigma_\e+\e \mu_\e)\cdot \D u_\e\d x\\
  &= \int_\Omega 1+f\cdot u_\e \d x+\int_{\p\Omega}g_N \cdot u_\e\,\d\H^{n-1}\\
  &\leq \delta \|u_\e\|_{\WW^{1,p}(\Omega)}+C(\delta)\left(1+\|g_N\|_{\WW^{-\frac1{p^\prime},p^\prime}(\p\Omega)}^{q^\prime}+\|f\|_{\WW^{-1,p^\prime}(\Omega)}^{p^\prime}\right).
  \end{split}
\end{equation}
Since the implicit constant is independent of $\eps>0$, by choosing $\delta>0$ sufficiently small we may rearrange to conclude
\begin{align*}
  \limsup_{\e\searrow 0}\int_\Omega \lvert\sigma_\e\rvert^{q^\prime}\d x\lesssim 1+\|g_N\|_{\WW^{-\frac1{p^\prime},p^\prime}(\p\Omega)}^{p^\prime}+\|f\|_{\WW^{-1,p^\prime}(\Omega)}^{p^\prime}.
\end{align*}
Since $\p_z F(x,\cdot)$ is Carath\'{e}odory and $\D u_\e\to \D u$ in measure by Lemma \ref{lem:convApproximate}, we also have $\sigma_\e\to F_z(\cdot,\D u)$ in measure. Finally note that $\sigma_\e + \e \mu_\e \rightharpoonup F_z(\cdot,\D u)$ in $\LL^1(\Omega)$, so the latter is row-wise divergence free and since it is also in $\LL^{q^\prime}(\Omega)$, the result is established.

The Dirichlet case is similar, the only difference is that we test the equation against $\psi = u_{\e}-g.$
\end{proof}

\section{Smoothing and relaxation}\label{sec:smoothing_relaxation}

We will establish some technical results concerning the smoothing operator introduced in \cite{Fonseca1997}.

\begin{lemma}\label{lem:regularised_extension}
  Suppose $1<p\leq q< \frac{np}{n-1}$ and let $\Omega$ be a Lipschitz domain. Let $x_0\in\Omega$, ${0<s/2<r<s}$ and let ${u\in \WW^{1,p}(\Omega)}.$ There is $R_0=R_0(\Omega)>0$ such that if ${r\leq R_0(\Omega)}$, there is a function $w\in \WW^{1,p}(\Omega)$ and $r^\prime<s^\prime$ with $s^\prime-r^\prime\sim s-r$, such that ${w=u}$ in ${\Omega\setminus B_{s^\prime}(x_0)}$ and $w=u$ in $B_{r^\prime}(x_0)\cap\Omega$. Denoting $A= \Omega_s(x_0)\setminus B_r(x_0)$ as well as ${A^\prime = \Omega_{s^\prime}(x_0)\setminus B_{r^\prime}(x_0)}$, we have the estimates
\begin{align}
  \begin{split}
  &\int_A \left\lvert V_{p,\mu}\left(\frac{w-a}{s-r}\right)\right\rvert^2 + \left\lvert V_{p,\mu}\left(\D (w-a)\right)\right\rvert^2 \d x   \\
  &\qquad\lesssim \int_A \left\lvert V_{p,\mu}\left(\frac{u-a}{s-r}\right)\right\rvert^2 + \left\lvert V_{p,\mu}\left(\D (u-a)\right)\right\rvert^2 \d x  \label{eq:testFunctionEstimates1} \\
  \end{split}\\
  \begin{split}
  &\int_{A^\prime} \left\lvert V_{p,\mu}\left(\frac{w-a}{s-r}\right)\right\rvert^\frac {2q} p + \lvert V_{p,\mu}(\D (w-a))\rvert^\frac{2q}p \d x \\
  &\qquad\lesssim (s-r)^{n\left(1-\frac q p\right)}\left(\int_A \left\lvert V_{p,\mu}\left(\frac{u-a}{s-r}\right)\right\rvert^2 + \left\lvert V_{p,\mu}\left(\D (u-a)\right)\right\rvert^2 \d x\right)^\frac q p\label{eq:testFunctionEstimates2}
  \end{split}
\end{align}
for any affine map $a \colon \bb R^n \to \bb R^n,$
where the implicit constants depend on $n$, $p$ and $\Omega$ only.

If in addition $\Omega = \bb R^n_+$ and $u\in \WW^{1,p}_0(\Omega)$, then $w$ can be chosen to also lie in $\WW^{1,p}_0(\Omega).$
\end{lemma}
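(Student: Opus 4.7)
The plan is to construct $w$ via a Fonseca-Marcellini style smoothing. Fix intermediate radii, say $r^\prime = r + (s-r)/4$ and $s^\prime = s - (s-r)/4$, a mollification scale $\e \sim s-r$, and a smooth radial cutoff $\eta$ with $\eta \equiv 0$ on $B_{r^\prime}(x_0)$ and outside $B_{s^\prime}(x_0)$, $\eta \equiv 1$ on a middle annulus, and $\lvert\nabla\eta\rvert \lesssim 1/(s-r)$. Then define
\begin{equation*}
  w = (1-\eta) u + \eta \, (\tilde u \star \rho_\e),
\end{equation*}
where $\tilde u$ is a suitable extension of $u$ to a small neighbourhood of $\supp(\eta)$. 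By construction $w = u$ on $\Omega_{r^\prime}(x_0)$ and on $\Omega \setminus B_{s^\prime}(x_0)$ as required. In the purely interior case $\tilde u = u$ suffices; near the boundary the hypothesis $r \leq R_0(\Omega)$ together with the uniform cone condition \eqref{eq:uniformCone} permits either a local extension across $\partial\Omega$, or equivalently a translation of the mollifier in an interior cone direction before convolving. For the special half-space case with $u \in \WW^{1,p}_0(\R^n_+)$, I would take $\tilde u$ to be the odd reflection of $u$ across $\partial \R^n_+$: by symmetry of $\rho_\e$ the convolution $\tilde u \star \rho_\e$ then vanishes on $\partial \R^n_+$, so $w \in \WW^{1,p}_0(\R^n_+)$.

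Since $\rho_\e$ is radial, mollification preserves affine functions, so writing $v = u - a$ we obtain $w - a = (1-\eta) v + \eta (\tilde v \star \rho_\e)$ and hence
\begin{equation*}
  \nabla(w-a) = (1-\eta)\nabla v + \eta \, (\nabla \tilde v \star \rho_\e) + \nabla \eta \cdot (\tilde v \star \rho_\e - v).
\end{equation*}
Estimate \eqref{eq:testFunctionEstimates1} then follows from Jensen's inequality applied to the convex map $z \mapsto \lvert V_{p,\mu}(z)\rvert^2$, which gives
\begin{equation*}
  \lvert V_{p,\mu}(\nabla \tilde v \star \rho_\e)\rvert^2 \leq \lvert V_{p,\mu}(\nabla \tilde v)\rvert^2 \star \rho_\e,
\end{equation*}
combined with the elementary bound $\lVert \tilde v \star \rho_\e - v\rVert_{\LL^p} \lesssim \e \lVert \nabla \tilde v\rVert_{\LL^p}$ (plus its $V$-function analogue) to absorb the $\nabla\eta$ term, and the Poincar\'e-Sobolev estimate of Lemma \ref{lem:annular_poincaresobolev} to handle the $(w-a)/(s-r)$ contribution. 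For \eqref{eq:testFunctionEstimates2}, the gain comes from Young's inequality for convolution applied with exponents $1$ and $q/p$:
\begin{equation*}
  \int \left( \lvert V_{p,\mu}(\nabla v)\rvert^2 \star \rho_\e \right)^{q/p} \d x \lesssim \e^{-n(q/p - 1)} \left( \int \lvert V_{p,\mu}(\nabla v)\rvert^2 \d x \right)^{q/p},
\end{equation*}
and substituting $\e \sim s-r$ produces exactly the $(s-r)^{n(1-q/p)}$ factor claimed; the analogous bounds on $(w-a)/(s-r)$ follow by the same mechanism.

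The main obstacle is the boundary analysis: making the construction uniform near $\partial\Omega$ with constants depending only on $\Omega$, and in the $\WW^{1,p}_0$ half-space case preserving zero boundary values so that $w \in \WW^{1,p}_0$. The odd reflection resolves the latter, while for general Lipschitz $\partial\Omega$ the hypothesis $r \leq R_0(\Omega)$ together with the uniform cone condition guarantees that a controlled local extension (or equivalently a translated mollifier) is available on the whole support of $\eta$. The remaining subtleties are bookkeeping: systematic use of $V$-function subadditivity \eqref{eq:vfunction_additive} and convexity of $\lvert V_{p,\mu}\rvert^2$, and verifying that the Poincar\'e-type estimates on annular regions meeting $\partial\Omega$ yield constants independent of the location of $x_0$ on or near $\partial\Omega$.
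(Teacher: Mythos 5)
There is a genuine gap at the heart of your construction. You glue the raw function $u$ to a fixed-scale mollification via a cutoff $\eta$, so on the transition annuli where $0<\eta<1$ you have
\begin{equation*}
  \D(w-a) = (1-\eta)\,\D(u-a) + \eta\,\D(\tilde v\star\rho_\e) + \nabla\eta\otimes(\tilde v\star\rho_\e - v),
\end{equation*}
and the first term is only in $\LL^p$. Since these transition annuli lie inside $A'$ (they must: $w=u$ is required on $B_{r'}\cap\Omega$ and on $\Omega\setminus B_{s'}$, and the estimate \eqref{eq:testFunctionEstimates2} is required on all of $A'=\Omega_{s'}\setminus B_{r'}$, so there is no room for a region where neither holds), the integral $\int_{A'}\lvert V_{p,\mu}(\D(w-a))\rvert^{2q/p}\,\d x$ behaves like $\int\lvert\D(u-a)\rvert^q$ there and is in general infinite for $u\in\WW^{1,p}$ with $q>p$. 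Your Jensen-plus-Young argument is fine, but it only controls the region where $\eta\equiv 1$; note also that it never uses $q<\frac{np}{n-1}$, which is a signal that the real difficulty has been bypassed. The paper instead uses the Fonseca--Mal\'y operator $Tu(x)=\dashint_{\omega(x)}u(x+\theta(x)y)\,\d y$ with a \emph{spatially varying} scale $\theta(x)\sim\min\{\lvert x\rvert-r',\,s'-\lvert x\rvert\}$: because $\theta$ degenerates at the spheres, $Tu$ matches $u$ there in the trace sense without any cutoff-gluing, and the higher integrability up to exponent $\frac{2q}{p}$ on the whole of $A'$ is exactly where the restriction $q<\frac{np}{n-1}$ enters (via maximal-function estimates near the degenerate spheres). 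Any correct proof has to handle this degeneration; a fixed $\e\sim s-r$ cannot.

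Two smaller points. First, your odd-reflection idea for the half-space $\WW^{1,p}_0$ case is an attractive alternative to the paper's route (which re-does the construction with a Whitney--Besicovitch covering and sets the pieces near $\p\Omega$ to zero), but you should check the interaction with the affine map $a$: the odd reflection of $u-a$ is not $\tilde u - a$ unless $a$ itself is odd in $x_n$, so the identity $w-a=(1-\eta)v+\eta(\tilde v\star\rho_\e)$ with $v=u-a$ does not come for free. Second, near a general Lipschitz boundary the averaging must be taken over $\omega(x)=\{y: x+\theta(x)y\in\Omega\}$ (with $\lvert\Omega_r(x)\rvert\sim\lvert B_r(x)\rvert$ from the cone condition, which is where $R_0(\Omega)$ enters); a convolution of an extension $\tilde u$ across $\p\Omega$ would additionally have to preserve the $V$-function energies under extension, which you have not justified.
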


\begin{proof}
  We will closely follow the proof of \cite[Lemma 2.4]{Fonseca1997}, with a slight modification to the extension operator defined in \cite[Lemma 2.2]{Fonseca1997}. In our setting we can take $\eta(x) = \lvert x\rvert,$ so for $s^\prime,t^\prime$ to be determined we define 
$$
w(x) = Tu(x) =\dashint_{\omega(x)} u(x+\theta(x)y)\d y
$$
where $\omega(x)=\{y\in B_1(0)\colon x+\theta(x)y\in \Omega\}$ and
$$
\theta(x)=\frac 1 {2}\max(0,\min\{\lvert x\rvert-r^\prime,s^\prime-\lvert x\rvert)\}
$$
Note that if $x \in A^\prime$ we have
\begin{equation}
  w(x) = \dashint_{\Omega \cap B_{\theta(x)}(x)} u(z) \,\d z,
\end{equation} 
so we see that $w=u$ on $\Omega \cap (\p B_{s^\prime} \cup \p B_{t^\prime})$ if $u \in C(\overline\Omega).$ By approximation this property extends in the sense of traces to all $u \in W^{1,p}(\Omega).$
We can also see that $T$ preserves affine maps, that is $Ta = a.$

Now by the uniform cone condition, there is $R_0>0$ such that $\lvert \Omega_r(x) \rvert \sim \lvert B_r(x)\rvert$ for all $r \leq R_0$ and $x \in \overline\Omega.$
Using this we can now argue exactly as in the interior case to obtain the corresponding estimates for $p$-norms.
For the claimed estimates \eqref{eq:testFunctionEstimates1}, \eqref{eq:testFunctionEstimates2} involving $V$-functions, we can argue as in \cite[Lemma 6.5]{Schmidt2008} when $p \leq 2,$ and if $p \geq 2$ we simply note that $V_{p,\mu}(z)\sim \lvert z|^2+\lvert z|^p$ (see also \cite[Lemma 2.3]{Passarelli1996}).

We now assume that $u\in \WW^{1,p}_0(\Omega)$. Using the first part, we obtain $r^\prime<s^\prime$ with $s^\prime-r^\prime\sim s-r$ such that $w^\prime=u$ in $\Omega\setminus B_{s^\prime}(x_0)$ and $w^\prime=u$ in $B_{r^\prime}(x_0)\cap\Omega$. Moreover, \eqref{eq:testFunctionEstimates1} and \eqref{eq:testFunctionEstimates2} are satisfied. It remains to modify $w^\prime$ so it vanishes on $\p\Omega.$

Let $\{Q_i\}$ be a WB-covering for $A^\prime$ using Theorem \ref{thm:covering}, and $\{\psi_i\}_{i\in I}$ the associated partition of unity guaranteed by Theorem \ref{thm:WBunity}. We set
$$
w = u \chi_{A\setminus A^\prime}+\sum_{i\in I} v_i\star \rho_{\delta_i}\psi_i \quad\text{ where } \delta_i = \delta\lvert Q_i\rvert^\frac 1 n,
$$
where
$$
v_i = \begin{cases}
	0 \quad&\text{ if } d(Q_i,\p\Omega)<(s^\prime-r^\prime)/1000 \,\text{ and }\, d(Q_i,\p\Omega)<d(Q_i,\p A^\prime\setminus \p\Omega)\\
	w^\prime &\text{ else }.
	\end{cases}
$$

We analyse the boundary behaviour of $w$ on $\p A^\prime$, for which we assume $\Omega = \bb R^n_+.$
By replacing $B_s \setminus B_r$ by either $B_s \setminus B_{\frac{s+r}2}$ or $B_{\frac{s+r}2} \setminus B_r$, we can assume Lemma \ref{lem:annular_poincaresobolev} holds.
By a density argument (which is allowed since $q\leq \frac{np}{n-1}$) we may assume that $v\in C(\overline{\Omega})$. Note for $x\in A^\prime$,
\begin{align*}
\lvert w(x)-w^\prime(x)\rvert\leq \sum_{i\in I: x\in (1+\delta)Q_i} \lvert v_i\star \rho_{\delta_i}(x)-v(x)\rvert\psi_i
\leq M \max_{y\in B_r(x)} \lvert w^\prime(y)-w^\prime(x)\rvert
\end{align*}
where $r=\max\{\,\delta_i\in I, x\in (1+\delta)Q_i\,\}.$ But if $x\in (1+\delta)Q_i$, then it holds that ${\delta_i \lesssim \lvert Q_i\rvert^\frac 1 n\sim d(x,\p A^\prime)}$. It follows by definition of $v_i$ that $w\in \WW^{1,p}_v(A^\prime)=\WW^{1,p}_u(A^\prime)$. In particular, extending $w$ by $u$ and using Lemma \ref{lem:extension}, we obtain $w\in \WW^{1,p}_0(\Omega)$.

Now using the additivity property \eqref{eq:vfunction_additive}, the finite overlap property \eqref{def:WBcoverMultiplicity} and \eqref{eq:testFunctionEstimates1},
\begin{equation}
  \begin{split}
    \int_A \left\lvert V_{p,\mu}\left( \frac{w-a}{s-r} \right) \right\rvert^2 \d x 
    &\leq \int_{A \setminus A^\prime} \left\lvert V_{p,\mu}\left(\frac{u-a}{s-r}\right)\right\rvert^2 \d x + \sum_{i \in I} \int_{Q_i} \left\lvert V_{p,\mu}\left(\frac{(v_i-a) \star \rho_{\delta_i}}{s-t}\right)\right\rvert^2 \d x\\
    &\leq \int_A \left\lvert V_{p,\mu}\left( \frac{u-a}{s-r} \right) \right\rvert^2 \d x + \sum_{i \in I} \int_{(1+\delta)Q_i} \left\lvert V_{p,\mu}\left(\frac{v_i-a}{s-r}\right)\right\rvert^2 \d x \\
    &\leq \int_A \left\lvert V_{p,\mu}\left(\frac{u-a}{s-r}\right) \right\rvert^2 \d x.
  \end{split}
\end{equation} 
Next, we note that
\begin{align*}
  \D (w-a) = \sum_{i\in I} \D (v_i-a)\star \rho_{\delta_i}\psi_i + \sum_{i\in I} (v_i-a)\star\rho_{\delta_i}\D\psi_i,
\end{align*}
in $A^\prime,$ which follows by noting that $a = \sum_{i \in I} a \star \rho_{\delta_i}\psi_i.$
Similarly to above we can estimate
\begin{equation}
  \begin{split}
    &\int_{A^\prime}\lvert V_{p,\mu}(\D(w-a))\rvert^2 \d x\\
    &\lesssim \sum_{i\in I} \int_{Q_i} \lvert V_{p,\mu}(\D (v_i-a)\star\rho_{\delta_i})\rvert^2 + \sum_{i \in I}\left\lvert V_{p,\mu}\left(\frac{(v_i-a)\star\rho_{\delta_i}}{\lvert Q_i\rvert^{-\frac1n}}\right)\right\rvert^2 \d x,
  \end{split}
\end{equation} 
noting that $\lvert \D\psi_i\rvert \leq C \lvert Q_i\rvert^{-\frac1n}.$ 
Applying Lemma \ref{lem:precise_poincaresobolev} on each $Q_i$ and using \eqref{eq:vfunction_additive} we have
\begin{equation}
  \begin{split}
&    \int_{Q_i} \left\lvert V_{p,\mu}\left( \frac{(v_i-a) \star \rho_{\delta_i}}{\lvert Q_i\rvert^{\frac1n}} \right) \right\rvert^2 \,\d x \\    
    &\lesssim  \int_{(1+\delta)Q_i} \lvert V_{p,\mu}(\D(v_i-a))\rvert^2 \,\d x + \left\lvert V_{p,\mu}\left( \frac{(\lvert v_i-a\rvert )_{(1+\delta)Q_i}}{\lvert Q_i\rvert^{\frac1n}} \right) \right\rvert^2 \\
    &= A_{i,1} + A_{i,2}.
  \end{split}
\end{equation} 
To estimate the latter term, we apply Lemma \ref{lem:precise_poincaresobolev} followed by Jensen's inequality to $\varphi(t) = \lvert V_{p,\mu}(t^{\frac{n-1}n})\rvert^{\frac{2n}{n-1}},$ noting that $\varphi(t) \sim e_{1,p}(t).$ This gives,
\begin{equation}
  A_{i,2} \lesssim \left\lvert V_{p,\mu}\left( \left(\dashint_{(1+\delta)Q_i} \lvert v_i-a\rvert^{\frac{n}{n-1}} \,\d x\right)^{\frac{n-1}{n}} \right)\right\rvert^ 2 \,\d x
  \lesssim \left( \dashint_{(1+\delta)Q_i} \lvert V_{p,\mu}(v-a)\rvert^{\frac{2n}{n-1}} \,\d x \right)^{\frac{n-1}n}.
\end{equation} 
Now combining the above estimates and summing we have
\begin{align*}  
&    \int_{A^\prime} \lvert V_{p,\mu}(\D (w-a))\rvert^2 \,\d x\\
    \lesssim& \sum_{i\in I}\left( \int_{Q_i} \lvert V_{p,\mu}(\D (v_i-a)\star\rho_{\delta_i})\rvert^2 \,\d x + \norm{V_{p,\mu}((v_i-a)\star\rho_{\delta_i})}_{\LL^{\frac{2n}{n-1}}(Q_i)}^2\right) \\
    \lesssim& \sum_{i\in I}\int_{(1+\delta)Q_i} \lvert V_{p,\mu}(\D (v_i-a))\rvert^2 \,\d x + \left( \sum_{i \in I} \int_{(1+\delta)Q_i} \lvert V_{p,\mu}(v_i-a)\rvert^{\frac{2n}{n-1}} \,\d x\right)^{\frac{n-1}{n}} \\
    \lesssim& \int_A \lvert V_{p,\mu}(\D (w^\prime-a)\rvert^2 \,\d x + \left( \int_A \lvert V_{p,\mu}(w^\prime-a)\rvert^{\frac{2n}{n-1}} \,\d x \right)^{\frac{n-1}n}.
\end{align*} 
where we have used H\"older's inequality for sequences and the finite overlap property in the last two lines. To complete the estimate, we apply Lemma \ref{lem:annular_poincaresobolev} which gives
\begin{equation}
  \left( \int_A \lvert V_{p,\mu}(w^\prime-a)\rvert^{\frac{2n}{n-1}} \,\d x \right)^{\frac{n-1}n} \leq \int_A \lvert V_{p,\mu}(\D (w^\prime-a))\rvert^2 + \left\lvert V_{p,\mu}\left( \frac{w^\prime-a}{s-r} \right) \right\rvert^2 \,\d x.
\end{equation} 
This gives the estimate
\begin{equation}
  \int_{A} \lvert V_{p,\mu}(\D (w-a))\rvert^2 \,\d x \lesssim \int_A \left\lvert V_{p,\mu}\left(\frac{w^\prime-a}{s-r}\right)\right\rvert^2 + \left\lvert V_{p,\mu}\left(\D (w^\prime-a)\right)\right\rvert^2 \d x,
\end{equation} 
and so \eqref{eq:testFunctionEstimates1} follows by the corresponding estimates for $w^\prime.$
For \eqref{eq:testFunctionEstimates2} a similar argument to above using $\lvert V_{p,\mu}(\cdot))\rvert^{\frac{2q}p}$ gives
\begin{equation}
  \int_{A^\prime} \lvert V_{p,\mu}(\D(w-a))\rvert^{\frac{2q}p} \,\d x \lesssim \int_{A^\prime} \left\lvert V_{q,\mu}\left(\frac{w^\prime-a}{s-r}\right)\right\rvert^\frac{2q}p + \left\lvert V_{q,\mu}\left(\D (w^\prime-a)\right)\right\rvert^\frac{2q}p \d x,
\end{equation} 
and so the result follows by the estimates for $w^\prime.$
\end{proof}

\begin{remark}\label{rem:boundaryConditionExtension}
Inspecting the proof of Lemma 2.4 in \cite{Fonseca1997}, it holds that
$$
\limsup_{\e\to 0}\,\frac1{\e}\int_{\Omega \cap B_{r+\e}(x_0)\setminus B_{r-\e}(x_0)}\lvert \D \tilde u\vert^p\d x<\infty
$$
for $r\in\{r^\prime,s^\prime\}$.
\end{remark}

We now adapt the additivity property for relaxed functions established in \cite[Lemma 7.7]{Schmidt2009} to hold near the boundary.
For the Neumann case we will use the mixed version $\F_M$ as defined in \eqref{eq:mixed_relaxed}.
\begin{lemma}\label{lem:additivityNeumann}
Let $1<p\leq q<\frac{np}{n-1}$. Suppose $\Omega \subset \bb R^n$ is a bounded domain and that $F$ satisfies \eqref{def:bounds2} and \eqref{eq:pnormlower}. Suppose $u\in \WW^{1,p}(\Omega)$ and let $\tilde u$ be a $\WW^{1,p}$-extension of $u$ to $\R^n$ such that $\|\tilde u\|_{\WW^{1,p}(\R^n)}\lesssim \|u\|_{\WW^{1,p}(\Omega)}$. If the boundary condition
\begin{equation}\label{eq:additivity_condition}
\limsup_{\e\to 0}\,\frac1{\e}\int_{B_{s+\e}(x_0)\setminus B_{s-\e}(x_0)}\lvert \D \tilde u\vert^p\d x<\infty
\end{equation}
holds, then we have
\begin{align}\label{eq:additivity_neumann}
  \overline\F_N(u,\Omega)=\overline\F_M(u,\Omega_s(x_0),\Omega \cap \p B_s(x_0))+\overline\F_M(u,\Omega\setminus \overline{B_s(x_0)},\Omega \cap \p B_s(x_0)).
\end{align}
If, in addition $u\in \WW^{1,p}_g(\Omega)$ where $g\in \WW^{1,q}(\Omega)$, then
\begin{align}\label{eq:additivity_dirichlet}
\overline\F_D(u,\Omega)=\overline\F_D(u,\Omega_s(x_0))+\overline\F_D(u,\Omega\setminus \overline{B_s(x_0)}).
\end{align}
\end{lemma}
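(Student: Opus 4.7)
The plan is to prove each equality by establishing both inequalities. Writing $A_1 = \Omega_s(x_0)$, $A_2 = \Omega \setminus \overline{B_s(x_0)}$, and $\Gamma = \Omega \cap \partial B_s(x_0)$, I treat the Neumann case \eqref{eq:additivity_neumann} in detail; the Dirichlet case \eqref{eq:additivity_dirichlet} is analogous once one notes that for $\e$ small all modifications are localized in an annulus about $\Gamma$ disjoint from $\partial\Omega \setminus \Gamma$, so any Dirichlet trace on $\partial\Omega$ is preserved.

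For the inequality ``$\leq$'', I fix $\eta > 0$ and choose approximating sequences $(v^{(i)}_j) \subset \WW^{1,q}(A_i)$ with $v^{(i)}_j \rightharpoonup u$ weakly in $\WW^{1,p}(A_i)$, $\mathrm{Tr}(v^{(i)}_j) = \mathrm{Tr}(u)$ on $\Gamma$, and $\lim_j \F(v^{(i)}_j, A_i) \leq \overline\F_M(u, A_i, \Gamma) + \eta$. Because the traces match on $\Gamma$, Lemma \ref{lem:extension} allows the $v^{(i)}_j$ to be glued into $w_j \in \WW^{1,q}(\Omega)$ with $w_j \rightharpoonup u$ in $\WW^{1,p}(\Omega)$. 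The additivity of the integrand then gives $\F(w_j) = \F(v^{(1)}_j, A_1) + \F(v^{(2)}_j, A_2)$, and letting $j\to\infty$ along a simultaneous subsequence and then $\eta \to 0$ yields the desired inequality.

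For the reverse inequality, I take $(u_j) \subset \WW^{1,q}(\Omega)$ with $u_j \rightharpoonup u$ in $\WW^{1,p}(\Omega)$ and $\F(u_j) \to \overline\F_N(u, \Omega)$. The restrictions $u_j|_{A_i}$ weakly approximate $u|_{A_i}$ but carry the ``wrong'' trace on $\Gamma$, so they must be modified there. For $\e>0$, I would apply Lemma \ref{lem:regularised_extension} to the extension $\tilde u$ on the annulus $\Omega \cap (B_{s+\e}(x_0) \setminus B_{s-\e}(x_0))$ to obtain $w_\e \in \WW^{1,q}(\Omega)$ agreeing with $\tilde u$ outside a slightly larger shell and inside a slightly smaller one, and whose $V_{p,\mu}$-energy in the annulus is controlled by that of $\tilde u$ via \eqref{eq:testFunctionEstimates1}--\eqref{eq:testFunctionEstimates2}. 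Using a cutoff to interpolate between $u_j$ (away from $\Gamma$) and $w_\e$ (near $\Gamma$), I obtain modified $\tilde u^{\e}_j \in \WW^{1,q}(\Omega)$ whose restrictions to $A_i$ have trace $u|_\Gamma$, and are thus admissible in $\overline\F_M(u, A_i, \Gamma)$.

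The main obstacle is controlling the excess energy introduced by the cutoff. The key input is assumption \eqref{eq:additivity_condition}, which provides the uniform bound $\limsup_{\e \to 0} \e^{-1}\int_{B_{s+\e}\setminus B_{s-\e}} |\D\tilde u|^p \,\d x < \infty$. Via a Fubini/averaging argument this lets me select radii in the annulus along which the boundary integrals $\int_{\partial B_r} |\D\tilde u|^p \,\d\H^{n-1}$ are simultaneously finite and uniformly bounded; combined with the growth bound \eqref{def:bounds2} and the estimates of Lemma \ref{lem:regularised_extension}, the extra $\F$-energy introduced by the modification in the thin annular layer vanishes as $\e \to 0$. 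A diagonal extraction in $(\e,j)$ then produces recovery sequences $(\tilde u^{(i)}_j) \subset \WW^{1,q}(A_i)$ for which $\liminf_{j}[\F(\tilde u^{(1)}_j, A_1) + \F(\tilde u^{(2)}_j, A_2)] \leq \liminf_j \F(u_j) = \overline\F_N(u, \Omega)$, giving the reverse inequality. The Dirichlet case \eqref{eq:additivity_dirichlet} follows by the same scheme, noting that for $r<s^\prime$ sufficiently close to $s$ the supporting annulus does not meet $\partial\Omega$ (or, if it does, the modifications from Lemma \ref{lem:regularised_extension} preserve the existing boundary values since $w=u$ outside the smoothing shell), so the Dirichlet trace $g_D$ is preserved throughout.
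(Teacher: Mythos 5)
Your overall strategy is sound and close in spirit to the paper's: both inequalities ultimately reduce to transferring the boundary value $\tilde u$ onto $\Omega\cap\p B_s(x_0)$ without changing the asymptotic energy, which is where \eqref{eq:additivity_condition}, the restriction $q<\frac{np}{n-1}$ and the smoothing operator of Lemma \ref{lem:regularised_extension} must enter. You place this transfer work in the ``$\geq$'' inequality (splitting a global recovery sequence), whereas the paper places it in the ``$\leq$'' inequality (preparing the local recovery sequences before gluing) and, crucially, does not re-derive the transfer at all: it invokes \cite[Lemmas 7.7 and 7.8]{Schmidt2009} as a black box, observing that their proofs only use the growth bounds \eqref{def:bounds2} and \eqref{eq:pnormlower}. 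You instead attempt to reconstruct that transfer from Lemma \ref{lem:regularised_extension}, and this is where the gaps lie.

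Concretely: (i) your modification does not produce competitors with trace $\mathrm{Tr}\,u$ on $\Gamma$, because you apply the smoothing operator on an annulus $B_{s+\e}\setminus B_{s-\e}$ straddling $\p B_s(x_0)$; on $\p B_s(x_0)$ the function $w_\e$ is then the averaged quantity $\dashint_{\Omega\cap B_{\theta(x)}(x)}\tilde u\,\d z$, whose trace differs from that of $\tilde u$. The sphere $\p B_s(x_0)$ must be one of the two \emph{bounding} spheres of the smoothing annulus, since it is only there that $T\tilde u=\tilde u$. (ii) The crux --- the cross term $\D\phi\otimes(u_j-w_\e)$ produced by the cutoff, fed into an integrand of $q$-growth while $u_j$ is only bounded in $\WW^{1,p}$ --- is not resolved by \eqref{eq:additivity_condition} and a Fubini selection of good radii for $\tilde u$ alone: one also needs the $\LL^{\frac{2q}{p}}$-estimate \eqref{eq:testFunctionEstimates2} applied to the interpolated function, strong $\LL^q$-convergence of $u_j$ (via Rellich, using $q<\frac{np}{n-1}<p^*$), and a De Giorgi slicing in $j$ to choose transition layers carrying little of the energy of $u_j$; this is exactly the content of Schmidt's lemmas. (iii) The Dirichlet case is not ``analogous once one notes the annulus avoids $\p\Omega\setminus\Gamma$'': when $x_0$ is a boundary point, any annulus around $\Gamma=\Omega\cap\p B_s(x_0)$ meets $\p\Omega$, and inside the smoothing shell the averaged function no longer has trace $g_D$ there. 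This is precisely why the paper's proof contains the additional WB-covering/partition-of-unity step that re-imposes $g_D$ near $\p\Omega$ before the diagonal extraction. Either citing \cite[Lemmas 7.7, 7.8]{Schmidt2009} as the paper does, or supplying these three missing pieces, would complete your argument.
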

\begin{proof}
  Let $\{u_k\}\in \WW^{1,q}(\Omega)$ be such that 
  $$\overline\F_M(u,\Omega_s(x_0),\Omega \cap \p B_s(x_0))=\lim_{k\to\infty} \F(u_k,\Omega,\Omega \cap \p B_s(x_0))$$
   with $u_k\rightharpoonup u$ in $\WW^{1,p}(\Omega_s(x_0))$. Extend each $u_k$ to a $\WW^{1,q}$ function on $\R^n$ denoted $\tilde u_k$ in such a way that
\begin{align*}
\|\tilde u_k\|_{\WW^{1,q}(\Omega)}\lesssim \|u_k\|_{\WW^{1,q}(\Omega)}\\
\tilde u_k \rightharpoonup \tilde u \,\text{ in }\, \WW^{1,p}(\R^n).
\end{align*}
 Due to \cite[Lemma 7.7]{Schmidt2009}, there is a sequence $\{w_k\}\subset \WW^{1,q}(B_s(x_0))\cap \WW^{1,p}_{\tilde u}(B_s(x_0))$ such that $w_k\rightharpoonup \tilde u$ weakly in $\WW^{1,p}(B_s(x_0))$ and
 $$
 \lvert \F(w_k,B_s(x_0))-\F(\tilde u_k,B_s(x_0))\rvert\to 0
 $$ 
 as $k\to\infty$. 
 While \cite[Lemma 7.7]{Schmidt2009} concerns autonomous integrands of the form $F=F(z),$ an inspection of the proof reveals that we only need to assume the growth bounds \eqref{def:bounds2}, \eqref{eq:pnormlower} are satisfied.
 Hence abusing notation and using $w_k$ to denote the restriction of $w_k$ to $\Omega_s(x_0)$, we deduce $w_k\rightharpoonup \tilde u$ weakly in $\WW^{1,p}(\Omega_s(x_0))$ and further
 \begin{equation}
   {\lim_{k\to\infty}\F(w_k,\Omega_s(x_0)) = \overline \F_M(u,\Omega_s(x_0),\Omega\cap \p B_s(x_0))}.
 \end{equation} 
 
 By a similar argument and using \cite[Lemma 7.8]{Schmidt2009}, we obtain $v_k\in \WW^{1,q}(\Omega\setminus \overline{B_s(x_0)})$ with boundary value $u$ on $\p B_s(x_0)\cap\Omega$ such that $v_k\rightharpoonup u$ weakly in $\WW^{1,p}(\Omega\setminus \overline{B_s(x_0)})$ and
 \begin{equation}
   \lim_{k\to\infty}\F(v_k,\Omega\setminus\overline{B_s(x_0)}) = \overline \F_M(u,\Omega\setminus\overline{B_s(x_0)},\Omega \cap \p B_s(x_0)).
 \end{equation} 

Composing the sequences $u_k$ and $v_k$, we deduce
$$
\overline\F_M(u,\Omega)\leq\overline\F_M(u,\Omega_s(x_0),\Omega \cap \p B_s(x_0))+\overline\F_M(u,\Omega\setminus \overline{B_s(x_0)},\Omega \cap \p B_s(x_0)).
$$
The other direction is immediate from the definition.

We now turn to the Dirichlet case, where we need to modify the sequences $\{w_k\}$ and $\{v_k\}$ slightly in order to preserve the boundary condition $w_k= g_D$ on $\p\Omega\cap B_s(x_0)$ and $v_k = g_D$ on $\p\Omega\setminus B_s(x_0)$ respectively. We only illustrate how to do so in the case of $\{w_k\}$. For $\{v_k\}$ the argument is similar.
Thus we assume that $u\in \WW^{1,p}_{g_D}(\Omega)$. Let $\{u_k\}\in \WW^{1,q}(\Omega)$ be such that $\overline\F_D(u,\Omega_s(x_0))=\lim_{k\to\infty} \F(u_k,\Omega_s(x_0))$ with $u_k\rightharpoonup u$ in $\WW^{1,p}(\Omega_s(x_0))$. Extending $u_k$ by $g_D$ to $B_s(x_0)$ and applying \cite[Lemma 7.7]{Schmidt2009}, we obtain $\{w_k\}\subset \WW^{1,q}(B_s(x_0))\cap \WW^{1,p}_{\tilde u}(B_s(x_0))$ such that $w_k\rightharpoonup \tilde u$ weakly in $\WW^{1,p}(B_s(x_0))$ and
 $$
 \lvert \F(w_k,\Omega_s(x_0))-\F(u_k,\Omega_s(x_0))\rvert\to 0.
 $$ 
 Let $\{Q_i\}_{i\in I}$ be a WB-partition of $B_s(x_0)$ and $\{\psi_i\}_{i\in I}$ be a partition of unity associated to it. Set
 $$
 w_{k,\e} = \sum_{i\in I} \tilde w_{k,\e}\psi_i\quad \text{where} \quad \tilde w_{k,\e}=\begin{cases}
		w_k \text{ if } d(Q_i,\p\Omega)>\e\\
		g_D \text{ else }.
 		\end{cases}
 $$
Arguing as in the proof of Lemma \ref{lem:regularised_extension}, we see that $w_{k,\e}\in \WW^{1,q}_u(\Omega_s(x_0))$ and $w_{k,\e}\to w_k$ in $\WW^{1,q}(\Omega_s(x_0))$ as $\e\to 0$. Moreover, due to continuity of $\F$ in $\WW^{1,q}$, it holds that ${\F(w_{k,\e},\Omega_s(x_0))\to \F(w_k)}$ as $\e\to 0$. By a diagonal subsequence argument, we thus obtain a sequence ${\{w_k\}\subset \WW^{1,q}_u(\Omega_s(x_0))}$ such that $w_k\rightharpoonup u$ weakly in $\WW^{1,p}(\Omega_s(x_0))$ and
 $$
 \lvert \F(w_k,\Omega_s(x_0))-\F(u_k,\Omega_s(x_0))\rvert\to 0.
 $$ 
 This concludes the proof.
 \end{proof}

\begin{remark}
  In Lemma \ref{lem:regularised_extension} we assumed our domain was flat to ensure the extension operator preserved boundary values, which will be sufficient in what follows.
  In contrast Lemma \ref{lem:additivityNeumann} holds for general domains, even in the Dirichlet case; this is because we do not use the boundary extension operator, and instead adapt \cite[Lemma 7.7]{Schmidt2009} from the interior case.
\end{remark}

\subsection{Reduction to a flat boundary}\label{sec:flattening}

For many of our results we will assume that our boundary is locally flat, and consider the case where $\Omega = U^+ =  U \cap \bb R^n_+$ for some bounded open subset $U \subset \bb R^n$ containing $B_1(0)$.
In this section we will outline how to reduce to this setting, so let $\Omega \subset \bb R^n$ be a bounded $C^{1,\alpha}$ domain, and let $F = F(x,z)$ satisfy \eqref{def:bounds1}--\eqref{def:bounds31}.

For $x_0 \in \partial\Omega$ let $R>0$ be sufficiently small so there exists a bi-$C^{1,\alpha}$ mapping $\Psi \colon B_R(x_0) \to U \subset \bb R^n$ such that $\Psi(\Omega_R(x_0)) = U^+$ and $\Psi(\p\Omega \cap B_R(x_0)) = U \cap \{x_n=0\}.$ 
Now writing $\Phi = \Psi^{-1},$ in the Neumann case a change of variables gives $\widetilde u = u \circ \Phi$ satisfies $\F(u,\Omega_R(x_0)) = \widetilde{\F}(\tilde u),$ where
\begin{equation}\label{eq:flattened_functional}
  \begin{split}
    \widetilde{\F}(v) 
    &= \int_{U^+} \left(F\left(\Phi(x), \D v \cdot \D\Phi(x)^{-1}\right) - f(\Phi(x)) \cdot v\right) \lvert \det \D \Phi\rvert \,\d x \\
    &\quad+ \int_{U \cap \{x_n=0\}} g_N(\Phi(x)) \cdot v\, \lvert \det \D_{\tau}\Phi\rvert \,\H^{n-1},
 \end{split}
\end{equation} 
over all $v \in \WW^{1,p}(U^+,\R^m)$ such that $v = \tilde u$ on $\p U^+ \cap \R^n_+;$ here $\D_{\tau}$ denotes the vector of tangential derivatives along $\p\Omega.$ 
The Dirichlet case is analogous, except that we drop the final term on the boundary.

Note that the associated integrand $\widetilde F(x,z) = F\left(\Phi(x), \D v \cdot \D\Phi(x)^{-1}\right)\lvert \det \D \Phi\rvert$ inherits the properties \eqref{def:bounds1}--\eqref{def:bounds31} (up to possibly changing the constants appearing within them), and the Besov regularity of $f$ is preserved.
Additionally if $F = F_0(x,\lvert z\rvert)$ is radial, the same will hold for $\widetilde F.$

We now claim that if $u$ is a relaxed minimiser of $\overline{\F}_N$ or $\overline{\F}_D$, by shrinking $R$ if necessary we have $\tilde u$ is a relaxed minimiser of $\widetilde{\F}$ on $U^+.$
To achieve this we choose $R>0$ so that \eqref{eq:additivity_condition} holds, using Remark \ref{rem:boundaryConditionExtension}.
In the Neumann case, using \eqref{eq:additivity_neumann} we have $u$ minimises the mixed problem $\overline\F_M(u,\Omega_s(x_0),\Omega \cap \p B_s(x_0))$, whereas in the Dirichlet case we use \eqref{eq:additivity_dirichlet} to see that $u$ minimises $\overline\F_D(u,\Omega_s(x_0))$.
  Now composing the minimisers $u_\e$ of the regularised functional \eqref{def:Fe} with a flattening map $\Phi = \Psi^{-1}$, we have $\tilde u_{\e} = u_\e \circ \Phi$ minimises
  \begin{equation}
    v \mapsto \widetilde{\F}(v, U^+) + \e \int_{U^+} \lvert \D v \cdot \D\Phi(x)^{-1}\rvert^q\, \lvert \det \D \Phi\rvert\,\d x,
  \end{equation} 
  with $\widetilde{\F}(v)$ as in \eqref{eq:flattened_functional}.
  Now sending $\eps \to 0$ and noting $v \mapsto v \circ \Phi$ is weak-to-weak continuous in $W^{1,p}$, we deduce that $\tilde u$ is a relaxed minimiser of $\widetilde{\F}_N,$ $\widetilde{\F}_D$ respectively on $U^+.$

\section{Regularity of minimisers of the relaxed functional}\label{sec:relaxed}
\subsection{Basic result: Neumann case}\label{sec:basicNeumann}
The aim of this section is to prove Theorem \ref{thm:regularityRelaxed}. For the convenience of the reader we recall the statement:
\neumannBasic*

The proof follows the argument in \cite[Section 3]{Koch2020} for the Dirichlet case closely, and we focus on the key differences.
\begin{proof}
We will write $g = g_N$ to simplify notation. The key is to prove the following a-priori estimate:
Let $v_\e$ be the minimiser of $\F_\e(\cdot)$ in the class $\WW^{1,q}(\Omega)$. Note that $v_\e$ exists by the direct method and using the strict convexity deriving from \eqref{def:bounds1}. Then for any $0\leq \beta<\alpha,$ we claim there is $\gamma>0$ such that the estimate
\begin{align*}
  \|v_\e\|_{\WW^{1,\frac{np}{n-\beta}}(\Omega)}\lesssim \left(1+\F_\e(v_\e)+\|f\|_{\LL^{q^\prime}(\Omega)}^{q^\prime}+ \|g\|_{\WW^{\alpha-\frac1{q^\prime},q^\prime}(\p\Omega)}^{q^\prime}\right)^\gamma 
\end{align*} 
holds, with the implicit constant independent of $\e$ and $\gamma$.

Let $\rho_0>0$ and $\mathbf n\colon \R^n\to S^{n-1}$ be so that the uniform cone property \eqref{eq:uniformCone} holds. Possibly reducing $\rho_0$, assume without loss of generality that $\Omega+B_{3\rho_0}(x)\subset B(0,R)$ for all $x\in \Omega$. 
 Here $B(0,R)\Supset\Omega$ is the ball defined in Section \ref{sec:WBLipschitz}.
Given ${x_0\in\Omega}$, let $0\leq {\phi=\phi_{x_0,\rho_0}}\leq 1$ be a smooth cut-off supported in $B_{2\rho_0}(x_0)$ with $\phi(x)=1$ in $B_{\rho_0}(x_0)$ and ${\lvert \D\phi(x)\rvert\leq C {\rho_0}^{-k}}$  for some $C>0$.
Given functions $v$ defined on $\R^n$ and $h\in\R^n$ introduce
\begin{align*}
  T_h v &= \phi v_{h}+(1-\phi)v.
\end{align*}

As in the Dirichlet case, for $h\in C_{\rho_0}(\theta_0,- n(x_0))$, we note using Lemma \ref{lem:hbound1} the lower bound
\begin{align}\label{eq:lowerEstimate}
\F_\e(T_h v_\e)-\F_\e( v_\e)\gtrsim\|V_{p,\mu}(\D v_{\e,h})-V_{p,\mu}(\D v_\e)\|_{\LL^2(B_{\rho_0}(x_0))}^2.
\end{align}

The proof will conclude as in the Dirichlet case once we show that for every $x_0\in \R^n$ there is a constant $C=C(n,\rho_0,\Lambda,\Omega)$ such that for all $v\in \WW^{1,q}(\Omega)$
\begin{equation}\label{clm:diffEstimate}
  \begin{split}
    &\sup_{h\in C_{\rho_0}(\theta_0, n(x_0))}\frac{\F_\e(T_h v)-\F_\e(v)}{\lvert h\rvert^\alpha}\\
    &\qquad\leq C\left(1+\|\D v\|_{\LL^q(\Omega)}^q + \|g\|_{\WW^{\alpha-\frac 1 {q^\prime},q^\prime}(\p\Omega)}^{q^\prime}+\|f\|_{\BB_{\infty}^{\alpha-1,q^\prime}(\Omega)}^{q^\prime}\right)
  \end{split}
\end{equation}
We note that if $p < 2,$ we combine \eqref{eq:lowerEstimate}, \eqref{clm:diffEstimate} and use the improved differentiability at the level of $V$-functions as is done in \cite{Esposito2004}. Moreover, we comment that \eqref{eq:regRelaxed1} is a consequence of \eqref{eq:regRelaxed2} and Lemma \ref{lem:Vfunc_diff}.

Indeed, given $v\in \WW^{1,q}(\Omega)$, let $\tilde v$ be a $\WW^{1,q}$-extension of $v$ to $\bb R^n.$ Then considering ${h\in C_{\rho_0}(\theta_0,n(x_0))},$ evidently $T_h\tilde v \in \WW^{1,q}(\Omega)$ and so we can write
\begin{align*}
\F_\e(T_h v)-\F_\e(v) = &\int_\Omega F_\e(x,T_h \D v +\D\phi \otimes(v_h-v))-F_\e(x,T_h \D v)\,\d x\\
&\qquad+\int_\Omega F_\e(x,T_h \D v)-F_\e(x,\D v)\,\d x\\
&\qquad -\int_\Omega f\cdot(T_h v-v)\,\d x+\int_{\p\Omega} g\cdot(T_h v-v)\,\d\H^{n-1}\\
=& A_1 + A_2+ A_3+A_4.
\end{align*}
The terms $A_1$ and $A_2$ are estimated exactly as in the Dirichlet case.
For $A_3$ we make a small refinement and estimate
\begin{equation}
  \lvert A_3 \rvert \lesssim \lvert h\rvert^{\alpha} \norm{f}_{\BB_{\infty}^{\alpha-1,q^\prime}} \norm{v}_{\WW^{1,q}(\Omega)},
\end{equation} 
noting this holds when $\alpha = 0, 1$ and interpolating $f \mapsto \lvert A_3\rvert$ in $[\cdot,\cdot]_{1-\alpha,\infty}$ for the general case.
Concerning $A_4$, we claim that for $\alpha \in (0,1)$, we have
\begin{equation}\label{eq:A4Interpolated}
  \lvert A_4\rvert = \left\lvert \int_{\partial\Omega} g \cdot (T_h v - v) \,\d\H^{n-1}\right\rvert \lesssim \lvert h\rvert^{\alpha} \norm{g}_{\WW^{\alpha-\frac 1 {q^\prime},q^\prime}(\p\Omega)}^{q^\prime} \norm{\D v}_{\LL^q}.
\end{equation} 
We will establish this by similarly interpolating between $\alpha  = 0,1.$ When $\alpha = 0$ we have
\begin{equation*}
    \lvert A_4 \rvert \leq \norm{g}_{\BB_{q^\prime}^{-1+\frac1{q},q^\prime}(\partial\Omega)} \norm{T_h v - v}_{\BB^{1-\frac1q,q}_q(\partial\Omega)} \lesssim \norm{g}_{\BB_{q^\prime}^{-\frac1{q^\prime},q^\prime}(\partial\Omega)} \norm{v}_{\WW^{1,q}(\Omega)},
\end{equation*} 
where we have used Lemma \ref{lem:traceTheorem}. On the other hand, if $\alpha=1$, let $\tilde g \in \BB_{\infty}^{1,q^\prime}(\Omega)$ be an extension of $g$ to $\Omega$. Denote for $\sigma\in \p \Omega\cap B_{2\rho_0}$, $h_{\sigma}(t)= \sigma - tn$ and put ${t_\mathrm{max}(\sigma) = \max\{t > 0 \colon \sigma - t n\in B_{2\rho_0}\}}$. Then, using the fundamental theorem of calculus, the coarea formula and the interior cone condition we have
\begin{align*}
  \lvert A_4\rvert =& \left\lvert\int_{\p\Omega\cap B_{2\rho_0}} \int_0^{t_{\max}(\sigma)} \p_n \left(\tilde g\cdot(T_h v - v)(h_{\sigma}(t))\right)\,\d t\d\mathscr H^{n-1}\right\rvert\\
  =& \left\lvert\int_{\Omega\cap B_{2\rho_0}} \p_n \tilde g\cdot(T_h v - v)\,\d x\right\rvert\\
\leq& \left\lvert\int_{\Omega\cap B_{2\rho_0}} \p_n\left(\tilde g \phi \right)\cdot(v_h - v)\,\d x\right\rvert + \left\lvert\int_{\Omega\cap B_{3\rho_0}} \left((\tilde g \phi)_{-h} - (\tilde g \phi) \right)\cdot \p_n v\,\d x\right\rvert \\
\lesssim& \|(\phi g)_h-\phi g\|_{\LL^{q^\prime}(\Omega)}\|\D v\|_{\LL^q(\Omega)}+\lvert h\rvert\|\tilde g\|_{\WW^{1,q^\prime}(\Omega)}\|\D v\|_{\LL^{q}(\Omega)}\\
\lesssim& \lvert h \rvert \|g\|_{\WW^{1-1/q^\prime,q^\prime}(\Omega)}\|v\|_{\WW^{1,q}(\Omega)}.
\end{align*}
Note that we may apply the previous two estimates to $v-(v)_{\Omega}$ instead, in order to replace the $W^{1,q}$ norm by $\norm{\D v}_{L^q(\Omega)}$ using the Poincar\'e inequality.

Hence, viewing $g \mapsto \lvert A_4\rvert$ as the mapping from $\BB_{q^\prime}^{\alpha-\frac1{q^\prime},q^\prime}(\p\Omega)) \to \R,$ since the estimate holds for $\alpha = 0,1$ we can interpolate in $[\cdot,\cdot]_{\alpha,q^\prime}$ to deduce \eqref{eq:A4Interpolated}.
This concludes the proof.
\end{proof}

Improving the argument by optimising the cut-off function as demonstrated in \cite[Section 4.1]{Koch2021a}, we obtain the following improved version in the autonomous case.
\begin{corollary}\label{cor:autonomous}
  Let $2 \leq p\leq q< \max\left\{\frac{np}{n-1},p+1\right\}$. Assume  $\Omega$ is a Lipschitz domain, $f\in \BB_{\infty}^{\alpha-1,q^\prime}(\Omega)$ and $g_N\in \WW^{\alpha-\frac1{q^\prime},q^\prime}(\p\Omega)$.
  Suppose $F=F(z)$ is autonomous and satisfies \eqref{def:bounds1}, \eqref{def:bounds2}. Then if $u$ is a relaxed minimiser of $\F_N$ in the class $\WW^{1,p}(\Omega)$, we have $u\in \WW^{1,q}(\Omega)$ and for any $\beta<1$ the estimate
\begin{align*}
  \|u\|_{\WW^{1,\frac{np}{n-\beta}}(\Omega)} \lesssim \left(1+\overline\F_N(u)+\|f\|_{\BB^{\alpha-1,q^\prime}_{\infty}(\Omega)}+\|g_N\|_{\WW^{\alpha-\frac1{q^\prime},q^\prime}(\Omega)}\right)^\gamma
\end{align*}
holds for some $\gamma>0$. In fact, we have
$$
\|V_p(\D u)\|_{\BB^{\frac12,2}_\infty(\Omega)}\lesssim \left(1+\overline\F_N(u)+\|f\|_{\BB^{\alpha-1,q^\prime}_{\infty}(\Omega)}^{q^\prime}+\|g_N\|_{\WW^{\alpha-\frac1{q^\prime},q^\prime}(\Omega)}^{q^\prime}\right)^\gamma.
$$
\end{corollary}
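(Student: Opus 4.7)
We follow the scheme used in the proof of Theorem~\ref{thm:regularityRelaxed}, modifying only the interior estimate. For each $\e\in(0,1]$ let $v_\e\in\WW^{1,q}(\Omega)$ be the unique minimiser of $\F_\e(\cdot)$; by Lemma~\ref{lem:convApproximate} it suffices to establish a bound on $\|V_p(\D v_\e)\|_{\BB^{1/2,2}_\infty(\Omega)}$ uniform in $\e$. Fix $x_0 \in \bb R^n$, let $\rho_0,\theta_0>0$ be as in the uniform cone condition \eqref{eq:uniformCone}, and for $h\in C_{\rho_0}(\theta_0,-n(x_0))$ test minimality against $T_hv_\e=\phi\, v_{\e,h}+(1-\phi)v_\e$, with $\phi=\phi_{x_0,\rho_0}$ a cut-off supported in $B_{2\rho_0}(x_0)$ to be chosen. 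The lower bound \eqref{eq:lowerEstimate} from Lemma~\ref{lem:hbound1} remains intact and yields
\begin{align*}
\|V_p(\D v_{\e,h})-V_p(\D v_\e)\|_{\LL^2(B_{\rho_0}(x_0))}^2 \lesssim \F_\e(T_h v_\e)-\F_\e(v_\e),
\end{align*}
whose right-hand side splits as $A_1+A_2+A_3+A_4$ exactly as in Theorem~\ref{thm:regularityRelaxed}. The forcing and boundary contributions $A_3$ and $A_4$ are handled by the same interpolation argument used there, producing the claimed dependence on $\|f\|_{\BB^{\alpha-1,q^\prime}_\infty(\Omega)}$ and $\|g_N\|_{\WW^{\alpha-1/q^\prime,q^\prime}(\p\Omega)}$, since those arguments are insensitive to the autonomy of $F$.

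The essential improvement lies in the treatment of $A_1+A_2$. Since $F$ is autonomous, we may pick $\phi$ as a radial piecewise-affine profile concentrating its gradient on a thin transition annulus, and then optimise over the radial position and width of this annulus as in \cite[Section~4.1]{Koch2021a} (whose technique is in turn adapted from \cite{Schaeffner2020}). The autonomy permits an exact change of variables $x\mapsto x+h$ on the sublevel set $\{\phi=1\}$, which cancels the bulk part of $A_2$ and leaves only boundary-layer remainders supported on the transition annulus; these are then bounded using \eqref{def:bounds2}, \eqref{eq:h2bound_derivative} and the ellipticity from Lemma~\ref{lem:hbound1}. The outcome is the sharpened interior estimate
\begin{align*}
A_1+A_2 \lesssim |h|\bigl(1+\|\D v_\e\|_{\LL^q(\Omega)}^q\bigr),
\end{align*}
valid throughout the improved range $q<\max\{np/(n-1),\,p+1\}$.

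The argument is purely interior, so it carries over to the Neumann setting without modification, provided one ensures $T_hv_\e$ remains an admissible competitor; this is the reason we restrict $h$ to the interior cone $C_{\rho_0}(\theta_0,-n(x_0))$, which guarantees $v_{\e,h}\in\WW^{1,q}(\Omega\cap B_{3\rho_0}(x_0))$ and hence $T_hv_\e\in\WW^{1,q}(\Omega)$. Combining the upper and lower bounds, covering $\overline{\Omega}$ by finitely many balls $B_{\rho_0}(x_0)$ and invoking the cone-based characterisation \eqref{eq:besovcharacterisation} together with the localisation \eqref{eq:besovlocalisation} of Besov seminorms, one obtains
\begin{align*}
\|V_p(\D v_\e)\|_{\BB^{1/2,2}_\infty(\Omega)}^2 \lesssim 1+\F_\e(v_\e)+\|f\|_{\BB^{\alpha-1,q^\prime}_\infty(\Omega)}^{q^\prime}+\|g_N\|_{\WW^{\alpha-1/q^\prime,q^\prime}(\p\Omega)}^{q^\prime},
\end{align*}
uniformly in $\e$. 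Applying Lemma~\ref{lem:Vfunc_diff} (using $p\geq 2$) together with Sobolev embedding yields the $\WW^{1,np/(n-\beta)}(\Omega)$ estimate for every $\beta<1$, and letting $\e\to 0$ via Lemma~\ref{lem:convApproximate} concludes the proof.

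\textbf{Main obstacle.} The critical step is the optimised bound on $A_1+A_2$: one must verify that the refined cut-off construction of \cite[Section~4.1]{Koch2021a} remains valid in our setting and compatible with the cone-based boundary treatment, and in particular that the admissibility of $T_hv_\e$ in the Neumann problem is unaffected by the choice of radial profile. All remaining steps are routine adaptations of the corresponding arguments in Theorem~\ref{thm:regularityRelaxed}.
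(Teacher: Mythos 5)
Your proposal is correct and matches the paper's intended argument: the paper proves this corollary precisely by running the difference-quotient scheme of Theorem \ref{thm:regularityRelaxed} (keeping the interpolation treatment of the $f$- and $g_N$-terms) and replacing the estimate of $A_1+A_2$ by the optimised cut-off construction of \cite[Section 4.1]{Koch2021a}, which exploits autonomy exactly as you describe. No substantive differences.
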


\begin{remark}[Excluding Lavrentiev]\label{rem:lavrentievNeumann}
  We point out that under the assumptions of Theorem \ref{thm:regularityRelaxed} (or the corresponding analog in the Dirichlet case, see \cite{Koch2020}), we can rule out the Lavrentiev phenomenon provided we assume the following structural condition: we assume there is $\e_0> 0$ such that for all $0<\e<\e_0$ and $x \in \Omega,$ there is $\hat y \in \overline{\Omega_{\eps}(x)}$ such that
  \begin{equation}
    F(\hat y, z) \leq F(y,z)
  \end{equation} 
  against all $y \in \overline{\Omega_{\eps}(x)}$ and $z \in \bb R^{m\times n}.$ This condition was introduced in \cite{Esposito2019} in the context of functionals with $(p,q)$-growth and is similar to Assumption 2.3 in \cite{Zhikov1995}. It has also been used in the context of lower semi-continuity in \cite{Acerbi1994a}. 
  If this holds, we have that ${\overline{\F}_N(u) = \F_N(u)}$ for all $u \in \WW^{1,p}(\Omega)$ such that $F(x,\D u) \in \LL^1(\Omega)$ by a straightforward adaption of the argument for the Dirichlet case in \cite{Koch2022a}.
\end{remark}

\subsection{Improved interior regularity}\label{sec:improved}
We now present an improved differentiability result in the interior case, which goes beyond the $\frac{\alpha}2$ differentiability obtained in \cite{Esposito2004} (and correspondingly Theorem \ref{thm:regularityRelaxed} and \cite{Koch2020} in the global case). We remark again that the techniques utilised to obtain the corresponding result in the autonomous setting \cite{Carozza2013} are not directly applicable. Moreover, an approach by regularisation does not seem sufficient to obtain in particular Theorem \ref{thm:radial_neumann}. Accordingly, the iteration technique presented in this section is new in the literature.

\interiorImproved*

\begin{remark}\label{rem:optimality_forcing}
In light of the results in \cite{Simon1977,Weimar2021} our results are sharp, however in the first part the limiting factor is the regularity of the forcing term $f.$
Assuming regularity beyond $f \in \LL^{p^{\prime}}$ gives further differentiability when $p>2,$ up until $\beta = p-1$ where $\frac{p^\prime(\beta-2)} 2=1$.
Moreover, we remark that in general the regularity in this case is also sharp for $\alpha = 1$, as shown in \cite[Section 5]{Brasco2018}.
\end{remark}

Before embarking on the proof, which consists of five steps, we will briefly comment on the overall strategy.
We will employ the difference quotient technique at the level of the $V$-functional, and obtain a general estimate for $\norm{\Delta_hV_{p,\mu}(\D u)}_{L^2}$ in the Step 1, which forms the basis for the subsequent analysis.
Then in Step 2, we show that when $p \geq 2$ this leads to an iterative improvement in differentiability, which we iterate over a suitable set of parameters.
Moreover we show that we can pass to the limit in this iteration, which is the content of Step 3.
Finally Steps 4 and 5 treat the cases $p \leq 2$ and more regular $f$ respectively, where the same techniques are applied to a slightly different set of estimates and choices of parameters.

\begin{proof}
  The restriction of exponents allows us to apply the higher integrability result from \cite{Esposito2004}, and so we may assume that $u \in \WW^{1,q}_{\loc}(\Omega)$ with a corresponding estimate.
Fix $x_0\in \Omega$. Write $B_t\equiv B_t(x_0)$ for $t>0$. Further denote $a(x,z) = \p_z F(x,z)$.
Suppose $R>0$ such that $B_R \Subset \Omega$, and let $\frac{R}2 < r < s < R$ with $s-r<1$.
Set $r_1 = r + \frac{s-r}3,$ $r_2 = r+\frac{2(s-r)}3.$
Let $\phi$ be a radial cut-off with $\phi =1 $ in $B_r$, supported on $B_{r_1}$, and such that $\lvert \nabla^k \phi\rvert \lesssim_k (s-r)^{-k}$ for each $k \in \mathbb N$. Further set
\begin{equation}\label{eq:second_diff}
\Delta_h^2 u = \Delta_h (-\Delta_{-h}) u = (-\Delta_{-h}) \Delta_h =  u(x+h)+u(x-h)- 2u(x).
\end{equation}

\textbf{Step 1: Basic estimate:} 
Consider $h\in \R^n$ with $\lvert h\rvert \leq (s-r)/3$. Testing the Euler-Lagrange equation for $u$ with the test function $-\phi^2 \Delta_h^2 u \in \WW^{1,q}_0(B_R)$ we find
\begin{align*}
A=&-\int_{B_{s}} \phi^2 a(x,\D u)\cdot \Delta_h^2 \D u \d x\\
=& \int_{B_{s}} a(x,\D u)\cdot \D \phi^2\otimes \Delta_h^2 u\d x-\int_{B_{s}} \phi^2 f\cdot \Delta_h^2 u \d x\\
 =& B_1 + B_2.
\end{align*}
Concerning $A$, we note using discrete integration by parts
\begin{align*}
A=&\int_{B_{s}} \Delta_h(\phi^2 a(x,\D u))\cdot\Delta_h \D u\d x\\
 =& \int_{B_{s}} \phi^2 \Delta_h a(x,\D u)\cdot \Delta_h \D u\d x+\int_{B_{s}} \Delta_h \phi^2 a(x,\D u)_h \Delta_h \D u\d x \\
 =& \int_{B_{s}} \phi^2 \left( a(x,\D u_h) - a(x,\D u) \right) \cdot \Delta_h \D u\d x+\\
  &+ \int_{B_{s}} \phi^2 \left( a(x+h,\D u_h) - a(x,\D u_h) \right) \cdot \Delta_h \D u\d x\\
  &+ \int_{B_{s}} \Delta_h \phi^2 a(x,\D u)_h \Delta_h \D u\d x  \\
  &= A_{1}+A_{2} + A_{3}.
\end{align*}
Using \eqref{def:bounds1} via \eqref{eq:hbound1_quantiative} and \eqref{def:bounds31} we have
\begin{align*}
  A_1\gtrsim \int_{B_{s}} \phi^2 \lvert V_{p,\mu}(\D u_h) - V_{p,\mu}(\D u)\rvert^2 \,\d x,
\end{align*} 
whereas for $A_2, A_3$ by \eqref{eq:h2bound_derivative} and \eqref{def:bounds31} we have
\begin{align*}
  \lvert A_2\rvert + \lvert A_3 \rvert \lesssim \left(\lvert h\rvert^{\alpha} + \lvert h\rvert\right) \frac1{s-r}\int_{B_{r_2}}(1+\lvert \D u_h\rvert)^{q-1}\lvert \Delta_h \D u\rvert \d x.
\end{align*}
Similarly, we estimate $B_1$ as
\begin{align*}
  B_1 \lesssim \frac1{s-r}\int_{B_{r_1}} (1+\lvert \D u\rvert)^{q-1}\lvert \Delta_h^2 u\rvert \d x.
\end{align*}
To estimate $B_2,$ as in the proof of Theorem \ref{thm:regularityRelaxed} we consider the endpoint estimates in $\alpha.$
The case $\alpha = 0$ follows by the duality estimate
\begin{equation}
  \begin{split}
    \lvert B_2\rvert &\leq \norm{f}_{\WW^{-1,p^\prime}(B_s)} \|\phi^2\Delta_h^2 u\|_{\WW^{1,p}(B_{r_1})} \\
                     &\lesssim \norm{f}_{\WW^{-1,p^\prime}(B_s)} \left(\norm{\Delta_h^2\D u}_{\LL^p(B_{r_1})} + \frac1{s-r}\norm{\Delta_h^2u}_{\LL^p(B_{r_1})}\right).
    \end{split}
\end{equation} 
Moreover using the fundamental theorem of calculus we can estimate the second term as
\begin{equation}\label{eq:seconddiff_ftc}
\norm{\Delta_h^2u}_{\LL^p(B_{r_2})} \lesssim \lvert h\rvert\left( \int_{B_{r_1}}\int_0^1 \lvert\Delta_{2th}\D u_{-th}\rvert^p \,\d t \,\d x \right)^{\frac1p}.
\end{equation} 
We will also use this in the case $\alpha=1$ which gives
\begin{equation}
  \lvert B_2\rvert \leq \norm{f}_{\LL^{p^\prime}(B_s)} \|\Delta_h^2u\|_{\LL^p(B_{r_1})} \lesssim \lvert h\rvert\norm{f}_{\LL^{p^\prime}(B_s)} \sup_{0\leq t \leq 1}\norm{\Delta_{2th} \D u_{-th}}_{\LL^p(B_{r_1})}.
\end{equation} 
Now interpolating $f \mapsto \lvert B_2\rvert$ using $[\WW^{-1,p^\prime}(B_s),\LL^{p^\prime}(B_s)]_{\alpha,\infty} = \BB^{\alpha-1,p^\prime}_{\infty}(B_s)$ for $\alpha \in (0,1),$ we deduce that
\begin{equation}\label{eq:alpha_f_estimate}
  \begin{split}
    \lvert B_2\rvert&\leq \int_{B_s} \lvert f\cdot(\phi^2\Delta^2_h u)\rvert\\
    &\lesssim  \frac{\lvert h\rvert^{\alpha}}{s-r}\|f\|_{\BB_{\infty}^{\alpha-1,p^\prime}(B_s)}\left(\|\Delta_h^2 \D u\|_{\LL^{p}(B_{r_2})}+\sup_{0\leq t \leq 1}\norm{\Delta_{2th} \D u_{-th}}_{\LL^p(B_{r_1})}\right)
  \end{split}
\end{equation}
for all $\alpha \in (0,1],$ noting that $\BB_{\infty}^{0,p^\prime}(B_s) = \LL^{p^\prime}(B_s).$

Collecting the above estimates, we have shown that
\begin{equation}\label{eq:tangential_mainestimate}
  \begin{split}
    \|\Delta_h V_{p,\mu}(\D u)\|_{\LL^2(B_{r})}^2 
    &\lesssim
     \frac{\lvert h\rvert^{\alpha}}{s-r} \|f\|_{\BB^{1-\alpha,p^\prime}_{\infty}(B_{s})}\|\Delta_h \D u\|_{\LL^{p}(B_{r_1})}\\
    &\quad+ \frac{\lvert h\rvert^{\alpha}}{s-r} \|f\|_{\BB^{1-\alpha,p^\prime}_{\infty}(B_{s})}\sup_{0\leq t \leq 1}\norm{\Delta_{2th} \D u_{-th}}_{\LL^p(B_{r_1})}\\
            &\quad+\frac{\lvert h\rvert^{\alpha}}{s-r} \int_{B_{r_2}}(1+\lvert\D u_h\rvert)^{q-1}\lvert\Delta_h \D u\rvert\d x\\
            &\quad+\frac1{s-r}\int_{B_{{r_1}}} (1+\lvert \D u\rvert)^{q-1}\lvert \Delta_h^2 u\rvert \d x\\
            &= C_{1,1} + C_{1,2} + C_2 + C_3.
  \end{split}
\end{equation}

\textbf{Step 2: Iteration, case $p \geq 2$.}

We will repeatedly use \eqref{eq:tangential_mainestimate} to iteratively infer an improvement in differentiability, which will involve carefully estimating $C_2$ and $C_3$ at each step. 
Assume that
\begin{align}\label{eq:inducBase2}
  \|V_{p,\mu}(\D u)\|_{\BB^{\delta,2}_\infty(B_s)}\lesssim A^{\gamma}
\end{align}
for some $\delta>0$, $\gamma\geq 0$, where
\begin{equation}\label{eq:shorthand_a2}
  A = \left(1+\overline\F(u)+\|f\|_{\BB_{\infty}^{\alpha-1,p^\prime}(\Omega)}^{p^\prime}\right).
\end{equation} 
We know by (a local version of) Theorem \ref{thm:regularityRelaxed} that we can take $\delta = \frac \alpha 2.$
By Lemma \ref{lem:Vfunc_diff}, we can estimate
\begin{equation}
  \begin{split}
    C_{1,1} + C_{1,2} &\leq \frac{\lvert h\rvert^{\alpha}}{s-r} \norm{f}_{\BB_{\infty}^{\alpha-1,p^\prime}(B_s)}\left(\lvert h\rvert^{\delta}\norm{V_{p,\mu}(\D u)}_{\BB_{\infty}^{\delta,2}(B_{s})}\right)^{\frac2{p}}\\
                      &\leq \lvert h\rvert^{\alpha + \frac{2\delta}{p}} \frac{A}{s-r} \norm{V_{p,\mu}(\D u)}_{\BB_{\infty}^{\delta,2}(B_{s})}^{\frac2{p}}\\
                      &\lesssim \lvert h\rvert^{\alpha+\frac{2\delta}p} \frac{A^{\frac{2\gamma}p + 1}}{s-r}.
  \end{split} 
\end{equation} 
For the remaining two terms, suppose further that there is $\tau_1,\tau_2 \geq 1$ and $\eps>0$, $\sigma>0$ such that $\BB^{\delta,2}_{\infty}(B_s)$ embeds into $\LL^{\tau_1-\eps}(B_s) \cap \BB_{\infty}^{\sigma,\tau_2}(B_s),$ where
\begin{equation}\label{eq:tau_conjugate2}
  \frac 2{p\tau_2} + \frac{2(q-1)}{p(\tau_1-\eps)} \leq 1.
\end{equation} 
Then by Lemma \ref{lem:Vfunc_diff}, since $V_{p,\mu}(\D u) \in B^{\delta,2}_{\infty}(B_s)$ this implies that 
\begin{equation}
  \D u \in \LL^{\frac{p}2(\tau_1-\eps)}(B_r) \cap \BB^{\frac{2\sigma}{p},\frac{p\tau_2}2}(B_r),
\end{equation} 
and so by H\"older we can estimate
\begin{align}\label{eq:B42}
  C_2 \lesssim& \frac{\lvert h\rvert^{\alpha}}{s-r}\|\Delta_h \D u\|_{\LL^{\frac{p\tau_2}2}(B_{r_2})}\left(1+\|\D u\|_{\LL^{\frac p2(\tau_1-\eps)}(B_s)}\right)^{q-1}\\
  \lesssim&\frac{\lvert h\rvert^{\alpha}}{s-r}\|\Delta_h V_{p,\mu}(\D u)\|_{\LL^{\tau_2}(B_{r_2})}^{\frac 2p} \left(1+\|V_{p,\mu}(\D u)\|_{\LL^{\tau_1-\eps}(B_s)}^{\frac2p}\right)^{q-1}\\
  \lesssim& \lvert h\rvert^{\alpha+\frac{2\sigma}{p}}\norm{V_{p,\mu}(\D u)}_{\BB^{\delta,2}_{\infty}(B_s)}^{\frac2p}\frac{A^{\frac{2(q-1)\gamma}p+1}}{s-r}.
\end{align} 
We can similarly bound $C_3,$ except we use \eqref{eq:seconddiff_ftc} to bound
\begin{align*}
  \|\Delta_h^2 u\|_{\LL^{\frac{p\tau_2}2}(B_{r_1})}\lesssim& \lvert h\rvert \sup_{0<t<1}\|\Delta_{2th}\D  u_{-th}\|_{\LL^{\frac{p\tau_2}2}(B_{r_2})}\lesssim \lvert h\rvert^{1+\frac{2\sigma}p} \|V_{p,\mu}(\D u)\|_{\BB^{\delta,2}(B_{s})}^{\frac 2p},
\end{align*}
so we arrive at
\begin{equation}
C_2 + C_3 \lesssim \lvert h\rvert^{\alpha + \frac{2\sigma}p} \frac{A^{\frac{2q}p\gamma+1}}{s-r}.
\end{equation} 

In particular, we can estimate \eqref{eq:tangential_mainestimate} as
\begin{equation}\label{eq:improved_h_decay2}
  \begin{split}
  \norm{\Delta_hV_{p,\mu}(\D u)}_{\LL^2(B_r)} 
  &\lesssim  \frac{\lvert h\rvert^{\frac{\alpha}2 + \frac{\delta}p}}{s-r} \norm{f}_{\BB^{1-\alpha,p'}_{\infty}(B_s)} \norm{V_{p,\mu}(\D u)}_{\BB^{\delta,2}_{\infty}}^{\frac1p} \\
  &\quad + \frac{\lvert h\rvert^{\frac{\alpha}2  + \frac{\sigma}p}}{s-r}\norm{V_{p,\mu}(\D u)}_{\BB_{\infty}^{\delta,2}(B_s)}^{\frac1p} (1 + \norm{V_{p,\mu}(\D u)}_{\LL^{\tau_1-\eps}(B_s)})^{\frac{q-1}p} \\
  &\lesssim \left( \lvert h\rvert^{\frac{\alpha}2 + \frac{\delta}p} + \lvert h\rvert^{\frac{\alpha}2 + \frac{\sigma}p} \right) \frac{A^{\frac{q \gamma}p + \frac12}}{s-r},
  \end{split}
\end{equation} 
where we have kept the precise dependence of constants which we will need in the next step.
Hence this shows that $V_{p,\mu}(\D u) \in \BB^{\delta^\prime,2}_{\infty}(B_r)$ with
\begin{equation}
  \delta^\prime = \frac{\alpha}2 +  \frac1p \min\left\{ \delta,\sigma \right\},
\end{equation} 
with the associated estimate
\begin{equation}
  \|V_{p,\mu}(\D u)\|_{\BB^{\delta^\prime,2}_\infty(B_r)}\lesssim \frac{A^{\frac{q}p\gamma + \frac12}}{s-r}.
\end{equation} 
This idea will be to iterate this procedure for a carefully chosen set of parameters $\delta, \sigma, \tau_1, \tau_2, \eps.$
To do this, consider a sequence $\rho_k \in (R/2,R)$ such that $\rho_{k+1} < \rho_k$ for all $k$ to be determined.
We will apply the claim with $\rho_{k+1}, \rho_k$ in place of $r,s$ for each $k.$
We set $\delta_0 = \frac{\alpha} 2$ and show by induction that $V_{p,\mu}(\D u) \in \BB^{\delta_k,2}_{\infty}(B_{\rho_{tk}})$, for some $t=t(n,p,q,\alpha) \geq 1$ where
\begin{equation}
  \delta_k = \frac{\alpha}{2}\sum_{i=0}^k \frac1{ p^i},
\end{equation} 
which tends to $\frac{\alpha p}{2(p - 1)}$ as $k \to \infty.$
By Theorem \ref{thm:embedding}, we can take
\begin{equation}\label{eq:tau12}
  \frac1{\tau_1} \geq \frac 1 2 -\frac{\delta_k}n.
\end{equation}
Choosing $\tau_1$ so equality holds in \eqref{eq:tau12}, choosing $\tau_2$ via the equality case of \eqref{eq:tau12} we set
\begin{equation}
  \tau_2 = \tau_{2,0} := \left(\frac p2 - \frac{\frac{2n}{n-2\delta_k}-\eps}{q-1}\right)^{-1} \geq 1.
\end{equation} 
Note that since $q < \frac{np}{n-\alpha}$ and $\delta_k \geq \frac {\alpha} 2,$ by shrinking $\eps>0$ we can assume that $\frac{p}2(\tau_1-\eps) > q$, so we have $\tau_{2,0} < \frac{2q}p$.
We will consider two cases depending on possible values of $\tau_{2,0}$; if we have $\tau_{2,0} \geq 2$ for suitably small $\eps>0$, then we make take $\sigma = \delta$, whereas if $\tau_{2,0}<2$ we will require a two-step iteration process.
This distinction gives rises to two cases which we will consider separately.

\textbf{Case (a)}: Suppose that for some $\delta_k$ we have $\tau_{2,0} < 2$ for all $\eps>0$ sufficiently small, which is equivalent to the condition
\begin{equation}\label{eq:deltak_casea_condition}
\delta_k < \frac n {\alpha(q-1)}\left(\frac{1-p} 2+\frac{q-1} p\right).
\end{equation} 
Given $\eps>0$ to be determined, defining $\tau_1, \tau_2$ to satisfy \eqref{eq:tau_conjugate2}, \eqref{eq:tau12} with equality (so $\tau_2 = \tau_{2,0}$), we will choose $\sigma$ to satisfy
\begin{equation}
  \sigma = \delta_k + n\left(\frac1{\tau_2} - \frac1 2\right).
\end{equation} 
This ensures that $\BB^{\delta_k,2}_{\infty}(B_{\rho_{tk}}) \hookrightarrow \BB^{\sigma,\tau_2}_{\infty}(B_{\rho_{tk}})$ by Theorem \ref{thm:embedding}.
Note that
\begin{equation}
  \lim_{\eps \to 0} \sigma = q\left(\delta_k-\frac {\alpha} 2\right)+ \frac{np-nq+\alpha q}2 = q\left(\delta_k - \frac\alpha 2\right)+\kappa_0,
\end{equation} 
for some $\kappa_0>0$, noting that $\frac qp < \frac{n+\alpha}n < \frac{n}{n-\alpha}.$
Thus we can choose $\eps>0$ so that
\begin{equation}
  \sigma \geq q\left(\delta_k-\frac{\alpha}2\right)+\frac{\kappa_0} 2.
\end{equation} 
Then by \eqref{eq:improved_h_decay2} we deduce that $V_{p,\mu}(\D u) \in \BB^{\delta_{k,1},2}_{\infty}(B_{\rho_{tk+1}})$, with
\begin{equation}
  \delta_{k,1} = \min\left\{ \delta_{k+1}, \frac{\alpha}2+\frac q p\left(\delta_k-\frac {\alpha} 2\right)+\frac{\kappa_0}{2p} \right\}.
\end{equation} 
for some small $c_0>0$.
We can now replace $\delta_k = \delta_{k,0}$ by $\delta_{k,1}$ and apply this iteratively, which gives $V_{p,\mu}(\D u) \in \BB^{\delta_{k,j},2}_{\infty}(B_{\rho_{tk}+j})$ where
\begin{equation}
  \delta_{k,j} \geq \min\left\{ \delta_{k+1}, \frac{\alpha} 2 + \frac{\kappa_0}{2p} \sum_{i=0}^j \left( \frac qp \right)^i \right\}.
\end{equation} 
Since the second term is divergent, there is some $j_0= j_0(\alpha,n,p,q) \in \bb N$ for which either $\delta_{k,j_0} = \delta_{k+1}$ or $\delta_{k,j_0} \geq \frac n {\alpha(q-1)}\left(\frac{1-p} 2+\frac{q-1} p\right).$
In the former case we are done, otherwise we can turn to case (b).

\textbf{Case (b)}: Suppose for some $\delta^\prime \geq\delta_k,$ we have $V_{p,\mu}(\D u) \in \BB^{\delta^\prime,2}_{\infty}(B_{\rho_{tk+\ell}})$ for some $\ell\in[0,j_0]$, and $\delta^\prime \geq \frac n {\alpha(q-1)}\left(\frac{1-p} 2+\frac{q-1} p\right)$; that is $\tau_{2,0} \geq 2$ for small enough $\eps>0$, with $\delta^{\prime}$ in place of $\delta_k$. Then we will take $\tau_2 = 2,$ and for $\eps>0$ sufficiently small we can choose $\tau_1 \leq \frac{np}{n-2\delta^\prime }$ using \eqref{eq:tau_conjugate2}. Hence we obtain \eqref{eq:improved_h_decay2} with $\sigma = \delta_k,$ giving $V_{p,\mu}(\D u) \in \BB^{\delta_{k+1},2}_{\infty}(B_{\rho_{t(k+1)}})$ with $t\leq j_0$. This completes our induction argument.

\textbf{Step 3: Limiting case.} We now wish to pass to the limit as $k \to \infty$, to infer differentiability of order $\delta = \frac{\alpha p}{2(p-1)}$.
From the previous step we know there is some $t \geq 1$, $C_k \geq 0$, $\tilde\gamma_k>0$ such that
\begin{equation}\label{eq:mainestimate_step2}
  \norm{V_{p,\mu}(\D u)}_{\BB^{\delta_k,2}_{\infty}(B_{\rho_{tk}})} \lesssim_k A^{\widetilde\gamma_k},
\end{equation} 
where the implicit constant depends on the choice of radii $\{\rho_j\}_{j\geq1}$.
Moreover since we have
\begin{equation}\label{eq:caseb_inequality}
  \frac n {\alpha(q-1)}\left(\frac{1-p} 2+\frac{q-1} p\right)<\frac{\alpha p}{2(p-1)} = \lim_{k \to \infty} \delta_k,
\end{equation} 
we can choose $k_0$ sufficiently large such that the above holds for $\delta_k$ with $k \geq k_0$, which ensures we are in case (b) in the previous step.
Then for $k \geq k_0$, we can take $\tau_2=2$ and $\tau_1-\eps = \frac{2(q-1)}{p-1}$ in \eqref{eq:improved_h_decay2} to estimate
\begin{equation}\label{eq:step3_iterate}
  \begin{split}
    &\norm{V_{p,\mu}(\D u)}_{\BB^{\delta_{k+1},2}_{\infty}(B_{\rho_{tk+1})}} 
  \leq \frac{C_1}{\rho_{tk+1}-\rho_{tk}} \norm{V_{p,\mu}(\D u)}_{\BB^{\delta_k,2}_{\infty}}^{\frac1p} \\
  &\qquad + \frac{C_1}{\rho_{tk+1}-\rho_{tk}} \norm{V_{p,\mu}(\D u)}_{\BB_{\infty}^{\delta_k,2}(B_{\rho_{tk}})}^{\frac1p} \left(1 + \norm{V_{p,\mu}(\D u)}_{\LL^{\frac{2(q-1)}{p-1}}(B_{\rho_{tk}})}\right)^{\frac{q-1}p}
  \end{split}
\end{equation} 
for all $k \geq k_0$, where $C_1$ is independent of $k$.
To pass to the limit in the iteration, we seek an estimate of the form
\begin{equation}\label{eq:step3_interpolation}
  \norm{V_{p,\mu}(\D u)}_{\BB^{\delta_k,2}_{\infty}(B_{\rho_{tk}})}^{\frac1p} \norm{V_{p,\mu}(\D u)}_{\LL^{\frac{2(q-1)}{p-1}}(B_{\rho_{tk}})}^{\frac{q-1}p} \leq C_2 A^{\widetilde\gamma_0}\norm{V_{p,\mu}(\D u)}_{\BB^{\delta_k,2}_{\infty}(B_{\rho_{tk}})}^{\kappa},
\end{equation} 
with $\kappa < 1$, which we claim holds for sufficiently large $k$.

Indeed by Theorem \ref{thm:embedding} we know that for $k \geq 1$ we have
\begin{equation}
  \BB^{\delta_k,2}_{\infty}(B_{\rho_{tk}}) \hookrightarrow \LL^{2\xi}(B_{\rho_{tk}}) \quad \text{ for all }\quad \xi < \frac{n}{n-2\delta_k},
\end{equation} 
so in particular this holds for $\xi_k := \frac{n}{n-2\delta_{k-1}}$.
We then seek $\theta_k \in (0,1)$ such that
\begin{equation}
  \begin{split}
    \norm{V_{p,\mu}(\D u)}_{\LL^{\frac{2(q-1)}{p-1}}(B_{\rho_{tk}})} 
    &\lesssim \left(1 + \norm{V_p(\D u)}_{\LL^{\frac{2q}p}(B_R)}\right)^{1-\theta_k} \norm{V_{p,\mu}(\D u)}_{\LL^{\xi_k}(B_{\rho_{tk}})}^{\theta_k} \\
    &\lesssim A^{\tilde\gamma_0} \norm{V_{p,\mu}(\D u)}_{\BB^{\delta_k,2}_{\infty}(B_{\rho_{tk}})}^{\theta_k},
  \end{split}
\end{equation} 
where we use the above embedding in the second line.
This holds provided we can take
\begin{equation}
  \frac{1-\theta_k}{2q/p} + \frac{\theta_k}{2\xi_k} = \frac{p-1}{2(q-1)},
\end{equation} 
that is
\begin{equation}\label{eq:theta_rearrange}
  \theta_k = \frac{q-p}{q(q-1)} \left( \frac pq - \frac1{\xi_k} \right)^{-1}.
\end{equation} 
Note that $\delta_{k-1} \geq \delta_0 = \frac{\alpha}2$, and since $q < \frac{np}{n-\alpha}$ it follows that $2\xi_k > \frac{2q}p$ and hence that $\theta_k > 0$.
Now we claim that for $k \geq 1$ sufficiently large we have
\begin{equation}
  \kappa_k := \frac1p + \frac{q-1}p \theta_k < 1,
\end{equation} 
which would also imply that $\theta_k < \frac{p-1}{q-1} \leq 1$.
Indeed using \eqref{eq:theta_rearrange} and noting that $\lim_{k \to \infty} \xi_k = \frac{n}{n-\alpha p^\prime}$ we can compute
\begin{equation}
  \kappa_{\infty} := \lim_{k \to \infty} \kappa_k = \frac{\alpha p^{\prime}q}{p(np - nq + \alpha p^{\prime} q)}.
\end{equation} 
By rearranging we see that
\begin{equation}
  q < \frac{np}{n-\alpha} \implies \kappa_{\infty} < 1.
\end{equation}
Hence there exists $k_1 \geq k_0$ such that $\kappa := \kappa_{k_1} < 1$ verifies \eqref{eq:step3_interpolation}.

We now set $\tilde\delta_j = \delta_{k_1+j}$ and $\tilde\rho_j = \rho_{tk_1+j}$, then combining \eqref{eq:mainestimate_step2} with $k=k_1$, \eqref{eq:step3_iterate}, and \eqref{eq:step3_interpolation} we deduce that
\begin{equation}\label{eq:step3_iteration_estimate}
  \norm{V_{p,\mu}(\D u)}_{\BB^{\tilde\delta_{j+1},2}_{\infty}(B_{\tilde\rho_{j+1}})} \leq \frac{\tilde C}{\tilde\rho_{j+1}-\tilde\rho_j} \left( \norm{V_{p,\mu}(\D u)}_{\BB^{\tilde\delta_j,2}_{\infty}(B_{\tilde\rho_j})}^{\frac1p} + \norm{V_{p,\mu}(\D u)}_{\BB^{\tilde\delta_j,2}_{\infty}(B_{\tilde\rho_j})}^{\kappa}\right) A^{\tilde\gamma}
\end{equation} 
for all $j\geq 1$, where $\tilde C>0$ and $\tilde\gamma>0$ are constants.
By Young's inequality we can estimate this as
\begin{equation}
  \norm{V_{p,\mu}(\D u)}_{\BB^{\tilde\delta_{j+1},2}_{\infty}(B_{\tilde\rho_{j+1}})} \leq \frac12\norm{V_{p,\mu}(\D u)}_{\BB^{\tilde\delta_j,2}_{\infty}(B_{\tilde\rho_j})}+ \left(\frac{2\tilde CA^{\tilde\gamma}}{\tilde\rho_{j+1}-\tilde\rho_j}\right)^{p' + \frac1{1-\kappa}},
\end{equation} 
which we can iterate over suitable $\widetilde\rho_{j},$ which are determined by our choice of $\rho_k$.
Following the proof of \cite[Lemma 6.1]{Giusti2003}, we set $\rho_0 = R$ and set
\begin{equation}
  \rho_{k+1} = \rho_k - (1-\lambda) \lambda^k \frac{R}2,
\end{equation} 
where $\lambda \in (0,1)$ is chosen to satisfy $\frac12 \lambda^{p'+\frac1{1-\kappa}} < 1$.
Then noting that $\tilde\rho_{j} - \tilde\rho_{j+1} = (1-\lambda)\lambda^{tk_1+j} \frac{R}2$ we see that
\begin{equation}
  \begin{split}
  \norm{V_{p,\mu}(\D u)}_{\BB^{\tilde\delta_j,2}_{\infty}(B_{R/2})} 
  &\leq \frac1{2^{j}} \norm{V_{p,\mu}(\D u)}_{\BB^{\tilde\delta_0,2}_{\infty}(B_R)} \\
  &\quad+ \left(\frac{2A^{\tilde\gamma}}{(1-\lambda)\lambda^{tk_1} R}\right)^{p' + \frac1{1-\kappa}} \sum_{i=0}^j 2^{-j}\lambda^{-j\left( p^{\prime} + \frac1{1-\kappa}\right)}, 
  \end{split}
\end{equation} 
so passing to the limit we have
\begin{equation}
  \limsup_{j \to \infty} \norm{V_{p,\mu}(\D u)}_{\BB^{\tilde\delta_j,2}_{\infty}(B_{R/2})} \lesssim \left(R^{-1}A^{\tilde\gamma}\right)^{p' + \frac1{1-\kappa}} < \infty.
\end{equation} 
We can now pass to the limit and deduce differentiability in $\BB^{\frac{\alpha p}{2(p-1)},2}_{\infty}$ noting that the above gives the uniform bound
\begin{equation}
  \norm{\Delta_hV_{p,\mu}(\D u)}_{\LL^2(B_{R/2-\lvert h\rvert})} \leq C \lvert h\rvert^{\tilde\delta_j}
\end{equation} 
for all $\lvert h\rvert \leq \frac R2$. Sending $j \to \infty$ establishes the claimed differentiability.

\textbf{Step 4: Iteration, case $p < 2$.} We will employ a similar argument, and we will only highlight the differences.
As before assume there is $\delta>0,$ $k\geq 0$ such that ${V_{p,\mu}(\D u) \in \BB^{1+\delta,2}_{\infty}(B_s)}$ with the estimate \eqref{eq:tangential_mainestimate}.
We will estimate \eqref{eq:tangential_mainestimate} using this, where similarly as in the $p\geq 2$ we use Lemma \ref{lem:Vfunc_diff} to bound
\begin{equation}
  C_{1,1} + C_{1,2} \lesssim \lvert h\rvert^{\alpha+\delta} \frac{A^{\frac1{p^\prime}}}{s-r} \left( 1 + \norm{\D u}_{L^p(B_{r_2})} \right)^{1-\frac p2} \seminorm{V_{p,\mu}(\D u)}_{\BB^{\delta,2}_{\infty}(B_{r_2})}.
\end{equation} 
For $C_2$ we first estimate
\begin{equation}
  \begin{split}
  C_2 &\leq \frac{\lvert h\rvert^{\alpha}}{s-r} \int_{B_{r_2}} (1 + \lvert \D u\rvert + \lvert \D u_h\rvert)^{q - 1}\lvert \Delta_h\D u\rvert \,\d x \\
      &\lesssim \frac{\lvert h\rvert^{\alpha}}{s-r}\int_{B_{r_2}} (1 +\lvert \D u\rvert + \lvert \D u_h\rvert)^{q - \frac p2} \lvert \Delta_hV_{p,\mu}(\D u)\rvert \,\d x,
  \end{split}
\end{equation} 
where we have used \eqref{eq:vfunction_difference}.
We seek to estimate $C_3$ similarly, but we will use the fundamental theorem of calculus to write
\begin{equation}
  \begin{split}
    C_3 &\leq \frac1{s-r}\int_{B_{r_1}} (1 + \lvert \D u\rvert)^{q-1} \left\lvert \int_0^1 \Delta_h \D u_{-th} \cdot h \,\d t\right\rvert \,\d x \\
        &\leq \frac{\lvert h\rvert}{s-r} \int_0^1 \int_{B_{r_2}} (1 + \lvert \D u_{(1-t)h}\rvert + \lvert \D u_{-th}\rvert + \lvert \D u\rvert)^{q-1} \lvert \Delta_h \D u_{-th}\rvert \,\d x \,\d t \\
        &\leq \frac{\lvert h\rvert}{s-r} \int_0^1 \int_{B_{r_2}} (1 + \lvert \D u_{(1-t)h}\rvert + \lvert \D u_{-th}\rvert + \lvert \D u\rvert)^{q-\frac p2} \lvert \Delta_h V_{p,\mu}(\D u)_{-th}\rvert \,\d x \,\d t,
  \end{split}
\end{equation} 
where we have used the fact that $p-2 \leq 0$ in the last line.

Now suppose there is $\tau_1, \tau_2 \geq 1$ and $\eps, \sigma > 0$ such that we have the embedding $\BB^{\delta,2}_{\infty}(B_s) \hookrightarrow \LL^{\tau_1-\eps}(B_s) \cap B^{\sigma,\tau_2}_{\infty}(B_s)$ as before, but we now require
\begin{equation}\label{eq:subquad_tau}
  \frac1{\tau_2} + \frac{\frac {2q}p - 1}{\tau_1 - \eps} \leq 1.
\end{equation} 
For the embeddings to hold, by Theorem \ref{thm:embedding} we require
\begin{equation}\label{eq:subquad_embedding}
  \frac1{\tau_1} \geq \frac12 - \frac{\delta}2, \quad \frac1{\tau_2} \geq \frac12 - \frac{\delta-\sigma}n.
\end{equation} 
Now by H\"older we can estimate
\begin{equation}
  C_2 + C_3 \leq \frac{\lvert h\rvert^{\alpha + \sigma}}{s-r} \left( 1 + \norm{\D u}_{\LL^{\frac{p(\tau_1-\eps)}2}(B_s)} \right)^{q- \frac p2} \seminorm{V_{p,\mu}(\D u)}_{\BB^{\sigma,\tau_2}_{\infty}(B_s)},
\end{equation} 
and so collecting estimate we have
\begin{equation}\label{eq:step4_preiterate}
  \begin{split}
  \norm{\Delta_hV_{p,\mu}(\D u)}_{\LL^2(B_r)} 
  &\lesssim \frac{\lvert h\rvert^{\alpha+\delta}A^2}{s-r} \norm{V_{p,\mu}(\D u)}_{\BB^{\delta,2}_{\infty}(B_s)} \\
  &\quad + \frac{\lvert h\rvert^{\alpha+\sigma}}{s-r} \left( 1 + \norm{V_{p,\mu}(\D u)}_{\LL^{\tau_1-\eps}(B_s)} \right)^{\frac{2q-p}{2p}} \norm{V_{p,\mu}(\D u)}_{\BB^{\delta,2}_{\infty}(B_s)}.
  \end{split}
\end{equation} 
Therefore we infer that $V_{p,\mu}(\D u) \in \BB^{\delta^\prime,2}_{\infty}(B_r)$ with
\begin{equation}
  \delta^\prime = \frac{\alpha}2 + \frac12\min\left\{ \delta,\sigma \right\},
\end{equation}
and there is $\widetilde\gamma > 0$ such that
\begin{equation}
  \norm{u}_{B^{\delta^\prime,2}_{\infty}(B_{r})} \lesssim \frac{A^{\widetilde \gamma}}{s-r}.
\end{equation} 
To iterate this, we set $\delta_0 = \frac{\alpha}2$ and show by induction that $V_{p,\mu}(\D u) \in \BB^{\delta_k,2}_{\infty}(B_{\rho_{tk}})$ for all $k$ where
\begin{equation}
  \delta_k = \alpha \sum_{i=1}^k \frac1{2^i},
\end{equation} 
which tends to $\alpha$ as $k \to \infty.$

Choose $\tau_1$ so that the first inequality in $\eqref{eq:subquad_embedding}$ holds with equality, then since $\delta \geq \frac{\alpha}2$ and $q < \frac{np}{n-\alpha}$ we have $\tau_1 \geq \frac{2n}{n-\alpha}.$ Therefore for $\eps>0$ sufficiently small we get
\begin{equation}
  \frac{\frac{2q}p-1}{\tau_1 - \eps} < 1-\frac{p}{2q} < 1,
\end{equation} 
so defining $\tau_{2,0} = \tau_2$ in the equality cases of \eqref{eq:subquad_tau}, $\eqref{eq:subquad_embedding}_1$, we get $1 < \tau_{2,0} < \frac{2q}p$.
As before, if $\tau_{2,0} \leq 2$ for sufficiently small $\eps>0$ then we can take $\tau_2 = 2$ and $\sigma = \delta$.
Otherwise we have
\begin{equation}\label{eq:deltak_casea_condition2}
  \delta_k < \frac{n(q-p)}{2q - p}
\end{equation} 
for all $\eps>0$ small.
Determining $\tau_2 = \tau_{2,0}$ and $\tau_1 = \frac{2n}{n-2\delta_k}$, by $\eqref{eq:subquad_embedding}_2$ we choose $\sigma = \delta_k + n \left( \frac1{\tau_2} - \frac12 \right).$ Observe that
\begin{equation}
  \lim_{\eps \to 0} \sigma = \frac{2q}p \left( \delta - \frac{\alpha}2 \right) + \left( n - (n-\alpha) \frac qp \right) =: \frac{2q}p \left( \delta - \frac{\alpha}2 \right)  + \kappa_0.
\end{equation} 
Since $q < \frac{np}{n-\alpha}$ we have $\kappa_0>0,$ and so we can choose $\eps>0$ sufficiently small, so that
\begin{equation}
  \sigma \geq \frac{2q}p \left( \delta - \frac{\alpha}2 \right) + \frac{\kappa_0}2 \geq \frac{\kappa_0}2.
\end{equation} 
Then analogously as in the case $p\geq 2,$ we can use this iteratively to show that we have ${V_{p,\mu}(\D u)\in \BB^{\delta_{k,j},2}(B_{\rho_{tk+j}})}$ with
\begin{equation}
  \delta_{k,j} \geq \min\left\{ \delta_{k+1}, \frac{\alpha}2 + \frac{\kappa_0}{4} \sum_{i=1}^j \left( \frac qp \right)^i \right\}.
\end{equation} 
We can now iterate this exactly as in the case $p\geq 2$.
To pass to the limit, since $\alpha > \frac{n(q-p)}{2q-p}$ there is $k_0$ such that for $k \geq k_0$ we can take $\tau_2 =2$ and $\sigma = \delta_k$ in  \eqref{eq:step4_preiterate}.
This gives the estimate
\begin{equation}
  \begin{split}
   &\norm{V_{p,\mu}(\D u)}_{\BB^{\delta_{k+1},2}(B_{\rho_{tk+1}})} 
  \leq \frac{C_1}{\rho_{tk+1}-\rho_{tk}} \norm{V_{p,\mu}(\D u)}_{\BB^{\delta_k,2}_{\infty}(B_{\rho_{tk}})}^{\frac12} \\
  &\qquad + \frac{C_1}{\rho_{tk+1}-\rho_{tk}} \left( 1 + \norm{V_{p,\mu}(\D u)}_{\LL^{\frac{2(2q-1)}p}(B_{\rho_{tk}})} \right)^{\frac{2q-p}{2p}} \norm{V_{p,\mu}(\D u)}_{\BB^{\delta_k,2}_{\infty}(B_{\rho_{tk}})}^{\frac12}.
  \end{split}
\end{equation} 
Now set $\xi_k = \frac{n}{n-2\delta_{k-1}}$ so that the embedding $\BB^{\delta_k,2}_{\infty} \hookrightarrow \LL^{2\xi_k}$ holds, and choose $\theta_k \in (0,1)$ so that
\begin{equation}
  \norm{V_p(\D u)}_{\LL^{\frac2p (2q-p)}(B_{\tilde\rho_j})} \leq \norm{V_p(\D u)}_{\LL^{\frac{2q}p}(B_{\tilde\rho_j})}^{1-\theta_k}\norm{V_p(\D u)}_{\LL^{2\xi}(B_{\tilde\rho_j})}^{\theta_k}.
\end{equation} 
This holds provided we take
\begin{equation}\label{eq:theta_subquadratic}
  \frac{1-\theta_k}{2q/p} + \frac{\theta_k}{2\xi_k} = \frac p{2(2q-p)}.
\end{equation} 
We then require
\begin{equation}
  \kappa_k = \frac12 + \frac{2q-p}{2p} \theta_k < 1,
\end{equation}
which we claim holds for sufficiently large $k$ since
\begin{equation}
  \lim_{k \to \infty} \kappa_k = \frac12 + \frac{q-p}{2q} \left( \frac pq - \frac{n-2\alpha}n \right)^{-1} < 1,
\end{equation} 
holds as $q < \frac{np}{n-\alpha}$.
Hence there is $k_1 \geq k_0$ such that $\kappa := \kappa_{k_1} < 1$.
For these parameters we have
\begin{equation}\label{eq:step4_iteration_estimate}
  \begin{split}
    &\norm{V_{p,\mu}(\D u)}_{\BB^{\delta_{k_1+j+1},2}_{\infty}(B_{\tilde\rho_{tk_1+j+1}})} \\
    &\leq \frac{\tilde C}{\rho_{tk_1+j+1}-\rho_{tk_1+j}} \left( \norm{V_{p,\mu}(\D u)}_{\BB^{\delta_{k_1+j},2}_{\infty}(B_{\rho_{tk_1+j}})}^{\frac12} + \norm{V_{p,\mu}(\D u)}_{\BB^{\delta_{k_1+j},2}_{\infty}(B_{\rho_{tk_1+j}})}^{\kappa}\right) A^{\tilde\gamma},
  \end{split}
\end{equation} 
for some $\tilde C>0$ and $\tilde \gamma>0$, which depend on $k_1$.
We now split this using Young's inequality and iterate as in Step 3 to conclude.

\textbf{Step 5: Case of regular $f$.}
  Now suppose $f \in \BB^{\beta-1,p^\prime}_1(\Omega)$ with $\beta \geq \alpha,$ and suppose that $V_{p,\mu}(\D u) \in \BB^{\delta,2}_{\infty,\loc}(\Omega).$
  We will assume $p > 2$ as we otherwise see no improvement, and we will also assume $\beta \leq \frac{2\alpha}{p^{\prime}}$ for the same reason.
  We argue analogously as in Step 1, however we will use a different estimate for $B_2$; if $\beta \leq 1$ we can argue as in \eqref{eq:alpha_f_estimate} to estimate
  \begin{equation}
    B_2 \leq \lvert h\rvert^{\beta} \frac{\|f\|_{\BB^{\beta-1,p^\prime}_{\infty}(B_{s})}}{s-r}\|\Delta_h \D u\|_{\LL^p(B_{r_2})} \leq \lvert h\rvert^{\beta + \frac {2\delta}p}\frac{\|f\|_{\BB^{\beta-1,p^\prime}_{\infty}(B_{s})}}{s-r}\|V_p(\D u)\|_{\BB^{\delta,2}_{\infty}(B_{r_2})}^{\frac2p}
  \end{equation} 
  using Lemma \ref{lem:Vfunc_diff} for the second inequality.
  Now assuming $\beta > 1,$ we will estimate
  \begin{align*}
    B_2  &= \int_{B_s} \phi^2 f\cdot \Delta_h\int_0^1 \D u(x-th) h\,\d t\,\d x \\
         &\lesssim \lvert h \rvert\norm{\phi^2f}_{\BB^{\beta-1,p'}_1(B_s)} \norm{\Delta_h\D u}_{\BB_{\infty}^{1-\beta,p}(B_{r_2})}.
  \end{align*}
  Set $\tau = \frac{2\delta}p + \beta - 1$, which satisfies $\tau \leq 1$ since $\beta \leq \frac{2\alpha}{p^{\prime}}$.
By interpolation along $\sigma\in (0,1)$ we will show that
\begin{equation}\label{eq:negative_besov_estimate}
  \norm{\Delta_h\D u}_{\BB^{-\sigma,p}_{\infty}(B_{r_2})} \lesssim \lvert h\rvert^{\tau} \norm{\D u}_{\BB^{\tau-\sigma}_{\infty}(B_s)}.
\end{equation}  
If $\sigma = 0,$ by definition of the Besov seminorm we have
\begin{equation}
  \norm{\Delta_h\D u}_{\LL^p(B_{r_2})} \leq \lvert h\rvert^{\tau} \norm{\D u}_{\BB^{\tau}_{\infty}(B_s)}.
\end{equation} 
If $\sigma = 1$ note that
\begin{align}
  \norm{\Delta_h\D u}_{\WW^{-1,p}(B_{r_2})} &\leq \norm{\D u}_{\WW^{-1,p}(B_s)}, \\
  \norm{\Delta_h\D u}_{\WW^{-1,p}(B_{r_2})} &\lesssim \lvert h\rvert\norm{\D u}_{\LL^p(B_s)},
\end{align}
which gives
\begin{equation}
  \norm{\Delta_h\D u}_{\WW^{-1,p}(B_{r_2})} \lesssim \lvert h\rvert^{\tau}\norm{\D u}_{\BB^{\tau-1,p}_{\infty}(B_s)}
\end{equation} 
for $\tau\in (0,1)$ by interpolation.
Now interpolating in $\sigma$ we deduce \eqref{eq:negative_besov_estimate} for $\sigma, \tau \in [0,1]$.

  Hence estimating the other terms in the same way as before, we arrive at the estimate
  \begin{equation}\label{eq:higher_mainestimate}
    \begin{split}
      \|\Delta_h V_{p,\mu}(\D u)\|_{\LL^2(B_{r})}^2 
      \lesssim& \frac1{s-r}\lvert h\rvert^{\beta+\frac{2\delta}p} \norm{f}_{\BB^{\beta-1,p^\prime}_{1}(\Omega)} \norm{V_{p,\mu}(\D u)}_{\BB^{\delta,2}_{\infty}(B_s)}^{\frac2p}\\
      &\quad+\frac1{s-r}\lvert h\rvert^{\alpha} \int_{B_{r_2}}(1+\lvert\D u_h\rvert)^{q-1}\lvert\Delta_h \D u\rvert\d x\\
      &\quad+\frac1{s-r}\int_{B_{{r_1}}} (1+\lvert \D u\rvert)^{q-1}\lvert \Delta_h^2 u\rvert \d x\\
      =& C_1 + C_2 + C_3
    \end{split}
  \end{equation}
  as in the end of step 1.

We also estimate the latter two terms slightly differently.
For $C_2$ we use \eqref{eq:vfunction_difference} to estimate
\begin{equation}
  \begin{split}
    C_2 &\leq \frac{\lvert h\rvert^{\alpha}}{s-r} \int_{B_{r_2}}(1+ \lvert \D u\rvert + \lvert\D u_h\rvert)^{q-1}\lvert\Delta_h \D u\rvert\d x \\
        &\lesssim \frac{\lvert h\rvert^{\alpha}}{s-r} \int_{B_{r_2}}(1+ \lvert \D u\rvert + \lvert\D u_h\rvert)^{q-\frac p2}\lvert\Delta_h V_{p,\mu}(\D u)\rvert\d x \\
  \end{split}
\end{equation} 
For $C_3,$ we will write
\begin{equation}
  \Delta_h^2u = u_h + u_{-h} - 2u = h \cdot \int_0^1 \left(\D u_{th} - \D u_{-th}\right) \,\d t,
\end{equation} 
so we can estimate
\begin{equation}
  \begin{split}
  C_3 &\leq \frac{\lvert h\rvert}{s-r}\int_{B_{r_1}} \int_0^1 (1 + \lvert \D u\rvert)^{q-1} \left\vert \D u_{th} - \D u_{-th} \right\rvert \,\d t \,\d x \\
        &\leq \frac{\lvert h\rvert}{s-r}\int_{B_{r_1}} \int_0^1 (1 + \lvert \D u\rvert)^{q-1} \left\vert \D u_{th} - \D u\right\rvert \,\d t \,\d x \\
        &\quad + \frac{\lvert h\rvert}{s-r}\int_{B_{r_1}} \int_0^1 (1 + \lvert \D u\rvert)^{q-1} \left\vert \D u_{-th} - \D u\right\rvert  \,\d t\,\d x \\
        &\leq \frac{\lvert h\rvert}{s-r} \int_0^1 \int_{B_{r_1}} (1 + \lvert \D u\rvert)^{q-\frac p2} \left( \lvert \Delta_{th}V_{p,\mu}(\D u)\rvert + \lvert \Delta_{-th}V_{p,\mu}(\D u)\rvert \right) \,\d x\,\d t.
  \end{split}
\end{equation} 
Now given $V_{p,\mu}(\D u) \in \BB^{\delta,2}_{\infty}(B_s),$ suppose $\tau_1, \tau_2 \geq 1$ and $\sigma,\eps>0$ are such that the embedding ${\BB^{\delta,2}_{\infty}(B_s) \hookrightarrow \LL^{\tau_1-\eps}(B_s) \cap \BB^{\sigma,\tau_2}_{\infty}(B_s)}$ and \eqref{eq:subquad_tau} holds, exactly as in Step 4 (except that now $p\geq 2$).
Then we can estimate
\begin{equation}
  C_2 + C_3 \lesssim \left( \lvert h\rvert^{\alpha + \sigma} + \lvert h\rvert^{1+\sigma} \right) \frac1{s-r} \left( 1 + \norm{V_{p,\mu}(\D u)}_{\LL^{\tau_1-\eps}(B_s)} \right)^{\frac{2q-p}p} \norm{V_{p,\mu}(\D u)}_{\BB^{\delta,2}_{\infty}(B_s)} ,
\end{equation} 
so it follows that $V_{p,\mu}(\D u) \in B^{\delta^{\prime},2}_{\infty}(B_s)$ with
\begin{equation}
\delta^{\prime} = \min\left\{1, \frac{\beta}2+\frac \delta p , \frac{\alpha}2+\frac{\sigma}2\right\},
\end{equation} 
and we have the associated estimate
\begin{equation}
  \begin{split}
  \norm{V_{p,\mu}(\D u)}_{\BB^{\delta^{\prime},2}_{\infty}(B_r)} 
  &\lesssim \frac{\norm{f}_{\BB^{\beta-1,p^\prime}_{1}(\Omega)}^{\frac12}}{s-r} \norm{V_{p,\mu}(\D u)}_{\BB^{\delta,2}_{\infty}(B_s)}^{\frac1p} \\
  &\quad+ \frac1{s-r} \left( 1 + \norm{V_{p,\mu}(\D u)}_{\LL^{\tau_1-\eps}(B_s)} \right)^{\frac{2q-p}{p}} \norm{V_{p,\mu}(\D u)}_{\BB^{\delta,2}_{\infty}(B_s)}^{\frac12}.
  \end{split}
\end{equation} 
From here the iteration is similar as in Step 4, by noting that our analysis did not require $p<2$. 
Indeed starting with $\delta_0 = \frac{\alpha}2$ we inductively show that $V_{p,\mu}(\D u) \in \BB_{\infty}^{\delta_k,2}(B_{\rho_tk})$ where $\rho_k \to R/2$ is to be chosen as before and
\begin{align*}
  \delta_{k+1}=\min\left\{\frac{\beta}2+\frac {\delta_k} p, \frac{\alpha}2 + \frac{\delta_k}2\right\}.
\end{align*}
If $\delta_k < \frac{n(q-p)}{2q-p}$ then we employ a two-step iteration which yields
\begin{equation}
  \delta_{k,j} \geq \min\left\{ \delta_{k+1}, \frac{\alpha}2 + \frac{\kappa_0}4 \sum_{i=1}^j \left( \frac qp \right)^i \right\} 
\end{equation} 
as in the $p < 2$ case.
Iterating this in $j$ and $k$ we can show there is $t \geq 1$ such that $V_p(\D u) \in \BB^{\delta_{k},2}_{\infty}(B_{\rho_{tk}})$ for all $k \leq k_0$, where $\delta_{k_0} \geq \frac{n(q-p)}{2q-p}$. 
Note this is always reached in finitely many steps since $q < \frac{np}{n-\alpha}$.
Then since $\delta_k > \frac{n(q-p)}{2q-p}$ for all $k > k_0$, we may take $\tau_2 = 2$ and $\sigma = \delta_k$ to infer the estimate
\begin{equation}
  \begin{split}
   &\norm{V_{p,\mu}(\D u)}_{\BB^{\delta_{k+1},2}(B_{\rho_{tk+1}})} 
  \leq \frac{C_1}{\rho_{tk+1}-\rho_{tk}} \norm{V_{p,\mu}(\D u)}_{\BB^{\delta_k,2}_{\infty}(B_{\rho_{tk}})}^{\frac1p} \\
  &\qquad + \frac{C_1}{\rho_{tk+1}-\rho_{tk}} \left( 1 + \norm{V_{p,\mu}(\D u)}_{\LL^{\frac{2(2q-1)}p}(B_{\rho_{tk}})} \right)^{\frac{2q-p}{2p}} \norm{V_{p,\mu}(\D u)}_{\BB^{\delta_k,2}_{\infty}(B_{\rho_{tk}})}^{\frac12}.
  \end{split}
\end{equation} 
Now we can iterate this with the same choices of $\xi_k$, $\theta_k$ and $k_1 \geq k_0$ as in Step 4 to pass to the limit.
\end{proof}

\subsection{Improved boundary regularity for radial integrands}\label{sec:boundaryImproved}
Our next goal is to obtain a version of Theorem \ref{thm:interiorImproved} applicable up to the boundary, by adapting the approach in \cite{EbmeyerLiuSteinhauer2005,Ebmeyer2005}.
Our results will be restricted to radial integrands $F\equiv F_0(\lvert z\rvert),$ and to homogeneous boundary conditions in the Dirichlet case.
More precisely, we will prove the following.

\neumannImproved*

Following the procedure in Section \ref{sec:flattening} we can reduce to the case of a flat boundary.
While this procedure only requires $\p\Omega$ to be of class $C^{1,\alpha}$, the $C^{1,1}$ regularity is assumed to ensure the fractional differentiability is preserved under this transformation.
In particular, we will write $\Omega_t = \Omega \cap B_t(0)$, $\Gamma_t = B_t\cap\p\Omega$, and suppose $\Omega_s = B_{s}^+$, $\Gamma_t = B_t \cap \{x_n =0 \}.$
In the case of the Laplacian it is known that solutions in $C^1$ domains are in general no better than $\BB^{\frac 3 2,2}_2(\Omega)$ \cite{Costabel2019}, so additional regularity of $\p\Omega$ is necessary, however we have not attempted to optimise this.

\subsubsection{The boundary difference quotient argument}

 The key ingredient is that we require a version of \eqref{eq:tangential_mainestimate} valid up to the boundary.
 The results in Section \ref{sec:flattening} have already shown that we can locally reduce to the case of the flat boundary, where we will establish our modified estimate.

\begin{lemma}\label{lem:mainEstimateImprovedDiff}
  Suppose $1<p\leq q < \infty,$ $\alpha \in (0,1],$ and suppose $F\equiv F_0(x,\lvert z\rvert)$ satisfies \eqref{def:bounds1}--\eqref{def:bounds31}.
  Let $q < \frac{n+\alpha}n p$ and $f \in \BB^{\alpha-1,p^\prime}_{\infty}(\Omega).$
  In the Dirichlet case assume $g_D = 0$, and in the Neumann case assume $g_N \in \WW^{\alpha-\frac1{p^\prime},p^{\prime}}(\Gamma_s).$
  Assume $u$ is a relaxed minimiser for $\overline \F_D$ with $u=0$ on $\Gamma_s$ or $\overline\F_M$ with $\gamma= \p B_s^+\setminus \Gamma_s$.
  Let $\tilde u$ be an $\WW^{1,p}$-extension of $u$ to $B_s$ given by an odd reflection in the Dirichlet case, and an even reflection in the Neumann case.
  Now let $r<s$ and suppose $ h = h e_i$ with $i\in \{1,\ldots,n\}$ with $\lvert h\rvert \leq \frac{s-r} 3$. Then in the Neumann case we have
\begin{equation}
  \begin{split}
    &\|\Delta_h V_{p,\mu}(\D \tilde u)\|_{\LL^2(\Omega_{r})}^2 + \|\Delta_{-h} V_{p,\mu}(\D \tilde u)\|_{\LL^2(\Omega_{r})}^2\\
    &\quad\lesssim \frac{\lvert h\rvert^{\alpha}}{s-r} \left(\|f\|_{\BB^{\alpha-1,p^\prime}_{\infty}(\Omega_{s})} +  \norm{g_N}_{\WW^{\alpha-\frac1{p^\prime},p^\prime}(\Gamma_s)} \right)\norm{\Delta_h \D \tilde u}_{L^p(\Omega_{r_2})} \\
    &\qquad +\frac1{s-r}\int_{B_{{r_1}}} (1+\lvert \D u\rvert)^{q-1}\lvert \Delta_h^2 \tilde u\rvert \d x\\
    &\qquad+\frac{\lvert h\rvert^{\alpha}}{s-r} \int_{B_{r_2}}(1+\lvert\D \tilde u_{h}\rvert+\lvert\D \tilde u_{-h}\rvert)^{q-1}\left(\lvert\Delta_h \D \tilde u\rvert+\lvert\Delta_{-h}\D \tilde u\rvert\right)\d x
  \end{split}
\end{equation}
Here $r_1 = r + \frac{s-r}3$ and $r_2 = r + \frac{2(s-r)}3.$ In the Dirichlet case, the term involving $g_N$ may be dropped.
  Moreover in both cases, if a-priori $u \in W^{1,q}(\Omega),$ it suffices to assume $q < \frac{np}{n-\alpha}.$

\end{lemma}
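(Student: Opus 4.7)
The plan is to reduce the boundary estimate to the interior-type difference-quotient argument of Step 1 in the proof of Theorem \ref{thm:interiorImproved}, performed on the full ball $B_s$ for a suitable reflection $\tilde u$ of $u$. I will extend $F_0(x',x_n,t)$ evenly in $x_n$ (which preserves \eqref{def:bounds1}--\eqref{def:bounds31}), extend $f$ by odd (Dirichlet, with $g_D=0$) or even (Neumann) reflection, and extend $g_N$ evenly in the tangential variables; then set $\tilde u(x',x_n):=\mp u(x',-x_n)$ for $x_n<0$ in the Dirichlet/Neumann cases respectively. The radial structure $F\equiv F_0(x,\lvert z\rvert)$ translates into the symmetry $a(x,Rz)=R\,a(x,z)$ for the flux $a=\p_z F$, with $R=\mathrm{diag}(1,\ldots,1,-1)$, and a direct change of variables applied to the Euler--Lagrange equation from Lemma \ref{lem:relaxedEuler} yields
\begin{equation*}
  \int_{B_s}a(x,\D\tilde u)\cdot\D\psi\,\d x \;=\; \int_{B_s}\tilde f\cdot\psi\,\d x \;+\; c\int_{\Gamma_s}g_N\cdot\psi\,\d\mathscr H^{n-1}
\end{equation*}
for all admissible $\psi\in \WW^{1,q}(B_s)$ compactly supported in $B_s$, with $c=0$ in the Dirichlet case and $c=\pm 2$ in the Neumann case; the factor $2$ encodes the jump of $a_n(x,\D\tilde u)$ across $\Gamma_s$, which the Neumann condition identifies with the prescribed data.

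Next, I will pick a radial cut-off $\phi$ with $\phi=1$ on $B_r$, $\mathrm{supp}(\phi)\subset B_{r_1}$ and $\lvert\D\phi\rvert\lesssim(s-r)^{-1}$, and test with $\psi=-\phi^2\Delta_h^2\tilde u$ for any direction $h=he_i$, $1\leq i\leq n$, with $\lvert h\rvert\leq(s-r)/3$; the test function is admissible since $\phi^2\Delta_h^2\tilde u\in \WW^{1,q}_0(B_s)$ in both cases. Symmetrising the resulting identity by using $\Delta_h^2=\Delta_{-h}^2$ and decomposing as in Step 1 of the proof of Theorem \ref{thm:interiorImproved}, the ellipticity bound \eqref{eq:hbound1_quantiative} applied to the analogue of $A_1$ produces the coercive quantity $\sum_{\pm}\|\phi\,\Delta_{\pm h}V_{p,\mu}(\D\tilde u)\|_{\LL^2(B_s)}^2$, while the analogues of $A_2, A_3, B_1$ reproduce the $q-1$-growth and cut-off error terms on the right-hand side of the claimed estimate, and $B_2$ is handled by the $\WW^{-1,p'}$--$\LL^{p'}$ endpoint duality interpolated in $[\cdot,\cdot]_{\alpha,\infty}$ exactly as in \eqref{eq:alpha_f_estimate}.

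The new ingredient is the Neumann boundary contribution $B_3=c\int_{\Gamma_s}g_N\cdot\phi^2\Delta_h^2\tilde u\,\d\mathscr H^{n-1}$, which I will handle using the same strategy as for $A_4$ in the proof of Theorem \ref{thm:regularityRelaxed}: for $\alpha=0$, the trace theorem of Lemma \ref{lem:traceTheorem} combined with duality gives one endpoint; for $\alpha=1$, extending $g_N$ to $\WW^{1,p'}(\Omega_s)$, integrating by parts in the normal direction, and using $\lvert\Delta_h^2\tilde u\rvert\lesssim\lvert h\rvert\sup_{0\leq t\leq1}\lvert\Delta_h\D\tilde u_{-th}\rvert$ gives the other; interpolating in $[\cdot,\cdot]_{\alpha,p'}$ then yields
\begin{equation*}
  \lvert B_3\rvert\lesssim\frac{\lvert h\rvert^\alpha}{s-r}\,\|g_N\|_{\WW^{\alpha-1/p',p'}(\Gamma_s)}\,\|\Delta_h\D\tilde u\|_{\LL^p(\Omega_{r_2})}.
\end{equation*}
The main obstacle will be the extension step: without the radial structure of $F$, the reflected flux fails the symmetry $a(x,Rz)=R\,a(x,z)$, so $\tilde u$ would not satisfy any usable equation on $B_s$ and in particular the jump of $a_n$ could not be identified with the Neumann data; this is precisely the restriction underlying the remark after Theorem \ref{thm:radial_neumann}. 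Finally, the a-priori $\WW^{1,q}(\Omega)$ version follows by noting that \eqref{def:bounds3} was only invoked in the preceding higher-integrability step.
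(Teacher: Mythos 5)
Your proposal is correct and reaches the claimed estimate, but it handles the one genuinely new difficulty --- the normal-direction difference quotient at the boundary --- by a different mechanism than the paper. The paper keeps all integrals on the half-ball $\Omega_s$, writes $2A' = A_1'+A_2'+A_3'+A_4'$, and shows that the two extra terms $A_3',A_4'$ (integrals of exact difference quotients, supported on the thin strips across $\Gamma_s$ depicted in Figure \ref{fig:diff_quotients}) cancel via the parity Lemma \ref{lem:cancellation}. You instead first prove that the reflected function $\tilde u$ satisfies a weak formulation on the whole ball $B_s$ with a factor-$2$ Neumann term on $\Gamma_s$, and then run Step 1 of Theorem \ref{thm:interiorImproved} essentially verbatim, where the analogous boundary terms vanish identically because $\phi^2\Delta_h^2\tilde u$ is compactly supported in $B_s$. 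The two devices are equivalent: the identity $a(x,Rz)=R\,a(x,z)$ that closes your change of variables is exactly the parity statement that makes $\sigma$ and $\tau$ in Lemma \ref{lem:cancellation} have the same parity, and both arguments need the even extension of $F_0$ in $x_n$ to preserve \eqref{def:bounds1}--\eqref{def:bounds31}. Your packaging makes completely explicit why the radial structure is indispensable (without it $\tilde u$ solves no usable equation on $B_s$), at the cost of two routine verifications you should not skip: (i) that the odd (resp.\ even) reflection of $f$ is bounded on $\BB^{\alpha-1,p'}_{\infty}$, which follows by interpolating the $\LL^{p'}$ and $\WW^{-1,p'}$ endpoints of the reflection operator, noting that for $\varphi\in\WW^{1,p}_0(B_s)$ the symmetrised function $\varphi\mp\check\varphi$ (with $\check\varphi(x',x_n)=\varphi(x',-x_n)$) is admissible on $B_s^+$; and (ii) the admissibility of $\psi+\check\psi$ (resp.\ $\psi-\check\psi$) as a test function for the localised mixed (resp.\ Dirichlet) problem, which is what produces the factor $2$ on $\Gamma_s$. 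Also note that no extension of $g_N$ is needed for the reflected equation itself; $g_N$ is only extended into the bulk for the $\alpha=1$ endpoint of the $B_3$ estimate, exactly as in \eqref{eq:B3_FTCextension}.
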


A crucial part in the proof will be played by the following elementary lemma:
\begin{lemma}\label{lem:cancellation}
  Suppose $s>0$ and let $\sigma,\tau : (-s,s) \to \bb R^N$ be integrable with both $\sigma$ and $\tau$ either both odd or both even. Then for $\lvert h\rvert \leq \frac{s}2,$ we have
  \begin{equation}
    \int_0^h \Delta_h\left( \sigma(x) \cdot \Delta_{-h}\tau(x) \right) \,\d t =  - \int_0^h \Delta_{-h}\left( \sigma(x) \cdot \Delta_h\tau(x) \right) \,\d t
  \end{equation} 
\end{lemma}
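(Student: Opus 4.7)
The natural approach is direct: expand both sides by the definition $\Delta_h f(x) = f(x+h) - f(x)$, then exploit the common parity of $\sigma$ and $\tau$ through a reflection of the integration variable. The assumption $\lvert h\rvert \leq s/2$ is used precisely to ensure the translations $x\pm h$ appearing in the integrand stay inside $(-s,s)$ for $x$ in the relevant range, so that all values of $\sigma,\tau$ are well defined.

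The first step would be to expand the two integrands, writing each as an explicit sum of four products of values of $\sigma$ and $\tau$ at $x$, $x+h$, $x-h$. The second step is to use the parity of $\sigma,\tau$ to establish the pointwise symmetry
\[
  \Delta_h\bigl(\sigma\cdot\Delta_{-h}\tau\bigr)(-x) \;=\; \Delta_{-h}\bigl(\sigma\cdot\Delta_{h}\tau\bigr)(x),
\]
which should hold in both the odd--odd and even--even cases: in the former, the two sign flips coming from oddness cancel pairwise inside the dot product, and in the latter no sign flips appear at all. This pointwise identity is really the heart of the matter, and would follow most cleanly by first checking the simpler relation $\sigma(-x)\cdot\tau(-x+h) = \sigma(x)\cdot\tau(x-h)$ (valid in both parity cases) and then expanding.

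With this identity in hand, applying the change of variables $y=-x$ to one of the two integrals should collapse them into a single integral over a symmetric interval, where the parity-induced cancellation delivers the claimed vanishing. The main obstacle I anticipate is the careful bookkeeping of signs in the odd--odd case: four separate parity flips interact in the product and one must verify that they combine correctly. I expect it will be cleanest to prove the pointwise identity once, handling both parity cases in parallel via the reduced identity above, rather than manipulating the full integrals term by term.
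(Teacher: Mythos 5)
Your pointwise identity in Step 2 is correct: writing $G(t)=\sigma(t)\cdot\Delta_{-h}\tau(t)$ and $H(t)=\sigma(t)\cdot\Delta_{h}\tau(t)$, the common parity gives $G(-t)=H(t)$ and hence $\Delta_h G(-t)=H(t-h)-H(t)=\Delta_{-h}H(t)$. The gap is in your final step. The substitution $y=-t$ converts the left-hand side into $\int_{-h}^{0}\Delta_{-h}H(y)\,\d y$, whereas the claimed right-hand side is $-\int_{0}^{h}\Delta_{-h}H(y)\,\d y$; equality would force $\int_{-h}^{h}\Delta_{-h}H(y)\,\d y=0$, and there is no parity reason for this, since $\Delta_{-h}H$ is neither odd nor even (one only knows $H(-t)=G(t)$, not $H(-t)=\pm H(t)$). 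No amount of sign bookkeeping can rescue this, because the identity as literally stated is false: with $N=1$, $\sigma(t)=\tau(t)=t^{2}$, $h=1$, $s=3$ one computes $\Delta_hG(t)=-6t^{2}-4t-1$ and $\Delta_{-h}H(t)=-6t^{2}+4t-1$, so the left-hand side equals $-5$ while the right-hand side equals $1$.

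What is true --- and what is actually needed to show $A_3'+A_4'=0$ in the proof of Lemma \ref{lem:mainEstimateImprovedDiff} --- is the statement obtained by integrating the difference quotients over the whole half-line on which $\sigma$ (which there contains the cutoff $\phi^{2}$) is compactly supported. For such $\sigma$ and $h>0$, say, the integrals telescope to boundary strips, $\int_{0}^{\infty}\Delta_hG\,\d t=-\int_{0}^{h}G\,\d t$ and $\int_{0}^{\infty}\Delta_{-h}H\,\d t=\int_{-h}^{0}H\,\d t$, and the remaining identity $\int_{0}^{h}G\,\d t=\int_{-h}^{0}H\,\d t$ is exactly your parity relation $G(-t)=H(t)$ after the substitution $t\mapsto -t$. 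So your key identity does deliver the cancellation the argument requires; it is the literal $\int_{0}^{h}$ form of the display (which the paper states without proof) that cannot be proved, and your proposed route necessarily breaks down at the last step.
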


\begin{proof}[Proof of Lemma \ref{lem:mainEstimateImprovedDiff}]
We denote $a(x,t) = \p_z F(x,t)$ and $a_0(x,t) = \p_t F_0(x,t)/t$.
As in Theorem \ref{thm:interiorImproved}, let $\phi$ be a radial cut-off with $\phi =1$ in $B_r$, supported in $B_{r_1}$.
We will also take $\BB^{\alpha-1,p^{\prime}}_{\infty}$-extension of $f$ to $B_s$, which we will also denote by $f$, such that $\norm{f}_{\BB^{\alpha-1,p^{\prime}}_{\infty}(\Omega_s)}\lesssim \norm{f}_{\BB^{\alpha-1,p^{\prime}}_{\infty}(B_s)}$.

\textbf{Step 1: Tangential directions.} 
We will consider difference quotients in the direction $h = h e_i$ with $i\in \{1,\ldots,n-1\}$ and $\lvert h\rvert \leq \frac{s-r} 3$.
Suppose $h = h e_i$ with $i\in \{1,\ldots,n-1\}$ and $\lvert h\rvert \leq \frac{s-r} 3$. Noting that $\phi^2 \Delta_h^2 u$ is an admissible test function in both the Dirichlet and mixed case, there is no problem in applying the proof of Theorem \ref{thm:interiorImproved} in order to obtain \eqref{eq:tangential_mainestimate}.

The only difference is that in the Neumann case we obtain an extra term
\begin{equation}
  B_3 = \int_{\Gamma_s} \phi^2 g_N \cdot \Delta_h^2\tilde u \,\d \H^{n-1},
\end{equation} 
which, analogously as in \eqref{eq:A4Interpolated} from the proof of Theorem \ref{thm:regularityRelaxed}, we estimate as
\begin{equation}
  \lvert B_3\rvert \lesssim \frac{\lvert h\rvert^{\alpha}}{s-r} \norm{g}_{\WW^{\alpha-\frac1{p^\prime},p^\prime}(\Gamma_s)} \norm{\Delta_h \D u}_{L^p(\Omega_{r_2})}.
\end{equation} 
Indeed the case $\alpha = 0$ follows from the same duality pairing and the trace embedding $W^{1,p}(\Omega_s) \hookrightarrow W^{1-\frac1p,p}(\partial\Omega_s)$, whereas for $\alpha = 1$ we write
\begin{equation}\label{eq:B3_FTCextension}
  \begin{split}
    B_3 &= \int_0^s \int_{\Gamma_s} \partial_n\left( \phi^2 g_N \Delta_h^2\tilde u \right)(x',t) \,\d\H^{n-1}(x')\,\d t \\
        &= \int_{\Omega_s} \partial_n\left( \phi^2 \tilde g \Delta_h^2\tilde u \right) \,\d x \\
        &= \int_{\Omega_s} \partial_n\left( \phi^2\tilde g \right) \Delta_h^2\tilde u \,\d x + \int_{\Omega_s} \Delta_h\left( \phi^2\tilde g \right) \Delta_h\partial_nu \,\d x,
  \end{split}
\end{equation} 
where $\tilde g$ is an extension of $g$ to $B_s$ such that the estimate $\norm{\tilde g}_{\WW^{1,p^\prime}(B_s)} \lesssim \norm{g}_{\WW^{1-\frac1{p^{\prime}},p^{\prime}}(\Gamma_s)}$.
Now we can estimate using H\"older as before.
One can also observe that we did not require our difference quotients to be tangential for this step, so the same argument works for $h = he_n$.

\textbf{Step 2: Normal direction.}
Let now $h = h e_n$ with $\lvert h\rvert \leq \frac{s-r} 3$. Similarly as in the first step, we will test the system against $\phi \Delta_h^2\tilde u.$ 
In the Dirichlet case, we observe this is admissible owing to the fact that $\tilde u$ is odd and that $g_D = 0.$
In both cases this gives
\begin{equation}
0=  \int_{\Omega_s} a(x,\D u) \cdot \D\left( \phi^2 \Delta_h^2\tilde u \right)  - f \cdot \phi^2 \Delta_h^2\tilde u \,\d x + \int_{\Gamma_s} g \cdot \phi^2 \Delta_h^2\tilde u \,\d\H^{n-1},
\end{equation} 
which we can write as
\begin{equation}
  \begin{split}
    A^\prime &= - \int_{\Omega_{s}} \phi^2 a(x,\D \tilde u)\cdot \Delta_{h}^2\D \tilde u \d x\\
       &= \int_{\Omega_{s}} a(x,\D u)\cdot \D \phi^2\otimes \Delta_h^2 \tilde u \d x- \int_{\Omega_{s}}\phi^2 f \cdot \Delta_h^2 \tilde u \,\d x + \int_{\Gamma_s} \phi^2 g_N \cdot \Delta_h^2\tilde u \,\d\H^{n-1}\\
       &= B_1^\prime + B_2^\prime + B_3^\prime,
  \end{split}
\end{equation}
understanding that $g_N \equiv 0$ in the Dirichlet case.
For this we observe the identity
\begin{equation}
  \begin{split}
    &\Delta_h\left( \phi^2 a(x,\D \tilde u) \cdot \Delta_{-h} \D \tilde u \right) + \Delta_{-h}\left( \phi^2 a(x,\D \tilde u) \cdot \Delta_{h} \D \tilde u \right)\\
    &= \Delta_h\left( \phi^2 a(x,\D \tilde u) \right) \cdot \left( \Delta_h \D \tilde u \right)_h + \phi^2 a(x,\D \tilde u) (-\Delta_h^2\D \tilde u) \\
    &\quad + \Delta_{-h}\left( \phi^2 a(x,\D \tilde u) \right) \cdot \left( \Delta_h \D \tilde u \right)_{-h} + \phi^2 a(x,\D \tilde u) (-\Delta_h^2\D \tilde u)
  \end{split}
\end{equation} 
holds, recalling that $\Delta_h \Delta_{-h} = - \Delta_h^2.$
Then, since $(\Delta_h\D \tilde u)_{-h} = - \Delta_{-h}\D \tilde u$ and further ${(\Delta_{-h}\D \tilde u)_h = - \Delta_h\D \tilde u,}$ we can write
\begin{equation}
  \begin{split}
    2A^\prime &= \int_{\Omega_s} \Delta_h\left( \phi^2 a(x,\D \tilde u) \right) \cdot \Delta_h \D \tilde u \,\d x
        + \int_{\Omega_s} \Delta_{-h}\left( \phi^2 a(x,\D \tilde u) \right)  \cdot \Delta_{-h}\D \tilde u \,\d x \\
       &\quad + \int_{\Omega_s} \Delta_h\left( \phi^2 a(x,\D \tilde u) \cdot \Delta_{-h} \D \tilde u\right) \,\d x 
              + \int_{\Omega_s} \Delta_{-h}\left( \phi^2 a(x,\D \tilde u) \cdot \Delta_{h} \D \tilde u\right) \,\d x\\
       &= A_1^\prime + A_2^\prime + A_3^\prime + A_4^\prime.
  \end{split}
\end{equation} 
Unlike in the tangential case the last two terms do not necessarily vanish, however by exploiting the symmetries of the system we will show that they cancel; the regions of integration of the respective terms are shown in Figure \ref{fig:diff_quotients}.
For $1 \leq i \leq n$, let $a_i$ denote the $i^{\mathrm{th}}$ column of $a,$ so that $a_i(x,\D u) = a_0(x,\lvert \D u\rvert) \p_i u,$ as then we can write $A_3^\prime = \sum_{i=1}^n A_{3,i}^\prime$ where
\begin{equation}\label{eq:boundary_diff_A3i}
  \begin{split}
    A_{3,i}^\prime &:= \int_{\Omega_s} \Delta_h \left( \phi^2 a_0(x,\D \tilde u) \cdot \Delta_{-h}\partial_i\tilde u \right)  \,\d x \\
             &= \int_{\Omega_s \setminus (\Omega_s+h)} \Delta_h \left( \phi^2 a_i(x,\lvert\D \tilde u\rvert)\,\p_i\tilde u \cdot \Delta_{-h}\partial_i\tilde u \right)  \,\d x \\
             &= \int_0^h \int_{B^{n-1}_s} \Delta_h \left( \phi^2 a_i(x,\lvert \D \tilde u\rvert) \p \tilde u_i \cdot \left( \p_i\tilde u(x^\prime,x_n-h) - \p_i \tilde u(x^\prime,x_n) \right) \right)  \,\d x^\prime \,\d x_n.
  \end{split}
\end{equation} 

\begin{figure}[ht]
    \centering
    \begin{subfigure}{0.45\textwidth}
\begin{tikzpicture}[scale=.4]
	\begin{pgfonlayer}{nodelayer}
		\node [style=none] (0) at (-5, 0) {};
		\node [circle,fill,minimum size = 3pt,inner sep = 0pt, outer sep = 0pt] (1) at (0, 0) {};
		\node [style=none] (2) at (5, 0) {};
		\node [style=none] (3) at (0, 5) {};
		\node [style=none] (4) at (-5, -1) {};
		\node [style=none] (5) at (0, 4) {};
		\node [style=none] (6) at (5, -1) {};
		\node [circle,fill,minimum size = 3pt,inner sep = 0pt, outer sep = 0pt] (7) at (0, -1) {};
		\node [style=none] (8) at (0, 7) {};
		\node [style=none] (9) at (-7, 0) {};
		\node [style=none] (10) at (7, 0) {};
		\node [style=none] (11) at (0, -3) {};
		\node [style=none] (12) at (.9, 7) {\scriptsize $x_n$};
		\node [style=none] (13) at (0.5, 0.5) {\scriptsize $0$};
		\node [style=none] (14) at (0.6, -0.5) {\scriptsize $-h$};
		\node [style=none] (15) at (7, .8) {\scriptsize $x'$};
		\node [style=none] (16) at (-4.9, 0) {};
		\node [style=none] (17) at (4.9, 0) {};
	\end{pgfonlayer}
	\begin{pgfonlayer}{edgelayer}
    \fill[fill=gray!20] (4.center) to[bend left = 5] (16.center) to (17.center) to[bend left =5] (6.center) to cycle;
    \draw [->,thick](1.center) to (8.center);
		\draw [->,thick](1.center) to (10.center);
		\draw [->,thick](1.center) to (9.center);
		\draw [->,thick](1.center) to (11.center);
		\draw [bend left=45] (0.center) to (3.center);
		\draw [bend left=45] (3.center) to (2.center);
		\draw [dashed,bend left=45] (4.center) to (5.center);
		\draw [dashed,bend left=45] (5.center) to (6.center);
    \draw [dashed](4.center) to (7.center);
		\draw [dashed](7.center) to (6.center);
	\end{pgfonlayer}
\end{tikzpicture}
      \subcaption{$A_3'$}
    \end{subfigure}
    \begin{subfigure}{0.45\textwidth}
      \begin{tikzpicture}[scale=.4]
	\begin{pgfonlayer}{nodelayer}
		\node [style=none] (0) at (-5, 0) {};
		\node [circle,fill,minimum size = 3pt,inner sep = 0pt, outer sep = 0pt] (1) at (0, 0) {};
		\node [style=none] (2) at (5, 0) {};
		\node [style=none] (3) at (0, 5) {};
		\node [style=none] (4) at (-5, 1) {};
		\node [style=none] (5) at (0, 6) {};
		\node [style=none] (6) at (5, 1) {};
		\node [circle,fill,minimum size = 3pt,inner sep = 0pt, outer sep = 0pt] (7) at (0, 1) {};
		\node [style=none] (8) at (0, 8) {};
		\node [style=none] (9) at (-7, 0) {};
		\node [style=none] (10) at (7, 0) {};
		\node [style=none] (11) at (0, -2) {};
		\node [style=none] (12) at (.9, 8) {\scriptsize $x_n$};
		\node [style=none] (13) at (0.5, 0.5) {\scriptsize $0$};
		\node [style=none] (14) at (0.5, 1.5) {\scriptsize $h$};
		\node [style=none] (15) at (7, 0.8) {\scriptsize $x'$};
	\end{pgfonlayer}
	\begin{pgfonlayer}{edgelayer}
    \fill[fill=gray!20] (4.center) to[bend left = 5] (16.center) to (17.center) to[bend left =5] (6.center) to cycle;
		\draw [->,thick](1.center) to (8.center);
		\draw [->,thick](1.center) to (10.center);
		\draw [->,thick](1.center) to (9.center);
		\draw [->,thick](1.center) to (11.center);
		\draw [bend left=45] (0.center) to (3.center);
		\draw [bend left=45] (3.center) to (2.center);
		\draw [dashed,bend left=45] (4.center) to (5.center);
		\draw [dashed,bend left=45] (5.center) to (6.center);
    \draw [dashed](4.center) to (7.center);
    \draw [dashed](7.center) to (6.center);
	\end{pgfonlayer}
\end{tikzpicture}
      \subcaption{$A_4'$}
    \end{subfigure}
    \caption{Regions of integration in $A_3'$ and $A_4'$}
    \label{fig:diff_quotients}
\end{figure}

We apply Lemma \ref{lem:cancellation} with
\begin{align}
  \sigma(t) = \phi(x^\prime,t) a_0(x^\prime,t,\lvert \D \tilde u\rvert(x,t)) \p_i\tilde u(x^\prime,t), \quad \tau(t) = \p_i(x^\prime,x_n)
\end{align}
for each $1 \leq i \leq n$ and $x^\prime \in B_s^{n-1}.$
Across the $x_n$-axis we have $\partial_iu$ is odd or even depending on whether $i=n$ and the boundary condition, and hence $a_0(x,\lvert \D u\rvert)$ is even.
Also $\phi$ is even along the $x_n$-axis since it is a radial cutoff, so it follows that $\sigma$ and $\tau$ have the same parity.
Hence, using this lemma in \eqref{eq:boundary_diff_A3i} and summing over $n$, we deduce that $A_3^\prime + A_4^\prime = 0.$

Now we can proceed as in the tangential case, where we write
\begin{equation}
  \begin{split}
    A_1^\prime &= \int_{\Omega_s} \phi^2 \left(a(x,\D \tilde u_h) - a(x,\D \tilde u)\right) \cdot \Delta_h \D \tilde u \,\d x \\
         &\quad+ \int_{\Omega_s} \phi^2 \left(a(x+h,\D \tilde u_h) - a(x,\D \tilde u_h)\right) \cdot \Delta_h \D \tilde u \,\d x \\
         &\quad+ \int_{\Omega_s} \Delta_h\phi^2 \, a(x,\D \tilde u)_h \Delta_h \tilde u \,\d x,
\end{split}
\end{equation} 
so by \eqref{eq:h2bound_derivative}, \eqref{eq:hbound1_quantiative}, \eqref{def:bounds31} we can estimate
\begin{equation}
  A_1^\prime \gtrsim \int_{\Omega_s} \phi^2 \lvert \Delta_hV_{p,\mu}(\D \tilde u)\rvert^2 \,\d x - \frac{C}{s-r} \left( \lvert h\rvert + \lvert h\rvert^{\alpha} \right) \int_{B_{r_2}} (1 + \lvert \D \tilde u_h\rvert)^{q-1} \lvert \Delta_h \D \tilde u\rvert\,\d x,
\end{equation} 
and similarly
\begin{equation}
  A_2^\prime \gtrsim \int_{\Omega_s} \phi^2 \lvert \Delta_{-h}V_{p,\mu}(\D \tilde u)\rvert^2 \,\d x - \frac{C}{s-r} \left( \lvert h\rvert + \lvert h\rvert^{\alpha} \right) \int_{B_{r_2}} (1 + \lvert \D \tilde u_{-h}\rvert)^{q-1} \lvert \Delta_{-h} \D \tilde u\rvert\,\d x,
\end{equation} 
which gives us our estimates for $A^\prime = A_1^\prime + A_2^\prime.$
For the remaining terms we can as before estimate
\begin{align}
  B_1^\prime &\lesssim \frac1{s-r}\int_{\Omega_{r_1}} (1 + \lvert \D \tilde u\rvert)^{q-1} \lvert\Delta_h^2\tilde u\rvert\,\d x,\\
  B_2^\prime &\lesssim \frac{\lvert h\rvert^{\alpha}}{s-r} \|f\|_{\BB^{1-\alpha,p^\prime}_{\infty}(\Omega_{s})}\|\Delta_h \D \tilde u\|_{\LL^p(B_{r_2})},\\
  B_3^\prime &\lesssim \frac{\lvert h\rvert^{\alpha}}{s-r} \norm{g}_{\WW^{\alpha-\frac1{p^\prime},p^\prime}(\Gamma_s)} \norm{\Delta_h \D \tilde u}_{\LL^p(\Omega_{r_2})}.
\end{align}
Indeed for $B_2^\prime$, the direction of the difference quotient plays no role provided we estimate the right-hand side in the full ball $B_s$, which is also the case for $B_3^\prime$ as noted in Step 1.
Therefore collecting our estimates we have shown the desired estimate.
\end{proof}

To extend Step 5 from the proof of Theorem \ref{thm:interiorImproved}, we will need an analogous interpolation estimate for $f$, along with an analogous estimate for $g_N$.

\begin{lemma}\label{lem:boundary_regularf}
  Suppose $p \geq 2$, $\alpha \in (0,1)$, $\mu>0$ and $\beta \in [\alpha,\frac{2\alpha}{p^{\prime}}]$. Then for $0 < \delta < \alpha$ and $0<s<r$, let $v \in \WW^{1,p}(B_s)$ such that $V_{p,\mu}(\D v) \in \BB^{\delta,2}_{\infty}(B_s)$, and $0 \leq \phi \leq 1$ be a suitable cut-off supported on $B_{r+\frac{s-r}3}$.
  Also let $f \in \BB^{\beta-1,p^{\prime}}_1(\Omega_s)$ and $g \in \BB^{\beta-\frac1{p^{\prime}},p^{\prime}}_1(\Gamma_s)$.
  Then for $h = he_i$ with $1 \leq i \leq n$ and $\lvert h\rvert \leq \frac{s-r}3$, we have
  \begin{align}
    \int_{\Omega_s} \phi^2 f \Delta_h^2v \,\d x &\lesssim \lvert h\rvert^{\beta + \frac{2\delta}p}\norm{\phi}_{\WW^{1,\infty}(B_s)} \norm{f}_{\BB^{\beta-1,p^{\prime}}_1(B_s)} \norm{V_p(\D v)}_{\BB^{\delta,2}_{\infty}(B_s)}^{\frac2p},\\
    \int_{\Gamma_s} \phi^2 g \Delta_h^2v \,\d \H^{n-1} &\lesssim \lvert h\rvert^{\beta + \frac{2\delta}p}\norm{\phi}_{\WW^{1,\infty}(B_s)} \norm{g}_{\BB^{\beta-\frac1{p^{\prime}},p^{\prime}}_1(\Gamma_s)} \norm{V_p(\D v)}_{\BB^{\delta,2}_{\infty}(B_s)}^{\frac2p}.
  \end{align}
\end{lemma}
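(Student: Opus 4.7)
The plan is to adapt the approach from Step 5 in the proof of Theorem \ref{thm:interiorImproved}, where essentially the first estimate was already obtained implicitly; the second estimate is new and requires a boundary-to-interior conversion via the fundamental theorem of calculus in the normal direction.

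For the first estimate, I would start from the identity
\begin{equation*}
  \Delta_h^2 v(x) = h \cdot \int_0^1 \Delta_h[\D v(\cdot - th)](x) \,\d t
\end{equation*}
and split into two subcases. If $\beta \leq 1$, I would interpolate the linear functional $f \mapsto \int \phi^2 f \Delta_h^2 v$ between the endpoints $f \in \WW^{-1,p'}$ at $\beta = 0$ and $f \in \LL^{p'}$ at $\beta = 1$ using the scale $[\cdot,\cdot]_{\beta,1}$, which via \eqref{eq:besov_negative} yields the $\BB^{\beta-1,p'}_1$ norm on the right, while the $\lvert h\rvert^{\beta+2\delta/p}$ decay follows at both endpoints by bounding $\|\Delta_h^2 v\|_{\WW^{k,p}}$ for $k=0,1$ using the FTC representation combined with Lemma \ref{lem:Vfunc_diff}. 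For $\beta \in (1, 2\alpha/p']$, I would extract one factor of $\lvert h\rvert$ from the FTC representation, dualise in $\BB^{\beta-1,p'}_1 \times \BB^{1-\beta,p}_\infty$, and apply the negative Besov estimate \eqref{eq:negative_besov_estimate} with $\sigma = \beta-1$ and $\tau = \beta-1+2\delta/p$; Lemma \ref{lem:Vfunc_diff} again supplies the required regularity of $\D v$.

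For the second (boundary) estimate, I would extend $g$ to some $\tilde g \in \BB^{\beta,p'}_1(\Omega_s)$ via the inverse trace theorem, and convert the boundary integral to a volume integral as in \eqref{eq:B3_FTCextension}:
\begin{equation*}
  \int_{\Gamma_s} \phi^2 g \Delta_h^2 v \,\d\H^{n-1} = -\int_{\Omega_s} \p_n(\phi^2 \tilde g \Delta_h^2 v)\,\d x.
\end{equation*}
Expanding the normal derivative produces two volume terms. The first, $\int \p_n(\phi^2 \tilde g)\,\Delta_h^2 v$, reduces directly to the first estimate with $\p_n(\phi^2 \tilde g) \in \BB^{\beta-1,p'}_1(\Omega_s)$ playing the role of $f$, noting that the extension bound gives $\|\p_n(\phi^2 \tilde g)\|_{\BB^{\beta-1,p'}_1} \lesssim \|\phi\|_{\WW^{1,\infty}}\|g\|_{\BB^{\beta-1/p',p'}_1(\Gamma_s)}$. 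For the term $\int \phi^2 \tilde g\,\Delta_h^2 \p_n v$, I would apply discrete integration by parts (since $\Delta_h^2$ is self-adjoint and factors as $-\Delta_h\Delta_{-h}$) to rewrite it as $-\int \Delta_{-h}(\phi^2 \tilde g)\,\Delta_{-h}\p_n v$, then estimate via H\"older: the first factor contributes $\lvert h\rvert^\beta$ from the regularity of $\phi^2 \tilde g$, and the second contributes $\lvert h\rvert^{2\delta/p}$ since $\p_n v \in \BB^{2\delta/p,p}_\infty$ by Lemma \ref{lem:Vfunc_diff}. For $\beta > 1$ the same idea works after an additional application of \eqref{eq:negative_besov_estimate}.

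The main obstacle will be the careful bookkeeping of the Besov-space dualities throughout the parameter range $\beta \in [\alpha, 2\alpha/p']$, in particular verifying that \eqref{eq:negative_besov_estimate} remains applicable in the regime $\beta > 1$, and ensuring that the splitting of $\lvert h\rvert^\beta$ (from regularity of $g$) and $\lvert h\rvert^{2\delta/p}$ (from regularity of $V_p(\D v)$) is performed without losing the sharp exponent. This is particularly delicate in the $\beta > 1$ regime of the boundary estimate, where both the FTC conversion in the normal direction and the discrete integration by parts act on the same second differences.
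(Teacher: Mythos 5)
Your proposal matches the paper's proof in all essentials: the $f$-estimate is obtained via the $\BB^{\beta-1,p^{\prime}}_1\times\BB^{1-\beta,p}_{\infty}$ duality combined with the interpolation estimate \eqref{eq:negative_besov_estimate} for $\beta>1$ (endpoint interpolation sufficing for $\beta\le 1$), and the $g$-estimate via an inverse-trace extension $\tilde g\in\BB^{\beta,p^{\prime}}_1(B_s)$, the conversion \eqref{eq:B3_FTCextension}, and discrete integration by parts. The only difference worth noting is in the second boundary term when $\beta>1$: the paper places a single factor $\lvert h\rvert$ on $\Delta_h(\phi^2\tilde g)$ and extracts the remaining $\lvert h\rvert^{\beta-1+\frac{2\delta}{p}}$ from $\Delta_h\partial_n v$ via \eqref{eq:negative_besov_estimate}, rather than claiming $\lvert h\rvert^{\beta}$ decay for a first difference of $\tilde g$ (which is impossible for $\beta>1$) — but you correctly anticipate that an extra application of \eqref{eq:negative_besov_estimate} is exactly what is needed there.
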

\begin{proof}
  The interpolation argument in Lemma \ref{lem:mainEstimateImprovedDiff} already establishes the case $\beta \leq 1$, so we will assume $\beta > 1$.
  Additionally we will assume $f$ admits a norm-preserving $\BB^{\beta-1,p^{\prime}}_{\infty}$-extension to $B_s$ which we continue to denote by $f$.

  The estimate for $f$ is analogous as in the interior case, as taking difference quotients in the normal direction does not changes our analysis. 
  Indeed we use the duality pairing
  \begin{equation}
    \int_{\Omega_s} \phi^2 f \cdot \Delta_h^2 v \,\d x \leq \lvert h\rvert \norm{\phi^2f}_{\BB^{\beta-1,p^{\prime}}_1(B_s)} \norm{\Delta_hv}_{\BB^{1-\beta,p}_{\infty}(B_{r_2})},
  \end{equation} 
  and use the same interpolation estimate \eqref{eq:negative_besov_estimate} from Step 5 of the interior case.

  For the $g$ term, we observe that since $\beta > 1$ by \cite[Theorem 2.7.2]{Triebel1983}, we can extend $g$ to a function $\tilde g \in \BB^{\beta,p^{\prime}}_1(B_s)$ satisfying
  \begin{equation}
    \norm{\tilde g}_{\BB^{\beta,p^{\prime}}_1(B_s)} \lesssim \norm{g}_{\BB^{\beta-\frac1{p^{\prime}},p^{\prime}}_1(\Gamma_s)}.
  \end{equation} 
  Then using \eqref{eq:B3_FTCextension} from the proof of Lemma \ref{lem:mainEstimateImprovedDiff} we can estimate
  \begin{equation}
    \begin{split}
    \int_{\Gamma_s} \phi^2 g \Delta_h^2v \,\d\H^{n-1} 
    &= \int_{\Omega_s} \partial_n\left( \phi^2\tilde g \right) \Delta_h^2v \,\d x + \int_{\Omega_s} \Delta_h\left( \phi^2\tilde g \right) \Delta_h\partial_nv \,\d x \\
    &\lesssim \lvert h\rvert \norm{\phi}_{\WW^{1,\infty}(B_s)}\norm{\D\tilde g}_{\BB^{\beta-1,p^{\prime}}_1(B_s)} \norm{\Delta_hv}_{\BB^{1-\beta,p}_{\infty}(B_s)},
    \end{split}
  \end{equation} 
  and the latter term we once again use \eqref{eq:negative_besov_estimate} similarly as with the estimate for $f$, which establishes the result.
\end{proof}

\subsubsection{Odd and even extensions}\label{sec:oddeven_extension}

Lemma \ref{lem:mainEstimateImprovedDiff} indicates that we want to construct $\WW^{1,q}$-extensions of solutions that preserve fractional differentiability.
Under zero Dirichlet boundary conditions, we will take an odd extension given by
\begin{align}\label{eq:extensionDirichlet}
  \tilde u(x^\prime,x_n) = \begin{cases}
		u(x^\prime,x_n) \quad&\text{ if } x_n\geq 0\\
		-u(x^\prime,-x_n) &\text{ if } x_n<0
		\end{cases}
\end{align}

Observe that since $u$ vanishes on $\Gamma_s,$ by Lemma \ref{lem:extension} it is weakly differentiable in $B_s.$
Moreover by \cite[Theorem 4.5.2]{Triebel1992} (see also \cite[Theorem 2.9.2]{Triebel1983}) for all $\delta \in (0,1)$ we have the $\BB^{1+\delta,p}_{\infty}$-regularity is preserved with the corresponding estimate
\begin{equation}\label{eq:dirichlet_extension}
  \seminorm{\D \tilde u}_{\BB^{\delta,p}_{\infty}(B_s)} \lesssim \seminorm{\D u}_{\BB^{\delta,p}_{\infty}(\Omega_s)},
\end{equation} 
and we have an analogous estimate at the level of $V$-functionals, namely that
\begin{equation}\label{eq:dirichlet_extension2}
  \seminorm{V_p(\D \tilde u)}_{\BB^{\delta,p}_{\infty}(B_s)} \lesssim \seminorm{\D u}_{\BB^{\delta,p}_{\infty}(\Omega_s)},
\end{equation} 
In the case of zero Neumann boundary, we can instead take an even extension given by
\begin{equation}\label{eq:evenextension}
  \tilde u(x^\prime,x_n) = \begin{cases}
		u(x^\prime,x_n) \quad&\text{ if } x_n\geq 0,\\
    u(x^\prime,-x_n) &\text{ if } x_n<0.\end{cases}
\end{equation} 
Note this preserves fractional differentiability up to order $\delta < \frac1p,$ as illustrated by the following lemma.

\begin{lemma}\label{eq:zeroextension}
  Let $1 < p < \infty$ and $0 < \delta < \frac1p.$
  Then given $f \in \BB^{\delta,p}_{\infty}(B_1^+),$ let $\tilde f$ be the extension of $f$ to $B_1$ by zero.
  Then $\tilde f \in \BB^{\delta,p}_{\infty}(B_1)$ and
  \begin{equation}\label{eq:even_extension_bdd}
    \| \tilde f\|_{\BB^{\delta,p}(B_1)}\lesssim \| f\|_{\BB^{\delta,p}(B_1^+)}.
  \end{equation} 
\end{lemma}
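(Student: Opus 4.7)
The strategy uses the difference-quotient characterisation \eqref{eq:besovcharacterisation} of $\BB^{\delta,p}_\infty$. Since $\tilde f\equiv 0$ on $\{x_n<0\}$ one has $\|\tilde f\|_{\LL^p(B_1)}=\|f\|_{\LL^p(B_1^+)}$, so the task reduces to estimating the seminorm $\sup_{|h|\le 1/2}|h|^{-\delta p}\int_{B_1\cap(B_1-h)}|\tilde f(x+h)-\tilde f(x)|^p\d x$. For each $h$, the integration domain splits into three pieces according to the signs of $x_n$ and $x_n+h_n$: on the piece where $x,x+h\in B_1^+$ the integrand equals $|f(x+h)-f(x)|^p$ and contributes at most $|h|^{\delta p}[f]_{\BB^{\delta,p}_\infty(B_1^+)}^p$ directly; the piece where both points have $x_n<0$ contributes zero; and by symmetry the two remaining mixed pieces reduce to controlling
$$I(|h|):=\int_{B_1^+\cap\{x_n\le |h|\}}|f(x)|^p\d x.$$

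The whole proof thus reduces to the boundary decay estimate
$$I(t)\lesssim t^{\delta p}\|f\|_{\BB^{\delta,p}_\infty(B_1^+)}^p\qquad\text{for }0<t\le 1,$$
which I would establish by an averaging and iteration argument. For a parameter $\lambda>1$ to be chosen, write
$$f(x)=\dashint_t^{\lambda t}f(x',y_n)\d y_n-\dashint_t^{\lambda t}\bigl(f(x',y_n)-f(x)\bigr)\d y_n$$
for $x=(x',x_n)$ with $x_n\le t$. Applying Jensen to take the $p$-th power inside and integrating over $\{0\le x_n\le t\}\cap B_1^+$, the first resulting term is controlled by $\frac{1}{\lambda-1}I(\lambda t)$ via Fubini; the second, after the substitution $u_n=y_n-x_n$, is bounded using the difference-quotient seminorm of $f$ in the $e_n$-direction on all of $B_1^+$ applied to shifts of size $u_n\in[0,\lambda t]$, yielding a term of order $\lambda^{\delta p}t^{\delta p}[f]_{\BB^{\delta,p}_\infty(B_1^+)}^p$. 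The outcome is the inequality
$$I(t)\le\frac{C_1}{\lambda-1}I(\lambda t)+C_2\lambda^{\delta p}t^{\delta p}\|f\|_{\BB^{\delta,p}_\infty(B_1^+)}^p,$$
which I would iterate along the geometric scales $t_k=\lambda^k t$ up to $t_k\sim 1$, using the trivial bound $I(\lambda^k t)\le\|f\|_{\LL^p(B_1^+)}^p$ once the scale exits $(0,1]$.

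The main obstacle is closing this iteration: one needs $\lambda$ chosen large enough that the effective contraction constant $\frac{C_1\lambda^{\delta p}}{\lambda-1}$ is strictly less than $1$. This is precisely where the hypothesis $\delta<1/p$ enters, since $\frac{\lambda^{\delta p}}{\lambda-1}\sim\lambda^{\delta p-1}\to 0$ as $\lambda\to\infty$ if and only if $\delta p<1$. The endpoint $\delta p=1$ breaks the argument, consistent with the fact that the zero-extension of the constant $f\equiv 1$ is the characteristic function $\chi_{\{x_n>0\}}$, which fails to lie in $\BB^{\delta,p}_\infty(B_1)$ once $\delta>1/p$. Combining the boundary decay estimate with the direct bounds on the non-mixed pieces yields \eqref{eq:even_extension_bdd}.
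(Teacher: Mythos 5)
Your argument is correct and shares the paper's overall structure --- split the difference quotient over $B_1$ into the two pure regions and the mixed region, and reduce everything to the boundary decay estimate $I(t)=\int_{B_1^+\cap\{x_n\le t\}}\lvert f\rvert^p\,\d x\lesssim t^{\delta p}\|f\|_{\BB^{\delta,p}_\infty(B_1^+)}^p$ --- but you prove that key estimate by a genuinely different mechanism. The paper freezes $x'$, uses Fubini to see that $f(x',\cdot)\in\BB^{\delta,p}_\infty((0,h))$ for a.e.\ $x'$, and then applies the one-dimensional Sobolev embedding $\BB^{\delta,p}_\infty((0,h))\hookrightarrow\LL^{p/(1-\delta p)}((0,h))$ together with H\"older on the short interval; there $\delta p<1$ is exactly what keeps the embedding exponent finite. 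You instead compare $f$ with its vertical average over $[t,\lambda t]$ and iterate $I(t)\le\tfrac{C_1}{\lambda-1}I(\lambda t)+C_2\lambda^{\delta p}t^{\delta p}\|f\|^p$ over geometric scales, with $\delta p<1$ appearing as the condition under which $\lambda^{\delta p}/(\lambda-1)$ can be made a contraction; this is more elementary (no embedding theorem needed) and makes the endpoint failure transparent, at the cost of running a genuine iteration up to unit scale. Two points to tidy up if you write this out. First, the seminorm in \eqref{eq:besovcharacterisation} integrates over $\Omega^{|h|}$, which excludes precisely the near-boundary points you are shifting; since your right-hand side carries the full norm $\|f\|_{\BB^{\delta,p}_\infty(B_1^+)}$, you should appeal to the equivalence of the various difference-quotient characterisations (or work with the seminorm over $\Omega\cap(\Omega-h)$). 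Second, on the half-ball the vertical segments $[x_n+t,\,x_n+\lambda^k t]$ leave $B_1^+$ for points near the equator $\{\lvert x\rvert=1,\ x_n=0\}$; the paper's slicewise argument avoids this because it only uses increments of length at most $h$ inside slices $\{x'\}\times(0,h)$ with $\lvert x'\rvert<1-h$, whereas your iteration must reach scales of order one. The cleanest repair is to first extend $f$ boundedly in $\BB^{\delta,p}_\infty$ to a half-slab $B^{n-1}_1\times(0,2)$ (such an extension exists by interpolating a common Sobolev/Lebesgue extension operator) and run the averaging there, since $I(t)$ only increases under enlarging the domain. Neither issue affects the validity of your approach.
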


This can be deduced from \cite[Section 2.8.7]{Triebel1983}, which asserts that the characteristic function $\chi_{\{x_n > 0\}}$ is a multiplier for $\BB^{\delta,p}_{\infty}(\bb R^n)$ provided $\delta < \frac1p$.
Here we instead present an elementary proof, based on a reduction to the one-dimensional case.

\begin{proof}
  It suffices to verify fractional differentiability in the direction $e_n.$
  For $h >0,$ we have
  \begin{equation}
    \begin{split}
      \int_{B_{1-h}} \lvert\Delta_{he_n}( \chi_{\{x_n > 0\})} f )\rvert^p \,\d x 
      &\lesssim \int_{B_{1-h}} \chi_{\{x_n > h\}} \lvert \Delta_{h e_n} f\rvert^p \,\d x \\
      &\quad + \int_{B_{1-h}} \chi_{\{ 0 < x_n < h\}} \lvert f\rvert^p \,\d x\\
      &= I_1 + I_2.
    \end{split}
  \end{equation} 
  Evidently $I_1 \leq h^{\delta} \seminorm{f}_{\BB^{\delta,p}_{\infty}(B_1^+)}.$
  For the second term note, that by Fubini we have
  \begin{equation}
    \int_{B_{1-h}^{n-1}} \seminorm{f(x^\prime,\cdot)}_{\BB^{\delta,p}_{\infty}((0,h))}^p \,\d x \lesssim \seminorm{f}_{\BB^{\delta,p}_{\infty}(B^+)}^p,
  \end{equation} 
  since, by considering a countable dense set of $\eps \in (0,h)$, we can show that
  \begin{equation}
    \seminorm{f(x^\prime,\cdot)}_{\BB^{\delta,p}_{\infty}((0,h))}^p = \sup_{0<\eps<h} \frac1{\eps^{\delta p}} \int_0^h \lvert f(x^\prime,h+\eps) - f(x^\prime,h)\rvert^p \,\d x < \infty
  \end{equation} 
  for $\H^{n-1}$-almost all $x^\prime \in B_{1-h}^{n-1}.$
  Then for any such $x^\prime,$ by Sobolev embedding we have $f(x^\prime,\cdot) \in \BB^{\delta,p}_{\infty}((0,h)) \hookrightarrow \LL^{\frac{p}{1-\delta p}}((0,h))$ provided $\delta p < 1.$
  In this case, by H\"older we can split
  \begin{equation}
    \int_0^h \lvert f(x^\prime,x_n)\rvert^p \,\d x_n \lesssim \lvert h\rvert^{\delta p} \norm{f(x^\prime,\cdot)}_{\LL^{\frac{p}{1-\delta p}}(0,h)}^p \lesssim  \lvert h \rvert^{\delta p}\seminorm{f(x^\prime,\cdot)}_{\BB^{\delta,p}_{\infty}((0,h))},
  \end{equation} 
  so integrating over $x^\prime \in B_{1-r}^{n-1}$ we deduce that
  \begin{equation}
    I_2 \leq \lvert h\rvert^{\delta p}\seminorm{f}_{\BB^{\delta,p}_{\infty}(B^+)}^p,
  \end{equation} 
  from which the result follows.
\end{proof}

In the Neumann case, we can use this to perform our iteration to deduce differentiability given by some $\beta > \frac12$ for $V_{p,\mu}(\D u)$ as we will see in Theorem \ref{thm:radial_neumann} below.
However beyond this point our extension will no longer preserve the regularity of $u,$ so our iteration scheme will be forced to terminate.
To go beyond this range we will need to assume $g_N=0$ and apply the following lemma, which will be used in the proof of Theorem \ref{thm:relaxedImproved}.

\begin{lemma}\label{lem:zeroNeumann}
  Suppose $v \in \WW^{1,q}(B_1^+)$ satisfies
  \begin{equation}
    \int_{B_1^+} a_0(x,\lvert \D v\rvert) \D v \cdot \D \varphi - f \cdot \varphi \,\d x = 0
  \end{equation} 
  for all $\varphi \in \WW^{1,q}(\Omega),$
  where $a_0 \colon \overline{\Omega} \times [0,\infty) \to \bb R$ is continuous satisfying the growth bounds
  \begin{equation}\label{eq:a0_growthbound}
    t^{q-2} \lesssim a_0(x,t) \lesssim 1+t^{q-2}.
  \end{equation} 
  Then, if we additionally have $V_{q,\mu}(\D v) \in \BB^{\delta,2}_{\infty}(B_s^+)$ for some $\delta>\frac12$ and all $s<1,$ we have 
  \begin{equation}
    \p_n v = 0 \text{ on }\ \Gamma_1.
  \end{equation} 
\end{lemma}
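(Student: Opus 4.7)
The strategy is to exploit the fact that the weak form of the Euler--Lagrange equation, tested against $\varphi \in \WW^{1,q}(B_1^+)$ not required to vanish on $\Gamma_1$, encodes the natural Neumann boundary condition $a_0(x,\lvert \D v\rvert)\p_n v = 0$. The role of the hypothesis $\delta > \tfrac12$ is precisely to reach the sharp threshold of the Besov trace theorem ${\BB^{\delta,2}_\infty(B_s^+) \hookrightarrow \BB^{\delta-1/2,2}_\infty(\Gamma_s)}$ applied to $V_{q,\mu}(\D v)$. Combined with the continuity of the inverse map $V_{q,\mu}^{-1}$ and the continuity of $a_0$, this yields a trace for $\D v$ (and hence for $a_0(x,\lvert \D v\rvert)\p_n v$) in some $\LL^r_{\loc}(\Gamma_s)$, so that the weak boundary identity can be read off pointwise.

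To implement this, I would test the weak form against $\varphi(x) = \psi(x')\eta_h(x_n) \in \WW^{1,q}(B_1^+)$, where $\psi \in C_c^\infty(\Gamma_1)$ and $\eta_h(t) = \max(1 - t/h,0)$, yielding
\begin{equation*}
\int_{B_1^+} a_0(x,\lvert \D v\rvert)\D v \cdot \D\psi\, \eta_h \,\d x \;-\; \frac{1}{h}\int_{B_1^{n-1} \times (0,h)} a_0(x,\lvert \D v\rvert)\p_n v\, \psi \,\d x \;=\; \int_{B_1^+} f\cdot \varphi \,\d x.
\end{equation*}
As $h \to 0^+$, the first term vanishes by H\"older using $\|\eta_h\|_{\LL^q} = O(h^{1/q})$ and ${a_0(x,\lvert \D v\rvert)\D v \in \LL^{q'}(B_1^+)}$ (from the growth bound on $a_0$); the right-hand side likewise vanishes under natural integrability/duality of $f$ (for instance if $f\in \LL^{q'}$, since $\|\varphi\|_{\LL^q} \to 0$). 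The middle term converges, via Lebesgue differentiation in $x_n$ combined with the trace established above, to $-\int_{\Gamma_1}[a_0(x,\lvert \D v\rvert)\p_n v]_{\Gamma_1}\, \psi \,\d \H^{n-1}$. Arbitrariness of $\psi$ then forces $a_0(x,\lvert \D v\rvert)\p_n v = 0$ $\H^{n-1}$-a.e.\ on $\Gamma_1$.

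To conclude $\p_n v = 0$, I would distinguish two cases on $\Gamma_1$: where the trace of $\lvert \D v\rvert$ vanishes, the conclusion is immediate; where the trace of $\lvert \D v\rvert$ is positive, the lower growth bound $a_0(x,t) \gtrsim t^{q-2}$ gives $a_0(x,\lvert\D v\rvert)>0$, so $\p_n v = 0$ as well. The main obstacle will be justifying the passage to the limit in the middle term: it rests precisely on the threshold $\delta > \tfrac12$ and, in the subquadratic range $q<2$, on recovering a trace of $\D v$ from that of $V_{q,\mu}(\D v)$ via the local H\"older continuity of $V_{q,\mu}^{-1}$.
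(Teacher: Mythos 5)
Your proposal is correct and follows essentially the same route as the paper: the test function $\psi(x')\max(1-x_n/h,0)$ is, up to the normalising factor $h$, exactly the paper's choice $\phi(x')(\e-x_n)\mathbbm{1}_{\{x_n\le\e\}}$, and both arguments use the $\delta>\tfrac12$ Besov trace of $V_{q,\mu}(\D v)$ to pass to the limit in the boundary-layer term and then the lower growth bound on $a_0$ to conclude $\p_n v=0$.
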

\begin{proof}
  By our assumption on $V_{q,\mu}(\D v),$ we have $\D v$ admits a trace on $\Gamma_s$ for each $s<1$ by \cite[Section 4.4.2]{Triebel1992}, so by taking the precise representative this will be understood as $\D v.$

Let $\varphi \in C^{\infty}_c(B_1^{n-1})$ where $B_1^{n-1}$ denotes the unit ball in $\bb R^{n-1}.$
In particular, there is $s < 1$ such that $\varphi$ is supported in $B^{n-1}_s.$
Then for $\e$ sufficiently small we have ${\phi(x^\prime)\,(\e-x_n) 1_{\{x_n\leq \e\}}}$ is a valid test function, and using this gives
\begin{align*}
  0 &= \int_{B_1^+\cap \{x_n\leq \e\}} a_0(x,\lvert \D v\rvert)\D v\cdot e_n \otimes \phi(x^\prime) \,\d x\\
    &\quad+ \int_{B_1^+\cap \{x_n\leq \e\}} \left(a_0(x,\lvert\D v\rvert) \D v\cdot \D\phi + f \cdot \phi \right)(\e - x_n) \d x 
\end{align*}
The second integral, we can bound using \eqref{eq:h2bound_derivative} by
\begin{align*}
  C\e \int_{B_1^+ \cap \{ x_n \leq \e\}} \|\D\phi\|_{\LL^\infty(B)}(1+\lvert \D v\rvert)^{q-1} + \lvert f\rvert \,\d x.
\end{align*}
Since the integral vanishes as $\e \to 0,$ it follows that
\begin{equation}
  \lim_{\e \to 0} \frac1{\e} \int_{B_1^+\cap \{x_n\leq \e\}} a_0(x,\lvert \D v\rvert)\D v\cdot e_n \otimes \phi(x^\prime) \d x = 0.
\end{equation}
Since $\D v \in \LL^{q}(\Gamma_s)$ as noted above, we can pass to the limit to get
\begin{align*}
  0 = \int_{\Gamma_1} a_0(x,\lvert \D v\rvert) \D v\cdot e_n \otimes \phi(x^\prime) \d x^\prime
\end{align*}
In particular, we deduce that 
\begin{equation}
  a_0(x,\lvert \D v \rvert)\partial_n v = 0
\end{equation} 
$\H^{n-1}$-almost everywhere on $\Gamma_1$.
By \eqref{eq:a0_growthbound}, we deduce that $\lvert \D v\rvert^{q-2}\lvert\p_n v\rvert = 0$ $\H^{n-1}$-almost everywhere on $\Gamma_1$, and in particular this implies that $\p_n v=0$ on $\Gamma_1$ in the sense of traces.
\end{proof}

\subsubsection{Proof of boundary differentiability}

We now collect the results from the previous sections to prove our global differentiability results, starting with the Dirichlet case.
In this case the odd extension always preserves the fractional regularity of $\D u$, so the iteration steps follow by a routine modification of the iteration in the interior case.

\begin{proof}[Proof of {Theorem \ref{thm:relaxedImproved}} in the Dirichlet case]
  By a standard straightening of the boundary described in Section \ref{sec:flattening}, it suffices to consider the case where $\Omega = B_1^+$ and $u$ is a relaxed minimiser for $\F_D$.
  Note in particular, that it is possible to localise the relaxed functional due to Lemma \ref{lem:additivityNeumann}, and so we need to prove improved differentiability in $\Omega_r = B_r^+$ for some $r<1$. 
  Taking an odd extension $\tilde u$ of $u$ and making the obvious changes to the domains of integration, we can employ the iteration technique of Theorem \ref{thm:interiorImproved} without change using the bound in Lemma \ref{lem:mainEstimateImprovedDiff}, namely that
  \begin{equation}\label{eq:boundary_diff_estimate}
  \begin{split}
    &\|\Delta_h V_{p,\mu}(\D \tilde u)\|_{\LL^2(\Omega_{r})}^2 + \|\Delta_{-h} V_{p,\mu}(\D \tilde u)\|_{\LL^2(\Omega_{r})}^2\\
    &\quad\lesssim \frac{\lvert h\rvert^{\alpha}}{s-r} \|f\|_{\BB^{\alpha-1,p^\prime}_{\infty}(\Omega_{s})}\norm{\Delta_h \D \tilde u}_{L^p(\Omega_{r_2})} \\
    &\qquad +\frac1{s-r}\int_{B_{{r_1}}} (1+\lvert \D \tilde u\rvert)^{q-1}\lvert \Delta_h^2 \tilde u\rvert \d x\\
    &\qquad+\frac{\lvert h\rvert^{\alpha}}{s-r} \int_{B_{r_2}}(1+\lvert\D \tilde u_{h}\rvert+\lvert\D \tilde u_{-h}\rvert)^{q-1}\left(\lvert\Delta_h \D \tilde u\rvert+\lvert\Delta_{-h}\D \tilde u\rvert\right)\d x,
  \end{split}
\end{equation}
holds for $h = he_i$ with $\lvert h\rvert \leq \frac{s-r}3$.
Using this estimate as a basis, which is analogous to \eqref{eq:tangential_mainestimate}, we can repeat Steps 2 and 4 in the proof of Theorem \ref{thm:interiorImproved}, noting that the differentiability of the extension is preserved by \eqref{eq:dirichlet_extension2}.
That is, for $\delta < 1$ we have
\begin{equation}
  \begin{split}
    V_{p,\mu}(\D u) \in \BB^{\delta,2}_{\infty}(\Omega_{s}) 
    &\implies V_{p,\mu}(\D \tilde u) \in \BB^{\delta,2}_{\infty}(B_{s}) \\
    &\implies V_{p,\mu}(\D \tilde u) \in \BB^{\delta^{\prime},2}_{\infty}(B_{r})\\
    &\implies V_{p,\mu}(\D u) \in \BB^{\delta^{\prime},2}_{\infty}(\Omega_{r}),
  \end{split}
\end{equation} 
where
\begin{equation}\label{eq:deltaprime}
  \delta^{\prime} = \frac{\alpha}2 + \frac{\min\{\delta,\sigma\}}{\max\{2,p\}},
\end{equation} 
using the notation from the proof of Theorem \ref{thm:interiorImproved}. 
Which can now iterate this and pass to the limit as before, noting the estimates take an analogous form as \eqref{eq:step3_iteration_estimate}, \eqref{eq:step4_iteration_estimate} from Steps 3 and 4 of the interior case respectively, except that the norms are now taken over $\Omega_{\rho_{k}}$.
In the case that $f$ is more regular, we modify the estimate \eqref{eq:boundary_diff_estimate} using Lemma \ref{lem:boundary_regularf} and argue exactly as in Step 5 of the proof of Theorem \ref{thm:interiorImproved}.
\end{proof}

We now turn to the Neumann case, where we first allow for non-homogeneous boundary conditions.

\radialNeumann*

\begin{proof}
  As in the Dirichlet case we can reduce to the case of a flat boundary, using Lemma \ref{lem:additivityNeumann} to reduce to the case of a relaxed minimiser $u$ for $\F_M$ on $\Omega_1 = B_1^+$ with $\gamma = \p B_1^+\setminus\Gamma_1$.
  We will extend $u$ by $\tilde u$ to $B_1$ by means of an even extension.
  Then employing Lemma \ref{lem:mainEstimateImprovedDiff} we infer \eqref{eq:boundary_diff_estimate}, except that we have an additional term arising from $g_N$. 
  We can then infer the same improvement in differentiability
\begin{equation}
  \begin{split}
    V_{p,\mu}(\D u) \in \BB^{\delta,2}_{\infty}(\Omega_{s}) 
    &\implies V_{p,\mu}(\D u) \in \BB^{\delta^{\prime},2}_{\infty}(\Omega_{r}),
  \end{split}
\end{equation} 
with $\delta^{\prime}$ given by \eqref{eq:deltaprime} \emph{provided} that $\delta < \frac12$, as the odd reflection no longer preserves the $\BB^{\delta,2}_{\infty}$-regularity beyond this range.
If $\frac{\alpha}2 \min\{2,p^{\prime}\} \leq \frac12$, then $\delta_k < \frac12$ for every $k$, where $\delta_k$ is as defined in Steps 2--4, so we can carry out the iteration and pass to the limit as before.
Otherwise there is some $k \in \mathbb N$ such that $\delta_k \geq \frac12$, where we must terminate our iteration process. 
In this case we can still show that $V_{p,\mu}(\D u) \in \BB^{\delta,2}_{\infty}(\Omega)$ for all $\delta < \frac12$, and for $\delta$ sufficiently close to $\frac12$ we have $\delta^{\prime} > \frac12$,  where $\delta^{\prime}$ is defined as in \eqref{eq:deltaprime}.
Thus running the iteration step once more we find some $\delta_0 := \delta^{\prime} > \frac12$ such that $V_{p,\mu}(\D u) \in \BB^{\delta_0,2}_{\infty}(\Omega)$.

The case when $p \geq 2$ and $f \in \BB^{\beta-1,p^{\prime}}_1(\Omega)$, $g_N \in \BB^{\beta-\frac1{p^{\prime}},p^{\prime}}_1(\Omega)$ is analogous, except that we use Lemma \ref{lem:boundary_regularf} to estimate the $f$ and $g_N$ terms to obtain an improvement
\begin{equation}
  \delta \mapsto \delta^{\prime} = \min\left\{1, \frac{\beta}2+\frac \delta p , \frac{\alpha}2+\frac{\sigma}2\right\},
\end{equation} 
provided $\delta < \frac12$.
Here we have used the fact that the estimate for $g_N$ is identical to that for $f$.
Again if $\min\left\{\frac{p^{\prime}\beta}2,\alpha\right\} \leq \frac12$ then we can iterate this indefinitely and pass to the limit to conclude, and otherwise we obtain differentiability for some $\delta_0 > \frac12$ by terminating the iteration early.
\end{proof}

Using this and the results from the previous sections, we can now complete the proof of Theorem \ref{thm:relaxedImproved} in the Neumann case.

\begin{proof}[Proof of {Theorem \ref{thm:relaxedImproved}} in the Neumann case]
  As in Theorem \ref{thm:radial_neumann}, we can reduce to the case when $\Omega = B_1^+.$ We focus first on the case where $f\in B^{\alpha-1,p^\prime}_\infty(\Omega)$.
  In the Neumann case, the result follows by Theorem \ref{thm:radial_neumann} when $\alpha \leq \frac1{\min\{2,p^\prime\}}.$
  Otherwise there is $\delta > \frac12$ such that $V_{p,\mu}(\D u) \in \BB^{\delta,p}_{\infty}(\Omega_r)$ for all $r<1$, and we use the fact that $g_N = 0$ to iterate further.

  \textbf{Claim}: If $\alpha > \frac1{\min\{2,p^\prime\}},$ then we have $\p_n u = 0$ on $\Gamma.$

  We will establish this for minimisers $u_{\e}$ of the relaxation from Section \ref{sec:relaxedVsRegularised}, which in this localised form minimises
  \begin{equation}
    \F_{\e}(v) = \int_{B_1^+} F(x,\D v) - f \cdot v \,\d x + \e \int_{B_1^+} \lvert \D v\rvert^q \,\d x,
  \end{equation} 
  over $v \in \WW^{1,q}(B_1^+)$ such that $v = u$ on $\gamma = \p B_1^+ \setminus \Gamma_1.$
  Then each $\F_\e$ satisfies the hypotheses of Theorem \ref{thm:radial_neumann} with $p = q,$ so given our assumption on $\alpha$ we know that $V_{p,\mu}(\D u_{\e}) \in \BB^{\delta,2}_{\infty}(B_s^+)$ for some $\delta > \frac12$ and $s < 1.$
  Hence by Lemma \ref{lem:zeroNeumann} it follows that $\p_n u_{\e} = 0$ on $\Gamma_s$ for each $s<1.$
  Now passing to the limit, noting $u_{\e}$ converges to $u$ in $\WW^{1,p}(B_1^+),$ the claim follows.

  Hence, applying \cite[Section 4.5.2]{Triebel1992}, we have an even extension of $u$ preserves the $\BB^{s,2}_{\infty}$-norm of $V_{p,\mu}(\D u)$ for all $s<1$ since $\p_nu$ is odd and vanishes on $\Gamma_1.$
  Now continuing to iterate as in Theorem \ref{thm:interiorImproved}, the result follows. The argument in the case $f\in B^{\beta-1,p^\prime}_\infty(\Omega)$ with $p\geq 2$ and $\beta\in [\alpha,2]$ is analogous.
\end{proof}

\begin{remark}
  Due to Corollary \ref{cor:autonomous} and the corresponding result in the Dirichlet case in \cite{Koch2021a}, if $F$ is autonomous, Theorem \ref{thm:relaxedImproved} applies if $2 \leq p\leq q<\min\left\{p+1,\frac{np}{n-1}\right\}$.
\end{remark}

\subsection{Mixed boundary problems}\label{sec:mixedboundary}

We will conclude this section with a brief discussion of mixed boundary problems of the form
\begin{equation}
  \min_{u} \int_{\Omega} F(\D u) - f \cdot u \,\d x - \int_{\Gamma_N} g_N \cdot u \,\d\H^{n-1},
\end{equation} 
subject to $u = g_D$ on $\Gamma_D,$ where $\Gamma_N, \Gamma_D \subset \p\Omega$ are disjoint such that $\overline\Gamma_D \cup \overline\Gamma_N = \p\Omega.$
For convex functionals with $p$-growth, techniques using first-order difference quotients have been developed in \cite{Savare1998}, and we will employ similar arguments.

The relaxed functional to use in this case is
\begin{align*}
\overline \F_{M}(u)=\inf \left\{\,\liminf_{j\to\infty} \F(v_j): (v_j)\subset Y, v_j\rightharpoonup v \text{ weakly in } X\,\right\}
\end{align*}
where $v\in X= \{u\in \WW^{1,p}(\Omega)\colon \tp{Tr}\, u = g_D \text{ on } \Gamma_D\}$ and $Y=\WW^{1,q}(\Omega)\cap X$. 
Using this definition it is straightforward to adapt the results of Section \ref{sec:relaxedVsRegularised} to this setting.

We wish to employ similar arguments as in the pure Dirichlet and Neumann cases, however we will need to ensure our difference quotients preserve the respective boundary conditions.
This will involve considering domains with piecewise regular boundary, such that the respective boundary conditions are prescribed on the regular portions.
More precisely we will consider \emph{$C^{k,\alpha}$-domains with corners}, in that every $x_0 \in \overline\Omega$ admits a $C^{k,\alpha}$-diffeomorphism to a neighbourhood of the closure of the model space
\begin{equation}
  \R^n_{++} = \{ x \in \bb R^n : x_i > 0 \text{ for all } 1 \leq i \leq n \}.
\end{equation} 
We write $B^{++} = \R^n_{++}\cap B_1(0)$.

Moreover we will impose further restrictions on the boundary components, and assume one of the following two cases are satisfied at each $x_0 \in \overline\Gamma_D \cap \overline \Gamma_N.$
We say that we are in the \emph{flat case}, if after localising, we may use a $C^{1,\alpha}$-flattening map such that $\Gamma_D$ and $\Gamma_N$ are mapped to $\{x_{n-1}>0,\, x_n = 0\}$ and $\{x_{n-1}<0,\, x_n = 0\}$, respectively. We say we are in the \emph{corner case}, if after localising and flattening, we may work in $B_{k,++} := B_1(0)\cap \{x_i>0,\, \forall k\leq i\leq n\}$ and moreover, $\Gamma_D$ is mapped to $\bigcup_{\ell\leq i\leq n}\left(\p B_{++}\cap \{x_i = 0\}\right)$ for some $k, \ell$ with $1 \leq k \leq n$ and $k \leq \ell \leq n+1$ (understanding that $\Gamma_D = \emptyset$ when $\ell = n+1$).
The two cases are illustrated in Figure \ref{fig:mixed_extension} below.
 Note that in the corner case we may take $k = \ell$ or $\ell = n+1$, which corresponds to prescribing purely Dirichlet and Neumann boundary conditions in piecewise regular domains respectively.


Under these assumptions, we obtain the following analogue to Theorem \ref{thm:regularityRelaxed}.

\begin{theorem}\label{thm:relaxed_mixed}
  Let $\Omega$ be a $C^{1,1}$-domain with corners, and let $\Gamma_D, \Gamma_N \subset \p\Omega$ be such that locally we are either in the flat or corner case. Let $F$ satisfy \eqref{def:bounds1}--\eqref{def:bounds3}, with ${1 < p \leq q < \frac{(n+\alpha)p}n}
  .$ Suppose further that  $f \in \BB^{\alpha-1,q^\prime}_{\infty}(\Omega),$ $g_D \in \WW^{1+\alpha-\frac1q,q}(\Gamma_D)$ and ${g_N \in \WW^{\alpha-\frac1{q^\prime},q^\prime}(\Gamma_N)}.$ Then if $u$ is a relaxed minimiser of $\F(\cdot)$ in $\WW^{1,p}(\Omega),$ we have $u \in \BB^{1+\frac{\alpha}{\max\{2,p\}},p}_{\infty}(\Omega)$ together with the estimate
\begin{equation}
  \begin{split}
    &\|u\|_{\BB^{1+\frac \alpha {\max(2,p)},p}_\infty(\Omega)}\\
    &\lesssim \left(1+\overline\F_M(u)+\|f\|_{\BB^{\alpha-1,q^\prime}_{\infty}(\Omega)}^{q^\prime}+ \|g_D\|_{\WW^{1+\alpha-\frac1{q^\prime},q^\prime}(\Gamma_D)}^{q^\prime}+\|g_N\|_{\WW^{\alpha-\frac1{q^\prime},q^\prime}(\Gamma_N)}^{q^\prime} \right)^\gamma.
  \end{split}
\end{equation}
In particular, $u \in \WW^{1,q}(\Omega).$
\end{theorem}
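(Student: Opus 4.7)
The plan is to combine the techniques used for Theorem \ref{thm:regularityRelaxed} (Neumann case) with those for the pure Dirichlet version in \cite{Koch2020}. I would work with the minimiser $u_\e$ of the regularised functional $\F_\e$ over the mixed admissibility class, derive an $\e$-uniform estimate of $\|u_\e\|_{\BB^{1+\alpha/\max\{2,p\},p}_\infty(\Omega)}$ in terms of the claimed right-hand side, and then pass to the limit $\e \to 0$ via the mixed-boundary analogue of Lemma \ref{lem:convApproximate} (which follows by minor modifications of its proof together with an obvious adaptation of Lemma \ref{lem:additivityNeumann} to the mixed setting).

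The core step is a difference-quotient estimate of the form \eqref{clm:diffEstimate} adapted to the mixed boundary structure. First I would localise using a finite open cover of $\overline\Omega$ subordinate to a partition of unity. Away from the interface $\overline{\Gamma_D} \cap \overline{\Gamma_N}$, after flattening the boundary using Section \ref{sec:flattening}, I can apply directly the pure Neumann argument of Theorem \ref{thm:regularityRelaxed} or the pure Dirichlet argument from \cite{Koch2020} as appropriate. In the Dirichlet reduction I subtract a $\BB^{1+\alpha,q}_\infty$-extension $\tilde g_D$ of $g_D$ (using the hypothesis $g_D \in \WW^{1+\alpha-\frac{1}{q},q}(\Gamma_D)$ and the trace theorem), and work with $u_\e - \tilde g_D$, which vanishes on $\Gamma_D$ and therefore admits shifted test functions in the standard Dirichlet cone.

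The genuinely new content lies near points of $\overline{\Gamma_D} \cap \overline{\Gamma_N}$, where I would treat the flat and corner cases separately. In the flat case, $\Gamma_D$ maps to $\{x_{n-1} > 0, x_n = 0\}$ and $\Gamma_N$ to $\{x_{n-1} < 0, x_n = 0\}$ after flattening. I would then take difference quotients in two families: tangential shifts $h$ in the span of $\{e_1,\ldots,e_{n-2}\}$, which preserve the interface and both boundary components, and interior-pointing shifts $h \in C_{\rho_0}(\theta_0, e_n)$. In the first family the test function $T_h u_\e = \phi (u_\e)_h + (1-\phi)u_\e$ is directly admissible after the Dirichlet subtraction, while in the second I would extend $u_\e$ outside $\Omega$ using the extension of $g_D$ and bound the resulting $\Gamma_N$-boundary contribution via the interpolation argument of \eqref{eq:A4Interpolated}. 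Combining the two families yields full $n$-dimensional fractional differentiability at the level of $V_{p,\mu}(\D u_\e)$. The corner case in $B_{k,++}$ is analogous: coordinate directions along the Dirichlet faces play the role of the inward cone, while coordinate directions tangential to every face play the role of the free tangential shifts.

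The main obstacle will be the simultaneous construction of admissible test functions near the interface that respect the Dirichlet condition on $\Gamma_D$ while leaving enough flexibility to control the Neumann boundary term by a fractional norm of $g_N$; this requires compatibility between the partition of unity, the cutoff $\phi$, the shift direction $h$, and the extensions of $g_D$ and $g_N$, which is somewhat delicate at the corner where the two boundary pieces meet. Once this is arranged, the energy contributions arising from the integrand (via Lemma \ref{lem:hbound1} and \eqref{def:bounds31}) and the forcing $f$ are handled exactly as in Theorem \ref{thm:regularityRelaxed}, Young's inequality absorbs the top-order terms, and Lemma \ref{lem:Vfunc_diff} converts the resulting differentiability of $V_{p,\mu}(\D u)$ into the claimed Besov regularity of $u$, with the final $\WW^{1,q}$-regularity following from Sobolev embedding.
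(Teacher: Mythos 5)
Your overall architecture (regularise, prove an $\e$-uniform difference-quotient estimate, localise and flatten, treat the interface $\overline{\Gamma_D}\cap\overline{\Gamma_N}$ separately, pass to the limit via a mixed analogue of Lemma \ref{lem:convApproximate}, and convert via Lemma \ref{lem:Vfunc_diff}) matches the paper's. The gap is in your choice of shift directions at the interface in the flat case. Your second family, $h\in C_{\rho_0}(\theta_0,e_n)$ pointing \emph{into} the domain, does not produce admissible competitors: for $x\in\Gamma_D$ the shifted point $x+h$ lies in the interior of $\Omega$, so $(u_\e)_h(x)$ takes interior values and $T_hu_\e=\phi\,(u_\e)_h+(1-\phi)u_\e$ fails to attain the Dirichlet datum on $\Gamma_D\cap\operatorname{supp}\phi$. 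Extending $u_\e$ outside $\Omega$ by an extension of $g_D$ does not repair this, because the obstruction sits on $\Gamma_D$ itself, not outside $\Omega$; and the interpolation argument of \eqref{eq:A4Interpolated} only controls the Neumann boundary term, not the violated constraint. Your first family, $h\in\operatorname{span}\{e_1,\dots,e_{n-2}\}$, is admissible but only yields differentiability in $n-2$ directions, so the $e_{n-1}$ and $e_n$ directions --- precisely the ones affected by the interface --- are not covered by your argument as written.

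What is missing is the construction the paper uses: after subtracting the extension of $g_D$, extend $v_\e$ to the \emph{full} ball by a zero extension across $\Gamma_D$ (into $\{x_{n-1}>0,\,x_n<0\}$) and a general $\WW^{1,q}$ extension across $\Gamma_N$, and then restrict the shifts to the open quadrant of directions with both $h_{n-1}$ and $h_n$ of a fixed sign, chosen so that (i) shifted points of $\Gamma_D$ land in the zero-extension region (so $\tilde v_h$ vanishes on $\Gamma_D$ and $T_h\tilde v$ is a genuine competitor) and (ii) the uncontrolled extension region below $\Gamma_N$ does not interfere. Since $h_1,\dots,h_{n-2}$ are unconstrained, this quadrant spans $\R^n$ and is star-shaped, so the difference-quotient characterisation \eqref{eq:besovcharacterisation} still gives full Besov regularity. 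The corner case needs the analogous device (first-order reflection across each Neumann face followed by zero extension, with shifts of fixed sign in the Dirichlet-face coordinates); your sketch of that case inherits the same directional problem. Without this extension-plus-quadrant construction, the estimate near the interface is not established.
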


Since our estimates are local in nature, it suffices to consider the case $x_0 \in \overline\Gamma_D \cap \overline\Gamma_N$ where the two boundary conditions meet.
The only aspect in which the proof differs from \cite{Koch2020} and Theorem \ref{thm:regularityRelaxed} is in the construction of the difference quotients; as such we will only outline the necessary changes to treat the mixed case.
Informally we need to ensure the difference quotients do not move the unconstrained regions (the Neumann part) into the constrained regions (the Dirichlet part).
This is achieved by flattening the boundary to reduce to the model case, however it is unclear whether this is really necessary; in particular we must assume that the boundary is $C^{1,1}$-regular to preserve both the H\"older regularity of the coefficients and Besov-regularity of functions under the flattening map, but it will be interesting to understand whether this can be relaxed.

\begin{proof}[Proof of Theorem {\ref{thm:relaxed_mixed}}]
  We will derive estimates to the minimia $u_{\e} \in \WW^{1,q}(\Omega)$ associated to $\F_{\e}(\cdot)$ for each $\e>0,$ and pass to the limit at the end. Upon flattening the boundary, we will consider the flat and corner cases separately. In both cases, we will take a $B_{\infty}^{1+\alpha,p}$ extension $\tilde g$ of $g$ to $B_1(0),$ and consider $v_{\e} = u_{\e} - \tilde g.$ We will need to extend this to the full ball $B_1(0),$ and the respective procedures are illustrated in Figure \ref{fig:mixed_extension}.

\begin{figure}[ht]
    \centering
    \begin{subfigure}{0.45\textwidth}
      \begin{tikzpicture}[scale=.3]
	\begin{pgfonlayer}{nodelayer}
		\node [style=none] (0) at (0, 10) {};
		\node [style=none] (1) at (0, -10) {};
		\node [style=none] (2) at (-10, 0) {};
		\node [style=none] (3) at (10, 0) {};
		\node [style=none] (4) at (0, 0) {};
		\node [style=none] (5) at (-2, 0) {};
		\node [style=none] (8) at (0, 2) {};
		\node [style=none] (9) at (-3, 7) {};
		\node [style=none] (10) at (-7, 3) {};
		\node [style=none] (11) at (5, 5) {\footnotesize $v_{\e}$};
		\node [style=none] (15) at (2, -1.25) {};
		\node [style=none] (17) at (-5, 5) {\footnotesize $v_{\e}$};
		\node [style=none] (18) at (-5, -5) {\footnotesize $\WW^{1,p}$-extension};
		\node [style=none] (19) at (5, -5) {\footnotesize $0$};
		\node [style=none] (20) at (1.7, 10) {\footnotesize $x_{n-1}$};
		\node [style=none] (21) at (-10, 0.5) {};
		\node [style=none] (22) at (10, 0.7) {\footnotesize $x_n$};
		\node [style=none] (23) at (6, 0.7) {\footnotesize $\Gamma_D$};
		\node [style=none] (24) at (-6, 0.7) {\footnotesize $\Gamma_N$};
		\node [style=none] (25) at (0.5, -5) {};
		\node [style=none] (27) at (0.6, -0.6) {\footnotesize $0$};
	\end{pgfonlayer}
	\begin{pgfonlayer}{edgelayer}
		\draw [->,thick](4.center) to (0.center);
		\draw [->,thick](4.center) to (1.center);
		\draw [->,thick](4.center) to (3.center);
		\draw [->,thick](4.center) to (2.center);
		\draw [bend left=45,dashed] (5.center) to (8.center);
    \draw [->](4.center) to (9.center);
    \draw [->](4.center) to (10.center);
	\end{pgfonlayer}
\end{tikzpicture}
    \subcaption{Flat case}
    \end{subfigure}
    \begin{subfigure}{0.45\textwidth}
\begin{tikzpicture}[scale=.3]
	\begin{pgfonlayer}{nodelayer}
		\node [style=none] (0) at (0, 10) {};
		\node [style=none] (1) at (0, -10) {};
		\node [style=none] (2) at (-10, 0) {};
		\node [style=none] (3) at (10, 0) {};
		\node [style=none] (4) at (0, 0) {};
		\node [style=none] (5) at (0, 2) {};
		\node [style=none] (8) at (2, 0) {};
		\node [style=none] (9) at (3, 7) {};
		\node [style=none] (10) at (7, 3) {};
		\node [style=none] (11) at (5, 5) {\footnotesize $v_{\e}$};
		\node [style=none] (15) at (2, -1.25) {};
		\node [style=none] (17) at (-5, 5) {\footnotesize First order};
		\node [style=none] (28) at (-5, 4) {\footnotesize reflection};
		\node [style=none] (18) at (-5, -5) {\footnotesize $0$};
		\node [style=none] (19) at (5, -5) {\footnotesize $0$};
		\node [style=none] (20) at (1.7, 10) {\footnotesize $x_{n-1}$};
		\node [style=none] (21) at (-10, 0.5) {};
		\node [style=none] (22) at (10, 0.7) {\footnotesize $x_n$};
		\node [style=none] (23) at (6, .7) {\footnotesize $\Gamma_D$};
		\node [style=none] (24) at (-6, 0.5) {};
		\node [style=none] (25) at (0.5, -5) {};
		\node [style=none] (26) at (1, 5) {\footnotesize $\Gamma_N$};
		\node [style=none] (27) at (0.6, -0.6) {\footnotesize $0$};
	\end{pgfonlayer}
	\begin{pgfonlayer}{edgelayer}
    \draw [->,thick](4.center) to (0.center);
    \draw [->,thick](4.center) to (1.center);
    \draw [->,thick](4.center) to (3.center);
		\draw [->,thick](4.center) to (2.center);
		\draw [bend left=45,dashed] (5.center) to (8.center);
    \draw [->](4.center) to (9.center);
    \draw [->](4.center) to (10.center);
	\end{pgfonlayer}
\end{tikzpicture}
    \subcaption{Corner case}
    \end{subfigure}
    \caption{Extending $v_{\e}$}
   \label{fig:mixed_extension}
\end{figure}

In the flat case, we will then take a zero extension of $v_{\e}$ to $\{x_{n-1} > 0, x_n < 0\},$ before taking a $\WW^{1,q}$ extension $\tilde v_{\e}$ to the full ball $B_1(0).$
By construction $\tilde v$ agrees with $v$ on $B^+,$ and vanishes on $\Gamma_D$ (and also on $\{x_{n-1} = 0, x_n < 0\}$).
From here we observe that we can take difference quotients in directions $h \in \bb R^n$ such that $h_{n-1} < 0$ and $h_n > 0$ (see Figure \ref{fig:mixed_extension}(a)).
Since $v_h$ vanishes on $\Gamma_D$ we have $T_hv$ is a valid competitor, and we can argue in the same way as before.

In the corner case, we will take a first-order reflection along each $x_i$ for $k \leq i < \ell$ (as in Theorem 4.5.2 in \cite{Triebel1992}, which is a $W^{1,q}$-extension).
This gives a $\WW^{1,q}$ function on $B_{\ell,++}$ vanishing on $\p B_{k,++} \cap B_1(0),$, so we can extend it to the full ball by means of a zero extension to obtain $\tilde v.$
Now we can take difference quotients in directions $h \in \bb R^n$ such that $h_i > 0$ for each $\ell \leq i \leq n$ (see Figure \ref{fig:mixed_extension}(b) for the case $k = n-1$, $\ell = n$), and argue as before to conclude.
\end{proof}

Assuming $F\equiv F_0(x,\lvert z\rvert)$ is a radial integrand and homogeneous boundary conditions, we may also improve on Theorem \ref{thm:relaxedImproved} as in Section \ref{sec:improved}. In addition to assuming \eqref{def:bounds31}, we also found it was necessary to work on piecewise regular domains.
Here the idea is that the reflection procedure in Section \ref{sec:boundaryImproved} can be carried out separately in each $x_i$ direction, taking odd and even reflections for the case of Dirichlet and Neumann boundaries respectively.
We point out that this result also shows our regularity results hold for piecewise $C^{1,1}$ domains, even in the purely Dirichlet and Neumann cases.

\begin{theorem}\label{thm:relaxed_mixedImproved}
  Suppose $1<p\leq q < \infty,$ $\alpha \in (0,1]$.  Let $\Omega$ be a bounded $C^{1,1}$-domain with corners such that locally we are always in the corner case. Suppose $F\equiv F_0(x,\lvert z\rvert)$ satisfies \eqref{def:bounds1}-\eqref{def:bounds31}.
  Let $q < \frac{n+\alpha}n p$ and $f \in \BB^{\alpha-1,p^\prime}_{\infty}(\Omega).$ Assume $g_N = g_D = 0$.
  Then, if $u$ is a relaxed minimiser for $\F$,  for all $\delta \leq \frac{\alpha}2 \min\{2,p^\prime\}$ we have $V_{p,\mu}(\D u) \in \BB^{\delta,2}_{\infty,\loc}(\Omega).$ Further, if $p\geq 2$ and $f\in \BB^{\beta-1,p^\prime}_\infty(\Omega)$ for some $\beta\in[\alpha,2]$ then $V_{p,\mu}(\D u)\in \BB^{\delta,2}_\infty(\Omega)$ for all $\delta\leq \min\left\{\frac{p^{\prime}\beta}{2},\alpha\right\}$.

  Moreover, if a-priori $u \in W^{1,q}(\Omega),$ it suffices to assume $q < \frac{np}{n-\alpha}.$
\end{theorem}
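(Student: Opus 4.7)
The plan is to combine the ideas from the proofs of Theorem \ref{thm:relaxedImproved} and Theorem \ref{thm:radial_neumann} with a multi-direction reflection scheme. First, by adapting Lemma \ref{lem:additivityNeumann} to the mixed-boundary setting (which proceeds along the same lines, using the smoothing operator from Lemma \ref{lem:regularised_extension} separately near each face and invoking \cite[Lemmas 7.7 and 7.8]{Schmidt2009} on the relevant subdomains), we may localise $\overline{\F}_M$ near a corner point $x_0 \in \overline{\Gamma}_D \cap \overline{\Gamma}_N$. Applying the flattening map described in Section \ref{sec:flattening}, we reduce to the model corner domain $B_{k,++} = B_1(0) \cap \{x_i > 0 \colon k \leq i \leq n\}$, with Dirichlet data on $\bigcup_{\ell \leq i \leq n}(\partial B_{k,++} \cap \{x_i = 0\})$ and zero Neumann condition on the remaining faces $\{x_i = 0\}$ for $k \leq i < \ell$. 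The $C^{1,1}$ regularity of $\partial \Omega$ and its corners ensures the flattened integrand still satisfies \eqref{def:bounds1}--\eqref{def:bounds31} and retains its radial structure $\widetilde F = \widetilde F_0(x,\lvert z\rvert)$.

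Next, I will extend $u$ to all of $B_1(0)$ by performing successive reflections: for each $k \leq i < \ell$ an even reflection across $\{x_i=0\}$, and for each $\ell \leq i \leq n$ an odd reflection. Since $g_D = g_N = 0$, the resulting $\tilde u$ lies in $W^{1,p}(B_1(0))$ (by Lemma \ref{lem:extension} applied inductively). Because $\widetilde F_0$ depends only on $\lvert z\rvert$ and we extend it evenly in each coordinate, $\tilde u$ satisfies the Euler–Lagrange system on $B_1(0)$ for the reflected integrand $\widetilde F$, with a forcing term extended analogously (preserving Besov regularity since the extensions act on a fixed set of coordinates). The Dirichlet reflections preserve $B^{\delta,2}_\infty$-regularity of $V_{p,\mu}(\D u)$ for all $\delta < 1$ by \eqref{eq:dirichlet_extension2}, while the even reflections preserve it only up to $\delta < 1/2$ unless the corresponding tangential derivatives are odd and vanish on that face.

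The third step is to establish a Caccioppoli-type estimate analogous to Lemma \ref{lem:mainEstimateImprovedDiff} for $\tilde u$, valid for difference quotients in every coordinate direction. For $h = h e_j$ with $j < k$, the estimate is the tangential one derived in Step 1 of that lemma. For $h = h e_i$ with $k \leq i \leq n$, we test the Euler--Lagrange equation against $\phi^2 \Delta_h^2 \tilde u$, split the resulting identity symmetrically using $\Delta_h \Delta_{-h} = -\Delta_h^2$, and apply Lemma \ref{lem:cancellation} coordinate-wise: the radial structure of $\widetilde F_0$ and the matching parities of $\p_j \tilde u$ across $\{x_i=0\}$ force the cross-terms $A_3^\prime + A_4^\prime = 0$, exactly as in the normal-direction analysis for the pure Dirichlet/Neumann cases. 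The $f$-term is handled by the interpolation argument from \eqref{eq:alpha_f_estimate} and Lemma \ref{lem:boundary_regularf} under the additional regularity of $f$.

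Finally I will run the iteration from Steps 2--5 of Theorem \ref{thm:interiorImproved}, precisely as in the proofs of Theorems \ref{thm:relaxedImproved} and \ref{thm:radial_neumann}. The main obstacle is breaking the $\delta = 1/2$ barrier imposed by the even reflections across the Neumann faces. This is resolved exactly as in the Neumann case: once the iteration yields $V_{p,\mu}(\D u) \in B^{\delta_0,2}_\infty$ for some $\delta_0 > 1/2$ (which holds as long as $\tfrac{\alpha}{2}\min\{2,p'\} > 1/2$), Lemma \ref{lem:zeroNeumann} applied to the regularised minimisers $u_\e$ of $\F_\e$ shows $\partial_i u = 0$ on each Neumann face for $k \leq i < \ell$; then by \cite[Section 4.5.2]{Triebel1992} the even reflections preserve $B^{\delta,2}_\infty$-regularity for all $\delta < 1$, and the iteration continues to the claimed endpoint exactly as in the proof of Theorem \ref{thm:relaxedImproved}.
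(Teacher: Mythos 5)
Your proposal is correct and follows essentially the same route as the paper: reduce to the model corner ball, reflect evenly across Neumann faces and oddly across Dirichlet faces, run the directional difference-quotient argument of Lemma \ref{lem:mainEstimateImprovedDiff} in each coordinate, and break the $\delta=\frac12$ barrier on the Neumann faces via Lemma \ref{lem:zeroNeumann} before iterating as in Theorem \ref{thm:relaxedImproved}. Your write-up is in fact more detailed than the paper's own sketch, and the only slip is calling the derivatives whose parity obstructs the even reflection ``tangential'' when you mean the normal derivative $\partial_i u$ across the face $\{x_i=0\}$.
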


\begin{proof}
  By flattening we can reduce once again to the corner ball $B_{k,++},$ where $\Gamma_D$ is mapped to $\bigcup_{\ell i \leq n}\{x_i = 0\}$ and $\Gamma_N$ is mapped to $\bigcup_{k \leq i < \ell} \{x_i = 0\}$.
  Here $\Gamma_D$ and $\Gamma_D$ are understood to be empty if $\ell = n+1$ and $k = \ell$ respectively.
We know that $u \in \BB^{1+\frac{\alpha}{p},p}_{\infty}(B_{k,++})$ due to Theorem \ref{thm:relaxed_mixed}. 
Now we extend $u$ to the full ball by taking an even reflection along $x_i$ for $k \leq i < \ell,$ and an odd reflection along $x_i$ for $\ell \leq i \leq n.$
From Section \ref{sec:oddeven_extension} we see the Besov regularity is preserved by these extensions, adapting Lemmas \ref{eq:zeroextension} and \ref{lem:zeroNeumann} for the case of an even extension. 
Therefore we may iteratively apply the difference quotient argument from Lemma \ref{lem:mainEstimateImprovedDiff} in each direction, and argue as in the proof of Theorem \ref{thm:relaxedImproved} to conclude.
\end{proof}

\begin{remark}
  In both Theorem \ref{thm:relaxed_mixed} and Theorem \ref{thm:relaxed_mixedImproved}, if $F$ is independent of $x$, due to Corollary \ref{cor:autonomous}, it suffices to assume $2\leq p\leq q<\min\left\{\frac{np}{n-1},p+1\right\}$.
\end{remark}

\begin{remark}\label{rem:mixed_counterexamples}
 In order to obtain Theorem \ref{thm:relaxed_mixedImproved}, we genuinely must assume that $\Gamma_D$ and $\Gamma_N$ meet at a corner.
  This was already observed in \cite{Savare1997} for linear equations, where the function $u(x,y) = \Im(x+iy)^{\frac12}$ is seen to be harmonic on the upper-half plane $\{ \Im z > 0 \},$ and satisfies a mixed Dirichlet-Neumann condition on the boundary ($u$ vanishes on $\{x=0, y>0\},$ and $\p_yu$ vanishes on $\{x=0, y<0\}$).
  Note that $u$ does not lie in $\WW^{2,r}_{\loc}(\bb R^2_+)$ for any $r \geq \frac 43,$ whereas the above results, if they apply, would imply $u$ lies in $\WW^{2,r}_{\loc}(\bb R^2_+)$ for all $1 \leq r < 2.$
  Similarly, we have already seen that $\Gamma_D$ must be sufficiently regular to deduce higher differentiability, so it must map to one of the corner faces under the flattening map.
\end{remark}

\section{Characterisation of regular boundary points}\label{sec:excessEstimate}
In this section, we will establish the following $\eps$-regularity result. 

\epsRegularity*

Throughout this section we will assume the above assumptions hold.
Additionally for $z_0 \in \R^{m\times n},$ we introduce the shifted integrand
\begin{equation}\label{eq:shifted_integrand}
  F_{z_0}(x,z) = F(x,z+z_0) - F(x,z_0) - \partial_zF(x,z_0)z.
\end{equation} 
This satisfies the estimates
\begin{align}
\label{eq:shifted_Fgrowth}
    \lvert F_{z_0}(x,z)\vert &\leq \Lambda_M (1+\lvert z\rvert^2)^{\frac{q-2}2}\lvert z\rvert^2 \\ 
    \lvert \partial_zF_{z_0}(x,z)\vert &\leq \Lambda_M (1 +\lvert z\rvert^2)^{\frac{q-1}2}\lvert z\rvert  \label{eq:shifted_DFgrowth}\\
    \lvert F_{z_0}(x,z)\vert &\geq \lambda_M (1 +\lvert z\rvert^2)^{\frac{p-2}2}\lvert z\rvert^2  \label{eq:shifted_elliptic}\\
    \lvert \partial_z F_{z_0}(x_1,z) - \partial_z F_{z_0}(x_2,z)\rvert &\leq \Lambda_M \lvert x_1 - x_2\rvert^{\alpha} (1+\lvert z\rvert^2)^{\frac{q-1}{2}}. \label{eq:shifted_holder}
\end{align} 
for all $x_1,x_2 \in \overline{\Omega}$ and $z \in \R^{m\times n},$ which follows by distinguishing between when $\lvert z\rvert \leq 1$ and $\lvert z\rvert > 1.$ Here the constants depend on $M>0$ where $\lvert z_0\rvert\leq M.$

While the result as a global $\e$-regularity result is new, the proof is essentially routine.
Following \cite{DeMaria2010}, we will employ a blow-up argument to establish a suitable excess decay estimate. In the case of Dirichlet boundary, one can also argue directly by means of an $\mathcal{A}$-harmonic approximation similarly as in \cite{Kronz2005} and we intend to return to direct arguments, also in the case of Neumann boundary, in future work.

\begin{remark}
  While our $\eps$-regularity results are local in nature, for our arguments to hold it is necessary to impose only one type of boundary condition on $\p\Omega;$ in particular this does not apply to the problems considered in Section \ref{sec:mixedboundary}.
  This is because the proof will make use of the translation invariance $u \mapsto u+c$ of the Neumann problem, which is not available in the mixed setting.
  It is unclear whether this is merely a technical difficulty, but it highlights that we crucially use a global property in the Neumann case.
\end{remark}

We will locally flatten the boundary, following the procedure in Section \ref{sec:flattening}.
In what follows, we will denote $B^+ = B_1(0) \cap \bb R^n_+$ and  ${\Gamma = B_1(0) \cap \{x_n = 0\}}$. 
Finally, for $x_0 \in B^+$ and $R >0$ we will denote ${{B^+_R(x_0) = B_R(x_0)}\cap \bb R^n_+}.$

\subsection{Caccioppoli-type inequalities}\label{sec:caccioppoli}

We now prove the following boundary Caccioppoli estimate in the Neumann case.
\begin{lemma}\label{eq:caccioppoli_neumann}
  In the setting of Theorem \ref{thm:eps_regularity}, suppose that $u\in \WW^{1,p}(\Omega)$ is a relaxed minimiser to \eqref{eq:neumann_problem}. Then there is $R_0>0$ such that for all $M>0,$ if $a \colon \bb R^n \to \bb R^m$ is a affine map such that $\lvert\D a\rvert\leq M$ and $x_0\in \overline\Omega,$ $0<R<R_0$,
\begin{align*}
  \dashint_{\Omega_{R/2}(x_0)} \lvert V_p(\D (u-a)) \rvert^2 \d x  
  &\lesssim \dashint_{\Omega_R(x_0)} \left\lvert V_p\left(\frac{u-a}{R}\right)\right\rvert^2 \d x + \lvert V_{p^\prime}(R^{\alpha}L)\rvert^2\\
  &+ \left(\dashint_{\Omega_R(x_0)} \lvert V_p(\D (u-a))\rvert^2+\left\lvert V_p\left(\frac{u-a}{R}\right)\right\rvert^2 \d x \right)^\frac q p,
\end{align*}
where
\begin{equation}
  L = 1 + \seminorm{g_N}_{C^{0,\alpha}(\p\Omega \cap B_R(x_0))} + \norm{f}_{\LL^{\frac n{1-\alpha}}(\Omega_R(x_0))},
\end{equation} 
and the implicit constant depends on $M, F, \Omega$ and other structural constants indicated in Section \ref{sec:notation}.
\end{lemma}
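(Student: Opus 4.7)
The plan is to adapt the standard comparison argument to the $(p,q)$-growth setting in the Neumann case, broadly following the strategy of \cite{DeMaria2010} for the interior problem but dealing with the boundary conditions and the relaxation.

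First I would reduce the problem. Using Lemma \ref{lem:additivityNeumann}, the relaxed functional localises to $\Omega_s(x_0)$, and by the procedure in Section \ref{sec:flattening} I may flatten the boundary (at the cost of changing $F$ while preserving its structural assumptions), so that in the boundary case one works with $\Omega = B^+$ and $\Gamma = B_1 \cap \{x_n=0\}$. Since $u$ is only a relaxed minimiser, the inequality will be derived for the regularised minimisers $u_\eps \in \WW^{1,q}$ furnished by Section \ref{sec:relaxedVsRegularised}, with uniform constants, and then passed to the limit using Lemma \ref{lem:convApproximate}.

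Next, for concentric radii $R/2 \le r < s \le R$, I would apply Lemma \ref{lem:regularised_extension} to $u_\eps - a$ in order to produce a competitor $w_\eps \in \WW^{1,q}$ that coincides with $u_\eps$ outside $B_{s'}^+(x_0)$ and in $B_{r'}^+(x_0)$, satisfying the key bounds \eqref{eq:testFunctionEstimates1}--\eqref{eq:testFunctionEstimates2}. Combining this with a radial cut-off $\eta$, I form a test function $\tilde u_\eps$ that interpolates between $a$ on $B_r^+$ and $u_\eps$ outside $B_s^+$, keeping $\tilde u_\eps - u_\eps \in \WW^{1,q}_0$ in the region of modification so that the Neumann trace is preserved. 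The energy inequality $\F_\eps(u_\eps) \le \F_\eps(\tilde u_\eps)$ is then recast in terms of the shifted integrand $F_{z_0}$ from \eqref{eq:shifted_integrand} with $z_0 = \D a$; the strong convexity bound \eqref{eq:shifted_elliptic} controls the left-hand side by $\int_{B_r^+} |V_p(\D(u_\eps-a))|^2 \,\d x$, while the growth bound \eqref{eq:shifted_Fgrowth}, coupled with \eqref{eq:testFunctionEstimates1}--\eqref{eq:testFunctionEstimates2}, furnishes an upper bound on the annulus of the form
\begin{equation*}
\int_{A^+} \left[\left|V_p\!\left(\tfrac{u-a}{s-r}\right)\right|^2 + |V_p(\D(u-a))|^2\right] \d x + C\Bigl(\text{same integral}\Bigr)^{q/p},
\end{equation*}
the $q/p$ power term being precisely what produces the last term in the statement.

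The remaining contributions are the data terms $\int_{B_s^+} f\cdot(\tilde u_\eps - u_\eps)\,\d x$ and $\int_{\Gamma_s} g_N\cdot(\tilde u_\eps - u_\eps)\,\d\H^{n-1}$, together with the linear ``shift'' error $\int_{B_s^+}\partial_z F(x,\D a)\cdot \D(\tilde u_\eps - u_\eps)\,\d x$. For the data, subtracting the constants $(f)_{\Omega_R(x_0)}$ and $g_N(x_0)$, using the Sobolev--Poincar\'e inequality of Lemma \ref{lem:precise_poincaresobolev} for $f \in \LL^{n/(1-\alpha)}$, and the trace/Poincar\'e inequality together with the $C^{0,\alpha}$-seminorm of $g_N$, one picks up a factor of $R^\alpha L$ that is absorbed into $|V_{p'}(R^\alpha L)|^2$ via Young's inequality \eqref{eq:vfunction_young}. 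For the shift term one writes $\partial_z F(x,\D a) = \partial_z F(x_0,\D a) + [\partial_z F(x,\D a) - \partial_z F(x_0,\D a)]$; the constant part integrates by parts to produce a boundary contribution on $\Gamma$, which combines with the $g_N$ term, while the $x$-dependent error is bounded by $R^\alpha(1+|\D a|)^{q-1}$ using \eqref{def:bounds31} and again absorbed into $|V_{p'}(R^\alpha L)|^2$.

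Finally, I would use the hole-filling/absorption lemma of Giusti (\cite[Lemma~6.1]{Giusti2003}), which is available because the estimate at this stage has the form
\begin{equation*}
\int_{B_r^+}|V_p(\D(u-a))|^2\,\d x \le C\int_{A^+}|V_p(\D(u-a))|^2\,\d x + (\text{good terms}),
\end{equation*}
with good terms that do not grow as $s-r \to 0$ after normalisation by $R^n$. Adding the same integral on the interior and iterating between suitable pairs of radii in $(R/2,R)$ absorbs the $\D(u-a)$ contribution on the annulus, producing the stated inequality. The main obstacle I anticipate is the careful bookkeeping when integrating by parts the shift term in the Neumann setting: the boundary contribution on $\Gamma$ must be genuinely absorbed into the $L$ term rather than left as an uncontrolled piece, which requires exploiting the Hölder regularity of $g_N$ and the fact that one may freely subtract a constant from $u_\eps$ (Neumann invariance). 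A secondary technical point is ensuring the $q/p$-power term does not destroy the iteration, which is handled by the standard observation that an inequality of the form $a \le \theta a + b + a^{q/p}$ with small $\theta$ and $a$ bounded (by the higher integrability from Theorem \ref{thm:regularityRelaxed}) still yields the desired bound.
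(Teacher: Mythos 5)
Your proposal is correct and follows essentially the same route as the paper's proof: the same competitor built from the smoothing operator of Lemma \ref{lem:regularised_extension}, the shifted integrand \eqref{eq:shifted_integrand}, the translation-invariance normalisation making the test function mean-zero on the boundary (which is exactly how the paper absorbs both the $g_N$ term and the integrated-by-parts constant shift term), and hole-filling with the iteration lemma of Giusti. The only notable deviation is that you pass through the regularised minimisers $u_\eps$ and take limits, whereas the paper argues directly with the relaxed functional via the additivity property (Lemma \ref{lem:additivityNeumann}) and lower semicontinuity; both are viable, though the direct route spares you the bookkeeping of the $\eps\int\lvert\D\cdot\rvert^q$ terms and the final limit passage.
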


\begin{proof}
 We will put $\tilde F = F_{\D a}$ for the shifted integrand \eqref{eq:shifted_integrand}, as well as $\tilde u = u-a.$ Note that $\tilde u$ is a relaxed minimiser of the problem
$$
v \mapsto \int_{\Omega} \tilde F(x,\D v) - f \cdot v \,\d x +\int_{\Omega} \partial_zF(x,\D a)\D v \,\d x+\int_{\p\Omega} g \cdot v\,\d \H^{n-1},
$$
where we have written $g = g_N.$
Indeed, if we let $\overline{\F}_{0,N}$ denote the relaxed functional associated to $v \mapsto \int_{\Omega} \tilde F(x,\D v)),$ by definition of the relaxation we see that
\begin{equation}
  \begin{split}
    \overline{\F}_{N}(v) &= \overline{\F}_{0,N}(v-a) - \int_{\Omega} F(x,\D a) \,\d x -\p_zF(x,\D a)\cdot \D (v-a)\,\d x \\
                         &\quad- \int_{\Omega} f \cdot v \,\d x + \int_{\p\Omega} g \cdot v \,\d \H^{n-1}.
  \end{split}
\end{equation} 
Let $R/2<r<s<R/2$, applying Lemma \ref{lem:regularised_extension} to $u$ gives $w$ and $r<r^\prime<s^\prime<s.$ Let $\phi$ be a smooth cut-off supported on $B_{s^\prime}$, with $\phi =1$ on $B_{r^\prime}$ and $\lvert \D\phi\rvert\leq \frac c {s-r}$.
We will set $\tilde w = w-a$ and $\psi = (1-\phi)\tilde w.$
Then by Remark \ref{rem:boundaryConditionExtension} and Lemma \ref{lem:additivityNeumann} applied to $\overline{\F}_{0,N},$ we have
\begin{equation}
  \overline{\F}_{0,N}(\tilde u,\Omega) = \overline{\F}_{0,N}(\tilde u,\Omega_{s^\prime}(x_0),\Omega \cap \p B_{s^\prime}(x_0)) + \overline{\F}_{0,N}(\tilde u,\Omega \setminus B_{s^\prime}(x_0),\Omega \cap \p B_{s^\prime}(x_0)),
\end{equation} 
and a similar statement holds for $\psi.$ 
Further, by lower-semicontinuity of the energy $\int_{\Omega} \lvert V_p(\D v)\rvert^2\,\d x$ and \eqref{eq:shifted_elliptic} we have
\begin{equation}
  \int_{\Omega_{s^\prime}(x_0)} \lvert V_p(\D \tilde u)\rvert^2 \,\d x \lesssim \overline{\F}_{0,N}(\tilde u,\Omega_{s^\prime}(x_0),\Omega \cap \p B_{s^\prime}(x_0)).
\end{equation} 
Hence using the above and minimality of $\tilde u$ we have
\begin{align*}
  \int_{\Omega_{r^\prime}(x_0)} \lvert V_p(\D \tilde u)\rvert^2\,\d x
    &\lesssim \overline{\F}_{0,N}(\tilde u,\Omega_{s^\prime}(x_0),\Omega \cap \p B_{s^\prime}(x_0)) \\
    &\leq \overline{\F}_{0,N}(\psi,\Omega_{s^\prime}(x_0),\Omega \cap \p B_{s^\prime}(x_0))  - \int_{\Omega} f \cdot (\tilde u - \psi) \,\d x \\
    &\quad + \int_{\p\Omega} g\cdot(\tilde u-\psi)\,\d\H^{n-1} +\int_{\Omega_{s^\prime}(x_0)} \partial_z \tilde F(x,\D a) \D(\tilde u - \psi)\, \d x \\
    &= I_1 + I_2 + I_3 + I_4.
\end{align*}
We find using \eqref{eq:shifted_Fgrowth} and \eqref{eq:Vfunction_pq},
\begin{align*}
  \lvert I_1\rvert\lesssim \int_{\Omega_{s^\prime}(x_0)}\lvert V_{q,\mu}(\D\psi)\rvert^2\d x \lesssim \int_{\Omega_{s^\prime}(x_0)} \lvert V_p(\D\psi)\rvert^2+ \lvert V_p(\D\psi)\rvert^\frac{2q}p \,\d x.
\end{align*}
Hence using \eqref{eq:testFunctionEstimates1}, \eqref{eq:testFunctionEstimates2} we can bound
\begin{align*}
  \lvert  I_1 \rvert \leq& \int_{\Omega_s(x_0)\setminus B_r(x_0)} \lvert V_p(\D\tilde u)\rvert ^2+\left\lvert V_p\left(\frac{\tilde u}{s-r}\right)\right\rvert^2\d x\\
                         &\quad + (s-r)^{n\left( 1-\frac qp \right)} \left(\int_{\Omega_s(x_0)\setminus B_r(x_0)} \lvert V_p(\D\tilde u)\rvert^2+ \left\lvert V_p\left(\frac{\tilde u}{s-r}\right)\right\rvert^2\d x\right)^\frac {q} p.
\end{align*}
For $I_2$, we note that $\tilde u - \psi$ vanishes on $\Omega \cap \p B_{s^\prime},$ allowing us to use a Poincar\'e-Sobolev inequality to estimate
\begin{equation}
  \begin{split}
    \lvert I_2\rvert &\leq C \left(\int_{\Omega_{s^\prime}(x_0)} \lvert f\rvert^n \,\d x\right)^{\frac1n} \left(\int_{\Omega_{s^\prime}(x_0)} \lvert \tilde u - \psi\rvert^{\frac{n}{n-1}} \,\d x\right)^{\frac{n-1}{n}} \\
                     &\leq CR^{\alpha} \left(\int_{\Omega_R(x_0)} \lvert f\rvert^{\frac{n}{1-\alpha}} \,\d x\right)^{\frac{\alpha}{n}} \int_{\Omega_{s^\prime}(x_0)} \lvert \D(\tilde u - \psi)\rvert \,\d x.
  \end{split}
\end{equation} 
This is justified by integrating along one of the tangential directions.
To estimate $I_3,$ we use the translation invariance of the problem (due to the compatibility condition \eqref{eq:compatibility_condition}) to replace $\tilde u$ by $\tilde u+b,$ which in turn replaces $\tilde w$ by $\tilde w+b.$ Then we can choose $b \in \bb R$ so that
\begin{equation}\label{eq:psi_cancellation}
  \begin{split}
    0 &= \int_{B_{s^\prime} \cap \p\Omega} (\tilde u+b) - (1-\phi)(\tilde w+b) \,\d \H^{n-1}  \\
    &= \int_{B_{s^\prime} \cap \p\Omega} \tilde u - (1-\phi)\tilde w \,\d x+  b \int_{B_{s^\prime} \cap \p\Omega} \phi \,\d \H^{n-1}.
  \end{split}
\end{equation} 
Using this we can estimate
\begin{equation}
  \begin{split}
    I_3 &= \int_{B_{s^\prime} \cap \p\Omega} \left( g(x) - g(x_0) \right)\cdot (\tilde u - \psi) \,\d\H^{n-1} \\
        &\leq R^{\alpha} \norm{g}_{C^{0,\alpha}(B_R)} \int_{B_{s^\prime} \cap \p\Omega} \lvert\tilde u - \psi\rvert \,\d\H^{n-1} \\
        &\leq R^{\alpha} \norm{g}_{C^{0,\alpha}(B_R)} \int_{\Omega_{s^\prime}(x_0)} \lvert \D(\tilde u -\psi)\rvert \,\d x.
  \end{split}
\end{equation} 
For $I_4$, we use \eqref{eq:shifted_holder}, the divergence theorem, \eqref{eq:psi_cancellation}, and the assumption that $\p\Omega$ is $C^{1,\alpha}$ to bound
\begin{equation*}
  \begin{split}
    I_4 &\leq \int_{\Omega_{s^\prime}(x_0)} \partial_z\tilde F(x_0,\D a) D(\tilde u -\psi) \,\d x + CR^{\alpha} \int_{\Omega_{s'}(x_0)} \lvert \D(\tilde u -\psi)\rvert \,\d x \\
        &= \int_{\partial\Omega \cap \Omega_{s^\prime}(x_0)} \partial_z F(x_0,\D a) \cdot  (\tilde u - \psi) \otimes \mathbf{n}(x) \,\d \H^{n-1} + CR^{\alpha} \int_{\Omega_{s^\prime}(x_0)} \lvert \D(\tilde u -\psi)\rvert \,\d x\\
        &\leq \int_{\partial\Omega \cap B_{s^\prime}} \partial_z F(x_0,\D a) \cdot  (\tilde u - \psi) \otimes \mathbf{n}(x_0) \,\d \H^{n-1} \\
        &\quad + CR^{\alpha} \int_{\Omega_{s^\prime}(x_0)} \lvert \D(\tilde u -\psi)\rvert \,\d x + CR^{\alpha}\int_{\partial\Omega \cap B_{s^\prime}}  \lvert \tilde u -\psi\rvert  \,\d \H^{n-1}\\
        &\lesssim R^{\alpha} \int_{\Omega_{s^\prime}(x_0)} \lvert \D(\tilde u -\psi)\rvert \,\d x.
  \end{split}
\end{equation*}

Now using Young's inequality \eqref{eq:vfunction_young} applied to $\lvert V_p(z)\rvert^2$ we have
\begin{equation}
  \begin{split}
    R^{\alpha} \int_{\Omega_{s^\prime}(x_0)} \lvert \D(\tilde u - \psi)\rvert \,\d x 
    &\lesssim \delta \int_{\Omega_{s^\prime}(x_0)} \lvert V_p(\D(\tilde u -\psi))\rvert^2 \,\d x + C_{\delta}R^{n} \lvert V_{p^\prime}(R^{\alpha})\rvert^2 \\
    &\lesssim \delta \int_{\Omega_{s^\prime}(x_0)}\lvert V_p(\D \tilde u)\rvert^2 + \left\lvert V_p\left(\frac{\tilde u}{s-t}\right)\right\rvert^2 \,\d x + C_{\delta}R^{n} \lvert V_{p^\prime}(R^{\alpha})\rvert^2.
  \end{split}
\end{equation} 
Collecting estimates we have shown that
\begin{equation*}
  \begin{split}
    \int_{\Omega_r(x_0)} \lvert V_p(\D\tilde u)\rvert^2 \,\d x
    &\lesssim \int_{\Omega_s(x_0)\setminus B_r(x_0)}\lvert V_p(\D\tilde u)\rvert^2+ \left\lvert V_p\left(\frac{\tilde u}{s-r}\right)\right\rvert \,\d x\\
    &\quad+ (s-r)^{n\left( 1-\frac qp \right)}\left(\int_{\Omega_s(x_0)\setminus B_r(x_0)} \lvert V_p(\D\tilde u)\rvert^2+\left\lvert V_p\left(\frac{\tilde u}{s-r}\right)\right\rvert \,\d x\right)^\frac q p\\
    &\quad+ \delta \int_{\Omega_{s}(x_0)} \lvert V_p(\D \tilde u)\rvert^2 + \left\lvert V_p\left(\frac{\tilde u}{s-t}\right)\right\rvert^2 \,\d x + C_{\delta}R^n\lvert V_{p^\prime}(R^{\alpha})\rvert^2.
  \end{split}
\end{equation*}
Filling the hole, applying a standard iteration argument (see for instance \cite[Lemma 6.1]{Giusti2003}) and choosing $\delta>0$ sufficiently small to absorb the final gradient term, we deduce that
\begin{equation*}
  \begin{split}
    \int_{\Omega_{R/2}(x_0)} \lvert V_p(\D\tilde u)\rvert^2 \,\d x
    &\lesssim \int_{\Omega_R(x_0)}\lvert V_p(\D\tilde u)\rvert^2+ \left\lvert V_p\left(\frac{\tilde u}{R}\right)\right\rvert \,\d x+ R^n\lvert V_{p^\prime}(R^{\alpha})\rvert^2\\
    &\quad+ R^{n\left( 1-\frac qp \right)}\left(\int_{\Omega_R(x_0)} \lvert V_p(\D\tilde u)\rvert^2+\left\lvert V_p\left(\frac{\tilde u}{R}\right)\right\rvert \,\d x\right)^\frac q p,
  \end{split}
\end{equation*}
which is precisely the claimed inequality when written as averaged integrals.
\end{proof}

The Dirichlet case is essentially the same, and we will only outline the necessary modifications.

\begin{lemma}\label{eq:dirichlet_caccioppoli}
  In the setting of Theorem \ref{thm:eps_regularity}, suppose that $u\in \WW^{1,p}_{g_D}(\Omega)$ is a relaxed minimiser to \eqref{eq:dirichlet_problem} and let $x_0\in \overline\Omega,$ $0<R<R_0$. Then for each $M>0$, if $a \colon \bb R^n \to \bb R^m$ is an affine map such that $\lvert\D a\rvert\leq M$,
\begin{align*}
  \dashint_{\Omega_{R/2}(x_0)} \lvert V_p(\D (u-a)) \rvert^2 \d x  
  &\lesssim \dashint_{\Omega_R(x_0)} \left\lvert V_p\left(\frac{u-a}{R}\right)\right\rvert^2 \d x  + \lvert V_p(R^{\alpha}L)\rvert^2+ \lvert V_{p^\prime}(R^{\alpha}L)\rvert^2\\
  &\quad  + \left(\dashint_{\Omega_R(x_0)} \lvert V_p(\D (u-a))\rvert^2+\left\lvert V_p\left(\frac{u-a}{R}\right)\right\rvert^2 \d x \right)^\frac q p,
\end{align*}
where
\begin{equation}
  L = 1 + \seminorm{\D g_D}_{C^{0,\alpha}(\p\Omega \cap B_R(x_0))} + \norm{f}_{\LL^{\frac n{1-\alpha}}(\Omega_R(x_0))},
\end{equation} 
and the implicit constant depends on $M,F, \norm{\D g_D}_{\LL^{\infty}(\Omega)},\Omega$ and other structural constants.
\end{lemma}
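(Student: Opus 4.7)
The plan is to mirror Lemma \ref{eq:caccioppoli_neumann}, with modifications to ensure the competitor respects the Dirichlet boundary condition. After straightening the boundary as in Section \ref{sec:flattening}, we set $\tilde F = F_{\D a}$ and $\tilde u = u - a$, so that $\tilde u$ is a relaxed minimiser of the shifted Dirichlet functional over $\WW^{1,p}_{g_D-a}(\Omega)$. Since $u - g_D$ vanishes on $\p\Omega \cap B_R(x_0)$, I would apply the boundary-preserving variant of Lemma \ref{lem:regularised_extension} to $u - g_D$, obtaining $\hat w \in \WW^{1,p}_0$ with the corresponding estimates. With $\phi$ the radial cut-off of Lemma \ref{eq:caccioppoli_neumann}, and for $r<r^\prime<s^\prime<s$ produced by the extension lemma, the competitor will be
\begin{equation*}
  \psi = (g_D - a) + (1-\phi)\hat w,
\end{equation*}
which satisfies $\psi = g_D - a$ on $\p\Omega$ (since $\hat w$ has zero trace) and $\psi = \tilde u$ on $\Omega \setminus B_{s^\prime}$; in particular $\tilde u - \psi \in \WW^{1,p}_0(\Omega_{s^\prime}(x_0))$.

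By minimality together with Lemma \ref{lem:additivityNeumann}, I obtain
\begin{equation*}
  \int_{\Omega_{s^\prime}(x_0)} \tilde F(x,\D \tilde u)\,\d x \leq \int_{\Omega_{s^\prime}(x_0)} \tilde F(x,\D\psi)\,\d x + I_2 + I_4,
\end{equation*}
with $I_2 = -\int_\Omega f\cdot(\tilde u-\psi)\,\d x$ and $I_4 = \int_\Omega \partial_zF(x,\D a)\cdot \D(\tilde u-\psi)\,\d x$ the lower-order terms. Expanding $\D\psi = (\D g_D - \D a) + (1-\phi)\D\hat w - \D\phi\otimes\hat w$ and applying \eqref{eq:vfunction_additive} together with \eqref{eq:shifted_Fgrowth}, I would bound $\lvert V_p(\D\psi)\rvert^2 + \lvert V_p(\D\psi)\rvert^{\frac{2q}p}$ by a $\hat w$-contribution (controlled via Lemma \ref{lem:regularised_extension}, then returned to $\tilde u$ by means of $\D(u - g_D) = \D\tilde u - (\D g_D - \D a)$ and a second application of \eqref{eq:vfunction_additive}) plus a boundary contribution involving $\lvert V_p(\D g_D - \D a)\rvert^2$. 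The decomposition $\D g_D - \D a = (\D g_D(\cdot) - \D g_D(x_0)) + (\D g_D(x_0) - \D a)$ is then central: the first summand is pointwise bounded by $R^\alpha L$ in $B_R(x_0)$, yielding precisely the $\lvert V_p(R^\alpha L)\rvert^2$ term on the right-hand side, while the second (constant) summand is absorbed into the implicit constant (which is allowed to depend on $\norm{\D g_D}_{\LL^\infty}$) and into $\dashint_{\Omega_R(x_0)}\lvert V_p((u-a)/R)\rvert^2$ via a trace--Poincar\'e estimate, exploiting the fact that $\tilde u|_{\p\Omega}$ carries the corresponding affine contribution.

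The terms $I_2$ and $I_4$ are handled essentially as in the Neumann case, the simplification being that $\tilde u - \psi$ vanishes along $\p\Omega$ in addition to $\Omega \cap \p B_{s^\prime}(x_0)$; a Poincar\'e--Sobolev estimate yields $\lvert I_2\rvert \lesssim R^\alpha L \int_{\Omega_{s^\prime}} \lvert \D(\tilde u - \psi)\rvert\,\d x$, while $I_4$ is estimated by combining \eqref{eq:shifted_holder} with the divergence theorem (crucially, with no boundary term, since $\tilde u - \psi = 0$ on $\p\Omega$). Young's inequality \eqref{eq:vfunction_young} converts these into $\delta\int_{\Omega_{s^\prime}} \lvert V_p(\D\tilde u)\rvert^2 + C_\delta R^n\lvert V_{p^\prime}(R^\alpha L)\rvert^2$, and a standard filling-the-hole and iteration argument via \cite[Lemma 6.1]{Giusti2003} completes the proof. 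The main obstacle lies in handling the $\D g_D - \D a$ contribution to $\D\psi$: unlike the Neumann case this difference is not small and must be carefully isolated into a H\"older-controlled part producing the $\lvert V_p(R^\alpha L)\rvert^2$ error term, and a constant part absorbed by the implicit constant and by the $\lvert V_p((u-a)/R)\rvert^2$ quantity on the right-hand side.
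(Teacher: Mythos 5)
Your overall architecture (shifted integrand, boundary-preserving smoothing operator applied to $u-g_D$, filling the hole) is the right one, but the construction of the competitor contains a genuine gap. You take $\psi = (g_D-a) + (1-\phi)\hat w$, so that on the inner ball $B_{r^\prime}$ (where $\phi\equiv 1$) one has $\D\psi = \D g_D - \D a$, and hence
\begin{equation*}
  \int_{\Omega_{r^\prime}(x_0)} F_{\D a}(x,\D\psi)\,\d x \;\gtrsim\; R^{n}\,\bigl\lvert V_p\bigl(\D g_D(x_0)-\D a\bigr)\bigr\rvert^2 \;-\; CR^{n}\lvert V_p(R^{\alpha}L)\rvert^2 .
\end{equation*}
The constant vector $\D g_D(x_0)-\D a$ need not be small, and your proposed absorption of it fails for two reasons. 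First, it enters \emph{additively}, so it cannot be hidden in an implicit multiplicative constant in front of right-hand side terms that may vanish. Second, the trace--Poincar\'e argument only controls the \emph{tangential} components of $\D g_D(x_0)-\D a$: the restriction $u-a=g_D-a$ on $\p\Omega\cap B_R(x_0)$ carries no information about the normal slope. Concretely, take $F(z)=\lvert z\rvert^2$, $f=0$, $g_D\equiv 0$ on a flat boundary, $u(x)=Mx_n=a(x)$: then every term on the right-hand side of the stated inequality is $O(R^{2\alpha})$, yet your intermediate quantity $\dashint_{\Omega_{r'}}F_{\D a}(x,\D\psi)\,\d x$ is of size $\lvert V_p(M)\rvert^2$, a fixed constant. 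The chain of inequalities therefore does not establish the lemma, and in the excess-decay application (where the right-hand side must be $o(1)$ after rescaling by $\lambda_j^{-2}$) such an additive constant would be fatal.

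The paper avoids this by never letting the affine Taylor part of $g_D$ at $x_0$ enter the competitor's energy. Setting $\tilde g = g_D - g_D(x_0) - \D g_D(x_0)\cdot(\,\cdot\,-x_0)$, it shifts the integrand by the $x$-dependent quantity $\D a + \D\tilde g(x)$ and works with $\tilde u = u-a-\tilde g$; the competitor then has vanishing shifted gradient on the inner ball, so the only price paid for the boundary datum is (i) the harmless enlargement of the shift parameter to $\lvert \D a\rvert + \norm{\D g_D}_{\LL^\infty}$ in the constants of \eqref{eq:shifted_Fgrowth}--\eqref{eq:shifted_elliptic}, and (ii) an $x$-dependence of the shift of size $\lvert\D\tilde g\rvert\lesssim R^{\alpha}[\D g_D]_{C^{0,\alpha}}$, which is controlled by the modified H\"older bound \eqref{eq:shifted_holder_boundary} and produces exactly the $\lvert V_{p^\prime}(R^{\alpha}L)\rvert^2$ error; the $\lvert V_p(R^{\alpha}L)\rvert^2$ term then arises only when converting the conclusion from $\D\tilde u$ back to $\D(u-a)$. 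Your treatment of $I_2$ and of the linear term $I_4$ (freezing $x=x_0$ and using that $\tilde u-\psi\in\WW^{1,p}_0(\Omega_{s^\prime}(x_0))$) is fine and matches the paper; it is only the placement of $g_D-a$ in the competitor, rather than in the shift of the integrand, that must be corrected.
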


\begin{proof}
 We will choose $R_0>0$ sufficiently small so $B_{R_0}(x_0)$ can be flattened by a $C^{1,\alpha}$-diffeomorphism, which we will use to define the smooth operator from Lemma \ref{lem:regularised_extension}.
 Writing $g = g_D,$ set 
  \begin{equation}
    \tilde g(x) = g(x) - g(x_0) - \D g(x_0) \cdot (x-x_0),
  \end{equation} 
  so we have $\norm{g}_{\LL^{\infty}(\Omega_R(x_0))} \leq CR^{\alpha} \seminorm{\D g}_{C^{0,\alpha}(\Omega_R(x_0))}.$ We then put $\tilde F(x,z) = F_{\D a + \D \tilde g(x)}(x,z).$
  Note that the shifted estimates \eqref{eq:shifted_Fgrowth}--\eqref{eq:shifted_elliptic} continue to hold, however the constants depend on $M \geq \lvert z_0 \rvert + \norm{\D g}_{\LL^{\infty}(\Omega)}.$
  For the H\"older bound we can use \eqref{eq:shifted_DFgrowth} and also local uniform estimates for $\partial_z^2F$ to obtain 
\begin{equation}\label{eq:shifted_holder_boundary}
  \lvert \partial_z \widetilde F(x_1,z) - \partial_z \widetilde F(x_2,z)\rvert \leq \Lambda_M \left(1 + \seminorm{\D g}_{C^{0,\alpha}(\Omega)}\right) \lvert x_1 - x_2\rvert^{\alpha} (1+\lvert z\rvert^2)^{\frac{q}{2}}.
\end{equation} 
  Now for $\frac R2 \leq r < s < R,$ applying Lemma \ref{lem:regularised_extension} to $u-g$ gives $v$ and $r \leq r^\prime < s^\prime  \leq s$ for which the claimed estimates hold.
  We then set $\tilde u = u-a-\tilde g$ and $\tilde w = v - a- (g- \tilde g),$ which satisfies similar estimates as we have only shifted $v$ by an affine map. 
  As before let $\phi$ be a cutoff supported in $B_{s^\prime}$ such that $\chi \equiv 1$ on $B_{r^\prime}$ and $\lvert\D \phi\rvert \leq \frac{C}{s^\prime-r^\prime}.$ Setting $\psi = (1-\phi)\tilde w,$ using \eqref{eq:shifted_elliptic}, Lemma \ref{lem:additivityNeumann}, Remark \ref{rem:boundaryConditionExtension} and minimality of $u$ we have
  \begin{equation}
    \begin{split}
      \int_{\Omega_{t^\prime}(x_0)} \lvert V_p(\D\tilde u)\rvert^2 \,\d x 
      &\lesssim \int_{\Omega_{s^\prime}(x_0)} \widetilde F(x,\D\tilde u) \,\d x \\
      &\lesssim \int_{\Omega_{s^\prime}(x_0)} \widetilde F(x,\D \psi) \,\d x + \int_{\Omega_{s^\prime}(x_0)} f \cdot (\tilde u -\psi) \,\d x \\ 
      &\quad + \int_{\Omega_{s^\prime}(x_0)} \partial_z F(x,\D a + \D \tilde g) \D(\tilde u - \psi) \,\d x \\
      &= I_1 + I_2 + I_3.
    \end{split}
  \end{equation} 
  We can estimate $I_1$ and $I_2$ as before. For $I_3$ we can use the H\"older estimate \eqref{eq:shifted_holder_boundary} and the fact that $\tilde u - \psi$ vanishes on $\partial \Omega_{s^\prime}(x_0)$ to show that
  \begin{equation}
    I_3 \leq C R^{\alpha}\left(1 + \seminorm{\D g}_{C^{0,\alpha}(\Omega)}\right) \int_{\Omega_{s^\prime}(x_0)} \lvert \D(\tilde u -\psi)\rvert \,\d x,
  \end{equation} 
  from which the rest follows as in the Neumann case.
\end{proof}

\subsection{Excess decay estimates}\label{sec:excess_decay}

We will obtain, by means of a blow-up argument, estimates for the \emph{excess energy}
\begin{equation}
  E(x,R) = \dashint_{\Omega_R(x)} \lvert V_p(\D u - (\D u)_{\Omega_R(x)})\rvert^2 \,\d x + R^{2\beta},
\end{equation} 
where $\beta < \alpha$ is fixed.
Throughout this section we will assume we are in the case of a flat boundary where $\Omega_R(x_0) = B_R^+(x_0).$

\begin{lemma}\label{lem:excess_estimate}
  In the setting of Theorem \ref{thm:eps_regularity}, assume $u$ is a relaxed minimiser of \eqref{eq:neumann_problem} or \eqref{eq:dirichlet_problem} in $B_R^+.$
  Then for each $M>0$ and $\beta \in (0,\alpha),$ there is $C_M > 0$ such that for all $\tau \in (0,\frac14),$ there is $\eps > 0$ such that if
  \begin{equation}
    \lvert (\D u)_{B^+_R(x_0)}\rvert \leq M \text{ and } E(x_0,R) < \eps,
  \end{equation} 
  with $x_0 \in B_{1/2}^+$ and $R < \frac12,$ then
  \begin{equation}
    E(x_0,\tau R) \leq C_M \tau^{2\beta} E(x_0,R).
  \end{equation}  
\end{lemma}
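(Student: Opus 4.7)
The plan is to argue by contradiction using a blow-up argument, combining the Caccioppoli-type estimates from Lemmas \ref{eq:caccioppoli_neumann} and \ref{eq:dirichlet_caccioppoli} with a linearization step.  Suppose the conclusion fails; then for a fixed $\tau \in (0,1/4)$ (to be chosen later) and $C_M$ to be determined, there exist sequences $x_k \in B^+_{1/2}$, $R_k < 1/2$, affine maps $a_k$ with $\D a_k = A_k = (\D u_k)_{B^+_{R_k}(x_k)}$ satisfying $\lvert A_k\rvert \leq M$, and relaxed minimizers $u_k$ with
\begin{equation}
\lambda_k^2 := E(x_k,R_k) \to 0, \qquad E(x_k,\tau R_k) > C_M \tau^{2\beta}\lambda_k^2.
\end{equation}
Passing to a subsequence, we may assume $A_k \to A_0$ with $\lvert A_0\rvert \leq M$ and either $R_k^{-1}\mathrm{dist}(x_k,\Gamma) \to \infty$ (interior case, handled by \cite{DeMaria2010}) or $x_k \to \hat x_0 \in \Gamma$ after rescaling the boundary so that it becomes $\{y_n = -t_k\}$ with $t_k \to t_\infty \in [0,\infty)$.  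Define
\begin{equation}
v_k(y) = \frac{u_k(x_k + R_k y) - a_k(R_k y) - c_k}{\lambda_k R_k},
\end{equation}
where $c_k$ is chosen (in the Neumann case using the translation invariance) so that $(v_k)_{B^+_1} = 0$, and in the Dirichlet case set to normalize $v_k$ on the flattened boundary.  By construction,
\begin{equation}
\dashint_{\tilde B_1^+} \lvert V_p(A_k + \lambda_k \D v_k) - V_p(A_k)\rvert^2\,\d y \leq \lambda_k^2,
\end{equation}
where $\tilde B_1^+ = B_1 \cap \{y_n > -t_k/R_k\}$.

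The first key step is to extract a limit $v_0$.  Using \eqref{eq:vfunction_difference} to split into the regimes $\lvert \lambda_k \D v_k\rvert \lessgtr 1 + \lvert A_k\rvert$, one obtains uniform bounds $\dashint \lvert \D v_k\rvert^2 + \lambda_k^{p-2} \lvert \D v_k\rvert^p \lesssim 1$ (with the second term relevant only if $p<2$), and hence $v_k \rightharpoonup v_0$ in $W^{1,2}_{\loc}$ of the limit domain (which is either $B_1^+$ or $B_1$, depending on $t_\infty$).  Testing the Euler--Lagrange equations from Lemma \ref{lem:relaxedEuler} with a fixed $\varphi$, dividing by $\lambda_k$, Taylor-expanding $\p_z F(x, A_k + \lambda_k \D v_k)$ and using the H\"older continuity \eqref{def:bounds31} (which contributes $R_k^\alpha/\lambda_k$; this forces us to have already absorbed the $R^{2\beta}$ term into $\lambda_k^2$ in the definition of $E$), one shows that $v_0$ solves a linear elliptic system with constant coefficients $\mathbb A := \p_z^2 F(\hat x_0, A_0)$, with zero Dirichlet or zero Neumann data on the limiting half-space boundary (or none, if $t_\infty = \infty$).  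Classical linear theory then yields the decay estimate
\begin{equation}
\dashint_{B^+_\tau} \lvert \D v_0 - (\D v_0)_{B^+_\tau}\rvert^2 \,\d y \leq C_0(M) \tau^2 \dashint_{B^+_1} \lvert \D v_0\rvert^2 \,\d y.
\end{equation}

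The main obstacle, and the decisive step, is upgrading the weak convergence $v_k \rightharpoonup v_0$ to strong convergence of $V_p(A_k + \lambda_k \D v_k) - V_p(A_k)$ in $L^2$ on smaller balls, so that the excess of $u_k$ at scale $\tau R_k$ really is controlled by that of $v_0$ at scale $\tau$.  Here one applies the Caccioppoli inequality from Lemma \ref{eq:caccioppoli_neumann} or \ref{eq:dirichlet_caccioppoli} to $u_k - a_k - \lambda_k R_k \eta$ on $B^+_{R_k/2}(x_k)$ for a suitable cut-off comparison map $\eta$, rescales it, and combines it with the linearized ellipticity from \eqref{def:bounds1} via \eqref{eq:hbound1_quantiative}.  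The higher-order term from the $(p,q)$-growth is handled using $\lambda_k \to 0$ together with the exponent restriction \eqref{eq:pq_partialregularity}, which ensures the error term $\lambda_k^{2q/p - 2}$ is negligible.  This is precisely the step that breaks down if $q \geq p + 1$.  Once strong convergence is established, passing to the limit in $E(x_k,\tau R_k)/\lambda_k^2$ gives
\begin{equation}
\limsup_{k\to\infty} \lambda_k^{-2}E(x_k,\tau R_k) \leq C(M)\tau^2,
\end{equation}
so choosing first $C_M$ large and then $\tau$ small so that $C(M)\tau^2 < C_M\tau^{2\beta}$ (possible since $\beta < 1$) yields the desired contradiction.
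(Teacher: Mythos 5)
Your proposal follows essentially the same route as the paper's proof: a contradiction/blow-up argument producing rescaled maps $v_k$ that are approximate critical points with error of order $R_k^\alpha/\lambda_k$, linearisation to a constant-coefficient system whose solution enjoys a decay estimate, and the Caccioppoli inequalities of Section \ref{sec:caccioppoli} to transfer that decay back to the excess of $u_k$ (the paper carries out this last transfer via Rellich compactness of $v_k$ together with uniform integrability and Vitali's theorem rather than by proving strong $\LL^2$ convergence of the $V_p$-excess, but the mechanism is the same, and the restriction $q<p+1$ is in fact consumed in the passage to the limit in the Euler--Lagrange system rather than in the Caccioppoli step). Two small corrections: the limiting bound can only be $C(M)\tau^{2\beta}$ rather than $C(M)\tau^{2}$, since $E$ contains the term $R^{2\beta}$ which rescales exactly as $\tau^{2\beta}$; and the contradiction must be reached by taking $C_M>C(M)$ for the fixed but arbitrary $\tau$ --- shrinking $\tau$ is not permitted by the quantifiers in the statement, and is not needed since $\tau^{2}\leq\tau^{2\beta}$ for $\tau<1$.
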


\begin{proof}[Proof in the Neumann case]
  We will employ a blow-up argument, which involves several steps. As before write $g = g_N.$

  \textbf{Step 1} (Blow up): Suppose otherwise, so there exists $x_j \in \overline B^+_{1/2}$ and $R_j>0$ such that
  \begin{equation}
    \lvert (\D u)_{B^+_{R_j}(x_j)}\rvert \leq M \text{ and } \lambda_j^2 = E(x_j,R_j) \to 0
  \end{equation} 
  with each $\lambda_j \leq 1,$ and, for $\widetilde C_M > 0$ to be chosen appropriately, we have
  \begin{equation}
    E(x_j,\tau R_j) \geq \widetilde C_M \tau^{2\beta} \lambda_j^2.
  \end{equation} 
  To simplify notation, put $B_j^+ = B^+_{R_j}(x_j),$ $\Gamma_j = \Gamma \cap B_{R_j}(x_j)$, and introduce in addition  ${\widetilde B_j^+ = R_j^{-1}(B_j^+-x_j) = \{y\in B_1\colon x_j+R_j y\in B_j^+\}},$ $\widetilde \Gamma_j = \{y\in B_1\colon x_j + R_j y\in \Gamma\}$ for the rescaled versions. Setting $A_j = (\D u)_{B_j^+}$, $b_j= (u)_{B_j^+}$ and $a_j(y) = b_j + R_j A_j y$, for some $b\in \R^n$, we consider the rescaled sequence
  \begin{equation}
    v_j(y) = \frac{u(x_j+R_jy) - a_j(y)}{\lambda_j R_j},
  \end{equation} 
  defined on $\widetilde B_j^+.$
  By definition of $\lambda_j$ we have
  \begin{equation}
    \dashint_{\widetilde B_j^+} \frac{\lvert V_p(\lambda_j\D v_j)\rvert^2}{\lambda_j^2} \,\d y + \frac{R_j^{2\beta}}{\lambda_j^2} \leq 1,
  \end{equation} 
  so in particular we have
  \begin{equation}
    \begin{cases}
      \hfill \dashint_{\widetilde B_j^+} \lvert \D v_j\rvert^p  \,\d y \leq C &\text{ if } p \leq 2, \\
      \dashint_{\widetilde B_j^+} \lvert \D v_j\rvert^2 + \lambda_j^{p-2} \lvert \D v_j\rvert^p  \,\d y \leq C &\text{ if } p > 2.
    \end{cases}
  \end{equation} 
  Note that $B^+\subset \widetilde B^+_j\subset B$ for any $j$ and $(v_j)_{\widetilde B^+_j} = 0,$ so by Lemma \ref{lem:precise_poincaresobolev} we deduce that $\{v_j\}$ is bounded in $\WW^{1,\min\{p,2\}}(B^+).$
  Also, by passing to a subsequence we can assume that $x_j \to x_0,$ and moreover that $x_0 \in \Gamma;$ indeed since $R_j^{2\beta} \leq\lambda_j^2 \to 0,$ if $x_0 \in B^+$ we have $B_j^+ = B_{R_j}(x_j) \subset B^+$ for $j$ sufficiently large, and we can apply the interior argument from \cite{DeMaria2010} to derive a contradiction. Also by passing to a subsequence we have $A_j\to A_0$ for some $A_0$.

  \textbf{Step 2} (Extremality of $v_j$):
  We introduce the rescaled integrands
  \begin{equation}
    F_j(y,z) = \lambda_j^{-2} F_{A_j}(x_j+R_jy, \lambda_j z),
  \end{equation} 
  where $F_{A_j}$ is given by \eqref{eq:shifted_integrand}. It is convenient to introduce $f_j(y) = R_j\lambda_j^{-1}f(x_j+R_jy)$ and ${g_j(y) = \lambda_j^{-1}g(x_j+R_jy)}$. Then by Lemma \ref{lem:relaxedEuler}, $v_j$ satisfies
  \begin{equation}
    \begin{split}
    &\int_{\widetilde B^+_j} \partial_zF_j(y,\D v_j)\cdot\D\varphi_j\,\d x  \\
    & \quad = - \frac1{\lambda_j}\int_{\widetilde B_j^+} \p_zF(x_j+R_jy,A_j) \cdot \D \varphi_j \,\d x + \int_{\widetilde B_j^+} f_j \cdot \varphi_j - \int_{\widetilde\Gamma_j^\prime}  g_j \cdot \varphi_j \,\d\H^{n-1} \\
    & \quad = I_1 + I_2 + I_3
    \end{split}
  \end{equation} 
  for all $\varphi_j \in \WW^{1,\infty}(\widetilde B_j^+)$ such that $\varphi_j = 0$ on $\p\widetilde B_j^+ \setminus \widetilde\Gamma_j.$ We will additionally require $\int_{\widetilde\Gamma_j} \varphi_j \,\d\H^{n-1}=0,$ so we can estimate
  \begin{equation}
    I_1 = \frac1{\lambda_j}\int_{\widetilde B^+_j} \left( \p_zF(x_j+R_jy,A_j) - \p_zF(x_j,A_j) \right) \cdot \D\varphi \,\d y \lesssim \frac{R_j^{\alpha}}{\lambda_j} \int_{\widetilde B^+_j} \lvert \D \varphi\rvert \,\d y,
  \end{equation} 
  where we have used the $\alpha$-H\"older continuity of $F.$ For the second term we apply the Sobolev inequality to estimate
  \begin{equation}
    \begin{split}
      I_2 &\lesssim \norm{f_j}_{\LL^n(\widetilde B^+_j)} \left(\int_{\widetilde B^+_j} \lvert\varphi\rvert^{\frac{n}{n-1}}\,\d y\right)^{\frac{n-1}{n}}\\
          &\lesssim \frac{R_j^{\alpha}}{\lambda_j} \norm{f}_{\LL^{\frac{n}{1-\alpha}}(B^+)}\int_{\widetilde B^+_j} \lvert \D\varphi\rvert\,\d y.
    \end{split}
  \end{equation} 
  More precisely we used Lemma \ref{lem:precise_poincaresobolev}, noting this gives an extra term which we estimate by $\lvert (u)_{\widetilde B^+_j}\rvert \leq C \norm{\D_1 u}_{\LL^1(\widetilde B^+_j)},$ integrating along one of the tangential directions.
  Finally, for $I_3$, we use the cancellation condition and Gagliardo's trace theorem (noting $\widetilde\Gamma_j$ is flat) to estimate
  \begin{equation}
    I_3 = \frac1{\lambda_j} \int_{\widetilde\Gamma_j} \left( g(x_j+R_jy) - g(x_j) \right) \cdot \varphi \,\d\H^{n-1} \lesssim \frac{R_j^{\alpha}}{\lambda_j} \int_{\widetilde\Gamma_j} \lvert\varphi\rvert \,\d \H^{n-1} \lesssim \frac{R_j^{\alpha}}{\lambda_j} \int_{\widetilde B^+_j} \lvert \D \varphi\rvert \,\d y,
  \end{equation} 
  Combining the estimates we conclude that
  \begin{equation}\label{eq:vj_extremal}
    0 \leq \int_{\widetilde B^+_j} \partial_zF_j(y,\D v_j)\cdot\D\varphi_j\,\d x + \frac{CR_j^{\alpha}}{\lambda_j} \int_{\widetilde B^+_j} \lvert \D \varphi_j\rvert\,\d y.
  \end{equation} 

  \textbf{Step 3} (Limit is harmonic): Passing to a subsequence, we obtain a weak limit $v_j \rightharpoonup v$ in $\WW^{1,\min\{2,p\}}(B^+,\bb R^N).$ We will show this satisfies a constant-coefficient equation. Let $\varphi \in \WW^{1,\infty}(\overline B^+,\bb R^N)$ with $\varphi = 0$ on $B^+\cap \R^{N-1}_+$. We claim that we may extend $\varphi$ to $\varphi_j$ defined on $\widetilde B^+_j$ so that $\varphi_j = 0$ on $\widetilde B^+_j\setminus \widetilde\Gamma_j$ and moreover $\|\varphi_j\|_{\WW^{1,\infty}(\widetilde B^+_j)}\lesssim \|\varphi\|_{\WW^{1,\infty}(B^+)}$ as well as $\int_{\widetilde\Gamma_j}\varphi_j = 0$, where the implicit constant is independent of $j$. In addition, we may ensure that $\varphi_j\to\varphi$ in $\WW^{1,\infty}(B^+) $. We postpone the construction of $\varphi_j$ to Lemma \ref{lem:phij} after the proof.
  
  We now wish to send $j \to \infty$ in \eqref{eq:vj_extremal}, for which we split $\widetilde B^+_j = E_j^+ \cup E_j^-$ where
  \begin{equation}
    E_j^+ = \{ y \in \widetilde B^+_j : \lambda_j \lvert \D v_j\rvert > 1 \},\quad E_j^- = \{ y \in \widetilde B^+_j : \lambda_j \lvert \D v_j\rvert \leq 1 \}.
  \end{equation} 
  By Markov's inequality we have
  \begin{equation}
    \mathscr L^n(E_j^+) \leq \int_{E_j^+} \lvert V_p(\lambda_j \D v_j)\rvert^2 \,\d y \leq C \lambda_j^{\min\{2,p\}},
  \end{equation} 
  noting that $\lambda_j \leq 1.$ Hence since $q \leq p+1,$ putting $r = \min\{2,p\}$ and using H\"older we can bound
  \begin{equation}
    \begin{split}
      &\int_{E_j^+} \p_zF_j(y,\D v_j) \cdot \D\varphi_j  \,\d y \\
\lesssim& \lambda_j^{-1}\mathscr L^n(E_j^+) + \lambda_j^{\frac{qr-r-p}p} \left(\int_{E_j^+} \lambda_j^{p-r}\lvert\D v_j\rvert^p \,\d y\right)^{\frac{q-1}p} \mathscr{L}^n(E_j^+)^{\frac{p-q+1}p} 
      \lesssim  \lambda_j^{r-1}.
    \end{split}
  \end{equation} 
  Hence this vanishes as $j \to \infty.$

  On $E_j^-$ we have
  \begin{equation}
    \int_{E_j^-} \p_zF_j(y,\D v_j)\,\d y 
    = \int_{E_j^-} \int_0^1 \p_z^2F(x_j+R_jy,A_j+t\lambda_j\D v_j) \D v_j \cdot \D \varphi_j \,\d t \,\d y.
  \end{equation} 
  Note that $\chi_{E_j^-} \to \chi_{B^+},$ $\lambda_j \D v_j \to 0$ almost everywhere in $B^+,$ so by local uniform continuity of $\p_z^2 F$ it follows that
  \begin{equation}
    \int_0^1 \p_z^2F(x_j+R_jy,A_j+t\lambda_j\D v_j)\,\d t \longrightarrow \p_z^2 F(x_0,A_0)
  \end{equation} 
  strongly in $\LL^{\frac{r}{r-1}}(B^+)$ by the dominated convergence theorem. Hence as $\{\D\varphi_j \chi_{E_j}\}$ is bounded, converges to $\D\varphi$ almost everywhere in $B_1^+$, and $\D v_j \rightharpoonup \D v$ weakly in $\LL^r(B^+)$ we deduce that
  \begin{equation}
    \lim_{j \to \infty} \int_{E_j^+}  \p_zF_j(y,\D v_j)\cdot \D \varphi_j\,\d y  = \int_{B^+} \p_z^2F(x_0,A_0) \D v \cdot \D \varphi \,\d y.
  \end{equation} 
  Combining with \eqref{eq:vj_extremal} and noting the same holds for $-\varphi,$ we deduce that
  \begin{equation}\label{eq:harmonicV}
    \int_{B^+} \p_z^2F(x_0,A_0) \D v \cdot \D \varphi \,\d y = 0.
  \end{equation} 
  We now claim $v$ lies in $C^1(\overline B_{\tau}^+)$ for each $\tau \in (0,\frac12)$ with the associated estimate
   \begin{equation}\label{eq:harmonic_energyscaling}
     \left( \dashint_{B_{\tau}^+} \lvert V_p(\D v) - (V_p(\D v))_{B_{\tau}^+}\rvert^2 \,\d y\right)^{\frac12} \leq C \tau \left\lvert V_{p,\mu}\left(\dashint_{B_1^+} \lvert \D v - (\D v)_{B_{1}^+}\rvert \,\d y\right)\right\rvert,
  \end{equation} 
  To see this, let $\eps >0$, $\eta_{\eps}$ be a standard mollifier in $\mathbb R^{n-1}$, and set $v_{\eps} = \left(v(\cdot,x_n) \ast \eta_{\eps}\right)(x')$.
  Then $v_{\eps}$ satisfies \eqref{eq:harmonicV} in place of $v$ in $B_{1-\eps}^+$, so by standard interior regularity theory (see for instance \cite[Section 11.1.10]{Hormander1983}) we know that $v_{\eps} \in C^{\infty}(B^+_{1-\eps})$ and that
  \begin{equation}
    -\div \partial_z^2 F(x_0,A_0) D v_{\eps} = 0
  \end{equation} 
  holds in $B_{1-\eps}^+$.
  From the mollification we have $\D_{x'}^k\D v_{\eps} \in \LL^r(B_{1-\eps}^+)$ for each $k$, so by using the equation we inductively find that $\partial_{x_n}^2v_{\eps} \in \WW^{k,r}(B_{1-\eps}^+)$ for each $k \geq 0$.
  In particular $v_{\eps} \in W^{1,2}(B_{1-\eps}^+)$ by Sobolev embedding; note this step is only necessary in the case $p<2$.
  We now can deduce energy bounds by testing $v_{\eps}$ against $\varphi_h = \Delta_{-he_i}^k(\chi^2 \Delta_{he_i}^k v_{\eps}) \in W^{1,\infty}(B_{1-\eps}^+)$ for $k \geq 1,$ $1 \leq i \leq n-1$ and suitable cutoffs $\chi.$
  However this is only admissible provided $\int_{\Gamma} \varphi_h \,\d\H^{n-1} = 0$, for which we use the fact that $u$ is only defined up a constant.
  In particular, changing $u\to u+\lambda_j R_j b$, changes $v_j\to v_j + b$ and hence $v_{\eps}\to v_{\eps}+b$. Note that these changes do not impact any of our estimates.
  Thus, with the choice $b_h=-(\Delta_{-h e_i}^k(\eta^2 \Delta_{he_i}^k v_\e)_{\Gamma})$, $\varphi_h$ is a valid test function, so by standard arguments following \cite[Proposition 2.10]{Carozza1998} we deduce that
  \begin{equation}
    \begin{split}
      \sup_{B_{t(1-\eps)}^+} \lvert \D v_{\eps}\rvert 
      &\leq  \frac{C}{(s-t)^{n}} \left(\int_{B_{(1-\eps)s}^+} \lvert \D v_{\eps}\rvert^2 \,\d x \right)^{\frac12} \\
      &\leq \frac12 \sup_{B_{(1-\eps)s}^+} \lvert \D v_{\eps}\rvert + \frac{C}{(s-t)^{2n}} \int_{B_{(1-\eps)s}^+} \lvert \D v_{\eps}\rvert\,\d x
    \end{split}
  \end{equation} 
  for $0 < t < s <1$. 
  By iteration we can absorb the first term in the second line. 
  Combining this with a uniform estimate for $\D^2v_{\eps}$ we obtain
  \begin{equation}
     \sup_{B_{\frac{1-\eps}2}^+} \lvert \D v_{\eps}\rvert + \sup_{B_{\frac{1-\eps}2}^+} \lvert \D^2 v_{\eps}\rvert \lesssim \left(\int_{B_{1-\eps}^+} \lvert \D v_{\eps}\rvert^2\,\d x\right)^{\frac12} \lesssim \int_{B_{1-\eps}^+} \lvert \D v_{\eps}\rvert \,\d x.
  \end{equation} 
  Finally estimating $\lvert \D V_p(Dv_{\eps})\rvert \lesssim \lvert D^2v_{\eps}\rvert (1+ \lvert \D v_{\eps}\rvert)^{\frac{p-2}2}$, for each $\tau \in (0,\frac12)$ we can estimate
  \begin{equation}
    \begin{split}
      \left( \dashint_{B_{\tau(1-\eps)}^+} \lvert V_p(\D v_{\eps}) - (V_p(\D v_{\eps}))_{B_{\tau(1-\eps)}^+}\rvert^2 \,\d y\right)^{\frac12}  
      &\lesssim  \tau \sup_{B^+_{\frac{1-\eps}2}} \left( \lvert \D^2v_{\eps}\rvert ( 1 + \lvert \D v_{\eps}\rvert)^{\frac{p-2}2}\right) \\
      &\lesssim \tau \left\lvert V_{p,\mu}\left(\dashint_{B_{1-\eps}^+} \lvert \D v_{\eps}\rvert \,\d y\right)\right\rvert.
    \end{split} 
  \end{equation}
  By linearity we can replace $v_{\eps}$ by $v_{\eps} - (\D v_{\eps})_{B_{1-\eps}^+} x$ in the above estimate, after which sending $\eps \to 0$ gives \eqref{eq:harmonic_energyscaling}.


  \textbf{Step 4} (Conclusion): We now consider the affine maps
  \begin{align}
    a_{j,2\tau}(x) &= (u)_{B^+_{2\tau R_j}(x_j)} + (\D u)_{B^+_{2\tau R_j}(x_j)} \cdot (x-x_j). \\
    \tilde a_{j,2\tau}(y) &= (v_j)_{\widetilde B^+_{j,2\tau}} + (\D v_j)_{\widetilde B^+_{j,2\tau}} \cdot y
  \end{align} 
approximating $u$ and $v_j$ respectively, where $\widetilde B^+_{j,2\tau} = R_j^{-1}(B^+_{2\tau R_j}(x_j)-x_j).$ Then by the Caccioppoli-type inequality (Lemma \ref{eq:caccioppoli_neumann}) we have
  \begin{equation}
    \begin{split}
      \frac1{\lambda_j^2} E(x_j,\tau R_j)
      &\lesssim \frac1{\lambda_j^2} \dashint_{B^+_{2\tau R_j}(x_j)} \left\lvert V_p\left( \frac{u-a_{2\tau R_j}}{2\tau R_j} \right) \right\rvert^2 \,\d x + \frac{\tau^{2\beta}R_j^{2\beta}}{\lambda_j^2} + \frac{\lvert V_{p^\prime}(\tau^{\alpha}R_j^{\alpha}L)\rvert^2}{\lambda_j^2}  \\
      &\quad+ \lambda_j^{\frac2p(q-p)} \left( \frac1{\lambda_j^2} E(x_j,2\tau R_j) + \frac1{\lambda_j^2} \dashint_{B^+_{2\tau R_j}(x_j)} \left\lvert V_p\left( \frac{u-a_{2\tau R_j}}{2\tau R_j} \right) \right\rvert^2 \,\d x\right)^{\frac qp}.
    \end{split}
  \end{equation} 
  Note that $\frac{R_j^{2\beta}}{\lambda_j^2} \leq 1,$ and sending $j \to \infty$ we have $$\lambda_j^{-2}\lvert V_{p^\prime}(\tau^{\alpha}R_j^{\alpha}L)\rvert^2 \sim  \lambda_j^{-2} \tau^{2\alpha} R_j^{2\alpha} \to 0.$$ Also by the Poincar\'e inequality (Lemma \ref{lem:precise_poincaresobolev}) we can estimate
  \begin{equation}\label{eq:excess_lowerorder_bound}
    \frac1{\lambda_j^2} \left(\dashint_{B^+_{2\tau R_j}(x_j)} \left\lvert V_p\left( \frac{u-a_{2\tau R_j}}{2\tau R_j} \right) \right\rvert^{2r} \,\d x \right)^{\frac1r}\lesssim \frac1{\lambda_j^2} E(x_j,2\tau R_j)  \lesssim \frac{\tau^{-n}}{\lambda_j^2} E(x_j,R_j) \lesssim 1
  \end{equation} 
  for $1 \leq r \leq \frac{n}{n-1}.$
  Taking $r=1$ and noting that $\lambda_j^{\frac2p(q-p)} \to 0 $ as $j \to 0,$ it follows that
  \begin{equation}
    \limsup_{j \to \infty}\frac1{\lambda_j^2} E(x_j,\tau R_j) \leq \limsup_{j \to \infty} \frac1{\lambda_j^2} \dashint_{\widetilde B^+_{j,2\tau}} \left\lvert V_p\left( \frac{v_j-\tilde a_{j,2\tau}}{2\tau} \right) \right\rvert^2 \,\d x + \tau^{2\beta}.
  \end{equation} 
  To pass to the limit in the first term, observe by \eqref{eq:excess_lowerorder_bound} that the sequence
  \begin{equation}
    \frac1{\lambda_j^2} \left\lvert V_p\left( \frac{v_j-\tilde a_{j,2\tau}}{2\tau} \right) \right\rvert^2 \chi_{\widetilde B_{j,2\tau}}
  \end{equation} 
  is uniformly bounded in $L^{\frac{n}{n-1}}(B_{2\tau})$ and hence is uniformly integrable.
  Now by strong $\LL^p$ convergence we have $v_j \to v$ a.e.\ in $B_{2\tau}^+$ and ${\tilde a_{j,2\tau} \to \tilde a(y) = (v)_{B^+(\tau)} + (\D v_j)_{B^+_{\tau}} \cdot y}$ uniformly in $B_{2\tau}^+,$ so by Vitali's convergence theorem and also \eqref{eq:harmonic_energyscaling} we deduce that
  \begin{equation}
    \limsup_{j \to \infty}\frac1{\lambda_j^2} E(x_j,\tau R_j) \lesssim \dashint_{B_{2\tau}^+} \lvert V_p(\D v - (\D v)_{B^+_{2\tau}})\rvert^2 \,\d y + \tau^{2\beta} \lesssim \tau^{2\beta}.
  \end{equation} 
  Hence there is $C_M>0$ for which
  \begin{equation}
    \widetilde C_M \tau^{2\beta} \leq \frac1{\lambda_j^2} E(x_j,\tau R_j) \leq C_M\tau^{2\beta}
  \end{equation} 
  for $j$ sufficiently large, and so choosing $\widetilde C_M > C_M$ gives the desired contradiction.
\end{proof}

We now prove the following Lemma, which guarantees the existence of maps $\{\varphi_j\}$ with the properties claimed in Step 3 of the previous proof.
\begin{lemma}\label{lem:phij} We use the notation of Lemma \ref{lem:excess_estimate}.
  Let $\varphi \in \WW^{1,\infty}(\overline B^+,\bb R^N)$ with $\varphi = 0$ on $B^+\cap \R^{N-1}_+$. Then there are $\{\varphi_j\}$ so that $\varphi_j = 0$ on $\widetilde B^+_j\setminus \widetilde\Gamma_j$ and $\int_{\widetilde\Gamma_j}\varphi_j \,\d\H^{n-1}= 0$. Moreover, $\varphi_j\to\varphi$ in $\WW^{1,\infty}(B^+) $.
\end{lemma}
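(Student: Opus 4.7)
The plan is to construct $\varphi_j$ in three steps: extend $\varphi$ to $B_1$ by an even reflection preserving the boundary-vanishing condition, restrict to $\widetilde B_j^+$, then correct via a Lipschitz cutoff to enforce the mean-zero condition on $\widetilde\Gamma_j$. Since $\varphi$ vanishes on the curved part of $\p B^+$ by hypothesis, the even reflection $\tilde\varphi(y',y_n) := \varphi(y',|y_n|)$ defines an element of $\WW^{1,\infty}(B_1;\R^N)$ with $\|\tilde\varphi\|_{\WW^{1,\infty}(B_1)} \lesssim \|\varphi\|_{\WW^{1,\infty}(B^+)}$ that vanishes on all of $\p B_1$. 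Passing to a subsequence, $h_j := (x_j)_n/R_j \to a \in [0,\infty]$; the case $a=\infty$ corresponds to an interior blow-up treated separately in the proof of Lemma \ref{lem:excess_estimate}, so we focus on $a \in [0,\infty)$. Then $\widetilde B_j^+ \subset B_1$, $\widetilde\Gamma_j \to \{y_n=-a\}\cap B_1$, and the restriction $\tilde\varphi|_{\widetilde B_j^+}$ already vanishes on the curved boundary $\p\widetilde B_j^+ \setminus \widetilde\Gamma_j \subset \p B_1$.

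Next, fix a radially symmetric cutoff $\eta \in C_c^\infty(B_1)$ with $\eta \equiv 1$ on $B_{1/2}$ and set
$$\varphi_j := \tilde\varphi|_{\widetilde B_j^+} - c_j\,\eta, \qquad c_j := \Big(\int_{\widetilde\Gamma_j}\eta\,\d\H^{n-1}\Big)^{-1}\int_{\widetilde\Gamma_j}\tilde\varphi \,\d\H^{n-1},$$
where the denominator is bounded uniformly from below for $j$ large, as $\widetilde\Gamma_j$ stays in a compact subset of $B_1$ containing a fixed neighbourhood of $\{y_n=-a\}\cap B_{1/2}$ on which $\eta \equiv 1$. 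By construction $\varphi_j = 0$ on $\p\widetilde B_j^+\setminus\widetilde\Gamma_j$, $\int_{\widetilde\Gamma_j}\varphi_j\,\d\H^{n-1} = 0$, and $\|\varphi_j\|_{\WW^{1,\infty}(\widetilde B_j^+)} \lesssim \|\varphi\|_{\WW^{1,\infty}(B^+)}$.

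The main point to verify is the convergence $\varphi_j \to \varphi$ in $\WW^{1,\infty}(B^+)$. Since $\tilde\varphi = \varphi$ on $B^+$, we have $\varphi_j - \varphi = -c_j\,\eta$ there, and as $\|\eta\|_{\WW^{1,\infty}(B^+)}$ is a fixed constant, it suffices to show $c_j \to 0$. This is the main obstacle: by dominated convergence $c_j$ tends to a constant multiple of $\int_{\{y_n=-a\}\cap B_1}\tilde\varphi\,\d\H^{n-1}$, and for $a=0$ this limit is proportional to $\int_\Gamma \varphi\,\d\H^{n-1}$, which is generically nonzero. The resolution exploits the translation invariance already used in Step 3 of the proof of Lemma \ref{lem:excess_estimate}: since the limiting equation \eqref{eq:harmonicV} involves only $\D\varphi$ and is therefore invariant under adding constants to $\varphi$, it is enough to establish it for test functions normalised by $\int_\Gamma \varphi\,\d\H^{n-1} = 0$, under which the above computation yields $c_j \to 0$; in the regime $a>0$ one may alternatively choose a $j$-dependent cutoff $\eta_j$ supported in $\{y_n<-\tfrac{a}{2}\}$ with uniformly bounded $\WW^{1,\infty}$ norm, so the correction term vanishes identically on $B^+$. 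In either case the desired convergence follows.
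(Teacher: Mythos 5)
Your construction is correct, but it takes a genuinely different route from the paper. The paper builds explicit bi-$C^2$ diffeomorphisms $f_j$ (projections along the cone $C_j$ having $\Gamma$ and $\widetilde\Gamma_j$ as sections) with $f_j(\Gamma)=\widetilde\Gamma_j$ and $f_j\to\operatorname{id}$ in $C^2$, and sets $\varphi_j=\lvert Jf_j\rvert\,\varphi\circ f_j^{-1}$ extended by zero; the Jacobian weight makes the flux invariant, $\int_{\widetilde\Gamma_j}\varphi_j=\int_\Gamma\varphi$, and the convergence $\varphi_j\to\varphi$ falls out of $f_j\to\operatorname{id}$. You instead reflect evenly across $\Gamma$, restrict to $\widetilde B^+_j$, and subtract $c_j\eta$ to restore the mean-zero condition -- more elementary (no cone maps to compute), at the cost of a case split on $a=\lim h_j$ and a subsequence extraction, which is harmless since the lemma is only invoked inside a compactness argument that already passes to subsequences. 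The obstruction you isolate is real: when $h_j\to0$, any family with $\int_{\widetilde\Gamma_j}\varphi_j=0$ and $\varphi_j\to\varphi$ in $\WW^{1,\infty}(B^+)$ forces $\int_\Gamma\varphi=0$, so the lemma is only provable under this normalisation; the paper's own proof uses it silently in the line ``$\int_{\widetilde\Gamma_j}\lvert Jf_j\rvert\,\varphi\circ f_j^{-1}=\int_\Gamma\varphi=0$''. Your appeal to the translation invariance of \eqref{eq:harmonicV} to reduce to mean-zero test functions is exactly how the lemma is deployed in Step 3 of the proof of Lemma \ref{lem:excess_estimate}, so the restriction costs nothing in the application. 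Two details to tidy: for $a\in(0,1)$ the cutoff $\eta_j$ must also vanish near $\p B_1$ (not merely be supported in $\{y_n<-a/2\}$) so that $\varphi_j$ still vanishes on $\p\widetilde B^+_j\setminus\widetilde\Gamma_j$, and the lower bound on $\int_{\widetilde\Gamma_j}\eta_j$ should be justified by $\H^{n-1}(\widetilde\Gamma_j)\gtrsim(1-a^2)^{(n-1)/2}>0$; both are easily arranged.
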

\begin{proof}
Consider the cone $C_j$, which is uniquely determined by having sections $\Gamma$ and $\widetilde\Gamma_j$. Denote by $C_j^\prime= C_j\cap \{x_n\leq 1\}$. We construct a family of invertible, uniformly bi-$C^2$-maps $f_j\colon C_j^\prime\cap \{x_n\geq 0\}\to (C_j^\prime\cap \{x_n>0\})\cup (C_j^\prime \cap \widetilde B^+_j)$, such that
  \begin{align*}
    f_j(\Gamma)=\widetilde\Gamma_j \quad f(B_1^+)\subset C_j\cap \widetilde B^+_j \subset f(C_j^\prime)
	\end{align*}
	and further
	\begin{align}\label{eq:convergenceF_j}
	f_j\to \tp{id} \text{ in } C^2(B_1^+) \quad f_j^{-1}\to \tp{id} \text{ in } C^2(f_j(B_1^+)).
	\end{align}
  Once, $\{f_j\}$ is constructed, we extend $\varphi$ by $0$ to $\R^{n-1}_+$ and set
  \begin{align*}
  \varphi_j = \begin{cases}
  	\lvert J f_j\rvert\varphi\circ f_j^{-1} \text{ in } C_j\\
  	0 \text{ else }.
  	\end{cases}
  \end{align*}
  Using the properties of $\{f_j\}$ (see Fig.\ref{fig:1}), we find that
  \begin{align*}
  \|\lvert J f_j\rvert\varphi\circ f_j^{-1}\|_{\WW^{1,\infty}(\widetilde B^+_j)}\lesssim \|\varphi\|_{\WW^{1,\infty}(B^+)}, \qquad \int_{\widetilde\Gamma_j} \lvert f_j\rvert\varphi \circ f_j^{-1} = \int_{\Gamma} \varphi = 0
  \end{align*}
  with implicit constants independent of $j$.
  
\begin{figure}
 \centering
 \resizebox{0.5\textwidth}{!}{
\definecolor{uququq}{rgb}{0.25,0.25,0.25}
\begin{tikzpicture}[line cap=round,line join=round,>=triangle 45,x=1.0cm,y=1.0cm]
\draw[->,color=black] (-7,0) -- (7,0);
\foreach \x in {-7,-6,-5,-4,-3,-2,-1,1,2,3,4,5,6}
\draw[shift={(\x,0)},color=black] (0pt,2pt) -- (0pt,-2pt) node[below] {\footnotesize $\x$};
\draw[->,color=black] (0,-8) -- (0,5);
\foreach \y in {-8,-7,-6,-5,-4,-3,-2,-1,1,2,3,4}
\draw[shift={(0,\y)},color=black] (2pt,0pt) -- (-2pt,0pt) node[left] {\footnotesize $\y$};
\draw[color=black] (0pt,-10pt) node[right] {\footnotesize $0$};
\clip(-7,-8) rectangle (7,5);
\draw [dash pattern=on 4pt off 4pt] (0,0) circle (4cm);
\draw [domain=-7:7] plot(\x,{(-0-0*\x)/4});
\draw (-2.24,-3.32)-- (2.24,-3.32);
\draw [dash pattern=on 4pt off 4pt] (-6.13,4)-- (-2.24,-3.32);
\draw [dash pattern=on 4pt off 4pt] (2.24,-3.32)-- (6.13,4);
\draw [dash pattern=on 4pt off 4pt] (-6.13,4)-- (6.13,4);
\draw [dash pattern=on 4pt off 4pt] (2.24,-3.32)-- (0,-7.53);
\draw [dash pattern=on 4pt off 4pt] (-2.24,-3.32)-- (0,-7.53);
\draw (-4,0)-- (4,0);
\draw [black, xshift=0cm, domain=0:180] plot(\x:4);
\draw (-4,0)-- (-2.24,-3.32);
\draw (2.28,-3.23)-- (4,0);
\draw (-1.48,3.44) node[anchor=north west] {$ \widetilde B^+_j $};
\begin{scriptsize}
\fill [color=uququq] (0,-7.53) circle (1.5pt);
\draw[color=uququq] (0.74,-7.52) node {$(0,-a))$};
\draw[color=black] (1.74,-3.) node {$\widetilde\Gamma_j$};
\fill [color=uququq] (0,-3.32) circle (1.5pt);
\draw[color=uququq] (0.66,-3.06) node {$(0,-h_j)$};
\draw[color=black] (2.22,0.3) node {$\Gamma$};
\end{scriptsize}
\end{tikzpicture}
}
\caption{$\widetilde B^+_j$ and $C_j^\prime$}\label{fig:1}
\end{figure}  
  
  It remains to construct $\{f_j\}$.
  If $\widetilde B^+_j=B^+$, we may set $f_j = \tp{id}$. Hence we may assume $h_j = x_{j,n}>0$. Note that then $\widetilde B^+_j = B_1\cap \{x_n>-h_j\}$.
 It is straightforward to check that $C_j$ is centred at $-a$ where $a=\frac{h}{1-\sqrt{1-h^2}}$ and has aperture $\alpha = \frac{1-\sqrt{1-h^2}}{h}$. We set, writing $x=(x^\prime,x_n)$,
  $$
  f_j(x^\prime,x_n)= \left(\frac{a-h+(1+h)x_n}{x_n+a}\,x^\prime,\left(-h+(1+h)x_n\right)\right).
  $$
  We find
  $$
  f_j^{-1}(y^\prime,y_n)=\left(\frac{\frac{y_n+h}{1+h}+a}{a+y_n}\,y^\prime,\frac{y_n+h}{1+h}\right)
  $$
  Note that as $h\to 0$, $a\to\infty$. Thus, ensuring that $h_j<c_0$, we may assume that $a-h>1$. Then $x_n+a>1$ and $a+y_n>1$, so that $f_j$ and $f_j^{-1}$ are smooth. Moreover, using again that $a\to\infty$ as $h\to 0$, it is a straightforward calculation to check that the desired convergence \eqref{eq:convergenceF_j} holds.
\end{proof}
 
We will conclude by sketching the necessary modifications to the proof of Lemma \ref{lem:excess_estimate} in the Dirichlet case.

\begin{proof}[Proof of Lemma \ref{lem:excess_estimate} in the Dirichlet case]
  We will establish a decay estimate for the modified excess
  \begin{equation}
    \widetilde E(x_0,R) = \dashint_{\Omega_R(x_0)} \lvert V_p(\D u - \D g - (\D u)_{\Omega_R(x_0)} + \D g(x_0))\rvert^2 \,\d x + R^{\beta}.
  \end{equation} 
  We suppose otherwise, then we can find $x_j \in \overline B_{1/2}^+$ and $R_j>0$ such that
  \begin{equation}
    \lvert (\D u)_{B_{R_j}^+(x_j)}\rvert \leq M , \text{ and } \lambda_j = \widetilde E(x_j, R_j) \to 0
  \end{equation} 
  with each $\lambda_j \leq 1,$ and for $\widetilde C_M>0$ to be chosen we have
  \begin{equation}
    \widetilde E(x_j,\tau R_j) \geq \widetilde C_M \tau^{2\beta} \lambda_j^2.
  \end{equation} 
  Let $\tilde g(x) = g(x) - g(x_0) - \D g(x_0) \cdot (x-x_0),$ we will consider the rescaled sequence
  \begin{equation}
    v_j(y)  = \frac{u(x_j+R_jy) - a_j(y) - \tilde g(x_j + R_jy)}{\lambda_j R_j}.
  \end{equation} 
  Arguing as in the Neumann case, we see that $v_j$ is bounded in $W^{1,\min\{2,p\}}(B^+),$ and we can assume that $x_j \to x_0 \in \Gamma$ and that $R_j \to 0.$
  We now use the fact that each $v_j$ is extremal with respect to the shifted integrand
  \begin{equation}
    F_j(y,z) = \lambda_j^{-2} F_{A_j + D\tilde g(x)}(x_j + R_jy,\lambda_jz),
  \end{equation} 
  to deduce the estimate
  \begin{equation}
    0 \leq \int_{\widetilde B_j^+} \p_z F_j(y,\D v_j) \cdot D\varphi_j \,\d x + \frac{CR^{\alpha}_j}{\lambda_j} \int_{\widetilde B_j^+} \lvert D \varphi_j\rvert \,\d x
  \end{equation} 
  for any $\varphi_j \in W^{1,\infty}_0(\widetilde B_j^+).$
  Here we are using Lemma \ref{lem:relaxedEuler}. This will involve estimating
  \begin{equation}
    \frac1{\lambda_j^2}\int_{\widetilde B_j^+} \p_z F(x_0+R_jy,\lambda_j\D a + \lambda_j\D \tilde g(x)) \cdot \D \varphi_j \,\d x \leq \frac{CR^{\alpha_j}}{\lambda_j} \int_{\widetilde B_j^+} \lvert D\varphi_j\rvert \,\d x,
  \end{equation} 
  using \eqref{eq:shifted_holder_boundary} and the fact that $\varphi_j$ vanishes on $\p\widetilde B_j^+.$
  The remainder of the proof is analogous to the Neumann case. We note that for $\varphi \in W^{1,\infty}_0(B^+),$ we can simply extend it by zero to $\widetilde B_j^+$ to obtain $\varphi_j,$ from which we obtain the limit map $v$ is harmonic.
  Moreover since $v$ is affine on $\Gamma,$ we deduce the same the decay estimate \eqref{eq:harmonic_energyscaling} as in the Neumann case using results in \cite[Chapter 10]{Giusti2003}.
  Finally applying Lemma \ref{eq:dirichlet_caccioppoli} we can argue analogously as in Step 4 to obtain a contradiction.
  Hence we infer a decay estimate of the form
  \begin{equation}
    \widetilde E(x_0,\tau R) \leq C_M \tau^{2\beta} \widetilde E(x_0,R).
  \end{equation} 
  The corresponding estimate for $E(x_0,R)$ follows by \eqref{eq:vfunction_additive} and noting that
  \begin{equation}
    \dashint_{B_R} \lvert V_p(\D \tilde g)\rvert \,\d x \leq C \lvert V_p(R^{\beta})\rvert^2 \leq C R^{2\beta}
  \end{equation} 
  for $R \leq 1.$
\end{proof}

\subsection{Iteration of excess and conclusion} 

We can now conclude by a standard iteration argument. Note that we can treat the Dirichlet and Neumann problems simultaneously, since we established the same excess decay estimate.

\begin{proof}[Proof of Theorem {\ref{thm:eps_regularity}}]
  Let $x_0$ and $R>0$ be such that
  \begin{equation}
    \lvert(\D u)_{B^+_R(x_0)}\rvert \leq M, \text{ and } E(x_0,R) < \eps,
  \end{equation} 
  where $\eps>0$ is to be specified. Then for any $x \in B_{R/2}^+(x_0)$ we have
  \begin{equation}
    \lvert(\D u)_{B^+_{R/2}(x)}\rvert \leq 2^{n+1}M, \text{ and } E(x,R/2) \leq C_*(n,p) \eps,
  \end{equation} 
  using standard properties of $V$-functions. Choosing $\eps_0>0$ to be as in Lemma \ref{lem:excess_estimate} associated to $2^{n+2}M$ and some $\tau \in (0,\frac14),$ we require $C_*(n,p)\eps < \eps_0,$ so it follows that
  \begin{equation}
    E(x,\tau R/2) \leq C_M \tau^{2\beta} E(x,R/2) \leq C_M C_*(n,p)\tau^{2\beta} \eps.
  \end{equation} 
  By shrinking $\eps>0$ further, if necessary, we claim that
  \begin{equation}
    \lvert(\D u)_{B^+_{\tau^{k}R/2}(x)}\rvert \leq 2^{n+2}M, \text{ and } E(x,\tau^{k}R/2) \leq C_M \tau^{2k\beta} E(x,R/2)
  \end{equation} 
  for all $k \geq 0.$ This can be checked inductively, by using Jensen's inequality to estimate
  \begin{equation}
    \lvert (\D u)_{B^+_{\tau^{k+1}R/2}(x)} - (\D u)_{B^+_{\tau^kR/2}(x)}\rvert \leq \tau^{-n} e_p^{-1}\left( E(x,\tau^kR/2) \right),
  \end{equation} 
  where $e_p(t) = (1+t^2)^{\frac{p-1}2}t^2.$ Noting that $e_p^{-1}(t) \sim \sqrt{t}$ for $t>0$ sufficiently small, we can estimate
  \begin{equation}
    \lvert (\D u)_{B^+_{\tau^kR/2}(x)}\rvert \leq 2^{n+1}M + C\sigma^{-n}  \sqrt{\eps}\sum_{i=0}^{\infty} \sigma^{i\beta},
  \end{equation} 
  which is less than $2^{n+2}M$ if $\eps>0$ is sufficiently small. This shows that
  \begin{equation}
    E(x,r) \leq C \left( \frac{r}{R} \right)^{\beta}
  \end{equation} 
  for all $x \in B^+_{R/2}(x_0)$ and $0<r<R,$ verifying the Campanato-Meyers characterisation of H\"older continuity (see for instance \cite[Theorem 2.9]{Giusti2003}).
\end{proof}

To deduce Corollary \ref{eq:dimension_estimates}, straightening the boundary and employing a covering argument, it suffices to consider the case where $\Omega=B_1^+$.
We note that if $v\in \WW^{\theta,p}(\Omega,\R^N)$, we have, c.f. \cite{Mingione2003},
 \begin{align}\label{eq:dimensionEstimate}
& \tp{dim}_{\mathscr{H}} \left\{x\in\Omega\colon \limsup_{r\to 0} \lvert(v)_{\Omega_r(x)}\rvert = \infty \right\}\leq n-\theta p\\
&\tp{dim}_{\mathscr{H}} \left\{x\in\Omega\colon \liminf_{r \to 0} \dashint_{\Omega_r(x)} \left\lvert V_p(\D u - (\D u)_{\Omega_r(x)}) \right\rvert^2 > 0\right\}\leq n-\theta p,
 \end{align}
 Combining the results of Section \ref{sec:relaxed} with Corollary \ref{cor:singularPoints} the assertion follows, as does Theorem \ref{thm:regularBoundaryPoints} and Theorem \ref{thm:nonHomogeneousNeumann}.

\vfill
\pagebreak

\subsection*{Declarations}

CI was supported by the Engineering and Physical Sciences Council [EP/L015811/1].

\subsection*{Acknowledgments}

The authors would like to thank Jan Kristensen for the many helpful discussions and comments, and Lars Diening for suggesting the papers \cite{EbmeyerLiuSteinhauer2005,Ebmeyer2005}.
In addition the authors are grateful to the referees, for their careful reading of the manuscript and their helpful suggestions.

\subsection*{Conflict of interest}

The authors declare that they have no conflict of interest.

\appendix

\bibliographystyle{siam}
\bibliography{../../bibtex/Neumann}

\begin{thebibliography}{10}

\bibitem{Acerbi2003}
{\sc E.~Acerbi, G.~Bouchitt{\'{e}}, and I.~Fonseca}, {\em {Relaxation of convex
  functionals: The gap problem}}, Ann. l'Institut Henri Poincare Anal. Non
  Lineare, 20 (2003), pp.~359--390.

\bibitem{Acerbi1994a}
{\sc E.~Acerbi and G.~{Dal Maso}}, {\em {New lower semicontinuity results for
  polyconvex integrals}}, Calc. Var. Partial Differ. Equ., 2 (1994),
  pp.~329--371.

\bibitem{Acerbi1994}
{\sc E.~Acerbi and N.~Fusco}, {\em {Partial regularity under anisotropic (p,q)
  growth conditions}}, J. Differ. Equ., 107 (1994), pp.~46--67.

\bibitem{Adams2003}
{\sc R.~Adams and J.~Fournier}, {\em {Sobolev Spaces}}, Elsevier, 2003.

\bibitem{Balci2020}
{\sc A.~Balci, L.~Diening, and M.~Surnachev}, {\em {New Examples on Lavrentiev
  Gap Using Fractals}}, Calc. Var. Partial Differ. Equations2, 59 (2020).

\bibitem{Baroni2018}
{\sc P.~Baroni, M.~Colombo, and G.~Mingione}, {\em {Regularity for general
  functionals with double phase}}, Calc. Var. Partial Differ. Equations, 57
  (2018).

\bibitem{Bhattacharya1991}
{\sc T.~Bhattacharya and F.~Leonetti}, {\em {A new Poincar\'{e} inequality and
  its application to the regularity of minimizers of integral functionals with
  nonstandard growth}}, Nonlinear Anal., 17 (1991), pp.~833--839.

\bibitem{Bogelein2013}
{\sc V.~B{\"{o}}gelein, F.~Duzaar, and P.~Marcellini}, {\em {Parabolic
  equations with $(p,q)$-growth: A Variational Approach}}, Arch. Ration. Mech.
  Anal., 210 (2013), pp.~219--267.

\bibitem{Bojarski1988}
{\sc B.~Bojarski}, {\em {Remarks on local function spaces}}, in Funct. Spaces
  Appl., Springer-Verlag Berlin Heidelberg, 1988.

\bibitem{Brasco2018}
{\sc L.~Brasco and F.~Santambrogio}, {\em {A sharp estimate a la
  Calderon-Zygmund for the p-Laplacian}}, Commun. Contemp. Math., 20 (2018),
  p.~1750030.

\bibitem{Breit2012}
{\sc D.~Breit}, {\em {New regularity theorems for non-autonomous variational
  integrals with $(p,q)$-growth}}, Calc. Var., 44 (2012), pp.~101--129.

\bibitem{Breit2017}
{\sc D.~Breit, A.~Cianchi, L.~Diening, T.~Kuusi, and S.~Schwarzacher}, {\em
  {The p-Laplace system with right-hand side in divergence form: inner and up
  to the boundary pointwise estimates}}, Nonlinear Anal. Theory, Methods Appl.,
  153 (2017), pp.~200--212.

\bibitem{Bulicek2018}
{\sc M.~Bul{\'{i}}{\v{c}}ek, E.~Maringov{\'{a}}, B.~Stroffolini, and A.~Verde},
  {\em {A boundary regularity results for minimizers of variational integrals
  with nonstandard growth}}, Nonlinear Anal., 177 (2018), pp.~153--168.

\bibitem{Buttazo1995}
{\sc G.~Buttazo and M.~Belloni}, {\em {A survey of old and recent results about
  the gap phenomenon in the Calculus of Variations}}, Math. Appl., 331 (1995),
  pp.~1--27.

\bibitem{Buttazo1992}
{\sc G.~Buttazo and V.~Mizel}, {\em {Interpretation of the Lavrentiev
  phenomenon by relaxation}}, J. Funct. Anal., 2 (1992), pp.~434--460.

\bibitem{Byun2017}
{\sc S.-S. Byun and J.~Oh}, {\em {Global gradient estimates for non-uniformly
  elliptic equations}}, Calc. Var., 56 (2017).

\bibitem{Carozza1998}
{\sc M.~Carozza, N.~Fusco, and G.~Mingione}, {\em {Partial regularity of
  minimizers of quasiconvex integrals with subquadratic growth}}, Ann. di Mat.
  Pura Appl., 171 (1998), pp.~141--164.

\bibitem{Carozza2011}
{\sc M.~Carozza, J.~Kristensen, and A.~{Passarelli di Napoli}}, {\em {Higher
  differentiability of minimizers of convex variational integrals}}, Ann.
  l'Institut Henri Poincare Anal. Non Lineare, 28 (2011), pp.~395--411.

\bibitem{Carozza2013}
\leavevmode\vrule height 2pt depth -1.6pt width 23pt, {\em {Regularity of
  minimisers of autonomous convex variational integrals}}, Ann. della Scu.
  Norm. Sup. di Pisa, 13 (2013).

\bibitem{Cianchi2019}
{\sc A.~Cianchi and V.~Maz'ya}, {\em {Optimal second-order regularity for the
  p-Laplace system}}, J. Math\'{e}matiques Pures Appliqu\'{e}es, 132 (2019),
  pp.~41--78.

\bibitem{Costabel2019}
{\sc M.~Costabel}, {\em {On the limit Sobolev regularity for Dirichlet and
  Neumann problems on Lipschitz domains}}, Math. News/Mathematische
  Nachrichten, 292 (2019), pp.~2165--2173.

\bibitem{Dahlke2016}
{\sc S.~Dahlke, L.~Diening, C.~Hartmann, B.~Scharf, and M.~Weimar}, {\em {Besov
  regularity of solutions to the p-Poisson equation}}, Nonlinear Anal., 130
  (2016), pp.~298--329.

\bibitem{DeFilippis2022b}
{\sc C.~de~Filippis}, {\em {Quasiconvexity and partial regularity via nonlinear
  potentials}}, J. Math. Pures Appl., 163 (2022), pp.~11--82.

\bibitem{DeFilippis2020}
{\sc C.~de~Filippis, L.~Koch, and J.~Kristensen}, {\em {Regularity in relaxed
  convex problems}}, {in Prep.},  (2023).

\bibitem{DeFilippis2019}
{\sc C.~de~Filippis and G.~Minigione}, {\em {A borderline case of
  Calder\'{o}n-Zygmund estimates for non-uniformly elliptic problems}}, St.
  Petersbg. Math. J., 31 (2020), pp.~455--477.

\bibitem{DeFilippis2022a}
{\sc C.~{De Filippis} and B.~Stroffolini}, {\em {Singular multiple integrals
  and nonlinear potentials}}, arXiv Prepr. arXiv2203.05519,  (2022).

\bibitem{DeGiorgi1968}
{\sc E.~{De Giorgi}}, {\em {Un esempio di estremali discotinue per un problema
  variazionale di tipo ellitico}}, Boll. Un. Mat. Ital, 4 (1968), pp.~135--137.

\bibitem{DeMaria2010}
{\sc B.~{De Maria} and A.~{Passarelli di Napoli}}, {\em {Partial regularity for
  non autonomous functionals with non standard growth condition}}, Calc. Var.
  Partial Differ. Equ., 38 (2010).

\bibitem{Diening2011}
{\sc L.~Diening, P.~H{\"{a}}st{\"{o}}, P.~Harjulehto, and M.~Ruzicka}, {\em
  {Lebesgue and Sobolev spaces with variable exponents}}, Springer-Verlag,
  Berlin, 2011.

\bibitem{Ebmeyer2005}
{\sc C.~Ebmeyer}, {\em {Global regularity in Sobolev spaces for elliptic
  problems with p-structure on bounded domains}}, Prog. Nonlinear Differ.
  Equations Their Appl., 61 (2005), pp.~81--89.

\bibitem{EbmeyerLiuSteinhauer2005}
{\sc C.~Ebmeyer, W.~Liu, and M.~Steinhauer}, {\em {Global Regularity in
  Fractional Order Sobolev spaces for the p-Laplace equation on polyhedral
  domains}}, Zeitschrift fur Anal. und ihre Anwendung, 24 (2005), pp.~353--374.

\bibitem{Esposito2019}
{\sc A.~Esposito, F.~Leonetti, and P.~Petricca}, {\em {Absence of Lavrentiev
  gap for non-autonomous functionals with (p,q)-growth}}, Adv. Nonlinear Anal.,
  8 (2019), pp.~73--78.

\bibitem{Esposito2002}
{\sc L.~Esposito, F.~Leonetti, and G.~Mingione}, {\em {Regularity results for minimizers of irregular integrals with $(p,q)$ growth}}, Forum Math., 14 (2002), 245--272.

\bibitem{Esposito2004}
{\sc L.~Esposito, F.~Leonetti, and G.~Mingione}, {\em {Sharp regularity for
  functionals with (p,q) growth}}, J. Differ. Equations, 204 (2004), pp.~5--55.

\bibitem{Evans1992}
{\sc L.~Evans and R.~Gariepy}, {\em {Measure theory and fine properties of
  functions}}, CRC Press, Boca Raton, Fla., 1992.

\bibitem{Fonseca1997}
{\sc I.~Fonseca and J.~Mal{\'{y}}}, {\em {Relaxation of multiple integrals
  below the growth exponent for the energy density}}, Ann. l'Institut Henri
  Poincare Anal. Non Lineare, 14 (1997), pp.~309--338.

\bibitem{Fonseca2004}
{\sc I.~Fonseca, J.~Mal{\'{y}}, and G.~Mingione}, {\em {Scalar minimizers with
  fractal singular sets}}, Arch. Ration. Mech. Anal., 172 (2004), pp.~295--307.

\bibitem{Foss2001}
{\sc M.~Foss}, {\em {On Lavrentiev's phenomenon}}, PhD thesis, Carnegie Mellon
  University, 2001.

\bibitem{Foss2003}
\leavevmode\vrule height 2pt depth -1.6pt width 23pt, {\em {The Lavrentiev gaph
  phenomenon in nonlinear elasticity}}, Arch. Ration. Mech. Anal., 167 (2003),
  pp.~336--365.

\bibitem{Giaquinta1987}
{\sc M.~Giaquinta}, {\em {Growth conditions and regularity, a counterexample}},
  Manuscripta Math., 59 (1987), pp.~245--248.

\bibitem{Giaquinta1986a}
{\sc M.~Giaquinta and G.~Modica}, {\em {Partial regularity of minimizers of
  quasiconvex integrals}}, Ann. l'Institut Henri Poincare Anal. Non Lineaire, 3
  (1986), pp.~185--208.

\bibitem{Giusti2003}
{\sc E.~Giusti}, {\em {Direct Methods in the Calculus of Variations}}, World
  Scientific, 2003.

\bibitem{Gmeineder2022}
{\sc F.~Gmeineder and J.~Kristensen}, {\em {Quasiconvex functionals of
  (p,q)-growth and the partial regularity of relaxed minimizers}}, arXiv Prepr.
  arXiv2209.01613,  (2022).

\bibitem{Grisvard1992}
{\sc P.~Grisvard}, {\em {Elliptic Problems in Nonsmooth Domains}}, vol.~22,
  Society for Industrial and Applied Mathematics, University City,
  Philadelphia, 1992.

\bibitem{Hasto2022}
{\sc P.~H\"ast\"o and J.~Ok}, {\em {Maximal regularity for local minimizers of
  non-autonomous functionals}}, J. Eur. Math. Soc., 24 (2022), pp.~1285--1332.

\bibitem{Hirsch2020}
{\sc J.~Hirsch and M.~Sch{\"{a}}ffner}, {\em {Growth conditions and regularity,
  an optimal local boundedness result}}, Commun. Contemp. Math., 23 (2020),
  p.~2050029.

\bibitem{Hong1992}
{\sc M.~Hong}, {\em {Some remarks on the minimizers of variational integrals
  with (p,q) growth conditions}}, J. Differ. Equations, 6 (1992), pp.~91--101.

\bibitem{Hormander1983}
{\sc L.~H{\"o}rmander}, {\em {The Analysis of Linear Partial Differential
  Operators II}}, Springer, Berlin, 1983.

\bibitem{Kislyakov2005}
{\sc S.~Kislyakov and N.~Kruglyak}, {\em {Stability of approximation under
  singular integrals, and Cald\'{e}ron-Zygmund type decompositions}}, PDMI
  Prepr.,  (2005).

\bibitem{Kislyakov2013}
\leavevmode\vrule height 2pt depth -1.6pt width 23pt, {\em {Extremal Problems
  in Interpolation Theory, Whitney-Besicovitch Coverings and Singular
  Integrals}}, Monogr. Mat., 74 (2013), pp.~663--714.

\bibitem{Koch2020}
{\sc L.~Koch}, {\em {Global higher integrability for minimisers of convex
  functionals with (p,q)-growth}}, Calc. Var. Partial Differ. Equ., 60 (2021).

\bibitem{Koch2021a}
\leavevmode\vrule height 2pt depth -1.6pt width 23pt, {\em {Global higher
  integrability for minimisers of convex obstacle problems with (p,q)-growth}},
  Calc. Var. Partial Differ. Equ., 61 (2021).

\bibitem{Koch2022a}
\leavevmode\vrule height 2pt depth -1.6pt width 23pt, {\em {On global absence
  of Lavrentiev gap for functionals with (p,q)-growth}}, arXiv Prepr.
  arXiv2210.15454,  (2022).

\bibitem{Kristensen2010}
{\sc J.~Kristensen and G.~Mingione}, {\em {Boundary Regularity in Variational
  Problems}}, Arch. Ration. Mech. Anal., 198 (2010), pp.~369--455.

\bibitem{Kronz2005}
{\sc M.~Kronz}, {\em {Boundary regularity for almost minimizers of quasiconvex
  variational problems}}, Nonlinear Differ. Equations Appl., 12 (2005),
  pp.~351--382.

\bibitem{Lavrentiev1926}
{\sc M.~Lavrentiev}, {\em {Sur quelques probl{\`{e}}me du calcul des
  variations}}, Ann. di Mat. Pura Appl., 4 (1926), pp.~7--28.

\bibitem{Marcellini1989}
{\sc P.~Marcellini}, {\em {Regularity of minimizers of integrals of the
  calculus of variations with non-standard growth conditions}}, Arch. Ration.
  Mech. Anal., 105 (1989), pp.~267--284.

\bibitem{Marcellini1991}
\leavevmode\vrule height 2pt depth -1.6pt width 23pt, {\em {Regularity and
  existence of solutions of elliptic equations with p,q-growth conditions}}, J.
  Differ. Equations, 90 (1991), pp.~1--30.

\bibitem{Martio1978}
{\sc O.~Martio and J.~Sarvas}, {\em {Injectivity theorems in plane and space}},
  Ann. Acad. Sci. Fenn. Math., 4 (1978), pp.~383--401.

\bibitem{Mazya1968}
{\sc V.~Maz'ya}, {\em {Examples of nonregular solutions of quasilinear elliptic
  equations with analytic coefficients}}, Funkc. Anal. i Prilo{\v{z}}en., 2
  (1968), pp.~53--57.

\bibitem{Mingione2003}
{\sc G.~Mingione}, {\em {Bounds for the singular set of solutions to non linear
  elliptic systems}}, Calc. Var. Partial Differ. Equ., 18 (2003), pp.~373--400.

\bibitem{Mingione2006}
\leavevmode\vrule height 2pt depth -1.6pt width 23pt, {\em {Regularity of
  minima: an invitation to the dark side of the calculus of variations}}, Appl.
  Math, 51 (2006), pp.~355--426.

\bibitem{Mingione2007}
\leavevmode\vrule height 2pt depth -1.6pt width 23pt, {\em {The
  Calderon-Zygmund theory for elliptic problems with measure data}}, Ann. della
  Scu. Norm. Sup. di Pisa, 6 (2007), pp.~195--261.

\bibitem{Passarelli1996}
{\sc A.~{Passarelli di Napoli} and F.~Siepe}, {\em {A regularity result for a
  class of anisotropic systems}}, Rend. Istit. Mat. Univ. Trieste, 28 (1996),
  pp.~13--31.

\bibitem{Radulescu2015}
{\sc V.~R\u{a}dulescu and D.~Repov{\v{s}}}, {\em {Partial differential
  equations with variable exponents}}, CRC Press, Boca Raton, Fla., 2015.

\bibitem{Savare1997}
{\sc G.~Savar{\'{e}}}, {\em {Regularity and perturbation results for mixed
  second order elliptic problems}}, Commun. Partial Differ. Equations, 22
  (1997), pp.~869--899.

\bibitem{Savare1998}
\leavevmode\vrule height 2pt depth -1.6pt width 23pt, {\em {Regularity Results
  for Elliptic Equations in Lipschitz Domains}}, J. Funct. Ana., 152 (1998),
  pp.~176--201.

\bibitem{Schaeffner2020}
{\sc M.~Sch{\"{a}}ffner}, {\em {Higher Integrability for variational integrals
  with non-standard growth}}, Calc. Var. Partial Differ. Equ., 60 (2021).

\bibitem{Schmidt2008}
{\sc T.~Schmidt}, {\em {Regularity of minimizers of $W^{1,p}$-quasiconvex
  variational integrals with (p,q)-growth}}, Calc. Var., 32 (2008), pp.~1--24.

\bibitem{Schmidt2008a}
\leavevmode\vrule height 2pt depth -1.6pt width 23pt, {\em {Regularity theorems
  for degenerate quasiconvex energies with (p,q)-growth}}, Adv. Calc. Var., 1
  (2008), pp.~241--270.

\bibitem{Schmidt2009}
\leavevmode\vrule height 2pt depth -1.6pt width 23pt, {\em {Regularity of
  Relaxed Minimizers of Quasiconvex Variational Integrals with
  $(p,q)$-growth}}, Arch. Ration. Mech. Anal., 193 (2009), pp.~311--337.

\bibitem{Simon1977}
{\sc J.~Simon}, {\em {Regularite locale des solutions d'un equation non
  lineaire}}, PhD thesis, Universite Pierre et Marie Curie (Paris 6), 1977.

\bibitem{Simon1981}
\leavevmode\vrule height 2pt depth -1.6pt width 23pt, {\em {Regularite de la
  solution d'un probleme aux limites non lineaires}}, Ann. fac. sci. Toulouse,
  3 (1981), pp.~247--274.

\bibitem{Tachikawa2020}
{\sc A.~Tachikawa}, {\em {Boundary regularity of minimizers of double phase
  functionals}}, J. Math. Anal. Appl.,  (2020), p.~123946.

\bibitem{Triebel1978}
{\sc H.~Triebel}, {\em {Interpolation Theory, Function Spaces, Differential
  Operators}}, VEB Deutscher Verlag der Wissenschaften, Berlin, 1978.

\bibitem{Triebel1983}
\leavevmode\vrule height 2pt depth -1.6pt width 23pt, {\em {Theory of Function
  Spaces}}, Birkh{\"{a}}user Basel, Basel, 1~ed., 1983.

\bibitem{Triebel1992}
\leavevmode\vrule height 2pt depth -1.6pt width 23pt, {\em {Theory of Function
  spaces II}}, Birkh{\"{a}}user Basel, 1992.

\bibitem{Weimar2021}
{\sc M.~Weimar}, {\em {On the lack of interior regularity of the p-Poisson
  problem with p>2}}, Math. Nachrichten, 294 (2021), pp.~1186--1205.

\bibitem{Zhikov1993}
{\sc V.~Zhikov}, {\em {Lavrentiev phenomenon and homogenization for some
  variational problems}}, C. R. Acad. Sci. Paris S\'{e}r. Mat., 50 (1993),
  pp.~674--710.

\bibitem{Zhikov1995}
\leavevmode\vrule height 2pt depth -1.6pt width 23pt, {\em {On Lavrentiev's
  Phenomenon}}, Russ. J. Math. Phys., 3 (1995), pp.~249--269.

\bibitem{Zhikov1987}
{\sc V.~V. Zhikov}, {\em {Averaging of functionals of the calculus of
  variations and elasticity theory}}, Izv. Math., 29 (1987), pp.~33--66.

\end{thebibliography}
\end{document}